\title{New Sequence-Independent Lifting Techniques for Cutting Planes and When They Induce Facets}
\author{Siddharth Prasad\thanks{Computer Science Department, Carnegie Mellon University. {\tt sprasad2@cs.cmu.edu}}\and Ellen Vitercik\thanks{Management Science \& Engineering and Computer Science Departments, Stanford University. {\tt vitercik@stanford.edu}}\and Maria-Florina Balcan\thanks{Computer Science and Machine Learning Departments, Carnegie Mellon University. {\tt ninamf@cs.cmu.edu}} \and
Tuomas Sandholm\thanks{Computer Science Department, Carnegie Mellon University, Strategy Robot, Inc., Strategic Machine, Inc., Optimized Markets, Inc. {\tt sandholm@cs.cmu.edu}}}
\date{}
\begin{document}

\maketitle
\begin{abstract}
Sequence-independent lifting is a procedure for strengthening valid inequalities of an integer program. We generalize the sequence-independent lifting method of Gu, Nemhauser, and Savelsbergh (GNS lifting) for cover inequalities and correct an error in their proposed generalization. We obtain a new sequence-independent lifting technique---{\em piecewise-constant (PC) lifting}---with a number of interesting properties. We derive a broad set of sufficient conditions under which PC lifting is facet defining. To our knowledge, this is the first characterization of facet-defining sequence-independent liftings that are efficiently computable from the underlying cover. Finally, we demonstrate via experiments that PC lifting can be a useful alternative to GNS lifting. We test our new lifting techniques atop a number of novel cover cut generation routines, which prove to be effective in experiments with CPLEX.

%\keywords{First keyword  \and Second keyword \and Another keyword.}
\end{abstract}
\section{Introduction}
Lifting is a technique for strengthening cutting planes for integer programs by increasing the coefficients of variables that are not in the cut. We study lifting methods for valid cuts of {\em knapsack polytopes}, which have the form $\conv(P)$ where $$P = \left\{\vec{x}\in\{0,1\}^n : \sum\limits_{j=1}^n a_j x_j\le b\right\}$$ for $a_1,\ldots, a_n, b\in\N$ with $0 < a_1,\ldots, a_n\le b$. We interpret $P$ as the set of feasible packings of $n$ items with weights $a_1,\ldots, a_n$ into a knapsack of capacity $b$. Such {\em knapsack constraints} arise in binary integer programs from various industrial applications such as resource allocation, auctions, and container packing. They are a very general and expressive modeling tool, as any linear constraint involving binary variables admits an equivalent knapsack constraint by replacing negative-coefficient variables with their complements. A {\em minimal cover} is a set $C\subseteq\{1,\ldots,n\}$ such that $\sum_{j\in C}a_j > b$ and $\sum_{j\in C\setminus\{i\}}a_j\le b$ for all $i\in C$. That is, the items in $C$ cannot all fit in the knapsack, but any proper subset of $C$ can. The {\em minimal cover cut} corresponding to $C$ is the inequality $$\sum_{j\in C} x_j\le |C| - 1,$$ which enforces that the items in $C$ cannot all be selected. A {\em lifting} of the minimal cover cut is any valid inequality of the form \begin{equation}\label{eq:lifting}\sum\limits_{j\in C} x_j + \sum\limits_{j\notin C}\alpha_j x_j\le |C| - 1.\end{equation} The lifting coefficients $\alpha_j$ are often computed one-by-one---a process called {\em sequential lifting} that depends on the lifting order. Sequential lifting can be expensive since one must solve an optimization problem for each coefficient. Furthermore, one must reckon with the question of what lifting order to use. To lessen this computational burden, the lifting coefficients can be computed simultaneously. This method is called {\em sequence-independent lifting} and is the focus of this work. Our contributions include: (i) a generalization of the seminal sequence-independent lifting method developed by Gu et al.~\cite{Gu00:Sequence} and a correction of their proposed generalization; (ii) the first broad conditions under which sequence-independent liftings that are efficiently computable from the underlying cover---via our new techniques---define facets of $\conv(P)$; and (iii) new cover cut generation methods that, together with our new lifting techniques, display promising practical performance in experiments.

\subsection{Preliminaries on sequence-independent lifting}
    We begin with an overview of the \emph{lifting function} $f:[0,b]\to\R$ associated with a minimal cover $C$, defined by $$f(z) = |C| - 1 - \max\left\{\sum_{j\in C} x_j : \sum_{j\in C}a_jx_j\le b-z, x_j\in\{0,1\}\right\}.$$ For $i\notin C$, the value $f(a_i)$ is the maximum possible coefficient $\alpha_i$ such that $\sum_{j\in C}x_j + \alpha_i x_i\le |C| - 1$ is valid for $\conv(P)$. The lifting function has a more convenient closed form due to Balas~\cite{Balas75:Facets}. First, relabel the items so $C = \{1,\ldots,t\}$ and $a_1\ge\cdots\ge a_t$. Let $\mu_0 = 0$ and for $h = 1,\ldots, t$ let $\mu_h = a_1+\cdots+a_h$. Let $\lambda = \mu_t - b > 0$ be the cover's excess weight. Then, %$f(z) = 0$ if $0\le z\le\mu_1-\lambda$ and $f(z) = h$ if $\mu_h-\lambda < z\le\mu_{h+1}-\lambda$. 
    $$f(z) = \begin{cases} 0 & 0\le z\le \mu_1 - \lambda \\ h & \mu_h - \lambda < z \le \mu_{h+1} - \lambda.\end{cases}$$ 
The lifting function has an intuitive interpretation: $f(z)$ is the maximum $h$ such that an item of weight $z$ cannot be brought into in $C$ and fit in the knapsack, even if we are allowed to discard any $h$ items from $C$. The lifting function $f$ may be used to maximally lift a {\em single} variable not in the cover. To lift a second variable, a new lifting function must be computed. This (order-dependent) process can be continued to lift all remaining variables, and is known as {\em sequential lifting}. Conforti et al.~\cite{Conforti14:Integer} and Hojny et al.~\cite{Hojny20:Knapsack} contain further details.

\paragraph{Superadditivity and sequence-independent lifting.} A function $g:D\to\R$ is superadditive if $g(u+v)\ge g(u)+g(v)$ for all $u,v, u+v\in D$. If $g\le f$ is superadditive, $$\sum\limits_{j\in C}x_j + \sum\limits_{j\notin C}g(a_j)x_j\le |C|-1$$ is a valid {\em sequence-independent} lifting for $\conv(P)$. This result is due to Wolsey~\cite{Wolsey77:Valid}; Gu et al.~\cite{Gu00:Sequence} generalize to mixed 0-1 integer programs. The lifting function $f$ is generally not superadditive. Gu et al.~\cite{Gu00:Sequence} construct a superadditive function $g\le f$ as follows. Let $\rho_h = \max\{0, a_{h+1} - (a_1 - \lambda)\}$ be the excess weight of the cover if the heaviest item is replaced with a copy of the $(h+1)$-st heaviest item. For $h\in\{0,\ldots, t-1\}$, let $F_h = (\mu_h-\lambda+\rho_h, \mu_{h+1}-\lambda]$ and for $h\in\{1,\ldots, t-1\}$, let $S_h = (\mu_h-\lambda, \mu_h-\lambda+\rho_h]$. $S_h$ is nonempty if and only if $\rho_h>0$. For $w:[0,\rho_1]\to [0,1]$, Gu et al. define $$g_w(z) = \begin{cases} 0 & z = 0 \\ h & z\in F_h\qquad h = 0,\ldots, t-1\;\\ h - w(\mu_h-\lambda+\rho_h-z) & z\in S_h\hfill h = 1,\ldots, t-1.\end{cases}$$ Gu et al. prove that for $w(x) = x/\rho_1$, $g_w$ is superadditive. We call this particular lifting function the {\em Gu-Nemhauser-Savelsbergh (GNS) lifting function}. Furthermore, $g_w$ is undominated, that is, there is no superadditive $g'$ with $f\ge g'\ge g_w$ and $g'(z') > g_w(z')$ for some $z'\in [0,b]$. 

\subsection{Our contributions}

In Section~\ref{section:theory}, we prove that under a certain condition, $g_w$ is superadditive for any linear symmetric function $w$. This generalizes Gu et al.'s result~\cite{Gu00:Sequence} %that was
for $w(x) = x/\rho_1$ and furthermore corrects an error in their proposed generalization, which incorrectly claims $w$ can be any symmetric function. Of particular interest is the constant function $w = 1/2$; we call the resulting lifting {\em piecewise-constant (PC) lifting}. In Section~\ref{section:comparisons} we give a thorough comparison of PC and GNS lifting. We show that GNS lifting can be arbitrarily worse than PC lifting, and characterize the full domination criteria between the two methods. In Section~\ref{section:facets}, we provide a broad set of conditions under which PC lifting defines facets of $\conv(P)$. {\em To our knowledge, these are the first conditions for facet-defining sequence-independent liftings that are efficiently computable from the underlying cover. \footnote{\citet{Balas75:Facets} proved that under certain conditions, (sequential) lifting coefficients are fully determined independent of the lifting order. This can be viewed as a set of sufficient conditions under which sequence-independent (or sequential; they are one and the same here) lifting would yield a facet-defining cut. However, when sequence-independent lifting can be non-trivially performed, ours is the first such result.}}

In Section~\ref{section:experiments}, we experimentally evaluate our %new
lifting techniques in conjunction with a number of novel cover cut generation techniques. Our cut generation techniques do not solve expensive {\sf NP}-hard separation problems (which has been the norm in prior research~\cite{Kaparis10:Separation}). Instead, we cheaply generate many candidate cover cuts based on qualitative criteria, lift them, and check for separation only before %actually
adding the cut. This approach is effective in experiments with CPLEX. 

\subsection{Related work}

Cover cuts and their associated separation routines were first shown to be useful in a branch-and-cut framework by Crowder et al.~\cite{Crowder83:Solving}. Since then, there has been a large body of work studying various computational aspects, both theoretical and practical, of cover cuts, separation routines, and lifting. The seminal work of Gu et al.~\cite{Gu00:Sequence} showed how sequence-independent lifting can be performed efficiently using $g_w$ for $w(x) = x/\rho_1$. Gu et al.~\cite{Gu98:Lifted} perform a computational study of sequential lifting, and Wolter~\cite{Wolter06:Implementation} presents some computational results on the interaction between the sequence-independent lifting technique of Gu et al.~\cite{Gu00:Sequence} and different separation techniques. To our knowledge, this is the only computational study of sequence-independent lifting published to date. Our computational study takes a different approach than prior work. Rather than solving separation problems exactly, which involves expensive optimization, we generate large pools of candidate cover cuts, lift them, and check for separation before adding cuts to the formulation. This approach proves to be effective in our experiments. (The separation problem is {\sf NP}-hard~\cite{Klabjan98:Complexity,Gu99:Lifted}, but checking separation is a trivial linear time operation. More on separation can be found in Kaparis and Letchford~\cite{Kaparis10:Separation}.) Marchand et al.~\cite{Marchand02:Cutting} and Letchford and Souli~\cite{Letchford19:Lifted, Letchford20:Lifting} present other sequence-independent lifting functions based on superadditivity. 

\section{New sequence-independent lifting functions: structure and properties}\label{section:theory}

We generalize the result of Gu et al.~\cite{Gu00:Sequence} and also point out an error in their suggested generalization. Gu et al. claim that if $\mu_1-\lambda\ge\rho_1$, then $g_w$ is superadditive for any nondecreasing $w:[0, \rho_1]\to[0,1]$ such that $w(x) + w(\rho_1-x) = 1$. This claim is incorrect (we provide counterexamples in Appendix~\ref{app:erratum}). We show that this claim is correct when restricted to linear $w$.

\begin{theorem}\label{theorem:superadditive}
    For $k\in [0, 1/\rho_1]$, let $w_k(x) = kx + \frac{1-k\rho_1}{2}$, and let $g_k = g_{w_k}$. If $\mu_1-\lambda\ge\rho_1$, $g_{k}$ is superadditive and undominated.
\end{theorem}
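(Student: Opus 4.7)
The plan is to verify superadditivity of $g_k$ by an exhaustive case analysis over the piecewise structure of its domain, and then to establish undominatedness by a sandwich argument.  Throughout, three properties of $w_k$ do the work: (i) linearity, giving $w_k(x)+w_k(y)=k(x+y)+(1-k\rho_1)$; (ii) the symmetry $w_k(x)+w_k(\rho_1-x)=1$; and (iii) $w_k([0,\rho_1])\subseteq[0,1]$, which is exactly the range $k\in[0,1/\rho_1]$.  I would partition $[0,b]$ into $\{0\}$, the $F$-intervals $F_0,\ldots,F_{t-1}$, and the $S$-intervals $S_1,\ldots,S_{t-1}$, and for each pair of pieces $(P,Q)$ containing $u$ and $v$ identify which piece contains $u+v$, then verify $g_k(u+v)\ge g_k(u)+g_k(v)$.

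The pairs $F_h+F_{h'}$ reduce to standard inequalities on the original $f$.  The mixed pairs $F_h+S_{h'}$ require a bound on the location of $u+v$ in $g_k$'s domain and a one-variable computation.  The hypothesis $\mu_1-\lambda\ge\rho_1$ enters here: it ensures that no $S$-interval is wider than $F_0$, so shifting by an $S$-interval offset cannot ``skip over'' an $F$-interval, which keeps the case list finite and the containing piece of $u+v$ unambiguous.  I expect the main obstacle to be the $S_h+S_{h'}$ case.  Writing $u=\mu_h-\lambda+\rho_h-x$ and $v=\mu_{h'}-\lambda+\rho_{h'}-y$, linearity of $w_k$ collapses the right-hand side of the target inequality to a function of $x+y$ alone, and one must verify the inequality in each sub-case according to whether $u+v$ lies in $F_{h+h'}$, $S_{h+h'+1}$, or $F_{h+h'+1}$; the bounds must cancel precisely using $w_k(x)+w_k(y)=w_k(x+y)+w_k(0)$ (a restatement of linearity) and $w_k(0)=(1-k\rho_1)/2\ge 0$.

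For undominatedness, let $g'$ be superadditive with $g_k\le g'\le f$.  Since $g_k=f$ on $\{0\}\cup\bigcup_h F_h$, the sandwich forces $g'=g_k$ there automatically.  For $z\in S_h$, I would select an auxiliary point $z^*\in S_{h^*}$ whose offset $y$ satisfies $x+y=\rho_1$, so that by symmetry $w_k(x)+w_k(y)=1$ and $z+z^*$ falls into an $F$-interval where $g'$ is already pinned to $g_k$.  Applying superadditivity of $g'$ then gives $g'(z)\le g'(z+z^*)-g'(z^*)\le g_k(z+z^*)-g_k(z^*)$, and the identities collapse the right-hand side to exactly $g_k(z)$, closing the sandwich.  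This mirrors the original Gu--Nemhauser--Savelsbergh undominatedness argument with $w_k$ replacing $x/\rho_1$, and the linearity of $w_k$ is precisely what makes the cancellation clean; the residual technical step is checking that $z^*$ with the desired properties always exists under the hypothesis $\mu_1-\lambda\ge\rho_1$.
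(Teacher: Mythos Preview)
Your overall strategy matches the paper's: exhaustive case analysis on the piecewise domain for superadditivity, and the sandwich/pairing argument for undominatedness (the paper defers the latter entirely to Gu--Nemhauser--Savelsbergh, so your plan there is fine). However, you have misplaced the role of the hypothesis $\mu_1-\lambda\ge\rho_1$, and this is the one genuinely new step in the proof.

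You say the hypothesis enters in the mixed $F_h+S_{h'}$ case, to prevent an $S$-interval from ``skipping over'' an $F$-interval. In fact the $F+F$ and $F+S$ cases go through verbatim from Gu et al.\ for any symmetric nondecreasing $w$, with no appeal to the hypothesis. The hypothesis is used \emph{only} in the $S_h+S_{h'}$ case, and its role is to \emph{exclude} a sub-case rather than to make a cancellation work. Concretely, under $\mu_1-\lambda\ge\rho_1$ one proves $u+v>\mu_{h+h'-1}-\lambda+\rho_{h+h'-1}$, i.e.\ $u+v\notin S_{h+h'-1}$. This exclusion is the crux: if $u+v\in S_{h+h'-1}$ were possible, then for $k=0$ one would have $g_0(u)+g_0(v)=(h-\tfrac12)+(h'-\tfrac12)=h+h'-1>h+h'-\tfrac32=g_0(u+v)$, and superadditivity would fail. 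The paper establishes the exclusion via a nontrivial chain of inequalities in the reparametrization $u_i=a_i-\rho_{i-1}$, $v_i=\rho_i$, $M_h=\sum_{i\le h}(u_i+v_i)$, using $u_1\ge v_1$ together with monotonicity of the $u_i$ and $v_i$; it does not follow from the width observation you gave.

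Relatedly, your sub-case list for $S_h+S_{h'}$ is off by one index: once $S_{h+h'-1}$ is excluded, the live possibilities are $u+v\in F_{h+h'-1}$, $u+v\in S_{h+h'}$, or $u+v$ beyond $S_{h+h'}$ --- not $F_{h+h'}$, $S_{h+h'+1}$, $F_{h+h'+1}$. Those remaining sub-cases do then go through using the linear identity $w_k(x)+w_k(y)=k(x+y)+(1-k\rho_1)$ you wrote down, and this is exactly where linearity of $w_k$ (as opposed to mere symmetry) is essential.
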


The GNS lifting function is given by $g_{1/\rho_1}$. The proof of Theorem~\ref{theorem:superadditive} follows the proof that $g_{1/\rho_1}$ is superadditive~\cite{Gu00:Sequence} with a few key modifications; we defer it to Appendix~\ref{app:superadditive_proof}. If $\mu_1-\lambda <\rho_1$, $g_k$ is superadditive if and only if $k = 1/\rho_1$.

Of particular interest is the superadditive lifting function $g_0$, which we refer to as the {\em piecewise-constant (PC) lifting function}. The following result shows that the lifting obtained via $g_k$ is dominated by the union of the liftings obtained via $g_0$ (PC lifting) and $g_{1/\rho_1}$ (GNS lifting). Thus, in order to get as close to $\conv(P)$ as possible, it suffices to study these two lifting functions.

\begin{prop}\label{prop:convex_comb}
    Let $k\in (0, 1/\rho_1)$. %The lifted cover cut obtained via $g_k$ is dominated by the union of lifted cover cuts obtained from $g_0$ and $g_{1/\rho_1}$. More precisely, if 
    If $\sum_{j \in C} x_j + \sum_{j \notin C} g_0(a_j)x_j \leq |C| - 1$ and $\sum_{j \in C} x_j + \sum_{j \notin C} g_{1/\rho_1}(a_j)x_j \leq |C| - 1$, then $\sum_{j \in C} x_j + \sum_{j \notin C} g_k(a_j)x_j \leq |C| - 1$.
\end{prop}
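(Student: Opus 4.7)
The plan is to show that $g_k$ itself is a pointwise convex combination of $g_0$ and $g_{1/\rho_1}$; the conclusion then follows by taking the corresponding convex combination of the two hypothesized inequalities.

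First I would verify this decomposition at the level of the weight functions $w_k$. Setting $\alpha = k\rho_1$, which lies in $(0,1)$ since $k\in(0,1/\rho_1)$, a direct algebraic check gives $w_k(y) = \alpha\cdot w_{1/\rho_1}(y) + (1-\alpha)\cdot w_0(y)$ for every $y\in[0,\rho_1]$, using $w_{1/\rho_1}(y) = y/\rho_1$ and $w_0(y) = 1/2$. This is the crux of the argument: the one-parameter family $\{w_k\}$ is itself an affine interpolation between its two extremes, and the Gu et al.\ weight and the PC weight sit precisely at those extremes.

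Next I would transport this identity from $w_k$ to $g_k$. On $\{0\}\cup\bigcup_h F_h$, the value of $g_w(z)$ is independent of $w$, so $g_0$, $g_{1/\rho_1}$, and $g_k$ all coincide there and the identity $g_k = \alpha\, g_{1/\rho_1} + (1-\alpha)\, g_0$ holds trivially. On each $S_h$, the defining formula $g_w(z) = h - w(\mu_h-\lambda+\rho_h-z)$ combined with the decomposition of $w_k$ yields the same pointwise identity. In particular, $g_k(a_j) = \alpha\, g_{1/\rho_1}(a_j) + (1-\alpha)\, g_0(a_j)$ for every $j\notin C$.

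Finally, forming the $\alpha$-weighted average of the two hypothesized inequalities gives $\sum_{j\in C} x_j + \sum_{j\notin C} g_k(a_j)\,x_j \le \alpha(|C|-1) + (1-\alpha)(|C|-1) = |C|-1$, as claimed. There is no real obstacle in this argument; the entire statement reduces to spotting the linearity of the family $g_k$ in the parameter $k$, after which the rest is bookkeeping.
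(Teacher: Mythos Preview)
Your proposal is correct and follows exactly the same approach as the paper: the paper's proof consists of the single sentence ``We have $g_k(z) = k\rho_1\,g_{1/\rho_1}(z) + (1-k\rho_1)\,g_0(z)$ by direct computation, so $g_k$ lifting is a convex combination of GNS and PC lifting.'' You have simply fleshed out that direct computation by first verifying the identity at the level of the $w_k$'s and then transporting it to the $g_k$'s.
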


\begin{comment}
    \begin{equation*}
        \left\{
        \begin{aligned}
        &\sum\limits_{j \in C} x_j + \sum\limits_{j \notin C} g_0(a_j)x_j \leq |C| - 1 \\
        &\sum\limits_{j \in C} x_j + \sum\limits_{j \notin C} g_{1/\rho_1}(a_j)x_j \leq |C| - 1 
        \end{aligned}
        \right\}
        \implies \sum\limits_{j \in C} x_j + \sum\limits_{j \notin C} g_k(a_j)x_j \leq |C| - 1.
    \end{equation*}
\end{comment}

\begin{proof}
    We have $g_k(z) = k\rho_1g_{1/\rho_1}(z) + (1-k\rho_1)g_0(z)$ by direct computation, so $g_k$ lifting is a convex combination of GNS and PC lifting. 
\end{proof}

\begin{example}\label{ex:running}
    Let $C = \{1,2,3,4\}$ and consider a knapsack constraint of the form $16x_1 + 14x_2 + 13x_3 + 9x_4 + \sum_{j\notin C} a_jx_j\le 44$. $C$ is a minimal cover with $\mu_1 = 16$, $\mu_2 = 30$, $\mu_3=43$, $\mu_4=48$, $\lambda=8$, $\rho_1 = 6$, $\rho_2 = 5$, $\rho_3 = 1$, and $\mu_1-\lambda\ge\rho_1$. Fig.~\ref{fig:theorem} depicts $g_0$ and $g_{1/\rho_1}$ truncated to the domain $[\mu_1-\lambda, \mu_3-\lambda] = [8, 35]$.
\end{example}

\begin{figure}[t]
  \centering
  \includegraphics[width=\columnwidth, trim={2.4cm 5.6cm 3cm, 9.5cm},clip  ]{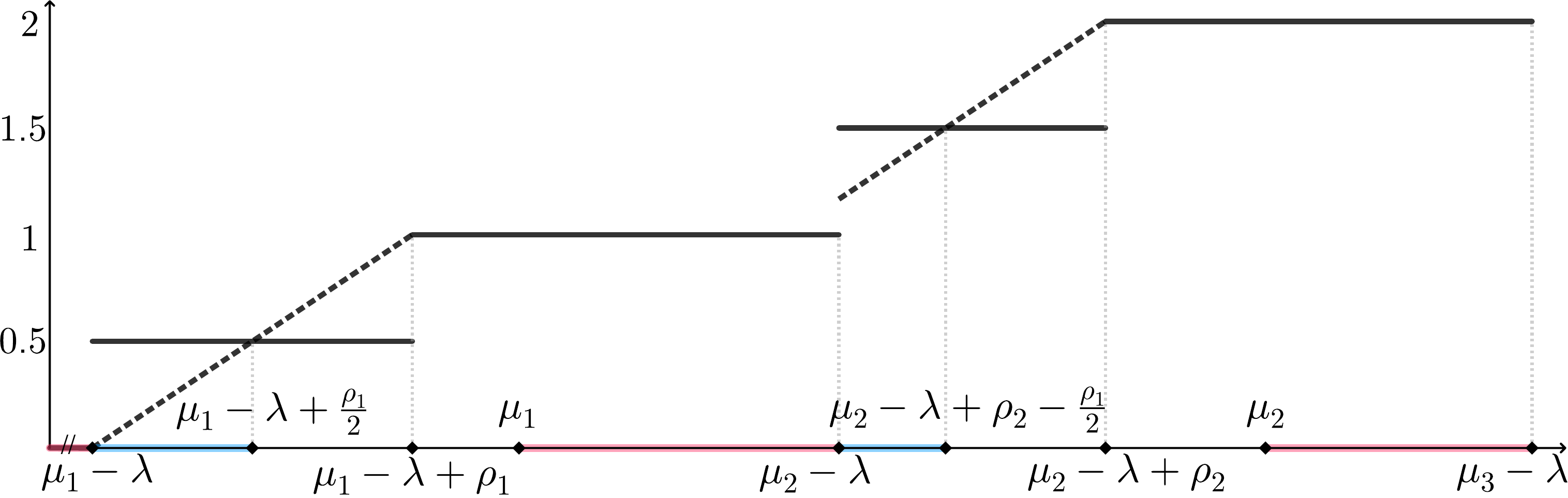}
  \caption{The PC lifting function $g_0$ is the piecewise constant step function depicted by the solid black lines. The GNS lifting function $g_{1/\rho_1}$ is obtained by replacing the solid lines in the intervals $S_h$ with the depicted dashed lines. If all coefficients of variables being lifted lie in the blue and red regions with at least three coefficients in the leftmost blue region, PC lifting is facet-defining and dominates GNS lifting (Theorem~\ref{theorem:facet}).}
  \label{fig:theorem}
\end{figure}

\subsection{Comparisons between PC and GNS lifting}\label{section:comparisons}

The following result shows GNS lifting can be arbitrarily worse than PC lifting.

\begin{prop}\label{prop:domination}
    For any $\varepsilon > 0, t\in\N$ there exists a knapsack constraint with a minimal cover $C$ of size $t$ such that PC lifting yields $$\sum_{j\in C} x_j + \sum_{j\notin C}\frac{1}{2}x_j\le |C|-1$$ and GNS lifting is dominated by $$\sum_{j\in C} x_j + \sum_{j\notin C}\varepsilon x_j\le |C|-1.$$
\end{prop}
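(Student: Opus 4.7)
The plan is to construct a concrete knapsack instance parameterized by a large integer $L$ (chosen as a function of $\varepsilon$) in which all cover items have a single common weight and every non-cover item has weight placed just above the left endpoint $\mu_1 - \lambda$ of the interval $S_1$. Because the GNS lifting function $g_{1/\rho_1}$ is linear on $S_1$ with slope $1/\rho_1$, starting from value $0^+$ at $\mu_1 - \lambda$, pushing $\rho_1$ up (by increasing $L$) makes its value at the chosen non-cover weights shrink to $0$, whereas the PC lifting function $g_0$ is flat at $1/2$ throughout $S_1$.

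Concretely, I would set $a_1 = a_2 = \cdots = a_t = 2L$, pick one or more non-cover items with weight $L+1$, and take capacity $b = (2t-1)L$. The first step of the proof is to verify that $C = \{1,\dots,t\}$ is a minimal cover of size $t$: $\sum_{j \in C} a_j = 2tL > (2t-1)L = b$, while removing any $i \in C$ drops the sum to $2(t-1)L \le (2t-1)L$. Next, I would compute $\mu_1 = 2L$, $\lambda = L$, and $\rho_1 = a_2 - (a_1 - \lambda) = L$, so that $\mu_1 - \lambda = L = \rho_1$ and Theorem~\ref{theorem:superadditive} applies, making both $g_0$ and $g_{1/\rho_1}$ superadditive valid liftings. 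Note that $S_1 = (L, 2L]$ contains the non-cover weight $L+1$.

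The third step is to evaluate both lifting functions at $a = L+1 \in S_1$. By definition, $g_0(L+1) = 1 - \tfrac{1}{2} = \tfrac{1}{2}$, which yields the PC cut claimed in the proposition. Plugging into the GNS formula gives
\[
  g_{1/\rho_1}(L+1) \;=\; 1 - \frac{\mu_1 - \lambda + \rho_1 - (L+1)}{\rho_1} \;=\; 1 - \frac{L-1}{L} \;=\; \frac{1}{L}.
\]
Choosing $L \ge \lceil 1/\varepsilon \rceil$ (and without loss of generality $\varepsilon \le 1/2$) makes every GNS coefficient at most $\varepsilon \le g_0(a)$, so the $\varepsilon$-cut is itself valid (being weaker than the PC cut) and coefficient-wise stronger than the GNS cut, i.e., dominates it.

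The argument is essentially a direct calculation, so I do not expect a genuine obstacle. The only conceptual point is choosing the non-cover weights at the extreme left of $S_1$ (where GNS is nearly zero while PC is still $1/2$) and letting $\rho_1 = L$ grow; the only technical thing to double-check is that the construction continues to satisfy the hypothesis $\mu_1 - \lambda \ge \rho_1$ of Theorem~\ref{theorem:superadditive}, which it does with equality.
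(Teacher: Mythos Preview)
Your proof is correct and follows essentially the same approach as the paper: both constructions place the non-cover weights just above the left endpoint of $S_1$ and make $\rho_1$ arbitrarily large, so that $g_{1/\rho_1}$ is near zero there while $g_0 \equiv 1/2$. The paper's construction is a bit more general (it starts from arbitrary cover weights $a_1\ge\cdots\ge a_t$ with $a_1-\lambda'\ge a_2-a_1+\lambda'>0$ and then scales by a large factor $M$), whereas you take the cleaner special case of equal cover weights parameterized directly by $L$, but the mechanism is identical.
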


The proof is in Appendix~\ref{app:domination}. At a high level, we construct an instance where the length of $S_1$, which is $\rho_1$, is large, and consider coefficients that are at the leftmost part $S_1$. GNS barely lifts such coefficients, while PC yields lifting coefficients of $1/2$. The next proposition fully characterizes the domination criteria between PC and GNS lifting. Its proof is immediate from the plots in Fig.~\ref{fig:theorem}.

\begin{prop}\label{prop:general_domination}
    Assume $\mu_1-\lambda\ge\rho_1$. Furthermore, suppose $\{j\notin C : \exists h\text{ s.t. }a_j\in S_h\}\neq\emptyset$ (else, GNS and PC trivially produce the same cut). If, for all $j\notin C$,
    \begin{enumerate}
        \item $a_j\in S_h\implies\rho_h > \frac{\rho_1}{2}\text{ and } a_j \le \mu_h-\lambda+\rho_h-\frac{\rho_1}{2}$ with at least one such $a_j\in S_h$ satisfying $a_j < \mu_h-\lambda+\rho_h-\frac{\rho_1}{2}$, PC strictly dominates GNS. 
        \item $a_j\in S_h\implies\rho_h > \frac{\rho_1}{2}\text{ and } a_j = \mu_h-\lambda+\rho_h-\frac{\rho_1}{2}$, PC and GNS yield the same cut.
        \item $a_j\in S_h\implies$ ($\rho_h\le\frac{\rho_1}{2}$) or ($\rho_h > \frac{\rho_1}{2}$ and $a_j > \mu_h-\lambda+\rho_h-\frac{\rho_1}{2}$), GNS strictly dominates PC.
        \item Otherwise, neither PC nor GNS dominates the other.
    \end{enumerate}
\end{prop}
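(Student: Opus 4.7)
The plan is to compare the lifting coefficients $g_0(a_j)$ and $g_{1/\rho_1}(a_j)$ pointwise for each $j\notin C$; one lifting dominates the other exactly when this coordinatewise inequality has the same direction on every such $j$ (strictly for at least one index, if strict dominance is claimed). Since $g_0$ and $g_{1/\rho_1}$ both take the value $h$ on every $F_h$, any discrepancy in their lifted cuts can only come from those $a_j$ that lie in some $S_h$. This is precisely why the preamble of the proposition restricts attention to the case that at least one such $a_j$ exists; otherwise the two cuts coincide trivially.

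The main step is the pointwise comparison on $S_h$. For $z\in S_h=(\mu_h-\lambda,\ \mu_h-\lambda+\rho_h]$, the definitions give $g_0(z)=h-\frac{1}{2}$ and $g_{1/\rho_1}(z)=h-\frac{\mu_h-\lambda+\rho_h-z}{\rho_1}$, so
\[
g_{1/\rho_1}(z)-g_0(z) \;=\; \frac{1}{2}-\frac{\mu_h-\lambda+\rho_h-z}{\rho_1}.
\]
This is nonnegative iff $z\ge \mu_h-\lambda+\rho_h-\rho_1/2$, and strictly positive iff the inequality is strict. If $\rho_h\le \rho_1/2$, the threshold lies at or below $\mu_h-\lambda$, so $g_{1/\rho_1}(z)>g_0(z)$ on all of $S_h$. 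If $\rho_h>\rho_1/2$, the threshold lies strictly inside $S_h$ and cleanly partitions it into a left portion where $g_0\ge g_{1/\rho_1}$ and a right portion where $g_0<g_{1/\rho_1}$, with equality exactly at the threshold.

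The four cases of the proposition now follow by inspecting the signs of $g_0(a_j)-g_{1/\rho_1}(a_j)$ across all $j\notin C$ with $a_j\in\bigcup_h S_h$. In case (1) every such $a_j$ yields $g_0(a_j)\ge g_{1/\rho_1}(a_j)$ with at least one strict, hence PC strictly dominates GNS. Case (2) forces equality at every such $a_j$, so the two cuts coincide. Case (3) forces strict inequality in favor of GNS at every such $a_j$ (by the two subcases of the computation above), and since the preamble guarantees at least one such $a_j$ exists, GNS strictly dominates PC. Case (4) is the residue in which at least one $a_j$ favors each side, and no coordinatewise comparison holds. The only bookkeeping is to assign the equality point $z=\mu_h-\lambda+\rho_h-\rho_1/2$ to case (2) and to fold the degenerate subcase $\rho_h\le\rho_1/2$ into case (3); both are immediate from the sign formula. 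There is no genuine obstacle here---once the pointwise difference is written down explicitly, everything reduces to the four sign patterns matching the statement, which is what the authors mean by the proof being ``immediate from the plots.''
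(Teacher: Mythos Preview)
Your proposal is correct and is precisely the explicit computation underlying the paper's one-line justification (``immediate from the plots in Fig.~\ref{fig:theorem}''): both arguments boil down to comparing $g_0$ and $g_{1/\rho_1}$ pointwise on each $S_h$ via the sign of $\tfrac{1}{2}-\tfrac{\mu_h-\lambda+\rho_h-z}{\rho_1}$, and then reading off the four domination patterns. There is nothing to add---you have simply written out what the paper leaves to the figure.
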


\begin{example}\label{example:comparison}
    Consider the constraint $16x_1 + 14x_2 + 13x_3 + 9x_4 + a_5x_5 + a_6x_6 + a_7x_7\le 44$ with minimal cover $C = \{1,2,3,4\}$.
    \begin{enumerate}
        \item Let $a_5=9$, $a_6=10$, $a_7=23$. GNS yields $$x_1 + x_2 + x_3 + x_4 + \frac{1}{6}x_5 + \frac{1}{3}x_6 + \frac{4}{3}x_7\le 3.$$ PC yields the dominant cut $$x_1 + x_2 + x_3 + x_4 + \frac{1}{2}x_5 + \frac{1}{2}x_6 + \frac{3}{2}x_7\le 3.$$
        \item Let $a_5=11$, $a_6=17$, $a_7=24$. GNS and PC both yield the cut $$x_1 + x_2 + x_3 + x_4 + \frac{1}{2}x_5 + x_6 + \frac{3}{2}x_7\le 3.$$
        \item Let $a_5=12$, $a_6=13$, $a_7=26$. GNS yields $$x_1 + x_2 + x_3 + x_4 + \frac{2}{3}x_5 + \frac{5}{6}x_6 + \frac{11}{6}x_7\le 3.$$ PC yields the weaker cut $$x_1 + x_2 + x_3 + x_4 + \frac{1}{2}x_5 + \frac{1}{2}x_6 + \frac{3}{2}x_7\le 3.$$
        \item Let $a_5 = 9$, $a_6 = 13$, $a_7 = 24$. GNS yields $$x_1 + x_2 + x_3 + x_4 + \frac{1}{6}x_5 + \frac{5}{6}x_6 + \frac{3}{2}x_7\le 3.$$ PC yields $$x_1 + x_2 + x_3 + x_4 + \frac{1}{2}x_5 + \frac{1}{2}x_6 + \frac{3}{2}x_7\le 3.$$ Neither cut is dominating.
    \end{enumerate}
\end{example}

\noindent
{\em Open question:} Gu et al.~\cite{Gu99:Lifted} and Hunsaker and Tovey~\cite{Hunsaker05:Simple} construct knapsack problems where branch-and-cut requires a tree of exponential size, even when {\em all lifted cover inequalities} (all inequalities of the form~\eqref{eq:lifting}) are added to the formulation. Do there exist knapsack problems where branch-and-cut requires exponential-size trees when all GNS-lifted cover inequalities are added, but does not when PC-lifted cover inequalities are added (or vice versa)?

\subsection{Facet defining inequalities from sequence-independent lifting}\label{section:facets}

We now provide a broad set of sufficient conditions under which PC lifting yields facet-defining inequalities. Our result relies on the following complete characterization of the facets of the knapsack polytope obtained from lifting minimal cover cuts, due to Balas and Zemel~\cite{Balas78:Facets,Hojny20:Knapsack}, which we restate using the notation and terminology of Gu et al.~\cite{Gu00:Sequence} and Conforti et al.~\cite[sec. 7.2]{Conforti14:Integer}. Given a minimal cover $C$ and $j\notin C$, let $h(j)$ be the index such that $\mu_{h(j)}\le a_j < \mu_{h(j)+1}$.

\begin{theorem}[Balas and Zemel~\cite{Balas78:Facets}]\label{theorem:balas+zemel}
    Let $C$ be a minimal cover. Let $J = \{j\notin C: a_j > \mu_{h(j)+1}-\lambda\}$ and let $I = (\{1,\ldots,n\}\setminus C)\setminus J$. Let $\cQ(J) = \{Q\subseteq J : \sum_{j\in Q}a_j\le b, Q\neq\emptyset\}$. The inequality $$\sum_{j\in C}x_j + \sum_{j\notin C}\alpha_j x_j\le |C| - 1$$ is a facet of $\conv(P)$ if and only if $\alpha_j = h(j) + \delta_j\cdot\mathbf{1}(j\in J)$ where each $\delta_j\in [0,1]$ and $\vec{\delta} = (\delta_j)_{j\in J}$ is a vertex of the polyhedron $$T = \left\{\vec{\delta}\in\R^{|J|} :\sum_{j\in Q}\delta_j\le f\Big(\sum_{j\in Q}a_j\Big) - \sum_{j\in Q}h(j)~\forall~Q\in\cQ(J)\right\}.$$
    %$$T = \Bigg\{\vec{\delta}\in\R^{|J|} \;:\; \sum_{j\in Q}\delta_j\le f\Big(\sum_{j\in Q}a_j\Big) - \sum_{j\in Q}h(j)~\forall~Q\in\cQ(J)\Bigg\}.$$
\end{theorem}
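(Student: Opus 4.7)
The plan is to prove both directions of the equivalence via (1) a validity analysis identifying the valid coefficient vectors with the affine image of $T$ described in the theorem, and (2) a dimension analysis matching vertices of $T$ to facets of $\conv(P)$.

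For validity, the central tool is the simultaneous lifting bound derived directly from $f$: for any nonempty $Q \subseteq \{1,\ldots,n\}\setminus C$ with $\sum_{j \in Q} a_j \le b$, fixing $x_j = 1$ for $j \in Q$ and maximizing $\sum_{j \in C} x_j$ subject to the residual knapsack capacity yields $\sum_{j \in Q} \alpha_j \le f\bigl(\sum_{j \in Q} a_j\bigr)$ for any valid lifted inequality. Specializing to $|Q| = 1$ with $j \in I$ gives $\alpha_j \le h(j)$; a short argument using the step structure of $f$ shows this bound can always be attained independently of other coefficients, so every facet-defining lifting forces $\alpha_j = h(j)$ on $I$ (otherwise the inequality is strictly strengthenable). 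For $j \in J$, the singleton bound yields $\alpha_j \le h(j)+1$, i.e.\ $\delta_j \in [0,1]$. Constraints arising from mixed $Q$'s (containing both $I$- and $J$-items) are then shown to be implied by the $Q \subseteq J$ constraints together with the equalities $\alpha_j = h(j)$ on $I$, by exploiting the placement of $I$-items inside the flat intervals $F_h$. This identifies the valid coefficient polytope with the affine image of $T$.

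For facet-definingness I would invoke the standard criterion that a valid inequality defines a facet of the full-dimensional polytope $\conv(P)$ iff its tight face contains $n$ affinely independent integer points. Given a vertex $\vec\delta^\ast$ of $T$, it is certified by $|J|$ linearly independent tight $T$-constraints indexed by subsets $Q_1, \ldots, Q_{|J|} \in \cQ(J)$; each such $Q_k$ lifts to a tight feasible integer point by turning on $Q_k$ and optimally filling $C$. Combined with $|C|$ obvious tight points (the $(|C|-1)$-subsets of $C$) and $|I|$ tight swap points $(C \setminus \{i\}) \cup \{j\}$ for $j \in I$ (with $i$ chosen to respect capacity), one obtains $n$ affinely independent tight points, proving the facet direction. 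Conversely, if $\vec\delta$ is not a vertex, it is the midpoint of a nontrivial segment in $T$; the corresponding endpoint inequalities give two distinct valid liftings whose average is our inequality, forcing the tight face to have dimension at most $n-2$.

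The main obstacle is the affine-independence bookkeeping in the facet direction: one must carefully match the $|J|$ active $T$-constraints with $|J|$ specific feasible tight points, and then show that the supplementary $|C| + |I|$ tight points span the remaining directions. This combinatorial construction is the heart of the Balas--Zemel argument. The validity reduction to $T$, while requiring care in handling mixed $Q$'s, is comparatively routine and follows from the step structure of $f$ and the definition of $I$ as the non-cover items lying in its flat blocks.
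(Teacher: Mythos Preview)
The paper does not prove this theorem; it is stated as a result of Balas and Zemel~\cite{Balas78:Facets} (restated in the notation of Gu et al.\ and Conforti et al.) and is used as a black box in the proof of Theorem~\ref{theorem:facet}. The only remark the paper makes about the proof is the sentence following Theorem~\ref{theorem:facet}: ``The proof of (the converse direction of) Theorem~\ref{theorem:balas+zemel} maps the binding constraints defining a vertex of $T$ to get $n$ linearly independent points at which the lifting~\eqref{eq:lifting} is binding.'' Your proposal is consistent with this one-line summary: you take the $|J|$ tight $T$-constraints at a vertex, lift each to a tight feasible $0/1$ point, and supplement with $|C|+|I|$ additional tight points to reach $n$ affinely independent points. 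So at the level of strategy there is nothing to compare---you are reconstructing the Balas--Zemel argument that the paper only cites.

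A couple of places in your sketch are imprecise and would need fixing in a full proof. First, your ``swap points'' for $j\in I$ are described as $(C\setminus\{i\})\cup\{j\}$, which is only tight when $h(j)=0$; in general you must remove $h(j)+1$ items from $C$ (the heaviest ones, so that the point is feasible) to achieve $\sum_{i\in C}x_i = |C|-1-h(j)$ with $x_j=1$. Second, the claim that mixed-$Q$ constraints (those containing both $I$- and $J$-items) are implied by the $Q\subseteq J$ constraints is the most delicate part of the validity direction and deserves more than ``exploiting the placement of $I$-items inside the flat intervals $F_h$''; one needs to argue that removing an $I$-item $i$ from $Q$ decreases both sides of the constraint by exactly $h(i)$, which requires a careful accounting using $f(z+a_i)\ge f(z)+h(i)$ for $i\in I$. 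These are standard details in the Balas--Zemel paper, but your outline glosses over them.
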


The characterization of facets based on $T$ in Theorem~\ref{theorem:balas+zemel} does not translate to a tractable method of deriving facet-defining inequalities, since one would need to enumerate the vertices of $T$. In fact, %when a cutting plane involves fractional coefficients (as is the case with GNS and PC {\em sequence-independent} lifting when the set of coefficients in $S_h$ for any $h$ is nonempty), 
Hartvigsen and Zemel~\cite{Hartvigsen92:Complexity} show the question of determining whether or not a cutting plane is a valid lifted cover cut is {\sf co-NP}-complete. 
%(If all coefficients are promised to be integral, then such a cut must be obtainable via {\em sequential} lifting, and in that case there is an algorithm with run-time $O(n^2)$ that solves the aforementioned decision problem~\cite{Hartvigsen92:Complexity}.) 
Deciding whether or not a cutting plane is a facet-defining lifted cover cut is in $\mathsf{D^P}$ ($\mathsf{D^P}$ is a complexity class introduced by Papadimitriou and Yannakakis~\cite{Papadimitriou82:Complexity} to characterize the complexity of facet recognition). 
%that consists of languages that are the intersection of an {\sf NP} language and a {\sf co-NP} language). 
Critically, these complexity results hold when the underlying cover is given as input.

Our result, to the best of our knowledge, provides {\em the first broad set of sufficient conditions under which sequence-independent liftings that can be efficiently computed from the underlying minimal cover---via PC lifting---are facet defining.} We now state and prove our result.

\begin{theorem}\label{theorem:facet}
    Let $C = \{1,\ldots, t\}$, $a_1\ge\cdots\ge a_t$, be a minimal cover such that $\mu_1 - \lambda\ge\rho_1 > 0$. Suppose $|\{j\notin C :  a_j\in S_1 \}|\ge 3$ and for all $j\notin C$: \begin{align*}& a_j\in S_h\implies \rho_h>\frac{\rho_1}{2}\text{ and } a_j\le\mu_h-\lambda+\rho_h - \frac{\rho_1}{2}, \\ & a_j\in F_h\implies a_j\ge\mu_{h}~(\text{equivalently } h(j) \ge h).\end{align*} Then, the cut $$\sum_{j\in C} x_j + \sum_{j\notin C}g_0(a_j)x_j\le |C|-1,$$ obtained via PC lifting, defines a facet of $\conv(P)$.
\end{theorem}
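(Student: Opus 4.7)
The approach is to apply the Balas--Zemel characterization (Theorem~\ref{theorem:balas+zemel}), which reduces the facet question to verifying that the PC coefficients take the form $g_0(a_j) = h(j) + \delta_j \mathbf{1}(j \in J)$ with $\vec{\delta}$ a vertex of $T$. First I would classify each $j \notin C$. If $a_j \in F_h$, the hypothesis $a_j \ge \mu_h$ combined with $a_j \le \mu_{h+1} - \lambda < \mu_{h+1}$ yields $h(j) = h$ and $a_j \le \mu_{h(j)+1} - \lambda$, so $j \in I$ and $g_0(a_j) = h = h(j)$. If $a_j \in S_h$, minimality of $C$ gives $a_i \ge \lambda$ for $i \in C$, hence $\mu_h - \lambda \ge \mu_{h-1}$ and $a_j > \mu_{h-1}$; the hypothesis $a_j \le \mu_h - \lambda + \rho_h - \rho_1/2$ together with $\rho_h = a_{h+1} - (a_1 - \lambda)$ forces $a_j \le \mu_h - \rho_1/2 < \mu_h$. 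Thus $h(j) = h - 1$, $a_j > \mu_{h(j)+1} - \lambda$ puts $j$ in $J$, and $g_0(a_j) = h - 1/2 = h(j) + 1/2$. So every $j \in J$ satisfies $\delta_j = 1/2$, and validity of the PC lifting (Theorem~\ref{theorem:superadditive}) guarantees $\vec{\delta} \in T$.

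It remains to show that $\vec{\delta} = (1/2,\ldots,1/2)$ is a vertex of $T$ by producing $|J|$ linearly independent tight constraints. Because $f\bigl(\sum_{j\in Q} a_j\bigr) - \sum_{j\in Q} h(j)$ is integer-valued while $\sum_{j \in Q} \delta_j = |Q|/2$, only pair constraints ($|Q|=2$) can be tight. Write $J_h = \{j \in J : a_j \in S_h\}$. For any pair $\{j_1, j_2\} \subseteq J_1$, the upper-bound hypothesis gives $a_{j_1} + a_{j_2} \le 2(a_1 - \lambda) + \rho_1 = \mu_2 - \lambda$, while the global assumption $\mu_1 - \lambda \ge \rho_1$ rearranges to $2(a_1 - \lambda) \ge \mu_1 - \lambda + \rho_1$; combining with the strict lower bound $a_{j_i} > a_1 - \lambda$ places the sum in $F_1$, so $f(a_{j_1}+a_{j_2}) = 1$ and the constraint $\delta_{j_1} + \delta_{j_2} \le 1$ is tight. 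For $j \in J_h$ with $h \ge 2$ paired with any $j' \in J_1$, an analogous computation yields $\mu_h - \lambda < a_j + a_{j'} \le \mu_{h+1} - \lambda$, so $f(a_j + a_{j'}) = h$ and $\delta_j + \delta_{j'} \le h - (h-1) = 1$ is again tight.

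To assemble $|J|$ independent constraints, fix three distinct $j_1, j_2, j_3 \in J_1$ (guaranteed by $|J_1| \ge 3$); the three pair constraints among them are independent on the coordinates $(\delta_{j_1}, \delta_{j_2}, \delta_{j_3})$, as a direct $3\times 3$ determinant check shows. Append the constraint $\{j_1, j\}$ for each remaining $j \in J$; each such constraint introduces the fresh coordinate $\delta_j$, so the full list of $|J|$ constraints is linearly independent, certifying $\vec{\delta}$ as a vertex, and Theorem~\ref{theorem:balas+zemel} then delivers the facet conclusion. The main obstacle will be the interval bookkeeping that locates the two pair-sums within the partition $\{F_h, S_h\}$; the computations hinge on the identity $\mu_h - \lambda + \rho_h = \mu_{h+1} - a_1$ (valid when $\rho_h > 0$), the cover-minimality bound $a_i \ge \lambda$, and the global assumption $\mu_1 - \lambda \ge \rho_1$. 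The hypothesis $|J_1| \ge 3$ is essential: two $J_1$ elements alone yield only the single equation $\delta_{j_1} + \delta_{j_2} = 1$, leaving a one-parameter family of feasible points and preventing vertex-ness.
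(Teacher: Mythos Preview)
Your proposal is correct and follows the same overall strategy as the paper: invoke Balas--Zemel, classify $I$ and $J$ (with $h(j)=h$ on $F_h$ and $h(j)=h-1$ on $S_h$), and certify that $\vec{\delta}=(1/2,\ldots,1/2)$ is a vertex of $T$ by exhibiting $|J|$ linearly independent tight pair constraints drawn from $J_1\times J$. Your family of tight constraints---the three pairs among $j_1,j_2,j_3\in J_1$ together with $\{j_1,j\}$ for the remaining $j$---is exactly the set the paper uses.

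The one substantive difference is how you establish membership $\vec{\delta}\in T$. The paper derives, for every $Q$ with $|Q|\ge 3$, the explicit bound $H(Q)\ge\lceil|Q|/2\rceil$ via a separate combinatorial claim (proved in Appendix~\ref{app:facet_claim}) that $\sum_{j\in Q}a_j>\mu_{\sum h_j-\lfloor|Q|/2\rfloor}-\lambda$. You bypass this entirely: superadditivity of $g_0$ together with $g_0\le f$ gives $\sum_{j\in Q}g_0(a_j)\le g_0\bigl(\sum_{j\in Q}a_j\bigr)\le f\bigl(\sum_{j\in Q}a_j\bigr)$ for every $Q\in\cQ(J)$, which is precisely the $T$-constraint after substituting $g_0(a_j)=h(j)+\tfrac12$. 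This is a genuine simplification over the paper's argument.

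One minor imprecision: your integrality remark (``only pair constraints can be tight'') does not rule out even $|Q|\ge 4$, since then $|Q|/2$ is itself an integer. This is harmless---you only need to \emph{exhibit} $|J|$ independent tight constraints, not classify all of them---but the sentence as written slightly overstates what integrality delivers.
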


\begin{proof}
    First we show that $J = \cup_{h\ge 1}\{j\notin C : a_j\in S_h\}$ and $I = \cup_{h\ge 0}\{j\notin C : a_j \in F_h \}$. Let $j\notin C$ be such that $a_j\in S_h$. We have $a_j > \mu_h-\lambda > \mu_{h-1}$ (as $\lambda \le a_i$ for any $i\in C$) and $a_j\le \mu_h - \lambda + \rho_h - \frac{\rho_1}{2} < \mu_h-\lambda+\rho_h < \mu_h$ (the final inequality follows directly from expanding out $\mu_h$ and $\rho_h$). So $h(j) = h - 1$, and as $a_j > \mu_h-\lambda = \mu_{h(j)+1}-\lambda$, $j\in J$. Next, let $j\notin C$ be such that $a_j\in F_h$. Then, $a_j\le \mu_{h+1}-\lambda < \mu_{h+1}$, and by assumption $a_j\ge \mu_h$, so $h(j) = h$. Therefore $a_j \le \mu_{h(j)+1}-\lambda$, and so $j\in I$. The facts that $a_j\in S_h\implies h(j) = h-1$ and $a_j\in F_h\implies h(j) = h$ will be used repeatedly in the remainder of the proof.

    We now determine the constraints defining the polyhedron $T\subset \R^{|J|}$ in Theorem~\ref{theorem:balas+zemel}. (For $j\in I$, PC lifting produces a coefficient of $h(j)$, as required by Theorem~\ref{theorem:balas+zemel}.) Partition $J$ into two sets $J = J_1\cup J_{>1}$ where $J_1 = \{j\notin C : a_j\in S_1\}$ and $J_{>1} = \cup_{h > 1}\{j\notin C : a_j\in S_h\}$. Each singleton $Q = \{j\}\in\cQ(J)$ yields the constraint $\delta_j\le 1$. Consider now $Q = \{j_1, j_2\}\in\cQ(J)$. We consider two cases, one where $j_1\in J_1$ and the other where $j_1\in J_{>1}$. First, let $j_1\in J_1$. Let $h$ be such that $a_{j_2}\in S_h$. We have $a_{j_1} + a_{j_2} > a_{j_2} > \mu_h-\lambda$, so $f(a_{j_1} + a_{j_2})\ge h$, and
    \begin{align*}
        a_{j_1} + a_{j_2} &\le \mu_1-\lambda+\frac{\rho_1}{2} + \mu_h-\lambda+\rho_h-\frac{\rho_1}{2} \\
        &= a_1 - \lambda + \mu_h - \lambda + (a_{h+1} - a_1 + \lambda) \\
        &= \mu_h + a_{h+1}-\lambda = \mu_{h+1}-\lambda,
    \end{align*}
    so $f(a_{j_1} + a_{j_2})\le h$. Therefore $f(a_{j_1} + a_{j_2}) = h$, and we get the constraint $\delta_{j_1} + \delta_{j_2} \le f(a_{j_1} + a_{j_2}) - h(j_1) - h(j_2) = h - 0 - (h-1) = 1$. Suppose now that $j_1, j_2\in J_{>1}$ (if $j_2\in J_1$ that is handled by the first case). 
    %We show that the constraint obtained in this case is of the form $\delta_{j_1}+\delta_{j_2}\le H(j_1, j_2)$ where $H(j_1, j_2)\ge 1$. 
    %is either $\delta_{j_1} + \delta_{j_2} \le 1$ or something weaker (that is, the right-hand-side of $1$ is replaced by an integer greater than $1$). 
    Let $h_1, h_2$ be the integers such that $a_{j_1}\in S_{h_1}$ and $a_{j_2}\in S_{h_2}$. We have $ a_{j_1} + a_{j_2} > \mu_{h_1}-\lambda + \mu_{h_2}-\lambda = (a_1+\cdots + a_{h_1}) + (a_1+\cdots + a_{h_2}) - 2\lambda > (a_1 + \cdots + a_{h_1-1}) + (a_1 + \cdots + a_{h_2}) - \lambda > \mu_{h_1 + h_2 - 1}-\lambda$ so $f(a_{j_1}+a_{j_2})\ge h_1 + h_2 - 1$, and $f(a_{j_1}+a_{j_2}) - h(j_1) - h(j_2)\ge 1$. So for such pairs, we obtain a constraint $\delta_{j_1} + \delta_{j_2}\le H(j_1, j_2)$, where $H(j_1, j_2)\ge 1$. %As it turns out, such constraints will not play a role in determining the relevant vertex of $T$. 
    Finally, we consider $Q\in\cQ(J)$ with $|Q|\ge 3$. For $j\in Q$ let $h_j$ be the integer such that $a_{j}\in S_{h_j}$. We claim that $$\sum\limits_{j\in Q}a_j > \mu_{\sum_{j\in Q}h_j-\lfloor|Q|/2\rfloor}-\lambda.$$ This claim is most succinctly proven using the quantities defined to prove Theorem~\ref{theorem:superadditive} (originally used by Gu et al.~\cite{Gu00:Sequence} to prove superadditivity of $g_{1/\rho_1}$). To avoid notational clutter, we defer the proof of this claim to Appendix~\ref{app:facet_claim}. The claim implies $f(\sum_{j\in Q}a_j)\ge \sum_{j\in Q}h_j-\lfloor|Q|/2\rfloor$, so the constraint induced by $Q$ is of the form $\sum_{j\in Q}\delta_j\le H(Q)$, where 
    $$ H(Q) := f\Big(\sum\limits_{j\in Q}a_j\Big) - \sum\limits_{j\in Q}h(j) \ge \sum\limits_{j\in Q} h_j - \lfloor|Q|/2\rfloor - \sum\limits_{j\in Q}(h_j-1) = \lceil|Q|/2\rceil.$$ We can now write down a complete description of $T$: 
    \begin{equation*}
    T = 
    \left\{
    \vec{\delta}\in \R^{|J|} \;:\; 
    \begin{aligned}
    & (1)\; \delta_j\le 1\;\;\forall\; j\in J\\
    & (2)\; \delta_i + \delta_j\le 1 \;\;\forall\; (i, j) \in J_1\times J\\
    & (3)\; \delta_i + \delta_j\le H(i, j) \;\;\forall\; (i, j)\in J_{>1}\times J_{>1} \\
    & (4)\; \textstyle\sum_{j\in Q} \delta_j\le H(Q) \;\;\forall\; Q\in\cQ(J), |Q|\ge 3
    \end{aligned}
    \right\}
    \end{equation*}
    where $H(i, j)\ge 1$ for all $(i, j)\in J_{>1}\times J_{>1}$ and $H(Q)\ge \left\lceil|Q|/2\right\rceil$ for all $Q\in\cQ(J)$, $|Q|\ge 3$. We argue that $\vec{\delta} = (1/2,\ldots, 1/2)$ is a vertex of $T$. Constraints of type (1), (3), and (4) are satisfied, and type (2) constraints are tight. Fix distinct $j_1, j_2, j_3\in J_1$. The set of $|J|$ type (2) constraints $\{\delta_j + \delta_{j_1}\le 1 \;\forall\; j\in J\setminus\{j_1\}\}\cup\{\delta_{j_2} + \delta_{j_3}\le 1\}$ is linearly independent, and hence $\vec{\delta} = (1/2,\ldots,1/2)$ is a vertex of $T$. PC lifting produces precisely the coefficients prescribed by Theorem~\ref{theorem:balas+zemel}: $g_0(a_j) = h(j)$ for $j\in I$ and $g_0(a_j) = h(j) + \frac{1}{2}$ for $j\in J$, so we are done. 
\end{proof}

Fig.~\ref{fig:theorem} illustrates the sufficient conditions of Theorem~\ref{theorem:facet}. While a facet-defining PC lifting can be efficiently obtained given a minimal cover satisfying the sufficient conditions of Theorem~\ref{theorem:facet}, we do not show how to find a minimal cover satisfying these conditions. We conjecture that finding such a cover is {\sf NP}-hard.

The proof of (the converse direction of) Theorem~\ref{theorem:balas+zemel} maps the binding constraints defining a vertex of $T$ to get $n$ linearly independent points at which the lifting~\eqref{eq:lifting} is binding. One could characterize those points directly to prove Theorem~\ref{theorem:facet}, but we use Theorem~\ref{theorem:balas+zemel} to simplify the proof.

\begin{example}
    Consider the constraint $16x_1+14x_2+13x_3+9x_4+9x_5+10x_6+11x_7+23x_8\le 44$ with minimal cover $C=\{1,2,3,4\}$. GNS lifting yields the cut $$x_1+x_2+x_3+x_4+\frac{1}{6}x_5+\frac{1}{3}x_6+\frac{1}{2}x_7+\frac{4}{3}x_8\le 3$$ and PC lifting yields the strictly dominant facet-defining cut $$x_1+x_2+x_3+x_4+\frac{1}{2}(x_5+x_6+x_7)+\frac{3}{2}x_8\le 3.$$
\end{example}

\noindent
\emph{Remark.} There exist facet-defining lifted cover inequalities with half-integral coefficients that cannot be obtained via PC lifting. Balas and Zemel~\cite{Balas78:Facets} provide an example of a knapsack constraint and minimal cover for which $\vec{\delta} = (1/2,\ldots,1/2)$ is a vertex of $T$, but $\mu_1-\lambda\ge\rho_1$ {\em does not hold}. It is an interesting open question to investigate when such facets arise and how to (efficiently) find them. The lifting procedure of~\citet{Letchford19:Lifted} also produces half-integral coefficients, but it is unclear when it can yield facets (it produces cuts that in general are incomparable with ours).

It would be interesting to derive similar conditions under which GNS lifting defines facets. We provide some sufficient conditions, but it is likely that a stronger result could be derived. We leave this as an open question.

\begin{prop}\label{prop:gns_facet}
    If for all $j\notin C$, $a_j\in S_h\implies a_j = \mu_h-\lambda+\rho_h$, GNS lifting strictly dominates PC lifting and defines a facet of $\conv(P)$.
\end{prop}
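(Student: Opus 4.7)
The plan is to apply Theorem~\ref{theorem:balas+zemel}: identify the sets $I$ and $J$, verify the GNS coefficients have the Balas-Zemel form $h(j) + \delta_j\mathbf{1}(j \in J)$, and exhibit $\vec{\delta} = (1,\ldots,1)$ as a vertex of the polyhedron $T$. First I would classify the indices. For $j \notin C$ with $a_j \in S_h$, the hypothesis $a_j = \mu_h - \lambda + \rho_h$, together with $\rho_h = a_{h+1} - a_1 + \lambda$ (valid since $S_h$ nonempty forces $\rho_h > 0$) and the bound $a_h \ge \lambda$ (which holds because $C$ is a minimal cover), yields $\mu_{h-1} \le a_j < \mu_h$, so $h(j) = h - 1$, $j \in J$, and the GNS coefficient is $h = h(j) + 1$. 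For $j \notin C$ with $a_j \in F_h$, analogous bounds give $h(j) \in \{h-1, h\}$ (with $h(j) = 0$ forced when $h = 0$); the case $h(j) = h$ places $j \in I$ with GNS coefficient exactly $h(j)$, and $h(j) = h - 1$ places $j \in J$ with GNS coefficient $h = h(j) + 1$. Thus every GNS coefficient has the form $h(j) + \mathbf{1}(j \in J)$, matching Theorem~\ref{theorem:balas+zemel} with $\delta_j = 1$ for all $j \in J$.

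Next I would verify that $\vec{\delta} = (1,\ldots,1)$ is a vertex of $T$. The $|J|$ singleton constraints $\delta_j \le 1$ are tight and involve distinct variables, hence are linearly independent. For $Q \in \cQ(J)$ with $|Q| \ge 2$, superadditivity of $g_{1/\rho_1}$ (valid in all regimes by Theorem~\ref{theorem:superadditive} together with \cite{Gu00:Sequence}) combined with $g_{1/\rho_1} \le f$ gives
$$|Q| + \sum_{j \in Q} h(j) = \sum_{j \in Q} g_{1/\rho_1}(a_j) \le g_{1/\rho_1}\Big(\sum_{j \in Q} a_j\Big) \le f\Big(\sum_{j \in Q} a_j\Big),$$
which is exactly the $T$-constraint for $Q$ evaluated at $\vec{\delta} = (1,\ldots,1)$. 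So all $T$-constraints hold, $(1,\ldots,1)$ is a vertex of $T$, and Theorem~\ref{theorem:balas+zemel} yields that the GNS cut is facet-defining.

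For strict domination, PC produces coefficient $h - 1/2$ on $S_h$ while GNS at $a_j = \mu_h - \lambda + \rho_h$ produces $h$; the two liftings agree on $F_h$. Hence GNS dominates PC pointwise and strictly so whenever some $j \notin C$ satisfies $a_j \in S_h$ for some $h$ (the nontrivial case the proposition addresses). The main obstacle is the classification of $a_j \in F_h$: these coefficients are unconstrained by the hypothesis and may have $h(j) = h - 1$, so one must carefully verify that the resulting $\delta_j$ lies in $[0, 1]$ and aligns with the GNS coefficient. Once that bookkeeping is done, the superadditivity-based argument for the polyhedral constraints is essentially immediate.
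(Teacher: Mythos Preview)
Your argument is correct and takes a genuinely different route from the paper's. The paper's proof is essentially two lines: the hypothesis forces $g_{1/\rho_1}(a_j) = f(a_j)$ for every $j \notin C$ (on $F_h$ both equal $h$, and at the right endpoint of $S_h$ the interpolation term $w_{1/\rho_1}(0)$ vanishes), so the GNS coefficients coincide with what sequential lifting produces; facetness then follows from the standard fact that sequential lifting of a minimal cover inequality yields a facet (the paper cites Prop.~7.2 of \cite{Conforti14:Integer}), and strict domination is read off item~3 of Proposition~\ref{prop:general_domination}. You instead work entirely inside Theorem~\ref{theorem:balas+zemel}: you classify $I$ and $J$, observe that every GNS coefficient is $h(j) + \mathbf{1}(j \in J)$, and exhibit $\vec{\delta} = (1,\ldots,1)$ as a vertex of $T$ via the $|J|$ tight singleton constraints, with feasibility of the larger-$Q$ constraints coming from superadditivity of $g_{1/\rho_1}$. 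Your route is more self-contained (no appeal to external sequential-lifting theory) and makes the Balas--Zemel structure explicit, at the cost of more bookkeeping; the paper's route is slicker but leans on outside machinery.

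One minor imprecision in your classification: $a_j \in S_h$ with $a_j = \mu_h - \lambda + \rho_h$ does not always force $h(j) = h-1$. The upper bound $a_j < \mu_h$ is equivalent to $\rho_h < \lambda$, i.e., $a_{h+1} < a_1$, which can fail in the degenerate case $a_1 = \cdots = a_{h+1}$; then $a_j = \mu_h$ and $h(j) = h$. However, in that case one checks $a_j \le \mu_{h+1} - \lambda$, so $j \in I$ and the GNS coefficient is still $h = h(j)$. Thus your central claim---that every GNS coefficient equals $h(j) + \mathbf{1}(j \in J)$---survives, and the rest of the argument goes through unchanged.
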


\begin{proof}
    %For any $j\notin C$ with $a_j\in S_h$, we have $g_{1/\rho_1}(a_j) = h = f(a_j)$. If $a_j\in F_h$, $g_{1/\rho_1}(a_j) = h = f(a_j)$ as well. 
    The condition implies $g_{1/\rho_1}$ coincides with the lifting function $f$ on all $a_j$, $j\notin C$, which, by {\em sequential lifting} (e.g., Prop. 7.2 in~\cite{Conforti14:Integer}), means GNS lifting is facet defining. PC lifting is strictly weaker due to item 3 of Prop.~\ref{prop:general_domination}. 
\end{proof}

\section{Experimental evaluation}\label{section:experiments}

We evaluate our new sequence-independent lifting techniques in conjunction with a number of methods for generating the minimal cover cuts that are to be lifted. We describe each component of the experimental setup below.

\subsection*{Cover cut generation}

Let $\sum_{j=1}^n a_jx_j\le b$ be a knapsack constraint, let $\vec{x}^{\LP}$ be the LP optimal solution at a given node of the branch-and-cut tree, and let $\cI = \{i : x^{\LP}_i > 0\}$. Enumerate $\cI$ as $\cI = \{1,\ldots, s\}$ (relabeling variables if necessary). We do not include variables not in $\cI$ in any of the minimal covers, since these do not contribute to the violation of $\vec{x}^{\LP}$ (though such variables may be assigned a nonzero coefficient in the lifted cover cut). Next, we present the five cover cut generation techniques that we use in experiments.

\paragraph{Contiguous covers.} First, sort the variables in $\cI$ in descending order of weight; without loss of generality relabel them so that $a_{1}\ge a_{2}\ge\cdots\ge a_{s}$. For each $i\in\{1,\ldots, s\}$, let $j\in\{i+1,\ldots, s\}$ be such that $C = \{i, i+1,\ldots, j\}$ is a minimal cover (if such a $j$ exists). This is the {\em contiguous} cover starting at $i$. We generate all such contiguous cover cuts for each $i$. (Balcan et al.~\cite{Balcan22:Improved} introduced these cover cuts, though they did not lift them nor did they restrict to $x_i^{\LP} > 0$.)

\paragraph{Spread covers.} As before, sort the variables in $\cI$ in descending order of weight; $a_1\ge\cdots\ge a_s$. For each $i$, let $j\in\{i+1,\ldots, s\}$ be {\em maximal} (if such a $j$ exists) such that there exists $k\in\{j+1,\ldots,s\}$ such that $C = \{i\}\cup\{j,\ldots, k\}$ is a minimal cover. Intuitively, $i$ can be thought of as a heavy ``head'', and the ``tail'' from $j$ to $k$ is as light as possible. We coin this the {\em spread} cover with head $i$. We generate all such spread cover cuts for each $i$.

\paragraph{Heaviest contiguous cover.} We define this as the contiguous cover starting at $1$.

\paragraph{Default cover.} Sort (and relabel) the variables in $\cI$ in descending order of LP value so that $x^{\LP}_1\ge\cdots\ge x^{\LP}_s$. Let $j$ be minimal so that $\{1,\ldots, j\}$ is a cover. Then, evict items, lightest first, until the cover is minimal. We coin this the {\em default} cover. These cover cuts are loosely based on the ``default'' separation routine implemented by Gu et al.~\cite{Gu98:Lifted}. (Their routine was for sequential lifting and does not directly carry over to our setting.) Wolter~\cite{Wolter06:Implementation} tested similar routines.

\paragraph{Bang-for-buck cover.} Sort (and relabel) the variables in $\cI$ in descending order of ``bang-for-buck'' so that $\frac{c_1}{a_1}\ge\cdots\frac{c_s}{a_s}$, where $c_i$ is variable $i$'s objective coefficient. Let $j$ be minimal so that $\{1,\ldots, j\}$ is a cover. Then, evict items, lightest first, until the cover is minimal. We coin this the {\em bang-for-buck} cover.

\begin{example}
    Consider the knapsack constraint $10x_1 + 9x_2 + 8x_3 + 7x_4 + 6x_5 + 6x_6 + 5x_7 + 4x_8\le 26$, suppose the LP optimal solution at the current branch-and-cut node is $\vec{x}^{\LP} = (0.1, 0.8, 0.7, 0.4, 0, 1, 0.2, 0.8)$, and suppose $\vec{c} = (5, 7, 9, 1, 2, 6, 6, 5)$. We have $\cI = \{1, 2, 3, 4, 6, 7, 8\}$. The contiguous minimal covers are $\{1, 2, 3\}$, $\{2, 3, 4, 6\}$, $\{3, 4, 6, 7, 8\}$. The spread minimal covers are $\{1,4, 6, 7\}$, $\{2, 4, 6, 7\}$, $\{3, 4, 6, 7, 8\}$. The heaviest contiguous cover is $\{1,2,3\}$. The default cover is $\{2, 3, 6, 8\}$. The bang-for-buck cover is $\{2, 3, 6, 7\}$.
\end{example}

\subsection*{Lifting}

We evaluate three lifting methods. (1) {\em GNS:} The cover cut is lifted using $g_{1/\rho_1}$. (2) {\em PC:} If $\mu_1-\lambda\ge\rho_1$ the cover cut is lifted using $g_0$, and otherwise it is lifted using $g_{1/\rho_1}$. (3) {\em Smart:} If $\mu_1-\lambda\ge\rho_1$, two liftings are generated: one with $g_0$ and one with $g_{1/\rho_1}$. If either lifting dominates the other, the weaker lifting is discarded. If $\mu_1-\lambda < \rho_1$, the cover cut is only lifted using $g_{1/\rho_1}$.

\subsection*{Integration with branch-and-cut}

Algorithm~\ref{alg:covercutgen} is the pseudocode for adding lifted cover cuts at a node of the branch-and-cut tree and is called (once) at every node of the tree. It uses the prescribed lifting method $\texttt{Lift}$ atop the prescribed cover cut generation method $\texttt{CoverCuts}$ for each constraint, and adds the resulting lifted cut to a set of candidate cuts if it separates the current LP optimum $\vec{x}^{\LP}$. It adds the $\ell$ deepest cuts among the candidate cuts to the formulation. (The depth or {\em efficacy} of a cut $\vec{\alpha}^{\top}\vec{x}\le\beta$ is the quantity $\frac{\vec{\alpha}^{\top}\vec{x}^{\LP}-\beta}{\norm{\vec{\alpha}_2}}$ and measures the distance between the cut and $\vec{x}^{\LP}$.) The $\ell$ cuts are added in a single wave/round at the current node, and no further cuts are generated at that node. Algorithm~\ref{alg:covercutgen} does not solve {\sf NP}-hard cover cut separation problems and instead relies on cheap and fast ways of generating many candidate cuts through $\texttt{CoverCuts}$, and only adds those that are separating.

\begin{algorithm}[t]
	\caption{Lifted cover cut generation at a node of branch-and-cut}\label{alg:covercutgen}
	\begin{algorithmic}[1]
\Require IP data $\vec{c}, \vec{A}, \vec{b}$, LP optimum at current node $\vec{x}^{\LP}$, per-node cut limit $\ell$
\State Initialize $\mathsf{cuts} = \emptyset$.
\For{each knapsack constraint $\vec{a}^\top\vec{x}\le b$\label{step:for_begin}}
    \For{each cover cut $\mathsf{cut}\in\texttt{CoverCuts}(\vec{a}, b, \vec{c}, \vec{x}^{\LP})$}
        \For{each lifted cover cut $\mathsf{liftedcut}\in\texttt{Lift}(\mathsf{cut})$}
            \If {$\mathsf{liftedcut}$ separates $\vec{x}^{\LP}$}
                \State $\mathsf{cuts}\leftarrow\mathsf{cuts}\cup\{\mathsf{liftedcut}\}$.
            \EndIf
        \EndFor
    \EndFor
\EndFor\label{step:for_end}
\State Add the top $\min\{\ell, |\mathsf{cuts}|\}$ cuts in $\mathsf{cuts}$ with respect to efficacy.
\end{algorithmic}
\end{algorithm}

\subsection*{Experimental results}

We evaluated our techniques on five different problem distributions: two distributions over winner-determination problems in multi-unit combinatorial auctions ({\em decay-decay} from~\cite{Sandholm02:Winner} and {\em multipaths} from CATS version 1.0~\cite{Leyton00:Towards}) and three distributions over multiple knapsack problems ({\em uncorrelated} and {\em weakly correlated} from~\cite{Fukunaga11:Branch} and {\em Chv\'{a}tal} from~\cite{Balcan22:Improved}). We ran experiments written in C{\tt ++} using the callable library of CPLEX version 20.1 on a 64-core machine, and implemented Algorithm~\ref{alg:covercutgen} with $\ell = 10$ within a cut callback. We generated $100$ integer programs from each distribution. We set a time limit of 1 hour, allocated 16GB of RAM, and used a single thread for each integer program. 

\newpage

\begin{figure}[th]
\centering
\begin{subfigure}
  \centering
  \includegraphics[width=.49\linewidth]{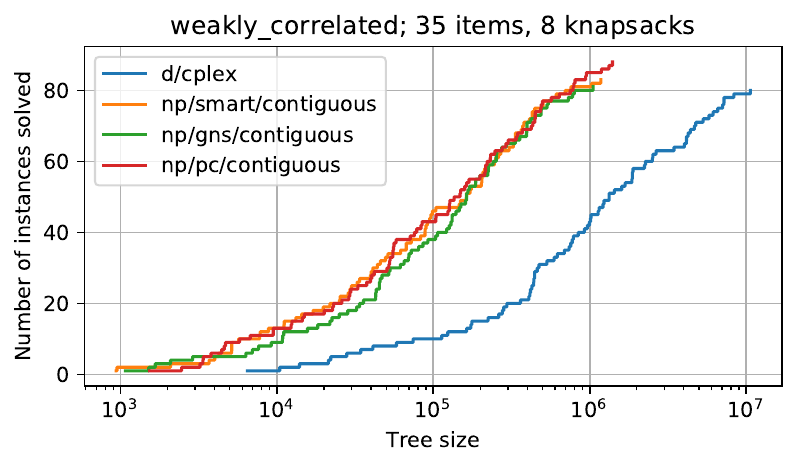}
  \label{fig:blah}
\end{subfigure}
\begin{subfigure}
  \centering
  \includegraphics[width=.49\linewidth]{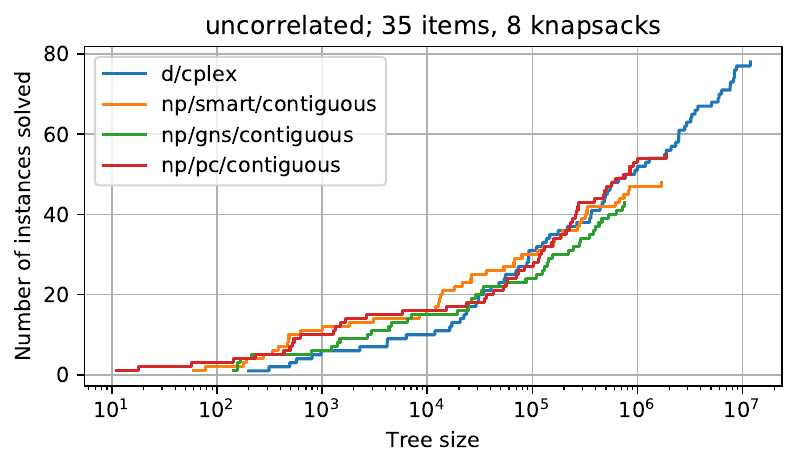}
\end{subfigure}
\begin{subfigure}
  \centering
  \includegraphics[width=0.49\linewidth]{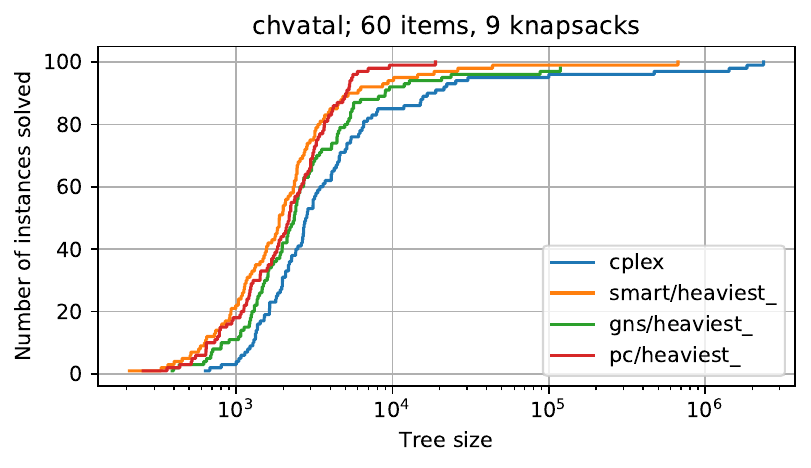}
\end{subfigure}
\begin{subfigure}
  \centering
  \includegraphics[width=0.49\linewidth]{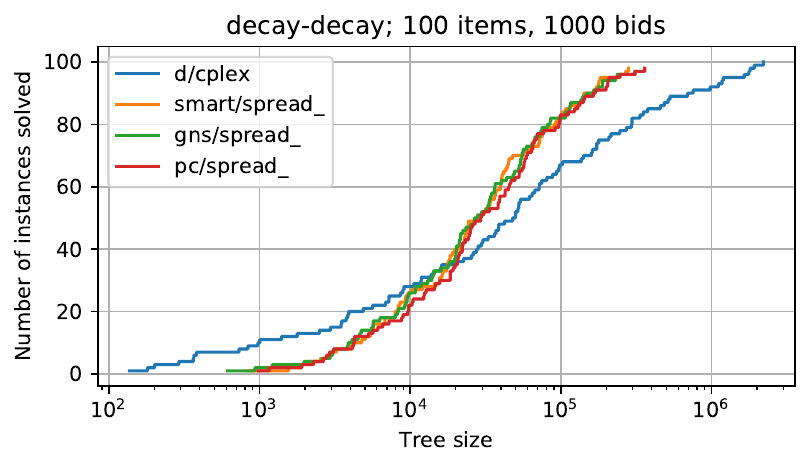}
\end{subfigure}
\begin{subfigure}
  \centering
  \includegraphics[width=0.49\linewidth]{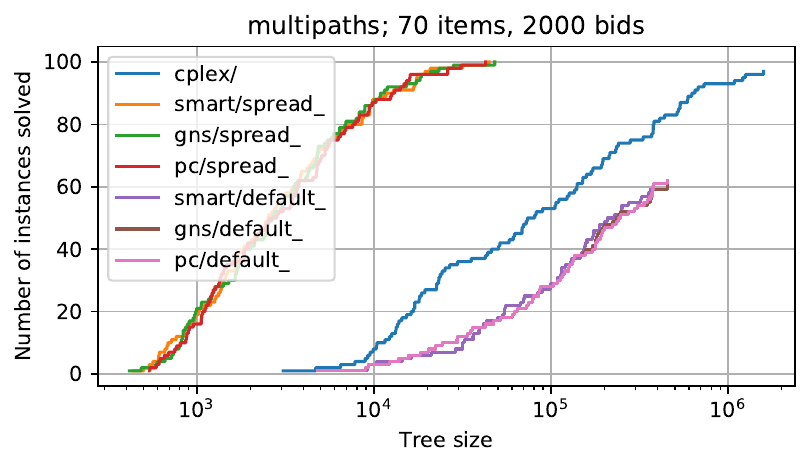}
\end{subfigure}
\begin{subfigure}
  \centering
  \includegraphics[width=0.49\linewidth]{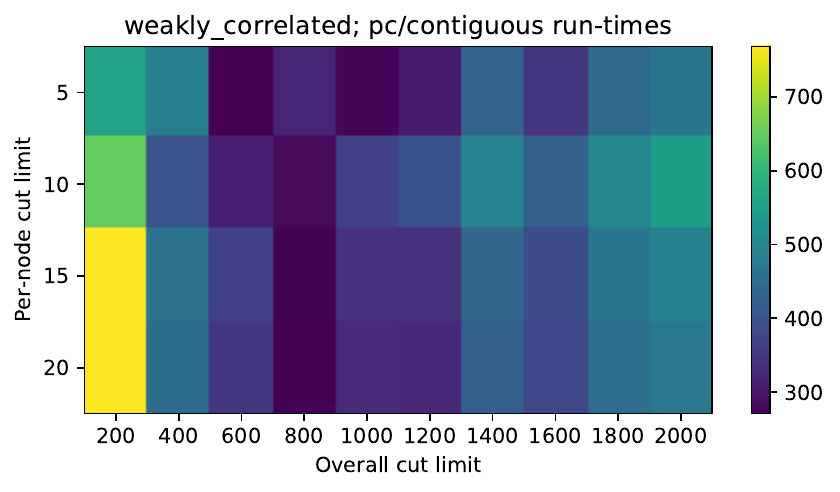}
\end{subfigure}
\caption{Illustrative experiments comparing different lifting methods. The first five plots are performance plots for the five different problem distributions, with various parameter settings. The heatmap illustrates the effect of varying the per-node cut limit and overall cut limit on run-time (avg. over first 10 weakly-correlated instances).}
\label{fig:aggregate}
\end{figure}

%\newpage

We present illustrative results in Fig.~\ref{fig:aggregate}, focusing on settings where PC lifting had significant impact. The full set of experiments are in App.~\ref{app:experiments}. We focus on tree size as the performance measure, but we provide full run-time results in App.~\ref{app:experiments}. Each curve is labeled {\tt setting/lifting/covers} or {\tt setting/CPLEX}. The {\tt setting} parameter is either empty, {\tt np}, or {\tt d}. Empty corresponds to all cuts, all heuristics, and presolve off. A setting of {\tt np} denotes presolve off and all other settings untouched. A setting of {\tt d} denotes the default settings (this setting is incompatible with our lifting implementations since presolve is incompatible with cut callbacks). If an underscore follows the label, CPLEX's internal cover cut generation is turned off; otherwise it is on. To ensure fair comparisons, in all CPLEX runs not involving our new techniques (those labeled {\tt setting/CPLEX}), we register a dummy cut callback that does nothing. This disables proprietary techniques such as ``dynamic search'' which do not support callbacks and thus do not support experiments of the type required in this paper.

We now describe the performance plots (Fig.~\ref{fig:aggregate}), which display how many instances each method was able to solve using trees smaller than the prescribed size on the $x$-axis within the 1 hour time limit. {\em Weakly correlated, contiguous covers (top left):} GNS solves $81$ instances, Smart solves $83$ instances, and PC solves $88$ instances. These all outperform default CPLEX which solves $80$ instances and builds trees an order of magnitude larger. {\em Uncorrelated, contiguous covers (top right):} GNS solves $43$ instances, Smart solves $48$ instances, and PC solves $55$ instances, while default CPLEX solves $78$ instances. {\em Chv\'{a}tal, heaviest covers (middle left):} PC lifting dramatically outperforms the other methods. Here, all CPLEX parameters are turned off, so we are directly comparing our lifted cover cut implementations with CPLEX's own cover cut generation routines. PC and Smart strictly outperform GNS and CPLEX (GNS is the only one unable to solve all 100 problems), and the largest tree size required by PC is an order of magnitude smaller than any of the other methods. This translates into a run-time improvement as well due to PC lifting (Fig.~\ref{fig:chvatal_} in the appendix). On the auction instances, there is little discernible performance difference between the lifting methods. {\em Decay-decay, spread covers (middle right):} our lifting implementations with all other CPLEX parameters off dramatically outperform default CPLEX on problems requiring trees of size $> 10^4$ (though default CPLEX solves all 100 problems while our methods do not.) {\em Multipaths, spread covers (bottom left):} we once again directly compare our lifted cover cut implementations with CPLEX's internal cover cut generation. Here, spread covers yield over an order of magnitude reduction in tree size relative to CPLEX, while default covers perform extremely poorly. We observed that contiguous and spread covers generally resulted in the best performance (with heaviest contiguous covers on par), and default and bang-for-buck covers performed drastically worse. 

While our techniques often led to significantly smaller trees than CPLEX, this did not translate to significant run-time improvements. However, in most settings our techniques were not too much slower than CPLEX and sometimes they were faster. Full run-time performance plots are in App.~\ref{app:experiments}. A possibility for this is that we did not limit the total number of cuts added throughout the tree, causing the formulation to grow very large. We ran an experiment to investigate the run-time impact of varying the number of cuts allowed (i) at each node ($\ell$ in Alg.~\ref{alg:covercutgen}) and (ii) throughout the entire search tree. We plot the average run-time (time limit of 1 hour) over the first 10 weakly-correlated instances using ${\tt pc/contiguous}$, visualized as a heatmap (Fig.~\ref{fig:aggregate}; bottom right). The best settings (limiting the overall number of cuts to 800) yield an average run-time of around 300 seconds and solved all 10 instances. Default CPLEX ({\tt d/CPLEX}) averaged roughly 900 seconds and only solved 9 instances to optimality.

\section{Conclusions and future research}

In this paper we showed that PC lifting can be a useful alternative to GNS lifting. We proved that under some sufficient conditions, PC lifting is facet-defining. To our knowledge, these are the first broad conditions for facet-defining sequence-independent liftings that can be efficiently computed from the underlying cover. We invented new cover cut generation routines, which in conjunction with our new lifting techniques, displayed promising practical performance.

There are a number of important future research directions that stem from our findings. First, a much more extensive experimental evaluation of PC lifting is needed. We have made several simplifying design choices, including (i) adding only one wave of lifted cover cuts at each node; (ii) ranking cuts solely based on efficacy (efficacy is not always the best quality to prioritize~\cite{Balcan22:Improved, Balcan22:Structural, Turner23:Cutting}); (iii) keeping $\ell$ constant across nodes while it could be a tuned hyperparameter, or even adjusted dynamically during search. Our experiments show promise even with these fixed choices, but a more thorough suite of tests could find how to best exploit the potential of our new techniques. A more comprehensive theoretical and practical comparison of PC and GNS lifting is needed as well. For example. PC lifting might possess desirable numerical properties in a solver since it always involves half-integral coefficients (as opposed to GNS). Another direction here is to use machine learning (which has already been used to tune cutting plane selection policies~\citep{Balcan21:Sample, li2023learning}) to decide when to use PC or GNS lifting. There might also be additional ways of determining what lifting method to use based on problem structure, and detecting that automatically.

\subsection*{Acknowledgements}
This material is based on work supported by the Vannevar Bush Faculty
Fellowship ONR N00014-23-1-2876, NSF grants
RI-2312342 and RI-1901403, ARO award W911NF2210266, NIH award
A240108S001 and Defense Advanced Research Projects Agency under cooperative agreement HR00112020003.

\bibliographystyle{plainnat}
\bibliography{references}

\appendix

\section{Counterexample to claim of Gu, Nemhauser, and Savelsbergh}\label{app:erratum}

Gu, Nemhauser, and Savelsbergh~\cite{Gu00:Sequence} remark that if $\mu_1-\lambda\ge\rho_1$, a large family of superadditive lifting functions can be constructed by considering any nondecreasing function $w(x)$ of $x\in [0,\rho_1]$ with $w(x) + w(\rho_1 - x) = 1$. Consider the class of logistic functions centered at $\rho_1/2$ of the form $$w_k(x) = \frac{1}{1+e^{-k(x - \rho_1/2)}}$$ where $k\ge 0$. Each $w_k$ is nondecreasing on $[0,\rho_1]$, and satisfies $w_k(x) + w_k(\rho_1-x) = 1$ for all $x$. In the following example, we show that using $w_{0.9}$ for sequence-independent lifting can yield invalid cuts. Moreover, the lifting function $g_{w_{0.9}}$ is not superadditive.

$$\begin{array}{ll} \text{maximize} & 112x_1 + 108x_2 + 107x_3 + 106x_4+102x_5 + 84x_6 + 82x_7\\
\text{subject to} & 
112x_1 + 108x_2 + 107x_3 + 106x_4+102x_5 + 84x_6 + 82x_7\le 268 \\ 
& x\in\{0,1\}^7
\end{array}$$

An optimal solution is given by $\vec{x}^* = (0, 0, 0,0,1,1,1)$, which has an objective value of $268$, satisfying the single knapsack constraint with equality. The set $C = \{2, 3, 4\}$ is a minimal cover, and the corresponding minimal cover inequality is $x_2 + x_3 + x_4\le 2$. We compute the relevant parameters needed for sequence-independent lifting. We have $\mu_0 = 0, \mu_1 = 108, \mu_2 = 215, \mu_3 = 321, \lambda = 53$, and $\rho_0 = 53, \rho_1 = 52, \rho_2 = 51$. We have that $\mu_1-\lambda\ge\rho_1$ is satisfied. Thus, the lifting function $g_w$ (truncated to the range $[0, 213]$) is given by $$g_w(z) = \begin{cases} 0 & 0 < z\le 55 \\ 1 - w(107-z) & 55 < z\le 107 \\ 1 & 107 < z \le 162 \\ 2 - w(213-z) & 162 < z\le 213.\end{cases}$$

Using $g_{w_{0.9}}$ yields the lifted cover inequality $$x_1 + x_2 + x_3 + x_4 + 0.99x_5 + 0.93 x_6 + 0.71 x_7\le 2.$$ But $0.99 + 0.93 + 0.71  = 2.63 > 2$, so $\vec{x}^*$ violates this inequality, so the lifted cover inequality is invalid for our problem. Furthermore, $g_{w_{0.9}}$ is not superadditive. We have $$g_{w_{0.9}}(82) + g_{w_{0.9}}(82) = 0.71 + 0.71 > g_{w_{0.9}}(82+82)\approx 1.00.$$

One need not look at logistic functions to derive this counterexample. The step function $$w(x) = \begin{cases}0 & x < \rho_1 / 2 \\ 1/2 & x = \rho_1/2 \\ 1 & x>\rho_1/2 \end{cases}$$ disproves the claim as well (in that $g_w$ is not superadditive, and results in an invalid cut in the above example).

\section{Proof of Theorem~\ref{theorem:superadditive}}\label{app:superadditive_proof}

The proof that $g_k$ is superadditive closely follows the proof that $g_{1/\rho_1}$ is superadditive by~\cite{Gu00:Sequence} (Lemma 1 in their paper), with a couple key modifications. As done in~\cite{Gu00:Sequence}, we will establish superadditivity of a function that is defined slightly more generally.

Given $v_1 > 0$, $u_i\ge 0, u_i\ge u_{i+1}, i = 1, 2,\ldots, v_i\ge 0, v_i\ge v_{i+1}, i = 1,2,\ldots$ such that $u_i + v_i > 0$ for all $i$, let $M_0 = 0$ and $M_h = \sum_{i=1}^h (u_i + v_i)$ for $h = 1, 2,\ldots,\infty$. Define $$\tilde{g}_k(z) = \begin{cases}0 & z = 0 \\ h & M_h < z\le M_h + u_{h+1}, \qquad h = 0, 1,\ldots \\ h + 1 - w_k(M_{h+1} - z) & M_h + u_{h+1} < z\le M_{h+1}, \hfill h = 0,1,\ldots\end{cases}$$ where $w_k(x) = kx + \frac{1 - kv_1}{2}$.

The superadditive lifting function $g_k$ is recovered by letting $u_i = a_i - \rho_{i-1}$ for $i\in\{1,\ldots, t\}$, $v_i = \rho_i$ for $i\in\{1,\ldots, t-1\}$. This yields $M_h = \mu_h - \lambda + \rho_h$ and $M_h + u_{h+1} = \mu_{h+1}-\lambda$ (see Gu et al.~\cite{Gu00:Sequence} for further details).

\begin{lemma}
    Let $k \in [0, 1/v_1]$. If $u_1\ge v_1$, the function $\tilde{g}_k$ is superadditive on $[0, \infty)$.
\end{lemma}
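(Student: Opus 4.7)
The plan is to reduce the lemma to its two endpoint cases $k=0$ and $k=1/v_1$ via a convex combination argument analogous to Proposition~\ref{prop:convex_comb}. A direct calculation gives
\[
w_k(x) = (kv_1)\,w_{1/v_1}(x) + (1-kv_1)\,w_0(x),
\]
and this identity lifts pointwise to the associated functions: $\tilde{g}_k(z) = (kv_1)\,\tilde{g}_{1/v_1}(z) + (1-kv_1)\,\tilde{g}_0(z)$ for every $z \ge 0$ (one checks this separately on the flat pieces, the sloped pieces, and at $z=0$). Since $k \in [0, 1/v_1]$ makes both coefficients nonnegative, and any nonnegative linear combination of superadditive functions is superadditive, it suffices to prove that $\tilde{g}_{1/v_1}$ and $\tilde{g}_0$ are each superadditive. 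The former is exactly Lemma~1 of Gu et al.~\cite{Gu00:Sequence}, so the remaining burden is to establish superadditivity of $\tilde{g}_0$ under the hypothesis $u_1 \ge v_1$.

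Because $w_0 \equiv 1/2$, the function $\tilde{g}_0$ is itself a step function taking the value $h$ on $(M_h, M_h + u_{h+1}]$ and the value $h + 1/2$ on $(M_h + u_{h+1}, M_{h+1}]$, with range contained in $\{0, 1/2, 1, 3/2, \dots\}$. I would prove superadditivity of $\tilde{g}_0$ by case analysis on where $u$ and $v$ lie (assuming $u \le v$ without loss of generality): each argument is either in an \emph{integer-level} piece or a \emph{half-integer-level} piece, yielding a handful of subcases depending on which type of piece each of $u$, $v$, and $u+v$ falls in. In every case the goal reduces to lower-bounding $u+v$ and verifying it lies in a piece on which $\tilde{g}_0$ meets or exceeds $\tilde{g}_0(u) + \tilde{g}_0(v)$.

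The structural backbone for the integer-integer case is the inequality $M_p + M_q \ge M_{p+q}$, which follows immediately from the monotonicity $u_i \ge u_{i+1}$ and $v_i \ge v_{i+1}$. The mixed cases follow from the same backbone plus a short extra estimate controlling the sloped-region displacement. The main obstacle is the half-half subcase with $p, q \ge 1$: here $\tilde{g}_0(u) + \tilde{g}_0(v) = p + q + 1$, so one must show $u + v > M_{p+q} + u_{p+q+1}$ to clear the next \emph{integer} threshold of $\tilde{g}_0$ (rather than merely a half-integer one).

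This is precisely where the hypothesis $u_1 \ge v_1$ does real work: it prevents the half-integer intervals (of length $v_{h+1} \le v_1$) from being wide enough relative to the adjacent integer intervals (of length $u_{h+1}$) that $u+v$ could fall just short of the needed threshold. I expect the decisive inequality chain to combine (i) the defining identity $M_{h+1} - M_h = u_{h+1} + v_{h+1}$, (ii) the monotonicity of the $u_i$ and $v_i$, and (iii) $u_1 \ge v_1$, producing an argument that mirrors the corresponding step in Gu et al.'s proof for $k = 1/v_1$ but is cleaner since no linear interpolation values need be tracked. Verifying this inequality uniformly across all sub-subcases of the half-half situation is the principal technical content I would anticipate.
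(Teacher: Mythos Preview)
Your convex-combination reduction to the two endpoints $k=0$ and $k=1/v_1$ is valid and is a genuinely different organization from the paper's. The paper instead reruns Gu et al.'s case analysis directly for arbitrary linear $w_k$: its one new ingredient is that the hypothesis $u_1 \ge v_1$ eliminates their Case~1.1 (both $z_1,z_2$ on sloped pieces and $z_1+z_2$ also on a sloped piece), after which the linearity of $w_k$ makes Cases~1.2 and~1.3 go through for every $k\in[0,1/v_1]$, while all remaining cases are cited verbatim from Gu et al. Your route is more modular---it isolates the genuinely new content in the step function $\tilde g_0$---but obliges you to redo the integer--integer and mixed cases that the paper simply inherits.

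One correction: in your half--half subcase you need $\tilde g_0(u+v)\ge p+q+1$, so the required threshold is $u+v > M_{p+q+1}$, not $u+v > M_{p+q}+u_{p+q+1}$ (the latter only delivers level $p+q+\tfrac12$). With the right threshold, this is \emph{precisely} the inequality the paper establishes as ``$z_1+z_2 > M_{h_1+h_2+1}$'' in its Case~1 argument, proved by splitting into $h_1=0$ versus $h_1,h_2\ge 1$ and using exactly the ingredients you anticipate (the identity $M_{h+1}-M_h=u_{h+1}+v_{h+1}$, monotonicity of the $u_i,v_i$, and $u_1\ge v_1$). So both proofs rest on the same decisive inequality; they differ only in how the surrounding bookkeeping is packaged.
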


\begin{proof}
    We prove that $\max\{\tilde{g}(z_1) + \tilde{g}(z_2) - \tilde{g}(z_1+z_2) : z_1, z_2\in [0, \infty)\}\le 0$, which is equivalent to superadditivity. We break the analysis into cases as done in~\cite{Gu00:Sequence}.

    \noindent
    \underline{Case 1:} $M_{h_1} + u_{h_1+1} < z_1\le M_{h_1+1}$ and $M_{h_2} + u_{h_2+1} < z_2\le M_{h_2+1}$. Then, as in~\cite{Gu00:Sequence}, $z_1 + z_2 \ge M_{h_1+h_2} + u_{h_1+h_2+1}$. The first subcase of~\cite{Gu00:Sequence} (Case 1.1) is $z_1 + z_2\le M_{h_1+h_2+1}$, that is, $z_1+z_2$ lies on a ``sloped" segment of $\tilde{g}$. We show that the assumption $u_1\ge v_1$ rules this case out, that is, $z_1 + z_2 > M_{h_1+h_2+1}$.

    \noindent
    {\em Proof that $z_1 + z_2 > M_{h_1+h_2+1}$}: First, suppose $h_1 = 0$. We have 
    \begin{align*} 
        z_1 + z_2 
        &> u_1 + M_{h_2} + u_{h_2 + 1} \\ 
        &\ge v_{h_2 + 1} + M_{h_2} + u_{h_2 + 1} \qquad (\text{since }u_1\ge v_1\ge v_{h_2+1})\\ 
        &= M_{h_2 + 1},
    \end{align*} 
    as desired. The case where $h_2 = 0$ is symmetric. Thus, suppose $h_1, h_2\ge 1$, and without loss of generality let $h_1\le h_2$ (the case where $h_2\le h_1$ is symmetric). We have 
    \begin{align*}
        z_1 + z_2 
        &> M_{h_1} + u_{h_1+1} + M_{h_2} + u_{h_2 + 1} \\
        &= \sum\limits_{i=1}^{h_1}(u_i + v_i) + u_{h_1+1} + \sum\limits_{i=1}^{h_2}(u_i + v_i) + u_{h_2+1} \\
        &= (u_1 + v_1) + \sum\limits_{i = 2}^{h_1}(u_i + v_i) + u_{h_1+1} + \sum\limits_{i=1}^{h_2}(u_i + v_i) + u_{h_2+1} \\
        &\ge (u_1+v_1) + \sum\limits_{i = h_2 + 1}^{h_1 + h_2 - 1}(u_i + v_i) + u_{h_1+1} + \sum\limits_{i=1}^{h_2}(u_i + v_i) + u_{h_2 + 1} \\
        &= M_{h_1 + h_2 - 1} + u_1 + v_1 + u_{h_1 + 1} + u_{h_2+1}
    \end{align*}
    Now, $u_{h_1+1}\ge u_{h_1+h_2+1}$ and as $h_1, h_2\ge 1$, $u_{h_2+1}\ge u_{h_1+h_2}$. Furthermore, $u_1\ge v_1\ge v_{h_1 + h_2}\ge v_{h_1+h_2+1}$. Therefore, the right-hand-side is at least $$M_{h_1+h_2-1} + (u_{h_1+h_2} + v_{h_1+h_2}) + (u_{h_1+h_2+1} + v_{h_1+h_2+1}) = M_{h_1+h_2+1},$$ as desired. This completes the proof that under Case 1, $z_1 + z_2 > M_{h_1 + h_2 + 1}$. We now proceed with the casework under Case 1, starting our numbering from Case 1.2 so that the labeling of the cases coincides with~\cite{Gu00:Sequence}.

    \noindent
    \underline{Case 1.2:} $M_{h_1+h_2+1} < z_1 + z_2\le M_{h_1+h_2+1} + u_{h_1+h_2+2}$. Then 
    \begin{align*}
        &\tilde{g} (z_1) + \tilde{g}(z_2) - \tilde{g}(z_1+z_2) \\
        &= \left(h_1+1 - k(M_{h_1+1}-z_1) - \frac{1-kv_1}{2}\right) + \left(h_2+1 - k(M_{h_2+1}-z_2) - \frac{1-kv_1}{2}\right) - (h_1+h_2+1) \\
        &= 1 - k(M_{h_1+1} + M_{h_2+1} - z_1 - z_2) - (1 - kv_1) \\
        &\le 1 - k(M_{h_1+1} + M_{h_2+1} - M_{h_1+h_2+1} - u_{h_1+h_2+2}) - (1-kv_1) \\
        &= 1 - k\Big((u_1+v_1) + \sum\limits_{i=2}^{h_1+1}(u_i+v_i) + \sum\limits_{i = 1}^{h_2+1}(u_i+v_i) -\sum\limits_{i=1}^{h_1+h_2+1}(u_i+v_i) - u_{h_1+h_2+2}\Big) - (1-kv_1) \\ 
        &= 1 - k(u_1+v_1 - u_{h_1+h_2+2}) - k\sum\limits_{i=2}^{h_1+1}((u_i+v_i) - (u_{i+h_2} + v_{i+h_2})) - (1-kv_1) \\ 
        &= -k\underbrace{(u_1 - u_{h_1+h_2+2})}_{\ge 0} - k\underbrace{\sum\limits_{i=2}^{h_1+1}((u_i+v_i) - (u_{i+h_2} + v_{i+h_2}))}_{\ge 0} \\
        &\le 0.
    \end{align*}

    \noindent
    \underline{Case 1.3:} $M_{h_1+h_2+1} + u_{h_1+h_2+2} < z_1 + z_2\le M_{h_1+h_2+2}$. Then
    \begin{align*}
        &\tilde{g} (z_1) + \tilde{g}(z_2) - \tilde{g}(z_1+z_2) \\
        &= \left(h_1+1 - k(M_{h_1+1}-z_1) - \frac{1-kv_1}{2}\right) + \left(h_2+1 - k(M_{h_2+1}-z_2)- \frac{1-kv_1}{2}\right) \\
        &\qquad\qquad\qquad  - \left((h_1+h_2+2) - k(M_{h_1+h_2+2} - z_1 - z_2) - \frac{1-kv_1}{2}\right) \\ 
        &= -k(M_{h_1+1} + M_{h_2+1} - M_{h_1+h_2+2}) - \frac{1-kv_1}{2} \\
        & \le -k(M_{h_1+1} + M_{h_2+1} - M_{h_1+h_2+2}) \qquad (\text{since }k\le 1/v_1) \\
        &\le 0.
    \end{align*}

    The remaining cases (1.4, 2, 3.1 - 3.2 from~\cite{Gu00:Sequence}) and their proofs follow~\cite{Gu00:Sequence} verbatim, so we omit them. This completes the proof that $\tilde{g}$ is superadditive. 
\end{proof}

Now, the proofs that $g$ is superadditive, maximal, and undominated are identical to those in~\cite{Gu00:Sequence}.

\section{Proof of Proposition~\ref{prop:domination}}\label{app:domination}

We prove Proposition~\ref{prop:domination}, which states that for any $\varepsilon > 0$ and any $t\in\N$ there exists a knapsack constraint $\vec{a}^\top\vec{x}\le b$ with a minimal cover $C$ of size $t$ such that PC lifting yields $\sum_{j\in C} x_j + \sum_{j\notin C}\frac{1}{2}x_j\le |C|-1$ and GNS lifting is dominated by $\sum_{j\in C} x_j + \sum_{j\notin C}\varepsilon x_j\le |C|-1$.

\begin{proof}
    Let $a_1\ge\cdots\ge a_t$ and let  $b$ be such that $a_1+\cdots + a_t > b$ and $a_1+\cdots+a_{t-1}\le b$. Let $\lambda' = a_1+\cdots+a_t-b$. Furthermore, choose $a_1,\ldots, a_t, b$ so that $a_1-\lambda'\ge a_2-a_1+\lambda'>0$. Let $M\ge\frac{1}{(a_2-a_1+\lambda')\varepsilon}$ and consider the knapsack constraint $$M(a_1x_1+\cdots+a_tx_t) + (1+M(a_1-\lambda'))(x_{t+1}+\cdots+x_{n})\le Mb.$$ $C = \{1,\ldots,t\}$ is clearly a minimal cover with $\mu_1 = Ma_1$, $\lambda = M\lambda'$ and $\rho_1 = M(a_2-a_1+\lambda')$, and by the choice of $a_1,\ldots, a_t$, $\mu_1-\lambda = M(a_1-\lambda')\ge M(a_2-a_1+\lambda') = \rho_1$. Hence, PC lifting can be used to yield a valid lifted cut. As $1+M(a_1-\lambda') = 1+(\mu_1-\lambda)$, we have $g_0(1+M(a_1-\lambda')) = \frac{1}{2}$ and $$g_{1/\rho_1}(1+M(a_1-\lambda')) = 1-\frac{\mu_1-\lambda+\rho_1 - (1+\mu_1-\lambda)}{\rho_1} = \frac{1}{\rho_1} = \frac{1}{M(a_2-a_1+\lambda')}\le\varepsilon.$$
\end{proof}

\section{Proof of claim in Theorem~\ref{theorem:facet}}\label{app:facet_claim}

In the proof of Theorem~\ref{theorem:facet} we critically used the fact that given $Q\in\cQ(J)$, $|Q|\ge 3$, with $h_j$ such that $a_j\in S_{h_j}$, $$\sum\limits_{j\in Q}a_j > \mu_{\sum_{j\in Q}h_j-\lfloor|Q|/2\rfloor}-\lambda.$$ This allowed us to ensure that the constraint induced by every such $Q$ was satisfied by the point $(1/2,\ldots,1/2)$. We prove this claim here.

\begin{proof}
We use the quantities $u_h, v_h, M_h$ defined in Appendix~\ref{app:superadditive_proof}. We break the proof into two cases. We will use the observation that $u_{h}\ge v_{h}$ for any $h$ such that $\rho_h > 0$. This is because for such $h$, $u_{h} = u_1 = a_1-\lambda$, and so $u_{h} = u_1\ge v_1\ge v_h$.

\noindent
{\em Case 1:} $|Q| = 2\ell$ is even. Let (without loss of generality) $Q = \{a_1,\ldots, a_{2\ell}\}$, let $h_j$ be such that $a_j\in S_{h_j}$, and let $H = h_1+\cdots + h_{2\ell}$. We have
\begin{align*}
    \sum\limits_{j=1}^{2\ell}a_j > \sum\limits_{j=1}^{2\ell} \mu_{h_j}-\lambda 
    &= \sum\limits_{j=1}^{2\ell} M_{h_j-1} + u_{h_j} \\
    &\ge M_{H - 2\ell} + \sum\limits_{j=1}^{2\ell} u_{h_j} \\
    &= \sum\limits_{i=1}^{H-2\ell}(u_i + v_i) + \sum\limits_{j=1}^{2\ell} u_{h_j} \\
    &\ge \sum\limits_{i=1}^{H-2\ell}(u_i + v_i) + \sum\limits_{j=\ell+1}^{2\ell} (u_{h_j} + v_{h_j}) \\
    &\ge \sum\limits_{i=1}^{H-2\ell}(u_i + v_i) + \sum\limits_{j=1}^{\ell}(u_{H-2\ell + j}+v_{H-2\ell + j}) \\
    &= \sum\limits_{i = 1}^{H-\ell}(u_i+v_i) \\
    &= M_{H-\ell} \\
    &= \mu_{H - \ell} -\lambda + \rho_{H-\ell} \\
    &\ge \mu_{H - \ell} -\lambda,
\end{align*}
as desired.

\noindent
{\em Case 2:} $|Q|=2\ell+1$ is odd. Let $Q = \{a_1,\ldots, a_{2\ell+1}\}$, let $h_j$ be such that $a_j\in S_{h_j}$, and let $H = h_1+\cdots+h_{2\ell+1}$. We have 
\begin{align*}
    \sum\limits_{j=1}^{2\ell+1}a_j &> \sum\limits_{j=1}^{2\ell+1} \mu_{h_j}-\lambda \\
    &= \sum\limits_{j=1}^{2\ell+1} M_{h_j-1} + u_{h_j} \\
    &\ge M_{H - (2\ell+1)} + \sum\limits_{j=1}^{2\ell+1} u_{h_j} \\
    &= \sum\limits_{i=1}^{H-(2\ell+1)}(u_i + v_i) + \sum\limits_{j=1}^{2\ell+1} u_{h_j} \\
    &\ge \sum\limits_{i=1}^{H-(2\ell+1)}(u_i + v_i) + \sum\limits_{j=\ell+1}^{2\ell} (u_{h_j} + v_{h_j}) + u_{h_{2\ell+1}}\\
    &\ge \sum\limits_{i=1}^{H-(2\ell+1)}(u_i + v_i) + \sum\limits_{j=1}^{\ell}(u_{H-(2\ell+1) + j}+v_{H-(2\ell+1) + j}) + u_{H-(2\ell+1) + (\ell+1)} \\
    &= \sum\limits_{i = 1}^{H-\ell-1}(u_i+v_i) + u_{H-\ell}\\
    &= M_{H-\ell-1} + u_{H-\ell}\\
    &= \mu_{H - \ell} -\lambda,
\end{align*}
as desired. 
\end{proof}

\section{Full experimental results}\label{app:experiments}

We provide the full set of experimental results. In all plots, we evaluate our methods against three CPLEX settings.
\begin{enumerate}
    \item {\tt d/CPLEX} denotes default CPLEX with all settings untouched.
    \item {\tt np/CPLEX} denotes CPLEX with presolve turned off and all other settings untouched.
    \item {\tt CPLEX} denotes CPLEX with all heuristics and presolve turned off. Furthermore, all cuts are turned off with the exception of CPLEX's internal cover cut generation, which is left on.
\end{enumerate}

In all above versions of CPLEX, we register a dummy cut callback that does nothing. This disables proprietary search techniques such as ``dynamic search''. This is needed for a fair comparison against our lifting implementations, since these require cut callbacks. 

\subsection{Direct evaluation of lifted cover cut methods}

First, we evaluate our lifting methods atop bare-bones CPLEX settings; all CPLEX cuts, heuristics, and presolve settings are switched off. We toggle CPLEX's internal cover cut generation routine on and off to measure the impact of our routines with and without the cover cuts added by CPLEX (turning CPLEX cover cuts off corresponds to an underscore label in our plots).

Figures~\ref{fig:mkp_weakly_correlated},~\ref{fig:chvatal},~\ref{fig:muca}, and~\ref{fig:multipaths} contain the tree size and run-time performance plots for the weakly correlated, Chv\'{a}tal, decay-decay and multipaths distributions, respectively, when CPLEX cover cuts are turned on. Figures~\ref{fig:mkp_weakly_correlated_},~\ref{fig:chvatal_},~\ref{fig:muca_}, and~\ref{fig:multipaths_} contain the tree size and run-time performance plots for the weakly correlated, Chv\'{a}tal, decay-decay and multipaths distributions, respectively, when CPLEX cover cuts are turned off. 

\begin{figure}[t]
\centering
\begin{subfigure}
  \centering
  \includegraphics[width=.39\linewidth]{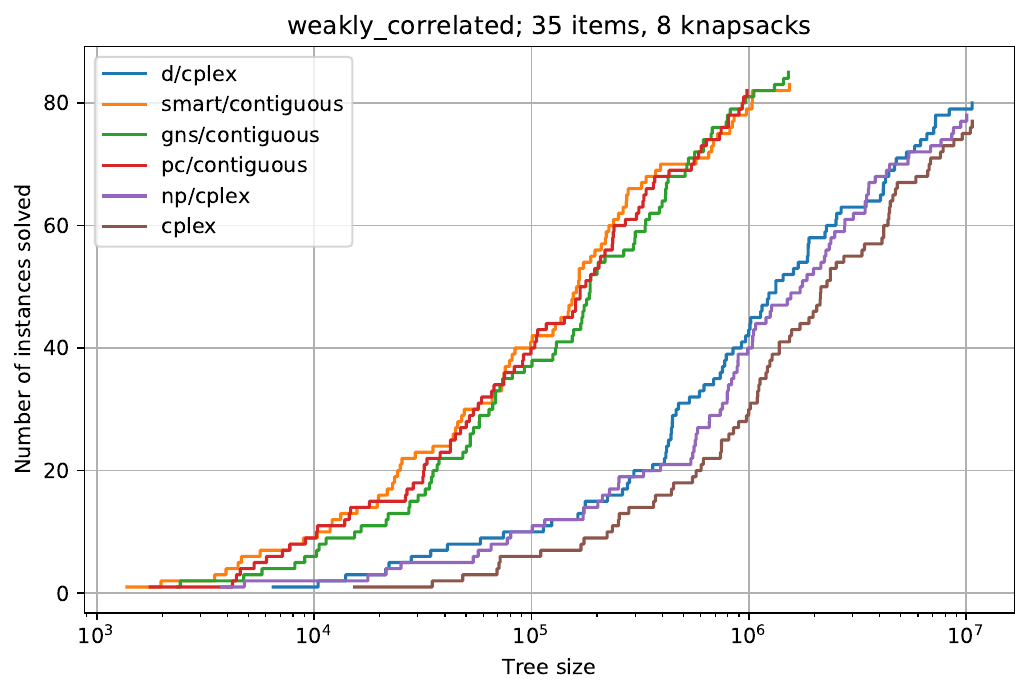}
  \label{fig:blah}
\end{subfigure}
\begin{subfigure}
  \centering
  \includegraphics[width=.39\linewidth]{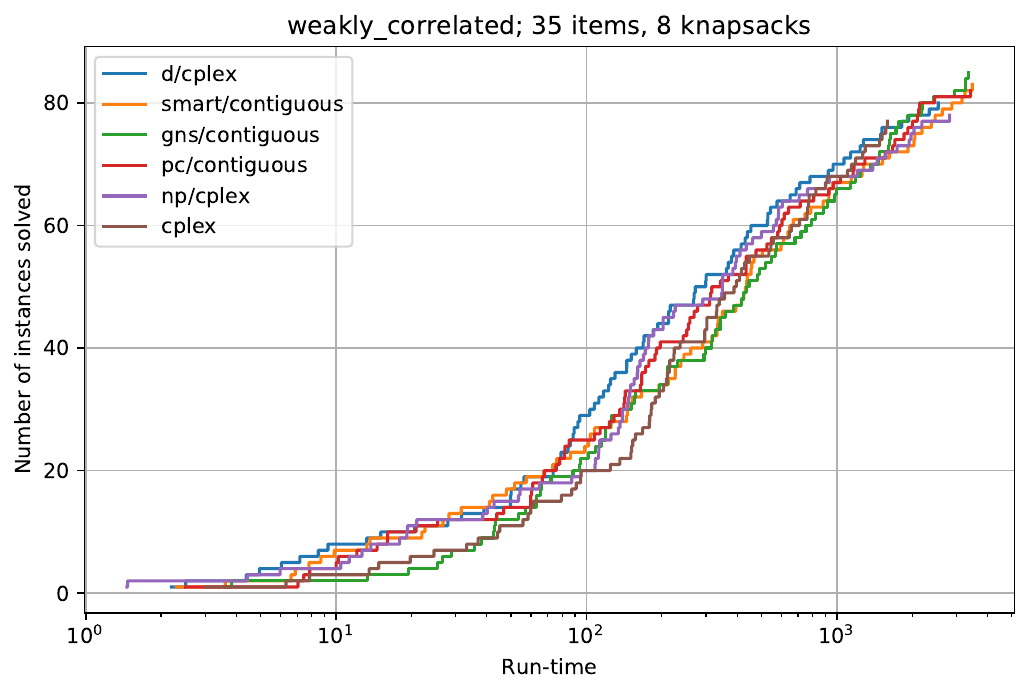}
\end{subfigure}
\begin{subfigure}
  \centering
  \includegraphics[width=.39\linewidth]{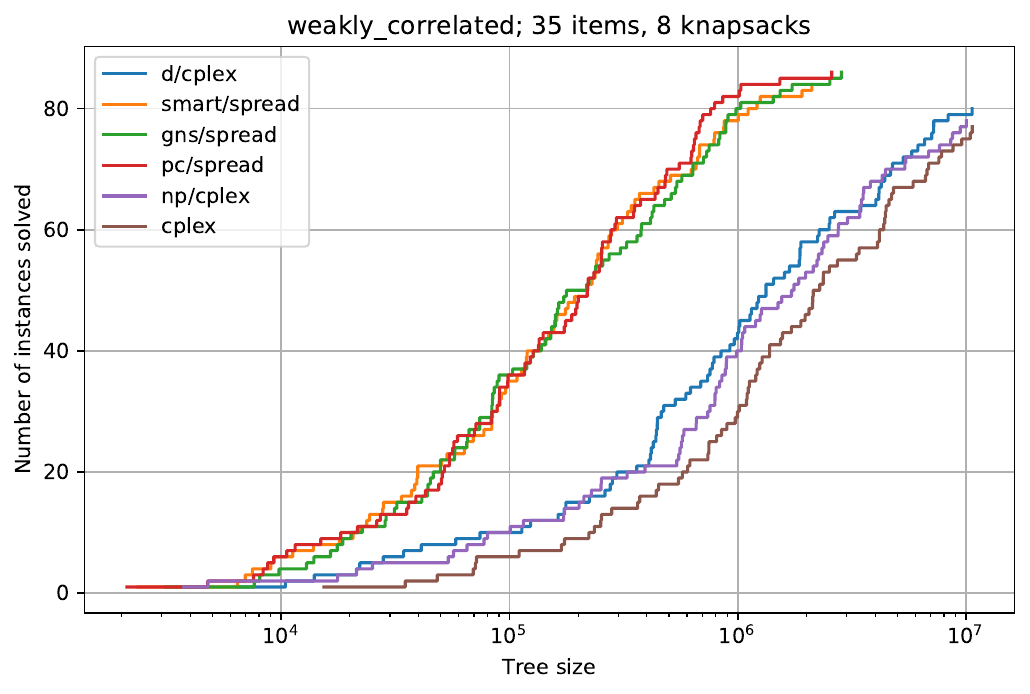}
  \label{fig:blah}
\end{subfigure}
\begin{subfigure}
  \centering
  \includegraphics[width=.39\linewidth]{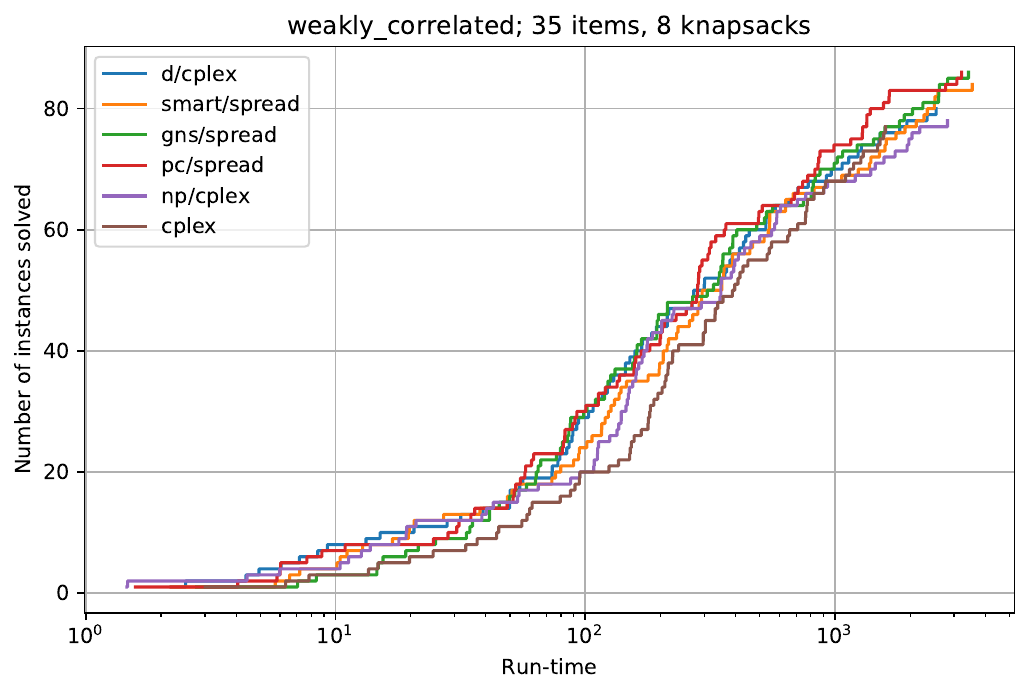}
\end{subfigure}
\begin{subfigure}
  \centering
  \includegraphics[width=.39\linewidth]{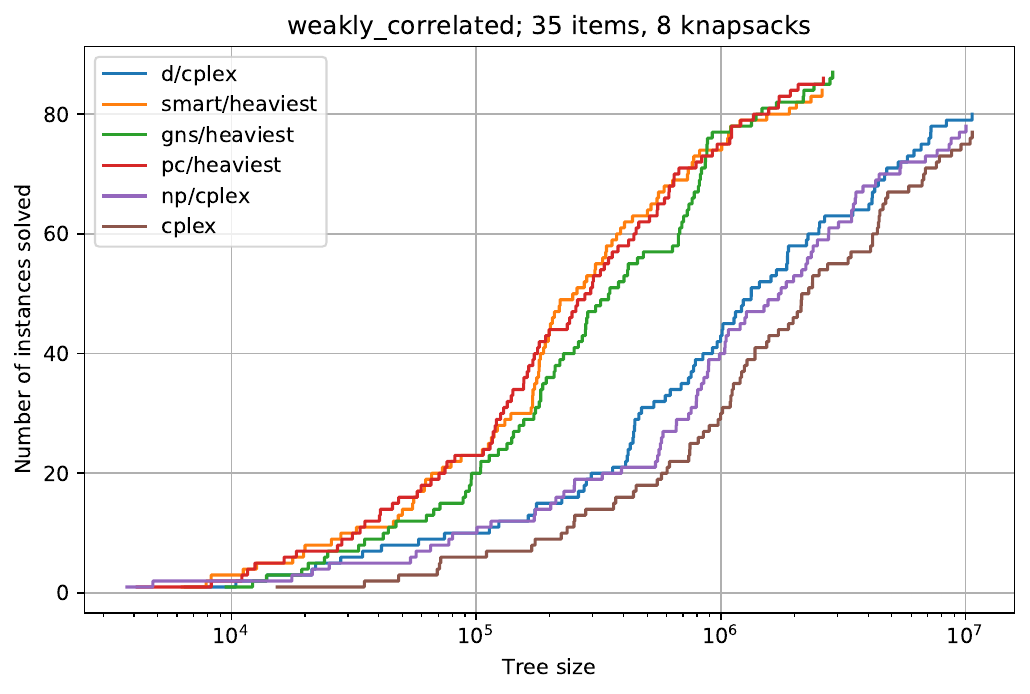}
  \label{fig:blah}
\end{subfigure}
\begin{subfigure}
  \centering
  \includegraphics[width=.39\linewidth]{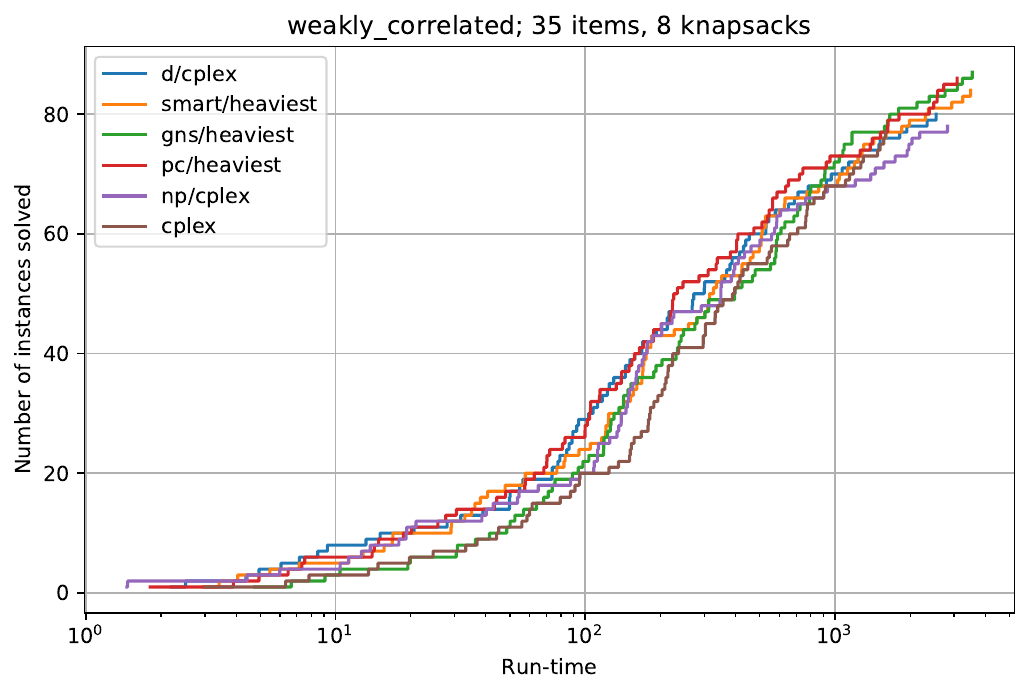}
\end{subfigure}
\begin{subfigure}
  \centering
  \includegraphics[width=.39\linewidth]{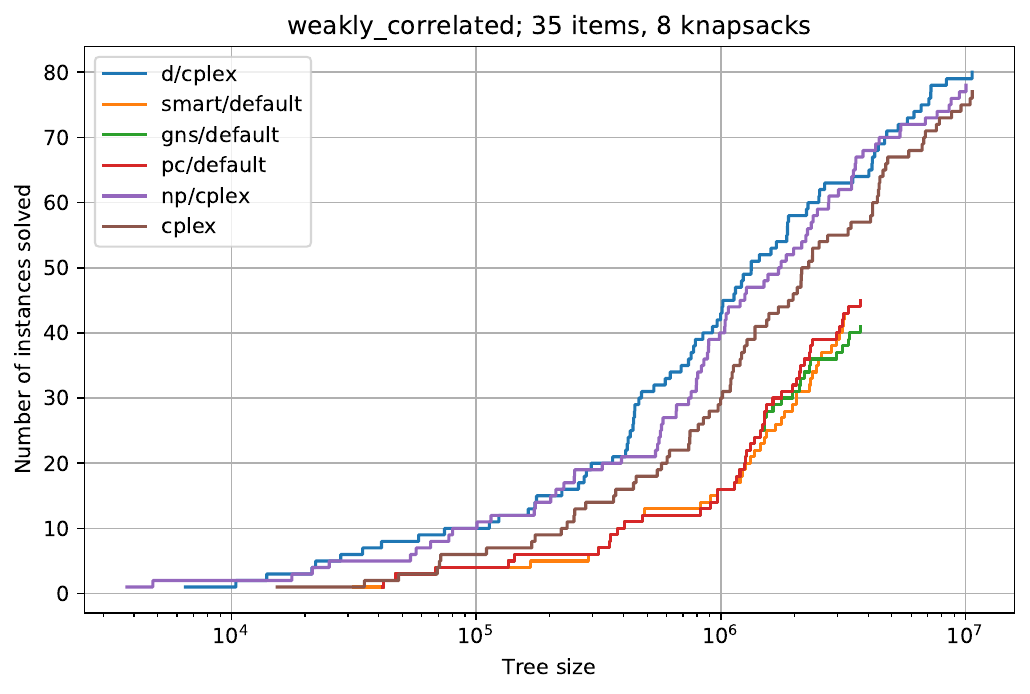}
  \label{fig:blah}
\end{subfigure}
\begin{subfigure}
  \centering
  \includegraphics[width=.39\linewidth]{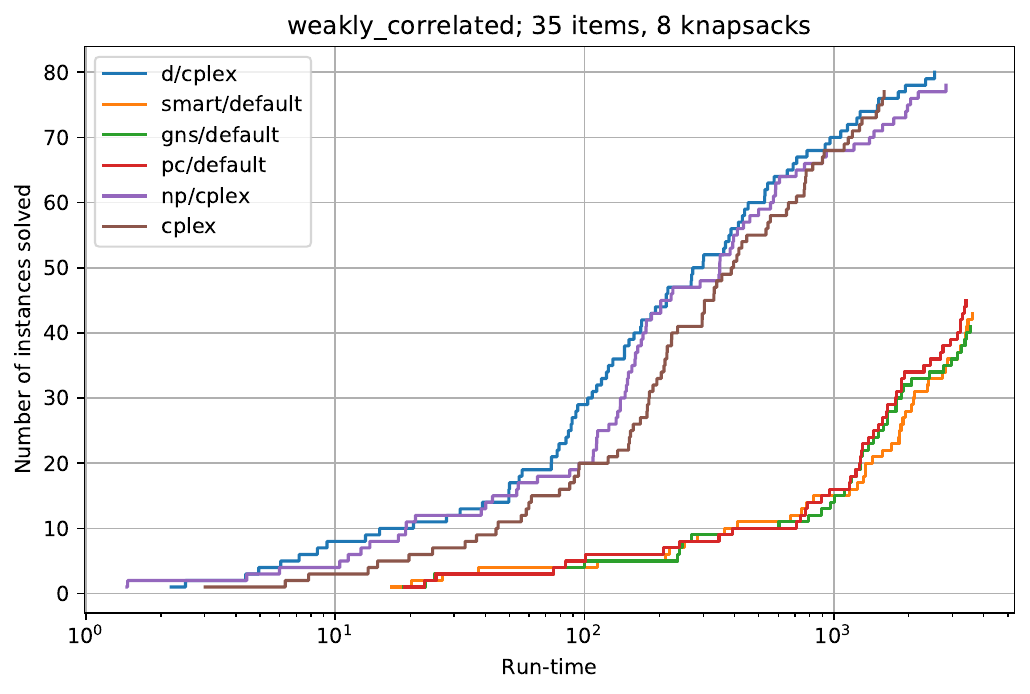}
\end{subfigure}
\begin{subfigure}
  \centering
  \includegraphics[width=.39\linewidth]{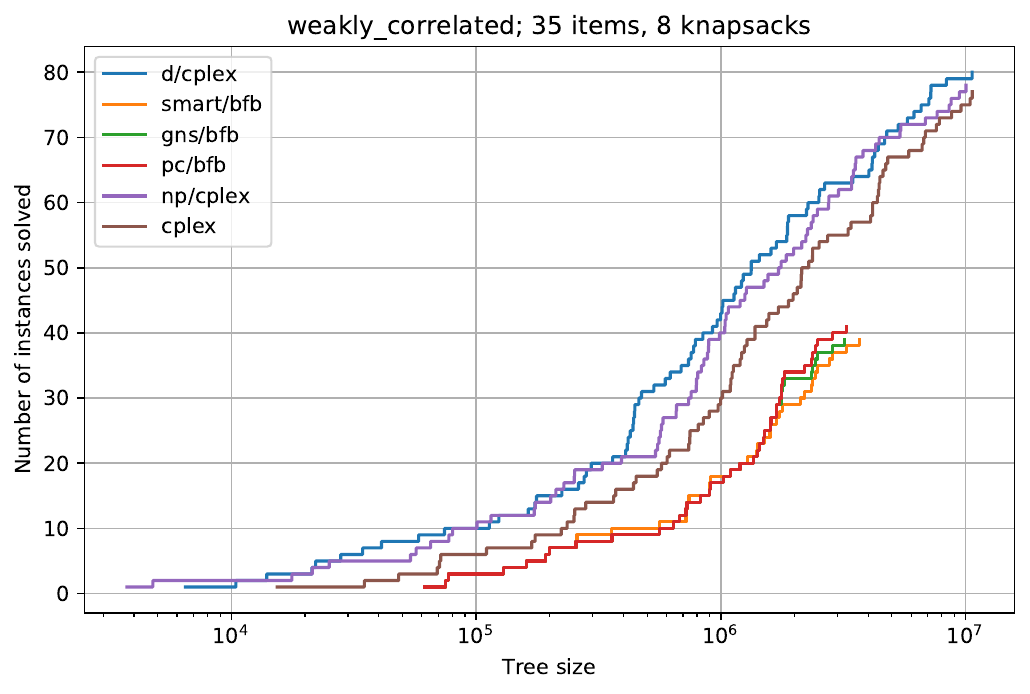}
  \label{fig:blah}
\end{subfigure}
\begin{subfigure}
  \centering
  \includegraphics[width=.39\linewidth]{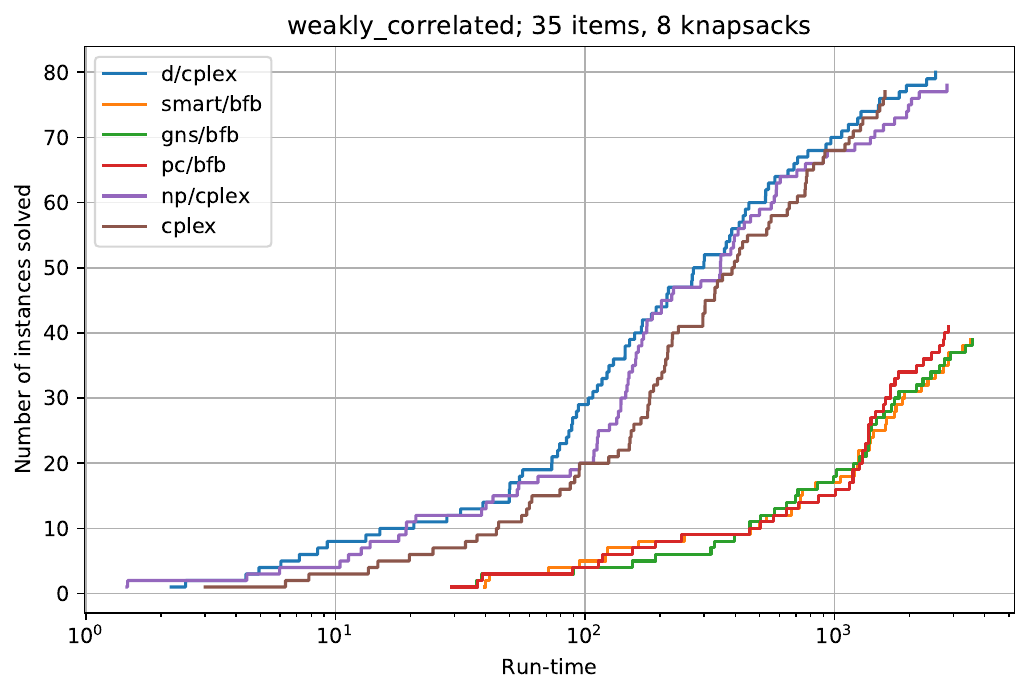}
\end{subfigure}

\caption{Weakly correlated, CPLEX cover cuts on, all other parameters off}
\label{fig:mkp_weakly_correlated}
\end{figure}

\begin{figure}[t]
\centering
\begin{subfigure}
  \centering
  \includegraphics[width=.39\linewidth]{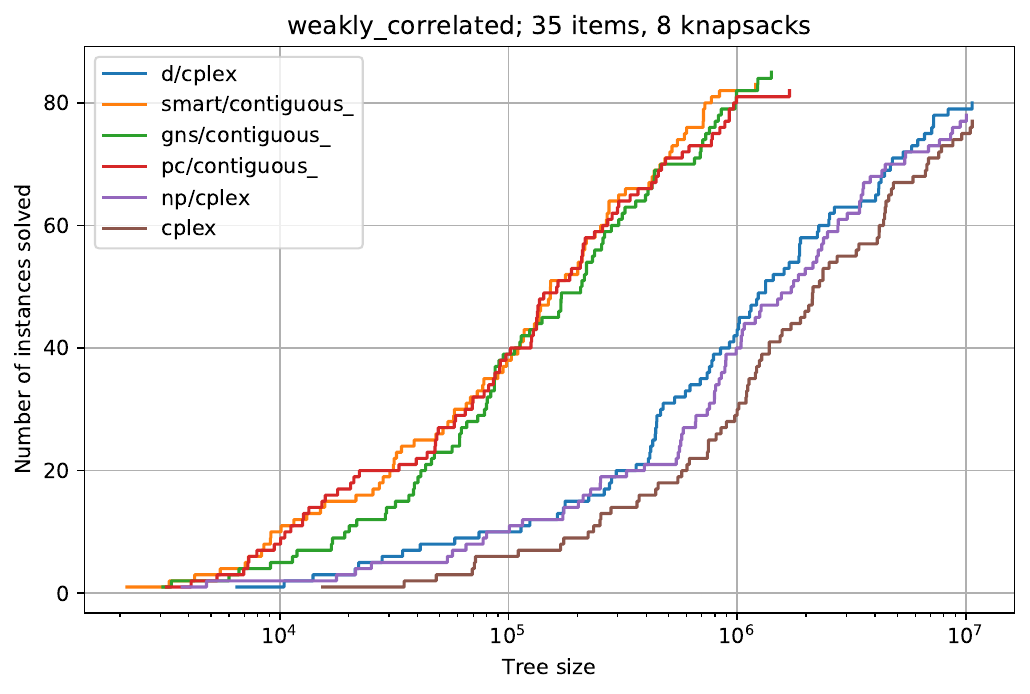}
  \label{fig:blah}
\end{subfigure}
\begin{subfigure}
  \centering
  \includegraphics[width=.39\linewidth]{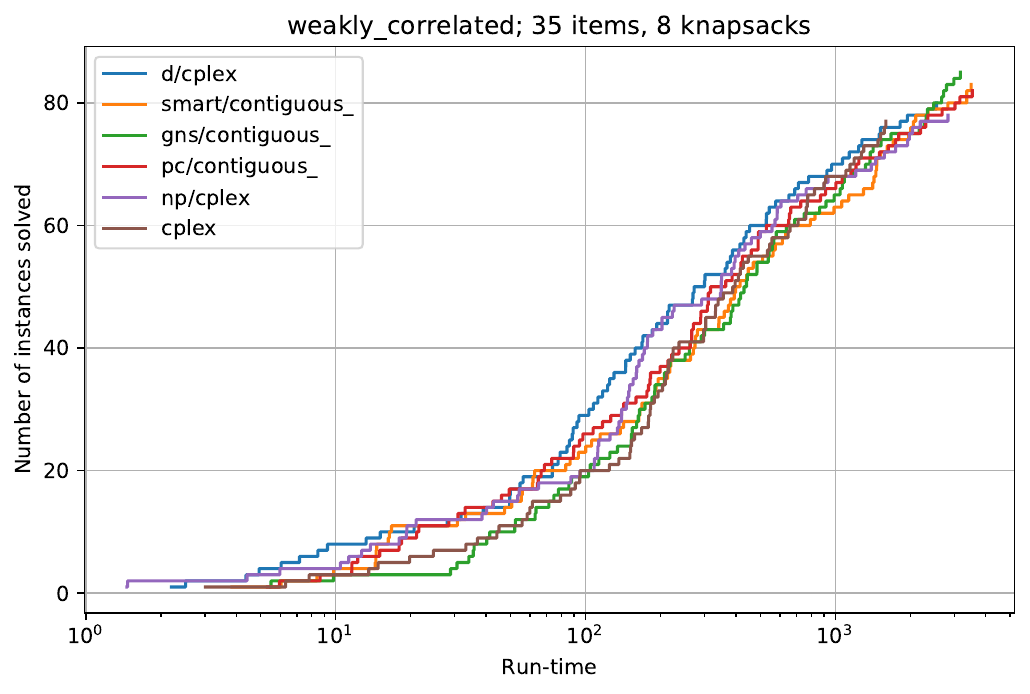}
\end{subfigure}
\begin{subfigure}
  \centering
  \includegraphics[width=.39\linewidth]{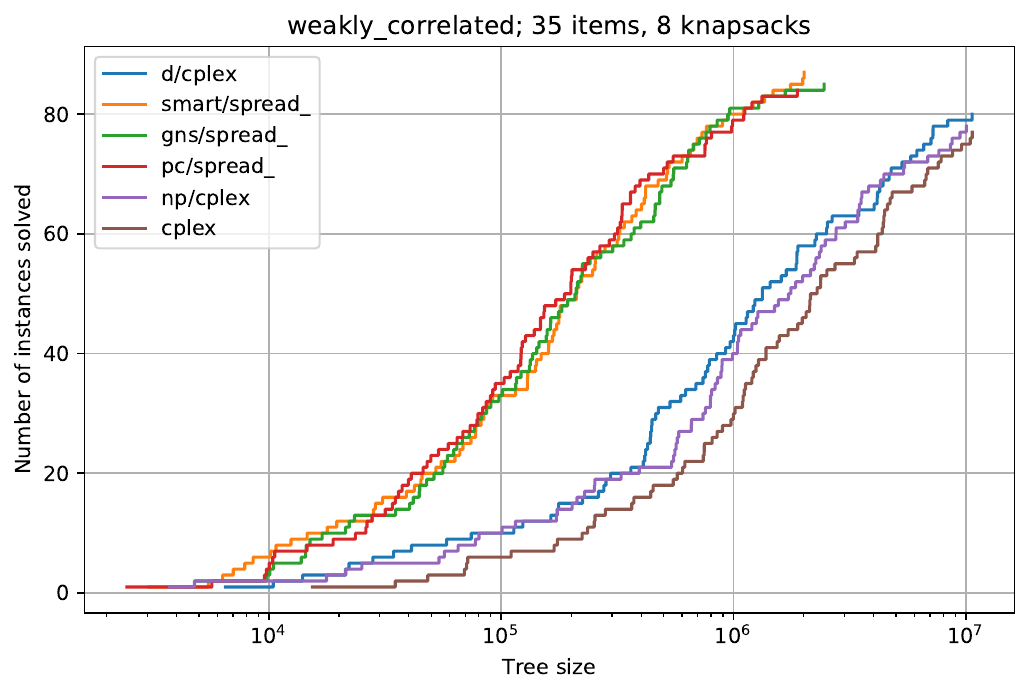}
  \label{fig:blah}
\end{subfigure}
\begin{subfigure}
  \centering
  \includegraphics[width=.39\linewidth]{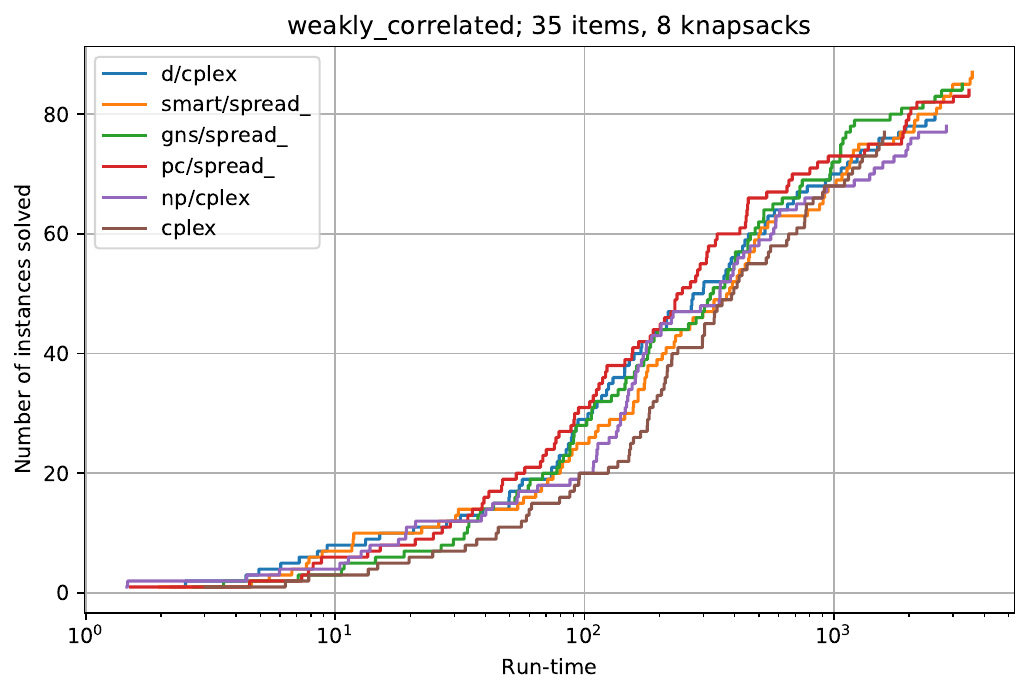}
\end{subfigure}
\begin{subfigure}
  \centering
  \includegraphics[width=.39\linewidth]{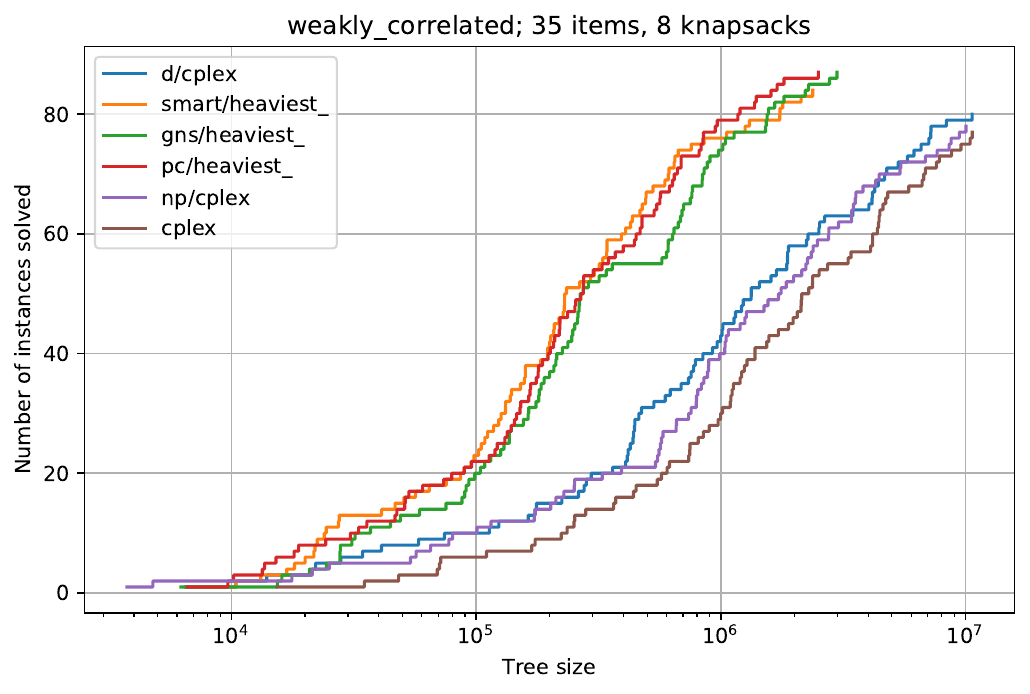}
  \label{fig:blah}
\end{subfigure}
\begin{subfigure}
  \centering
  \includegraphics[width=.39\linewidth]{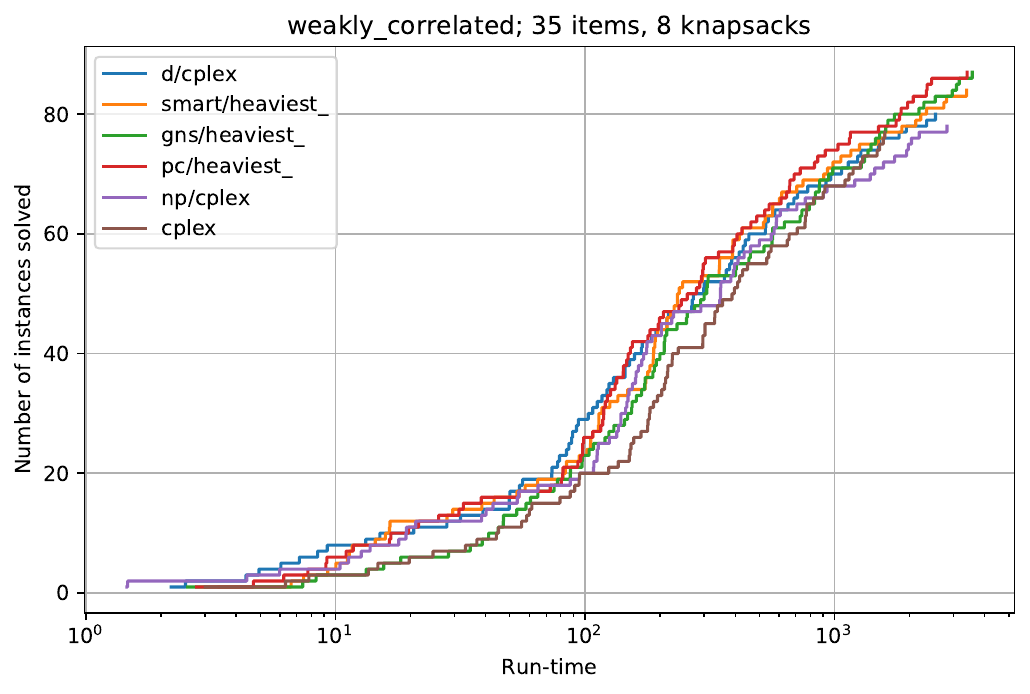}
\end{subfigure}
\begin{subfigure}
  \centering
  \includegraphics[width=.39\linewidth]{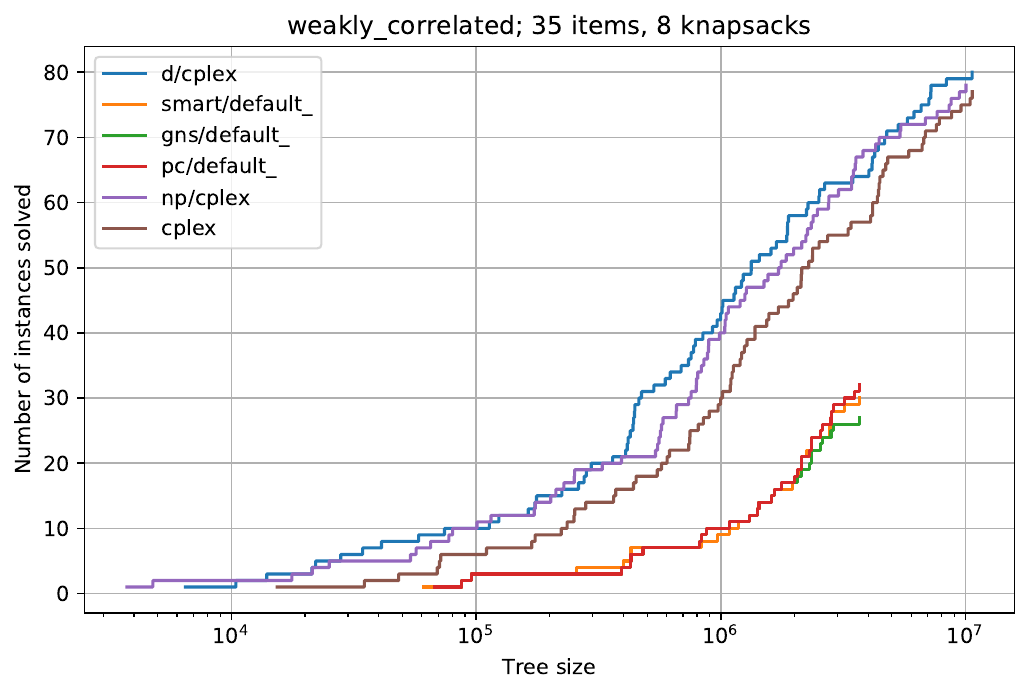}
  \label{fig:blah}
\end{subfigure}
\begin{subfigure}
  \centering
  \includegraphics[width=.39\linewidth]{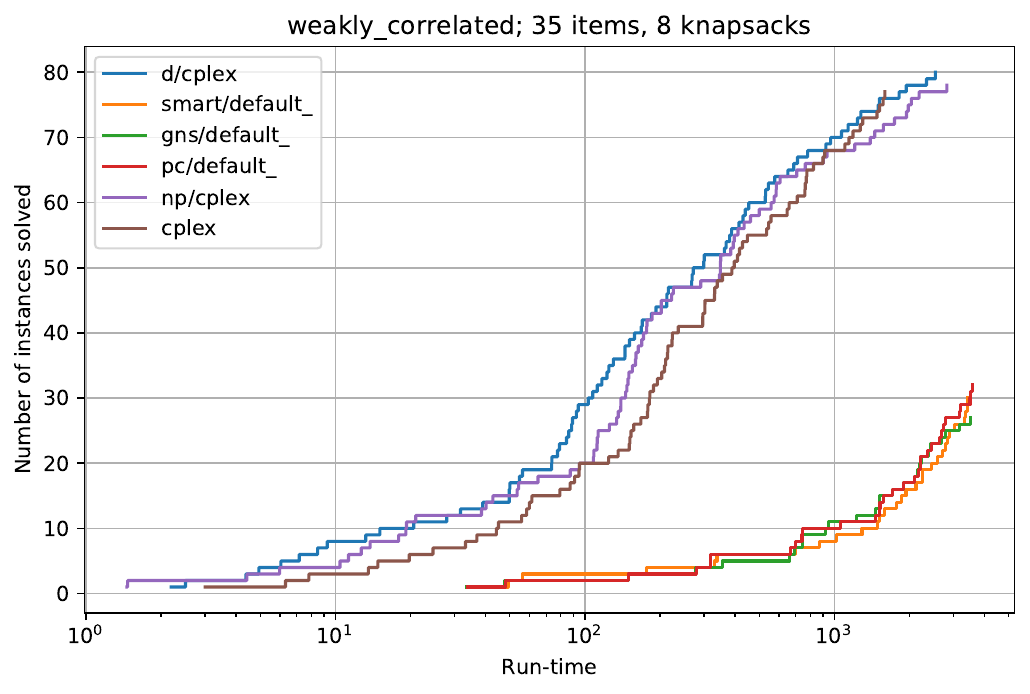}
\end{subfigure}
\begin{subfigure}
  \centering
  \includegraphics[width=.39\linewidth]{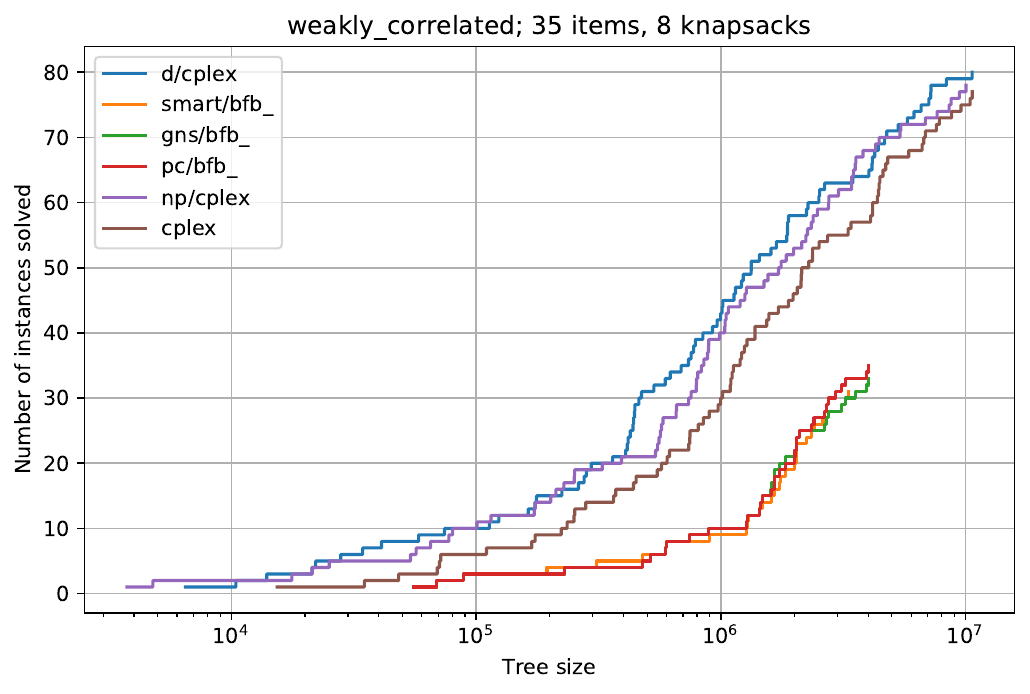}
  \label{fig:blah}
\end{subfigure}
\begin{subfigure}
  \centering
  \includegraphics[width=.39\linewidth]{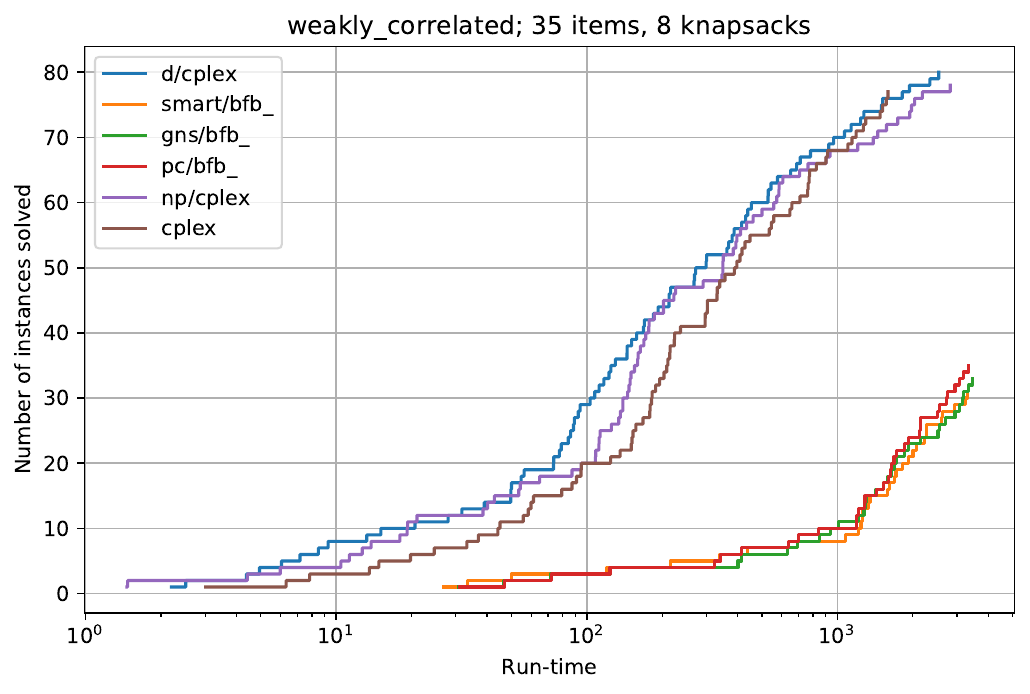}
\end{subfigure}

\caption{Weakly correlated, CPLEX cover cuts off, all other parameters off}
\label{fig:mkp_weakly_correlated_}
\end{figure}

\begin{figure}[t]
\centering
\begin{subfigure}
  \centering
  \includegraphics[width=.39\linewidth]{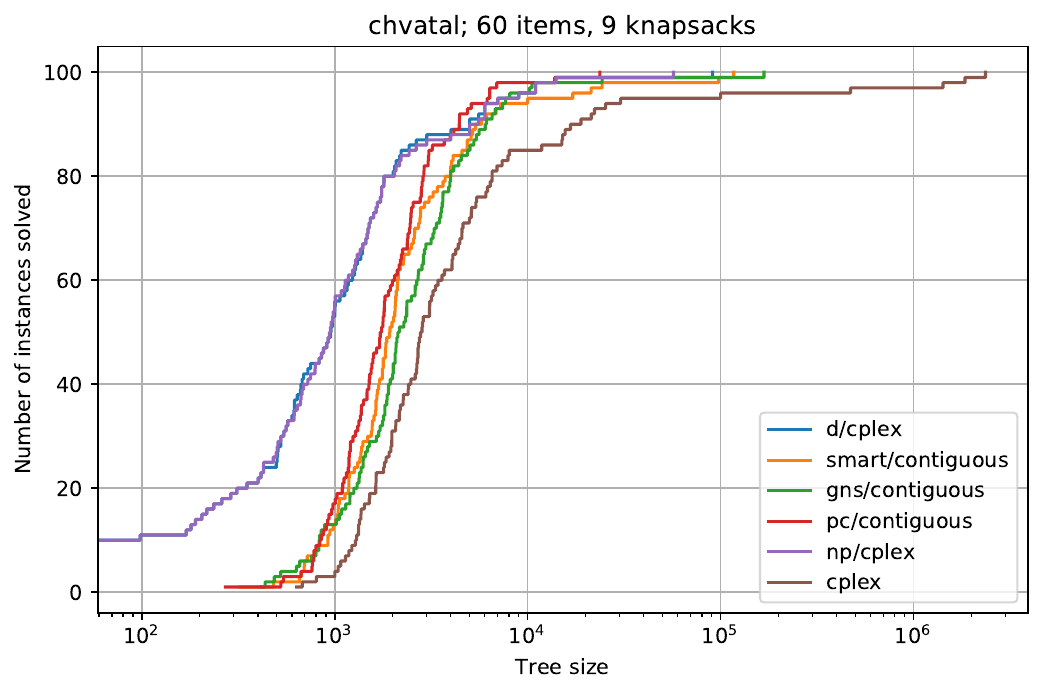}
  \label{fig:blah}
\end{subfigure}
\begin{subfigure}
  \centering
  \includegraphics[width=.39\linewidth]{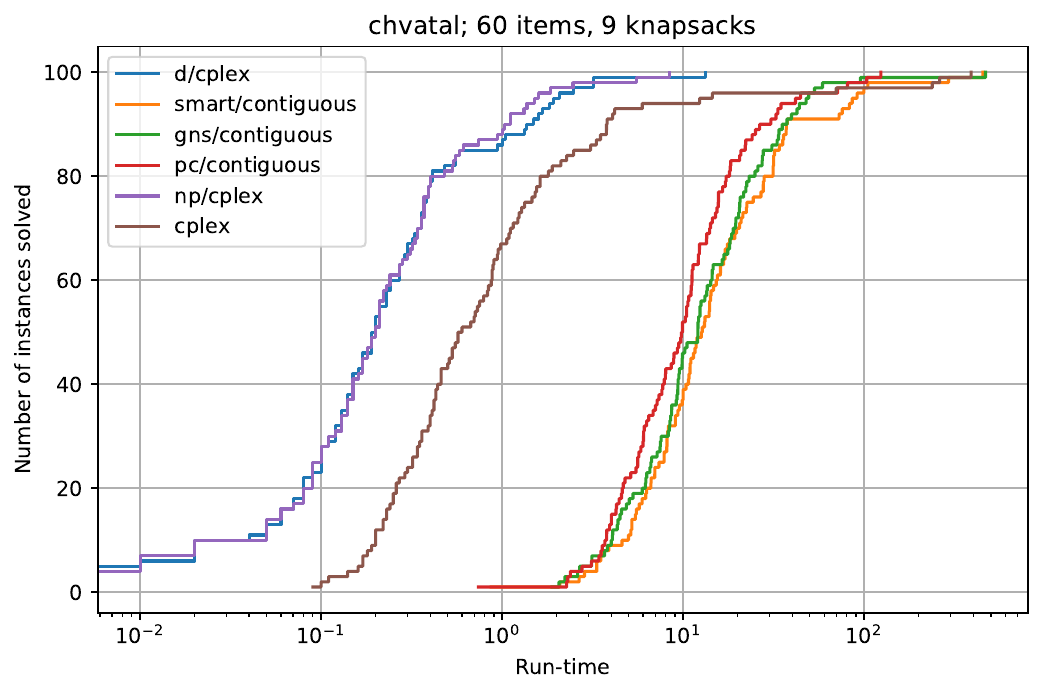}
\end{subfigure}
\begin{subfigure}
  \centering
  \includegraphics[width=.39\linewidth]{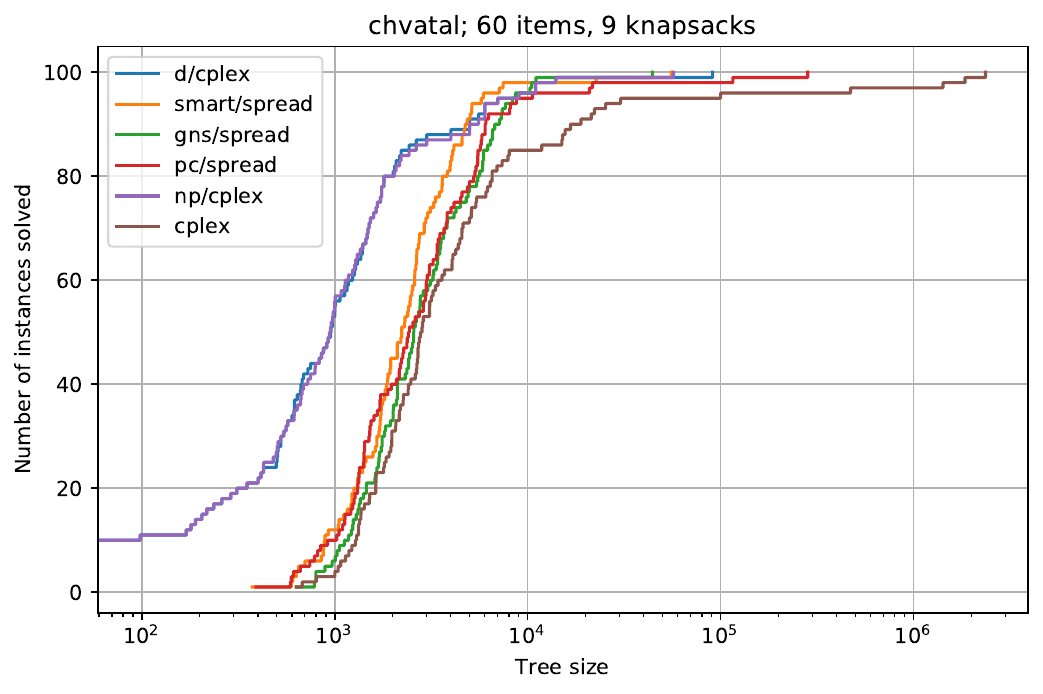}
  \label{fig:blah}
\end{subfigure}
\begin{subfigure}
  \centering
  \includegraphics[width=.39\linewidth]{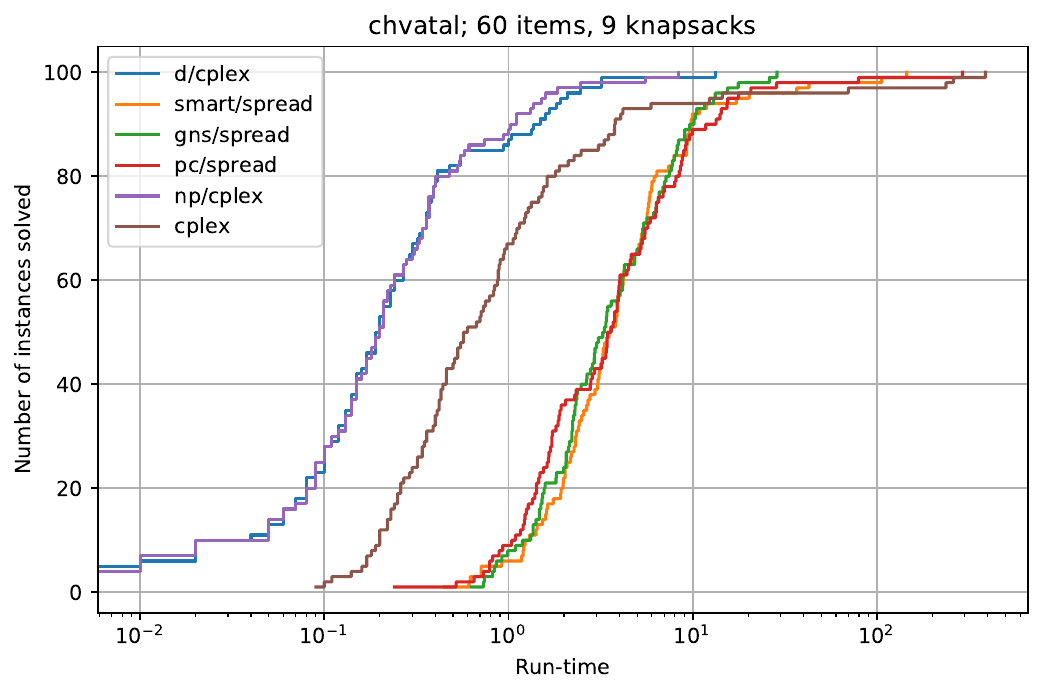}
\end{subfigure}
\begin{subfigure}
  \centering
  \includegraphics[width=.39\linewidth]{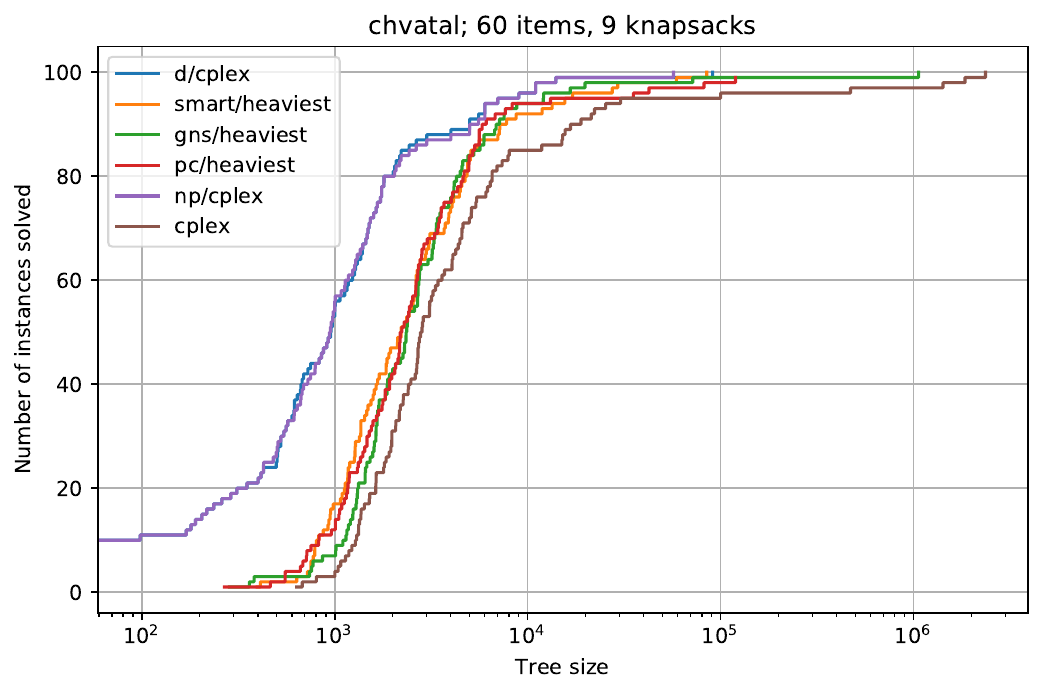}
  \label{fig:blah}
\end{subfigure}
\begin{subfigure}
  \centering
  \includegraphics[width=.39\linewidth]{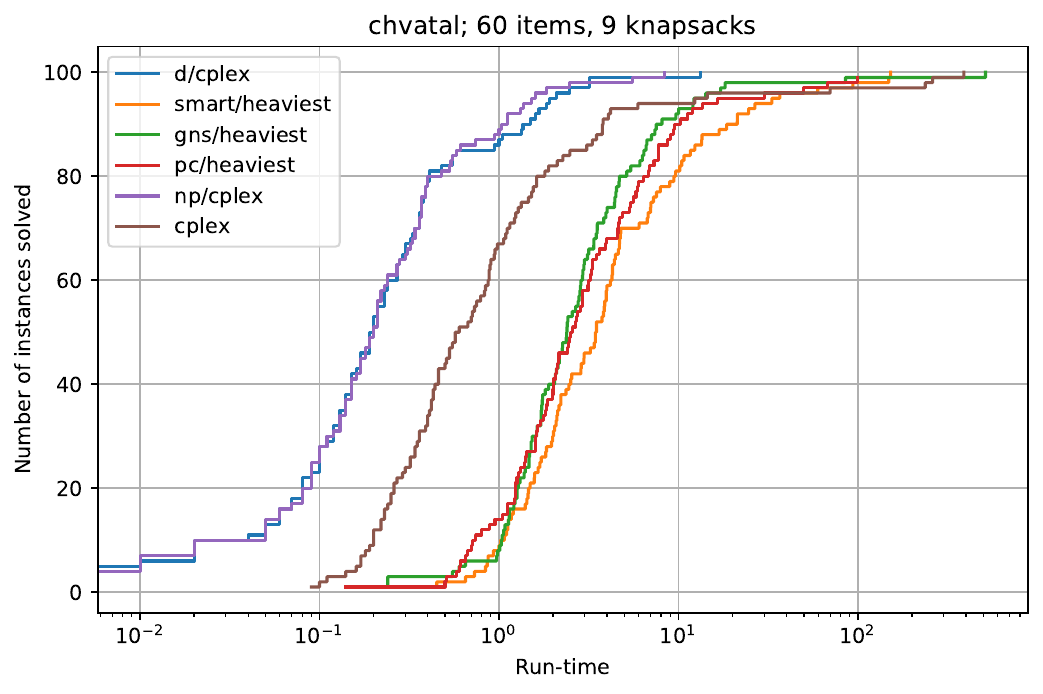}
\end{subfigure}
\begin{subfigure}
  \centering
  \includegraphics[width=.39\linewidth]{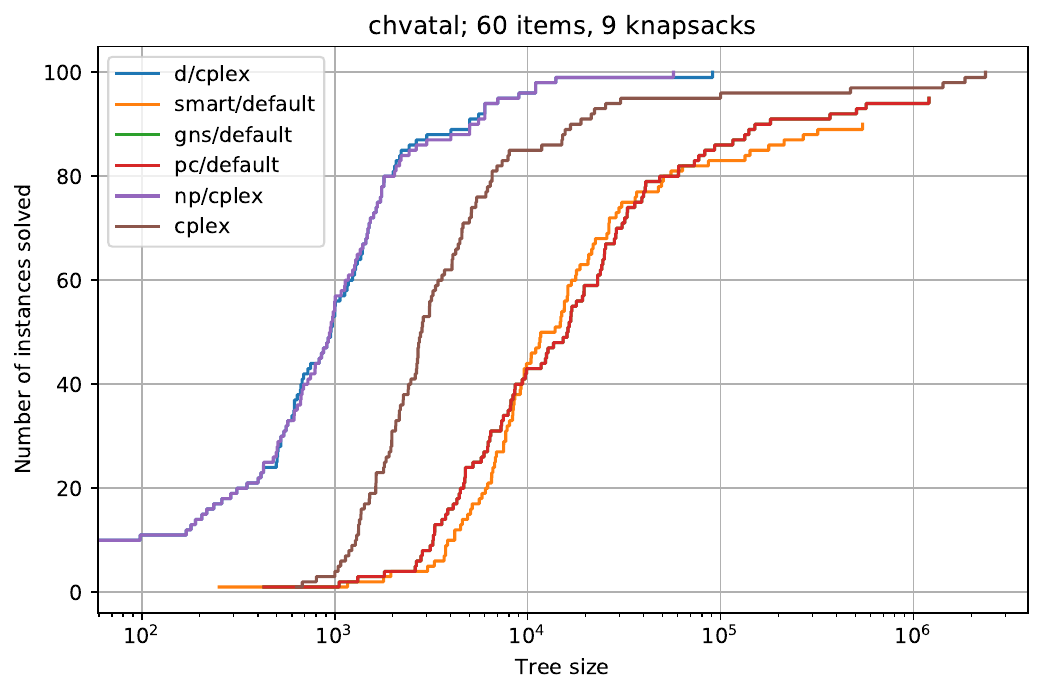}
  \label{fig:blah}
\end{subfigure}
\begin{subfigure}
  \centering
  \includegraphics[width=.39\linewidth]{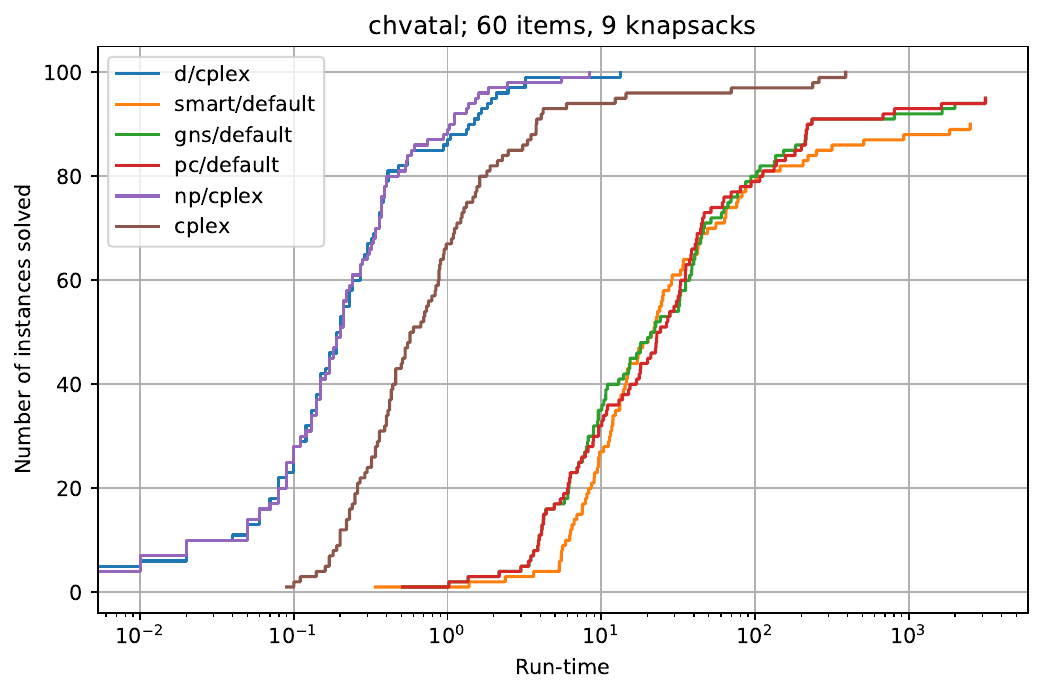}
\end{subfigure}
\begin{subfigure}
  \centering
  \includegraphics[width=.39\linewidth]{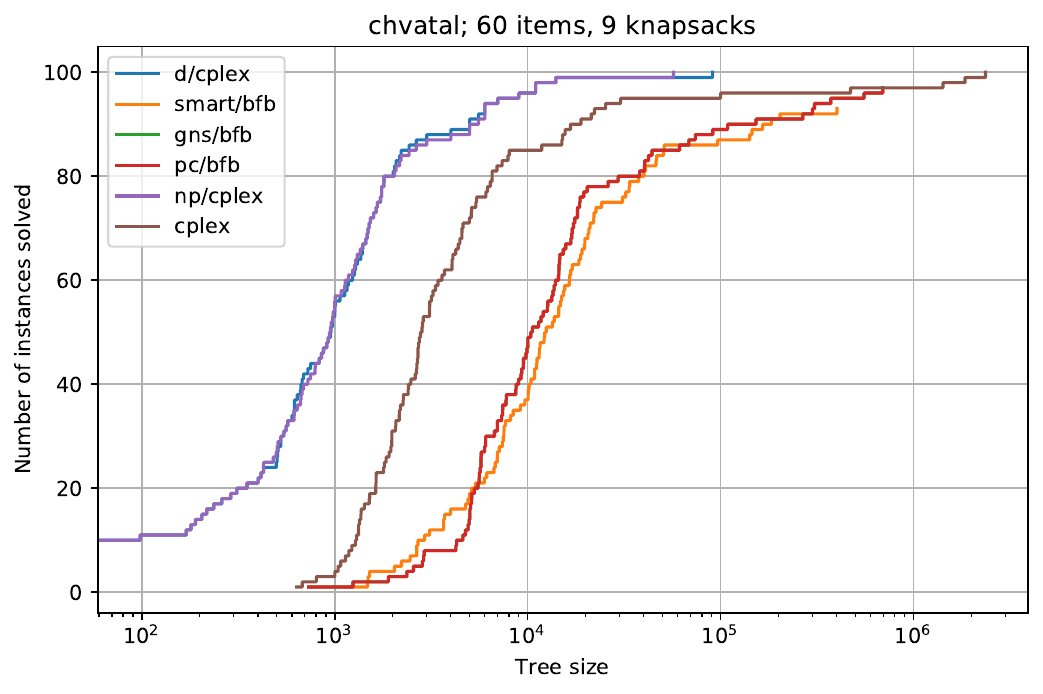}
  \label{fig:blah}
\end{subfigure}
\begin{subfigure}
  \centering
  \includegraphics[width=.39\linewidth]{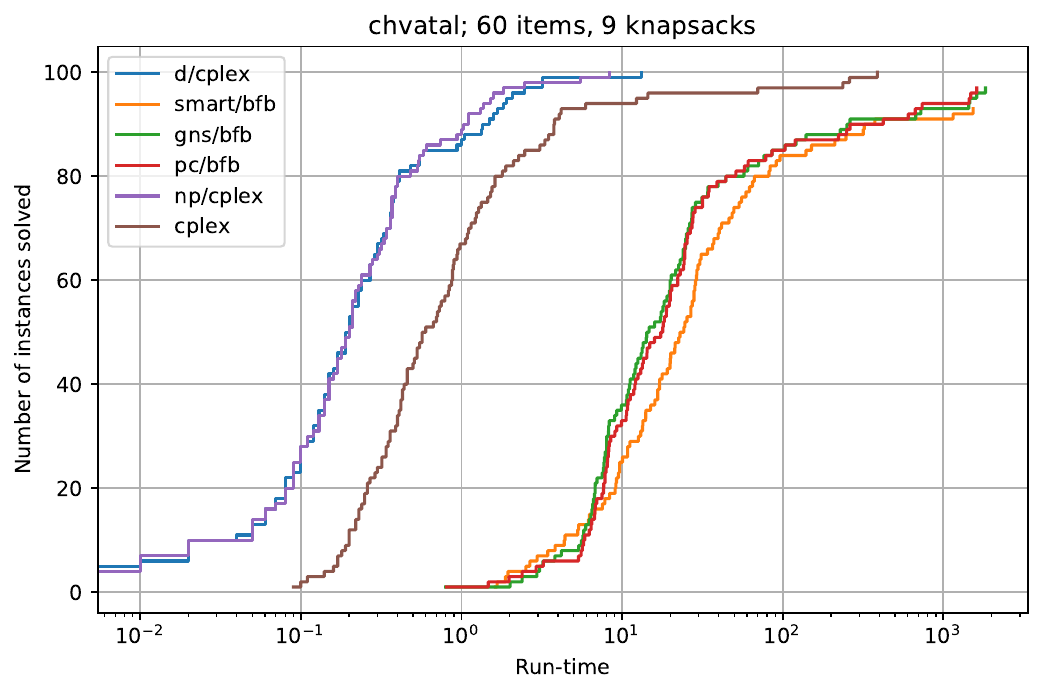}
\end{subfigure}

\caption{Chv\'{a}tal, CPLEX cover cuts on, all other parameters off}
\label{fig:chvatal}
\end{figure}

\begin{figure}[t]
\centering
\begin{subfigure}
  \centering
  \includegraphics[width=.39\linewidth]{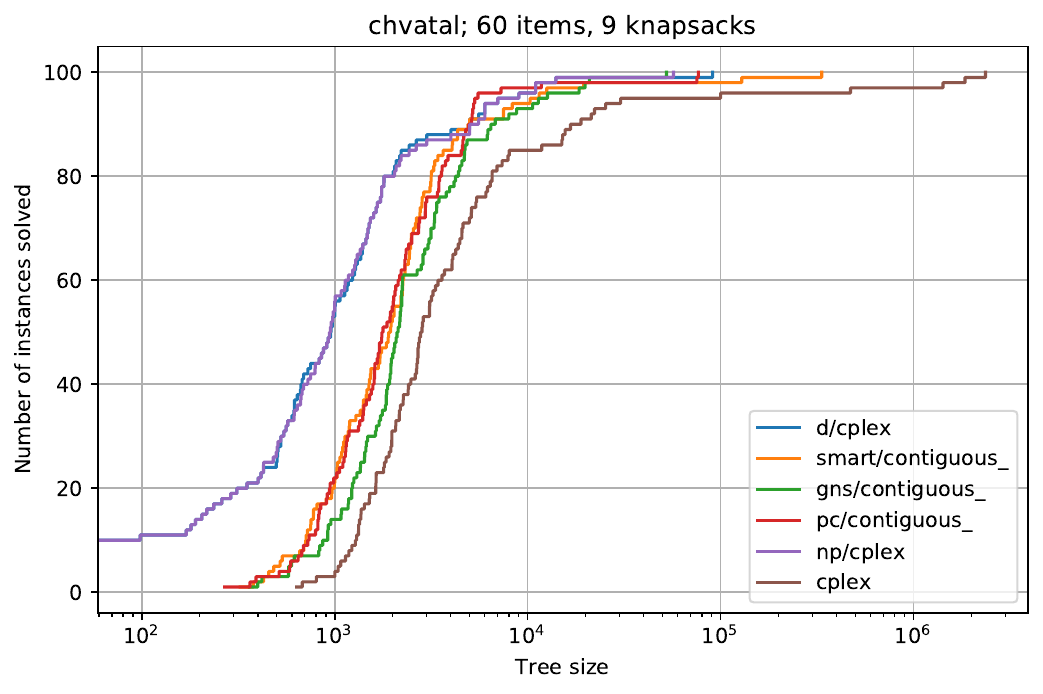}
  \label{fig:blah}
\end{subfigure}
\begin{subfigure}
  \centering
  \includegraphics[width=.39\linewidth]{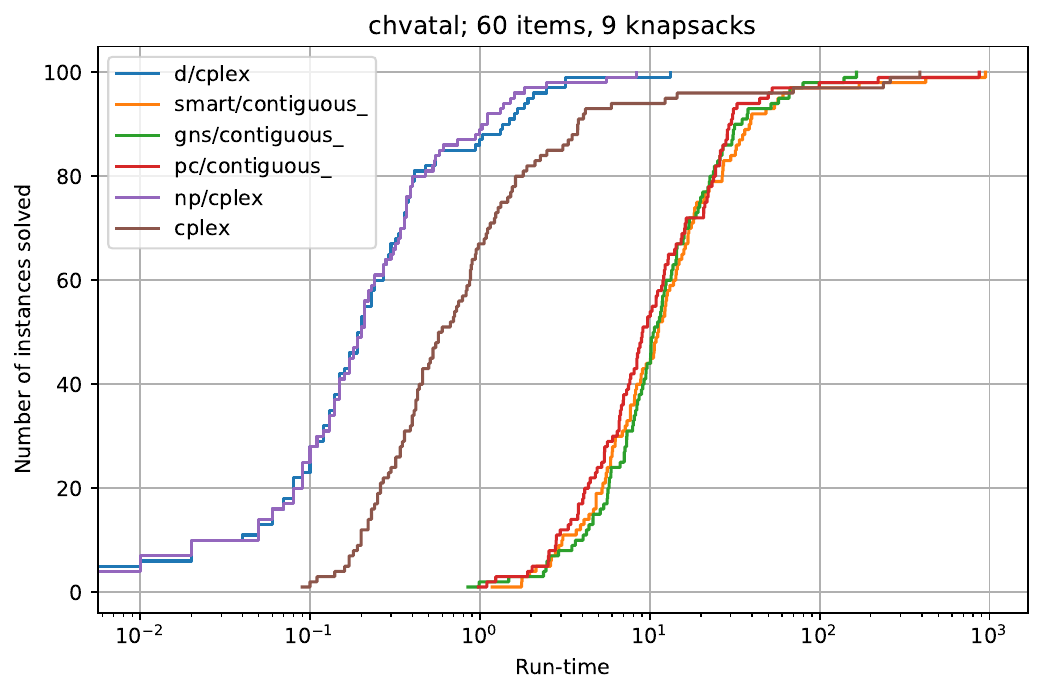}
\end{subfigure}
\begin{subfigure}
  \centering
  \includegraphics[width=.39\linewidth]{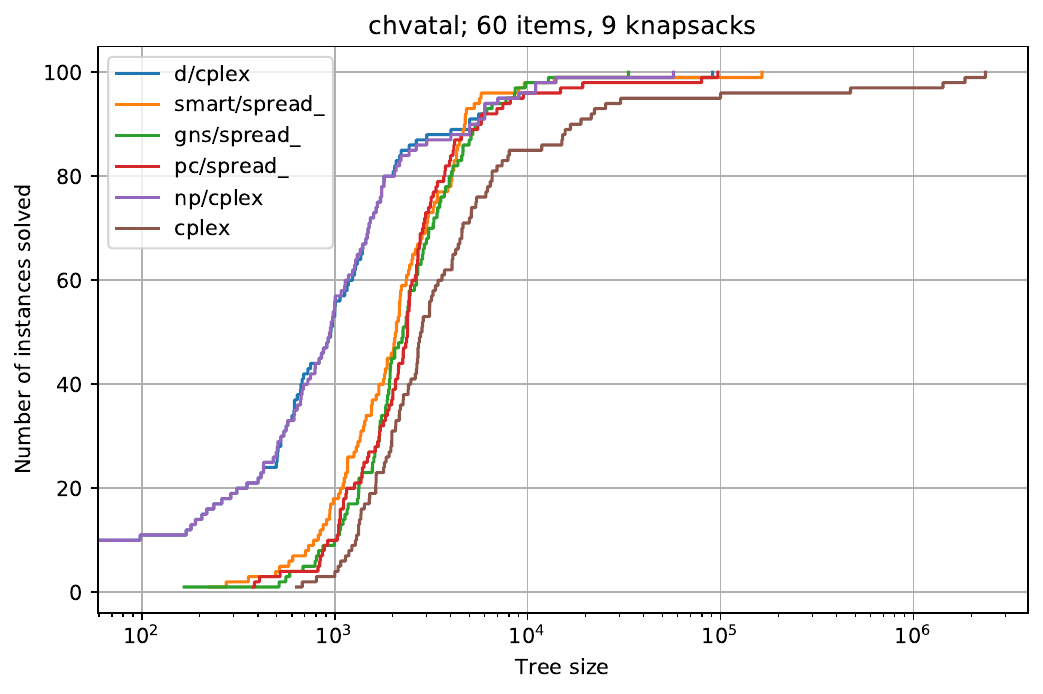}
  \label{fig:blah}
\end{subfigure}
\begin{subfigure}
  \centering
  \includegraphics[width=.39\linewidth]{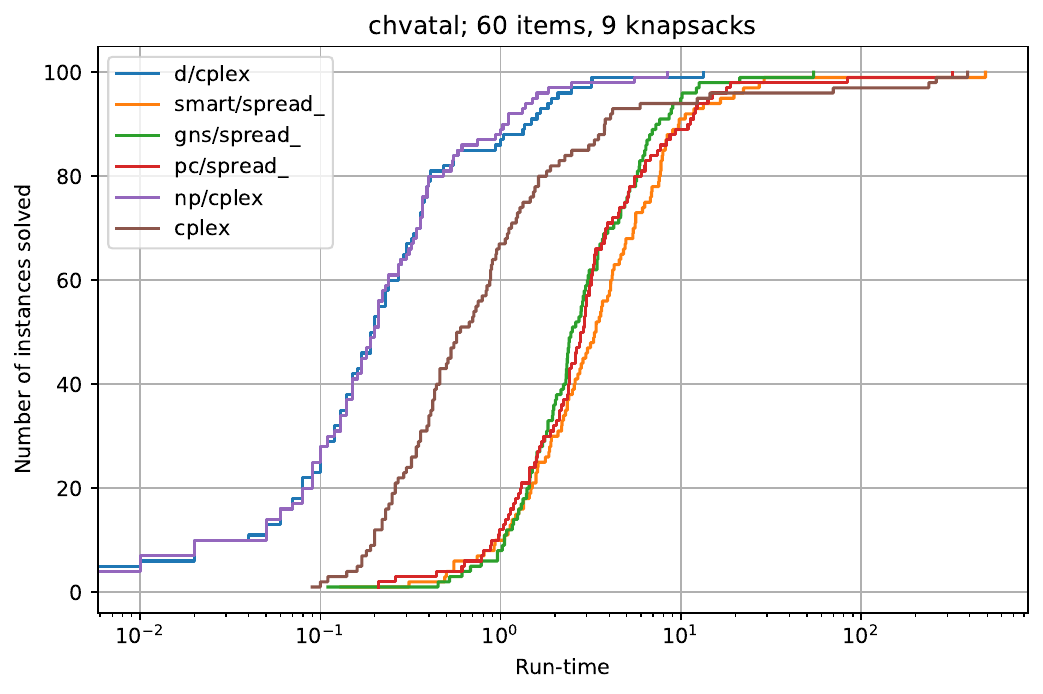}
\end{subfigure}
\begin{subfigure}
  \centering
  \includegraphics[width=.39\linewidth]{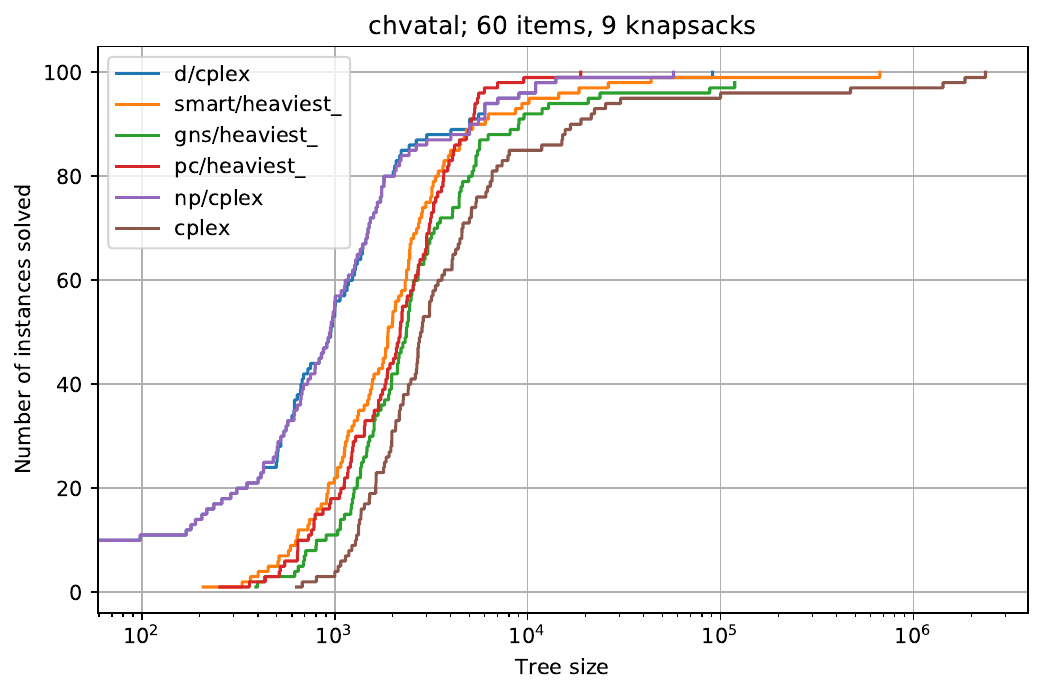}
  \label{fig:blah}
\end{subfigure}
\begin{subfigure}
  \centering
  \includegraphics[width=.39\linewidth]{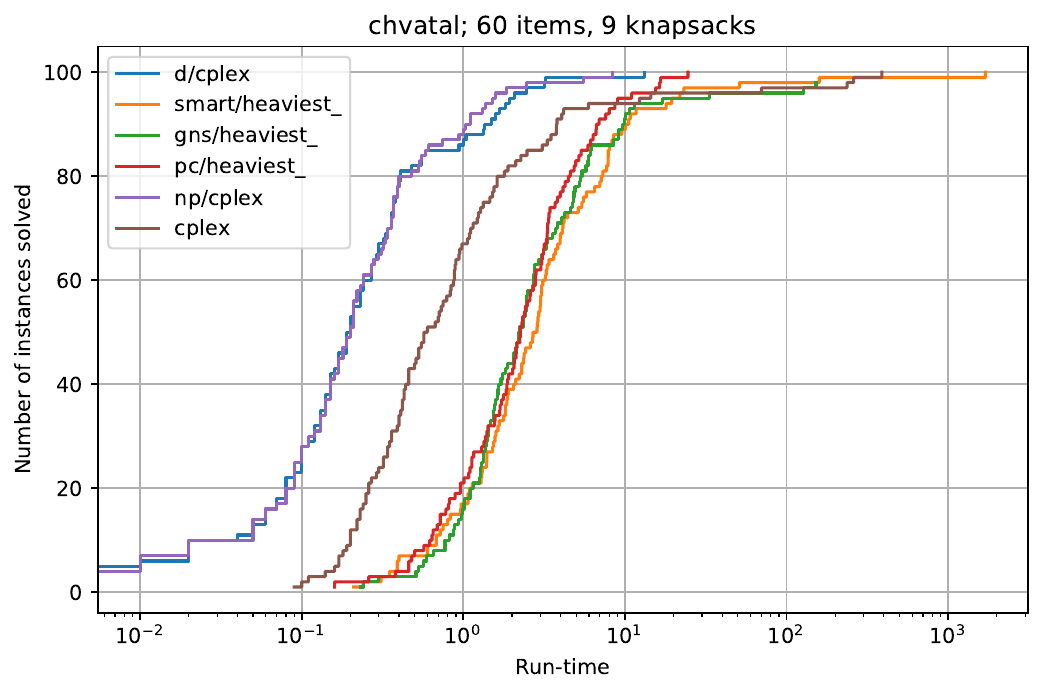}
\end{subfigure}
\begin{subfigure}
  \centering
  \includegraphics[width=.39\linewidth]{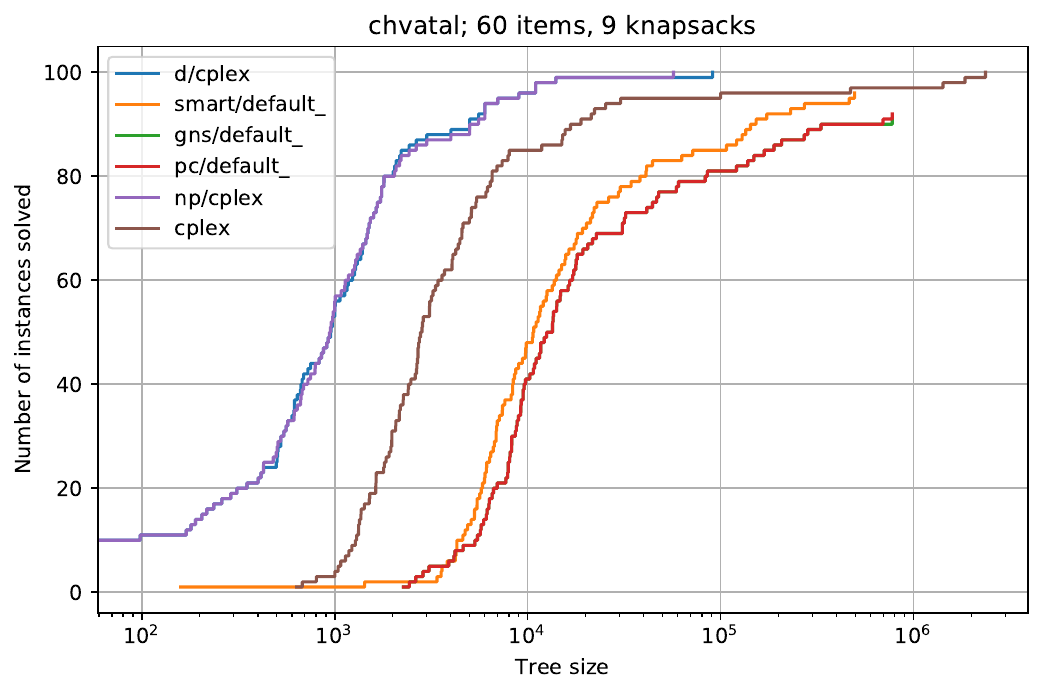}
  \label{fig:blah}
\end{subfigure}
\begin{subfigure}
  \centering
  \includegraphics[width=.39\linewidth]{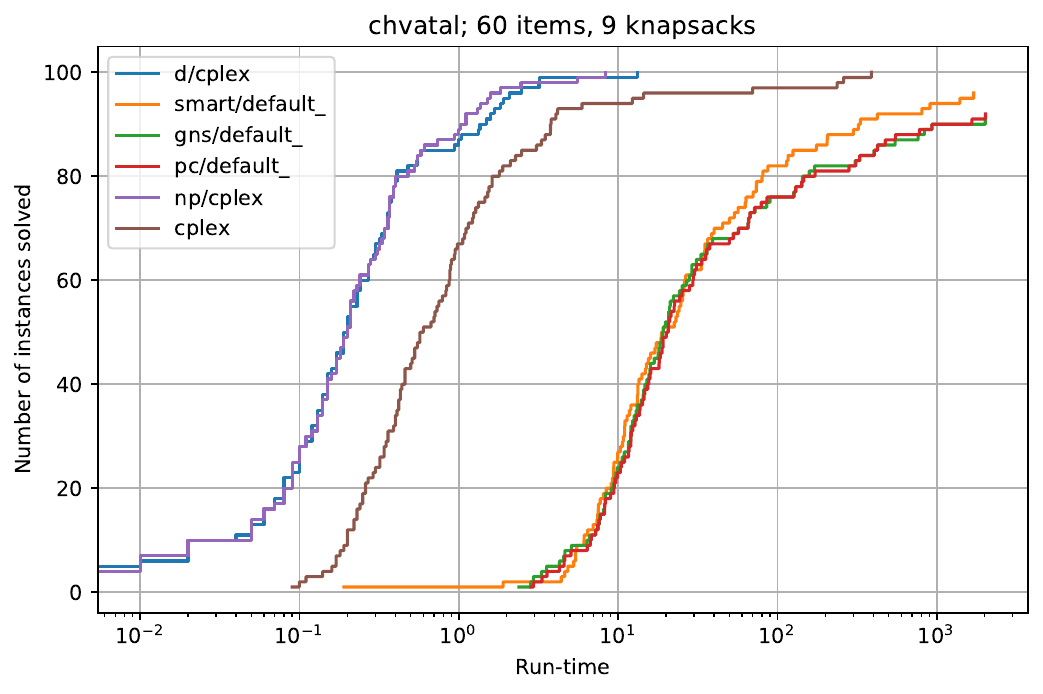}
\end{subfigure}
\begin{subfigure}
  \centering
  \includegraphics[width=.39\linewidth]{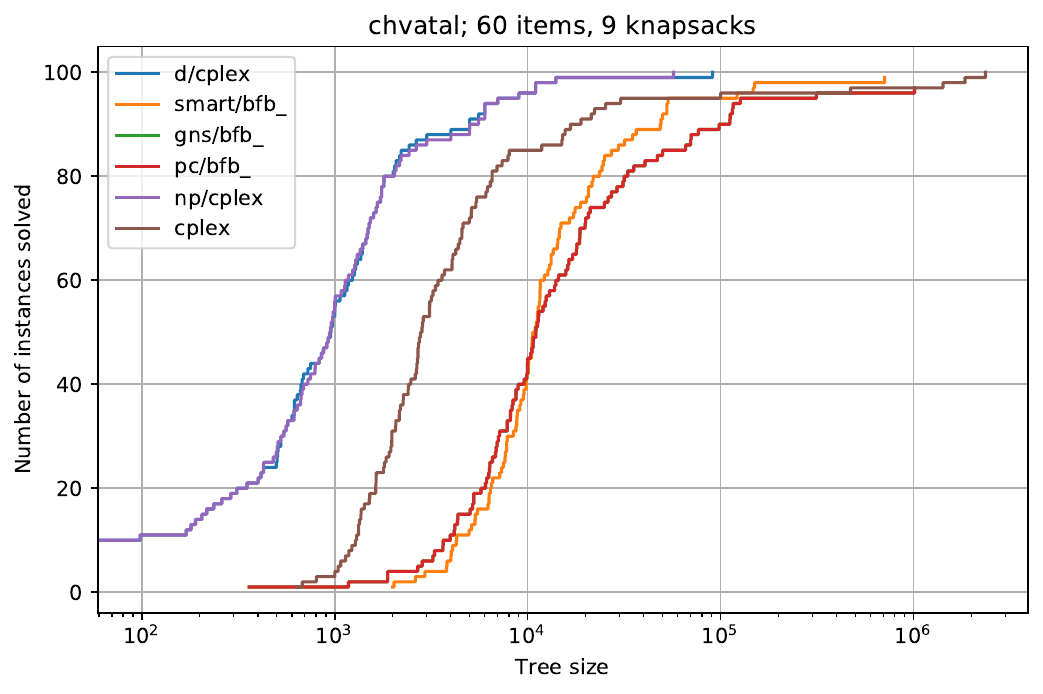}
  \label{fig:blah}
\end{subfigure}
\begin{subfigure}
  \centering
  \includegraphics[width=.39\linewidth]{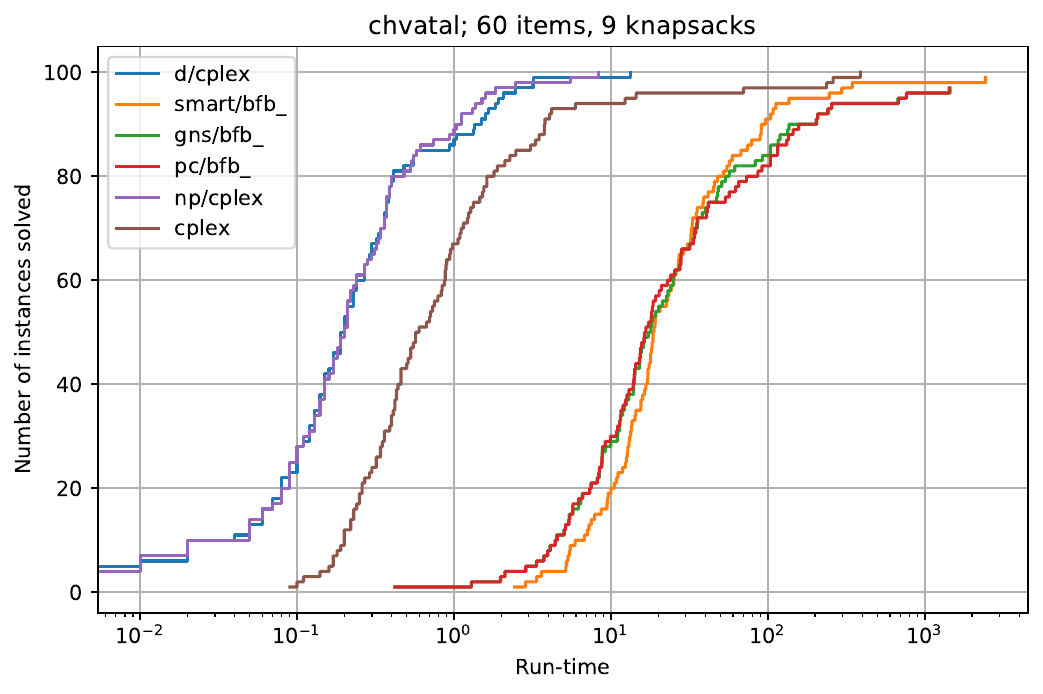}
\end{subfigure}

\caption{Chv\'{a}tal, CPLEX cover cuts off, all other parameters off}
\label{fig:chvatal_}
\end{figure}

\begin{figure}[t]
\centering
\begin{subfigure}
  \centering
  \includegraphics[width=.39\linewidth]{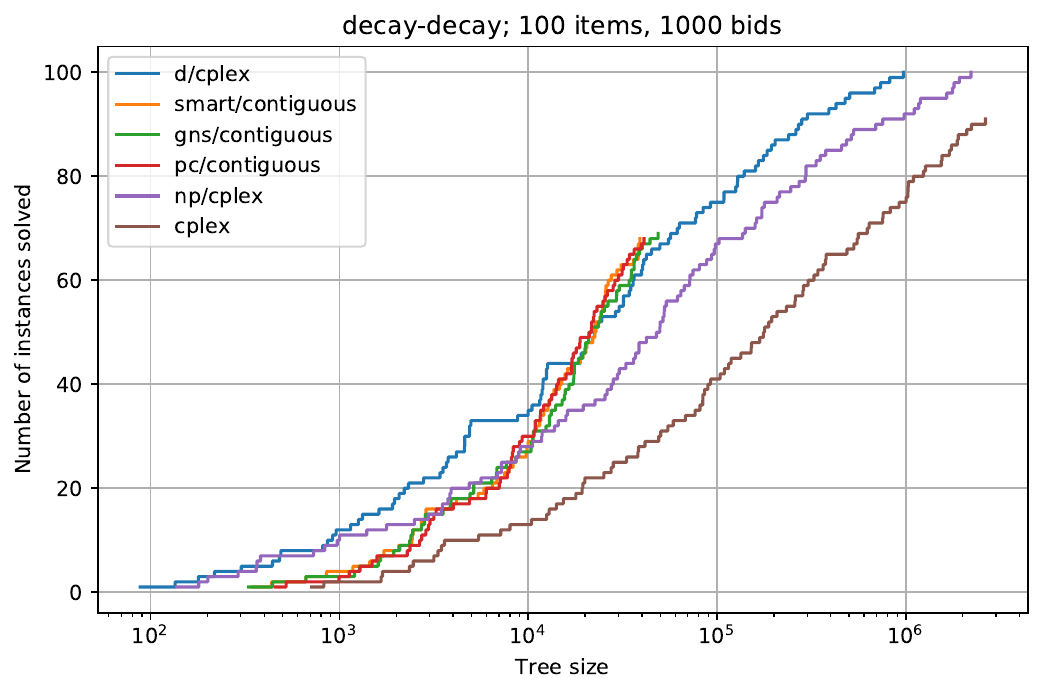}
  \label{fig:blah}
\end{subfigure}
\begin{subfigure}
  \centering
  \includegraphics[width=.39\linewidth]{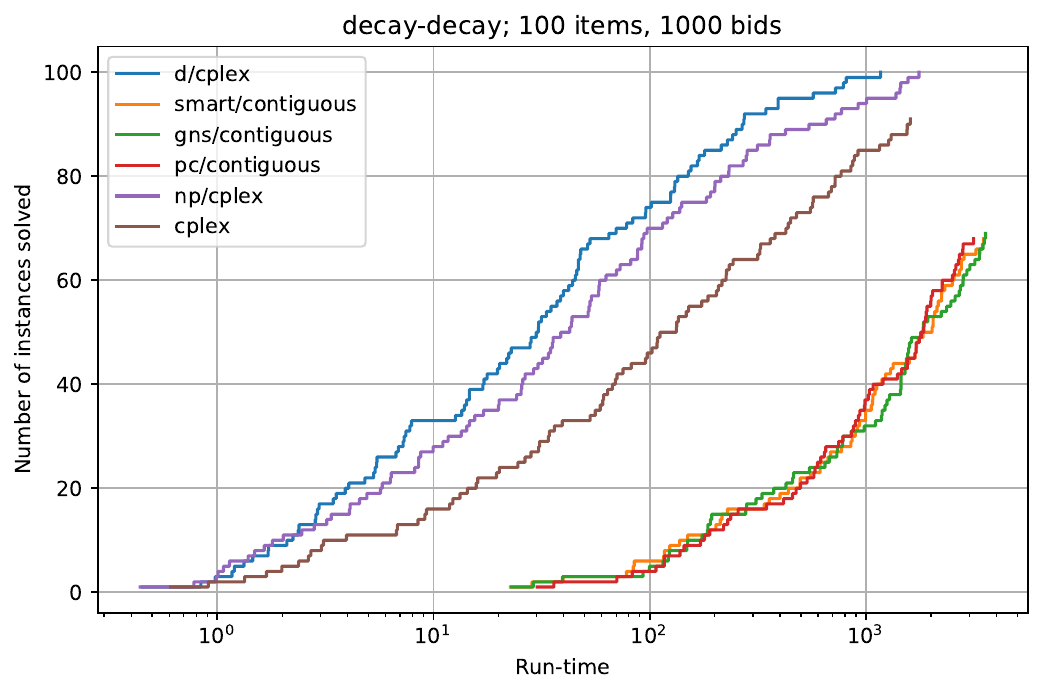}
\end{subfigure}
\begin{subfigure}
  \centering
  \includegraphics[width=.39\linewidth]{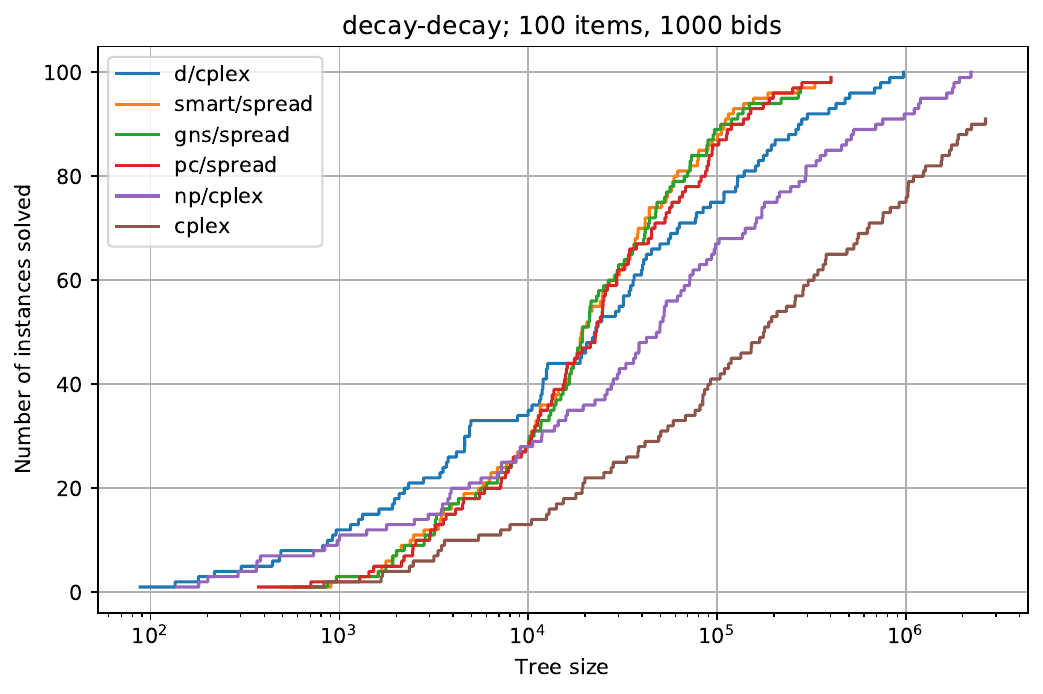}
  \label{fig:blah}
\end{subfigure}
\begin{subfigure}
  \centering
  \includegraphics[width=.39\linewidth]{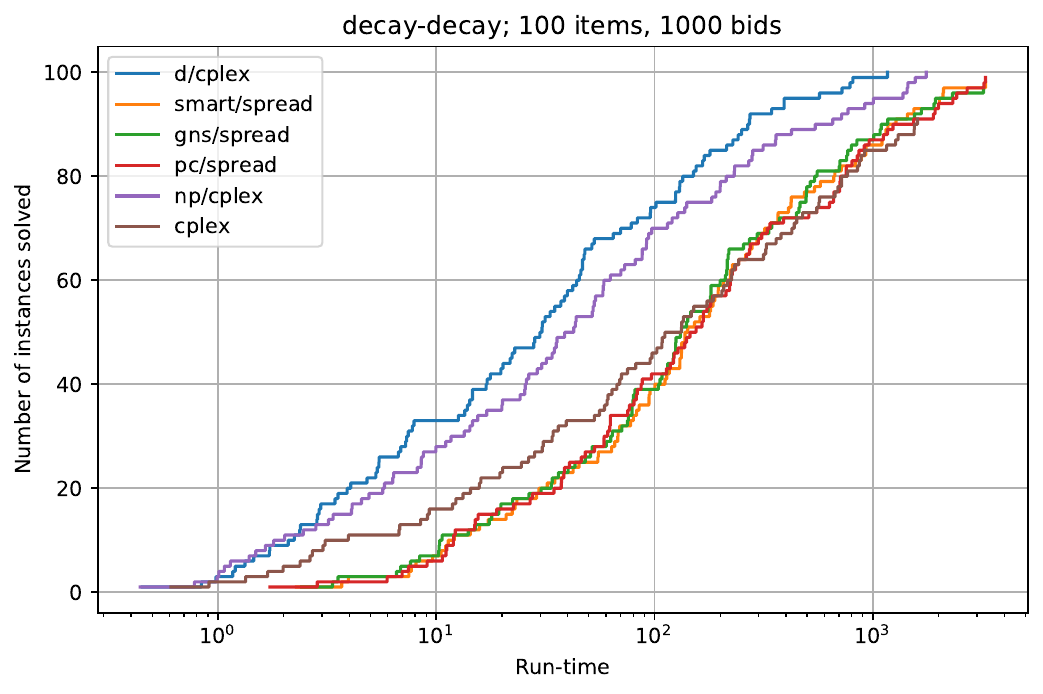}
\end{subfigure}
\begin{subfigure}
  \centering
  \includegraphics[width=.39\linewidth]{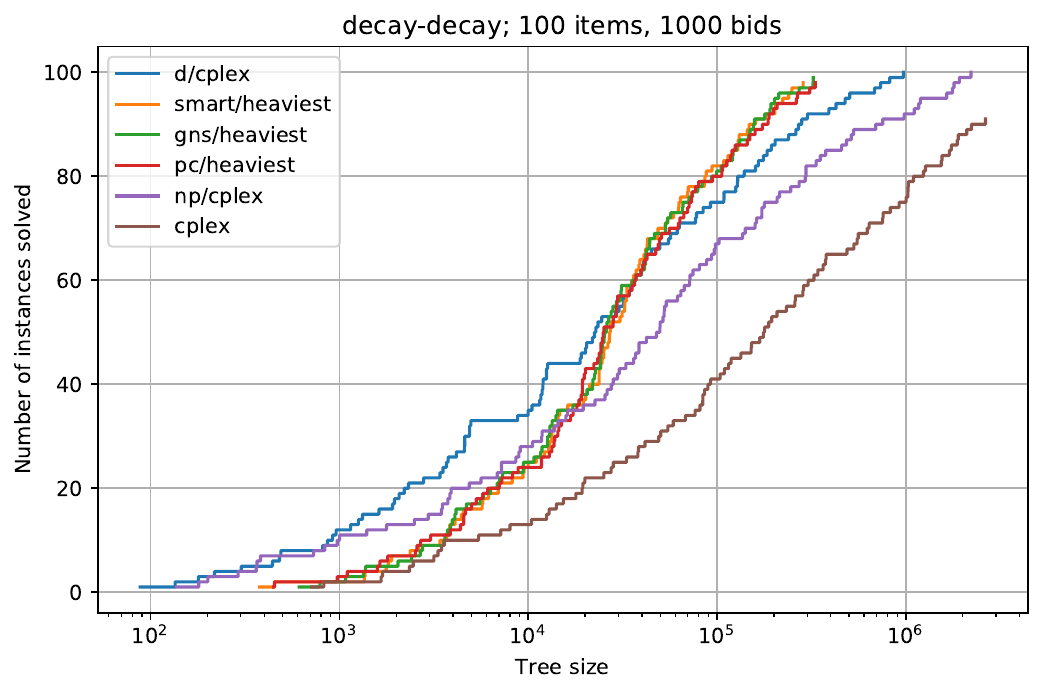}
  \label{fig:blah}
\end{subfigure}
\begin{subfigure}
  \centering
  \includegraphics[width=.39\linewidth]{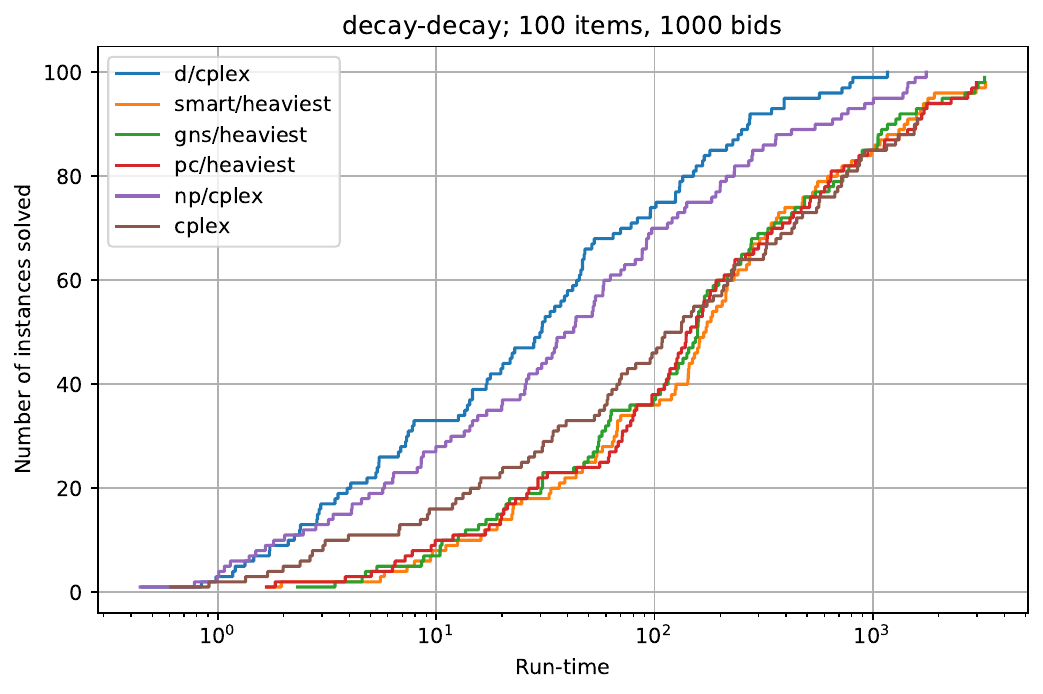}
\end{subfigure}
\begin{subfigure}
  \centering
  \includegraphics[width=.39\linewidth]{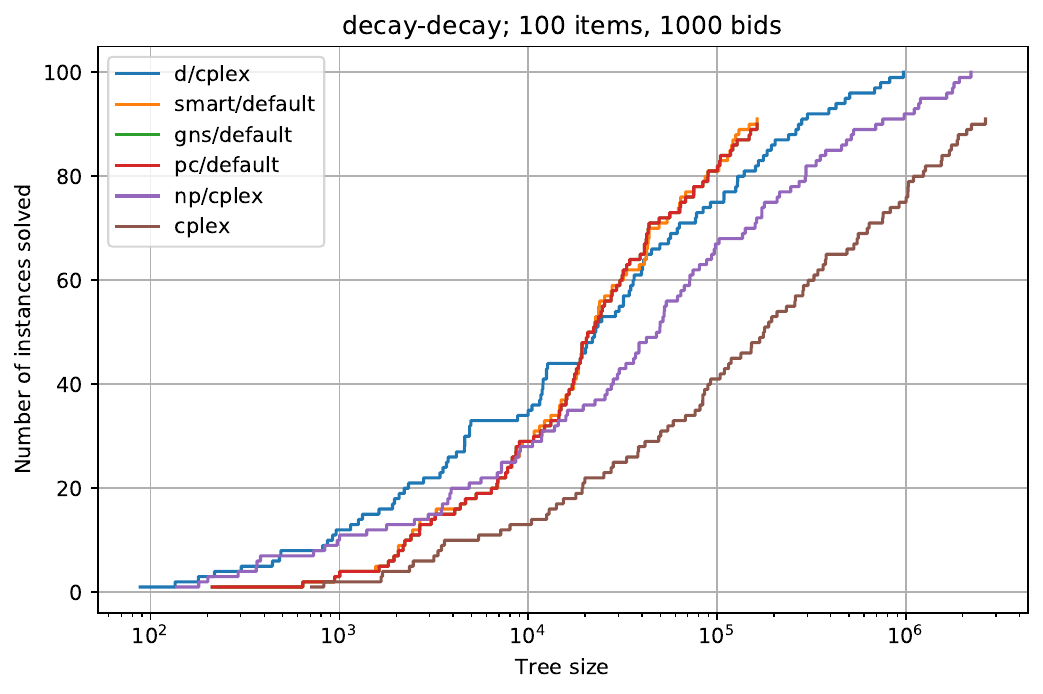}
  \label{fig:blah}
\end{subfigure}
\begin{subfigure}
  \centering
  \includegraphics[width=.39\linewidth]{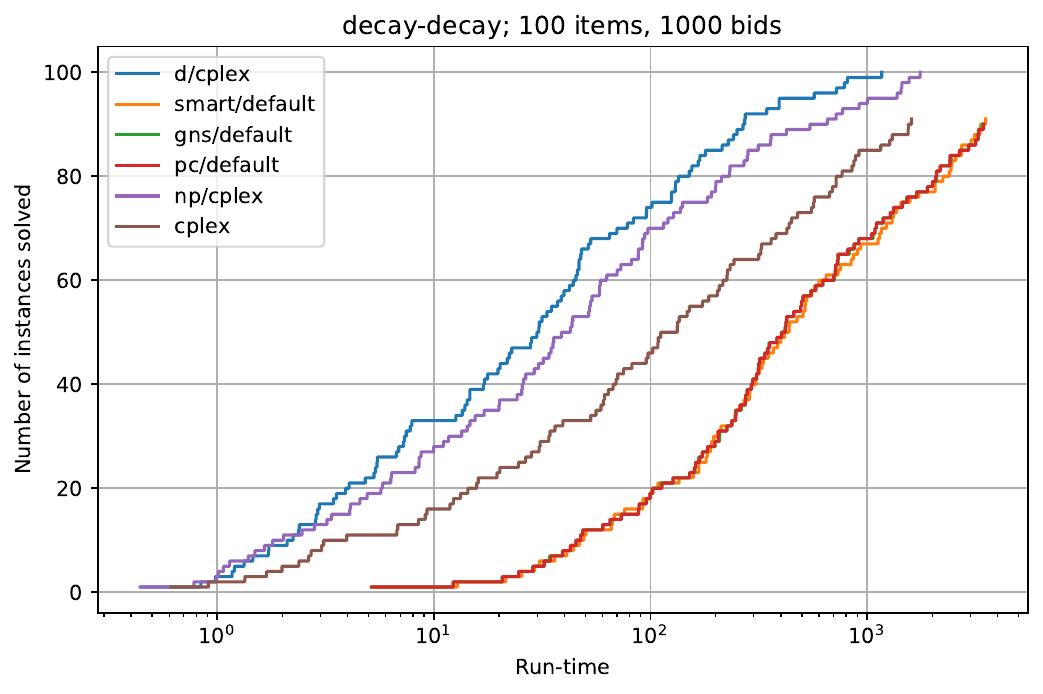}
\end{subfigure}
\begin{subfigure}
  \centering
  \includegraphics[width=.39\linewidth]{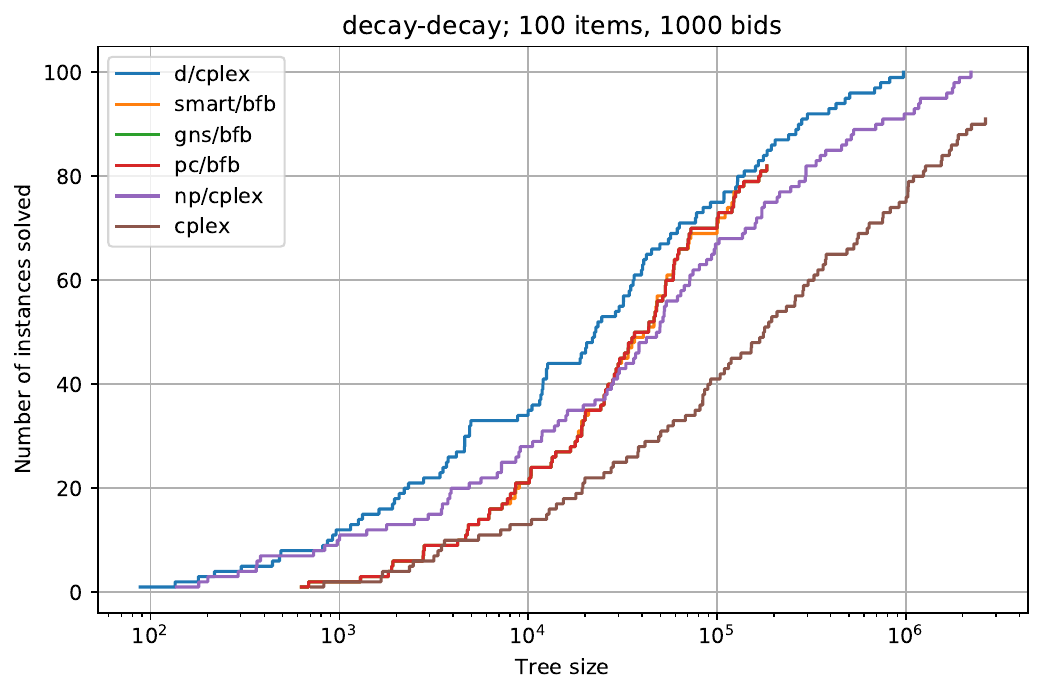}
  \label{fig:blah}
\end{subfigure}
\begin{subfigure}
  \centering
  \includegraphics[width=.39\linewidth]{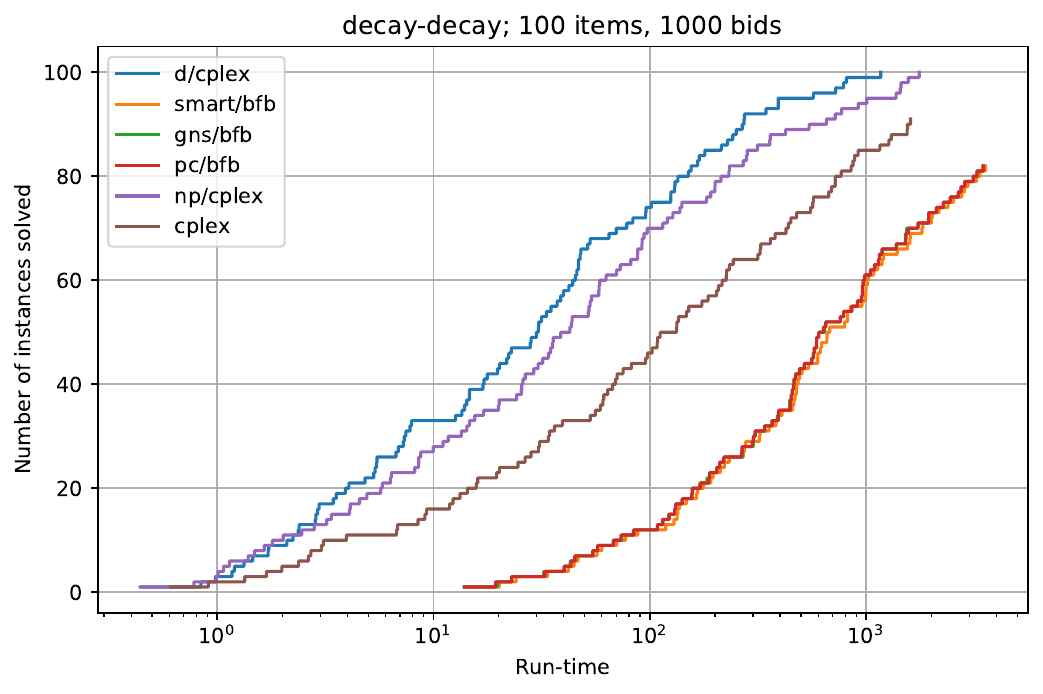}
\end{subfigure}

\caption{Decay-decay, CPLEX cover cuts on, all other parameters off}
\label{fig:muca}
\end{figure}

\begin{figure}[t]
\centering
\begin{subfigure}
  \centering
  \includegraphics[width=.39\linewidth]{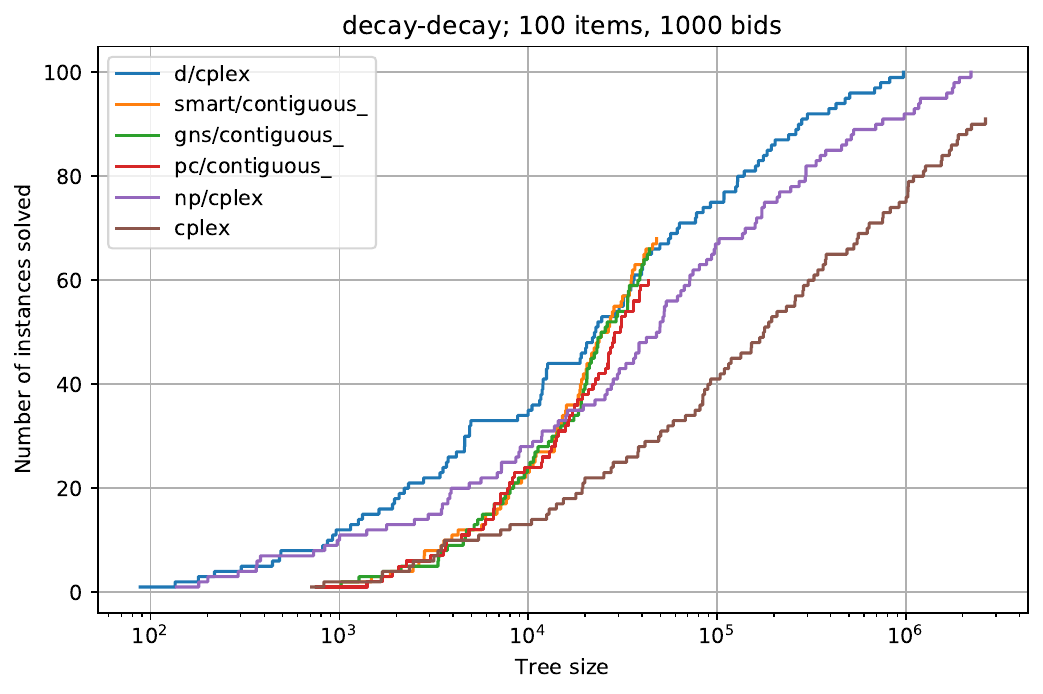}
  \label{fig:blah}
\end{subfigure}
\begin{subfigure}
  \centering
  \includegraphics[width=.39\linewidth]{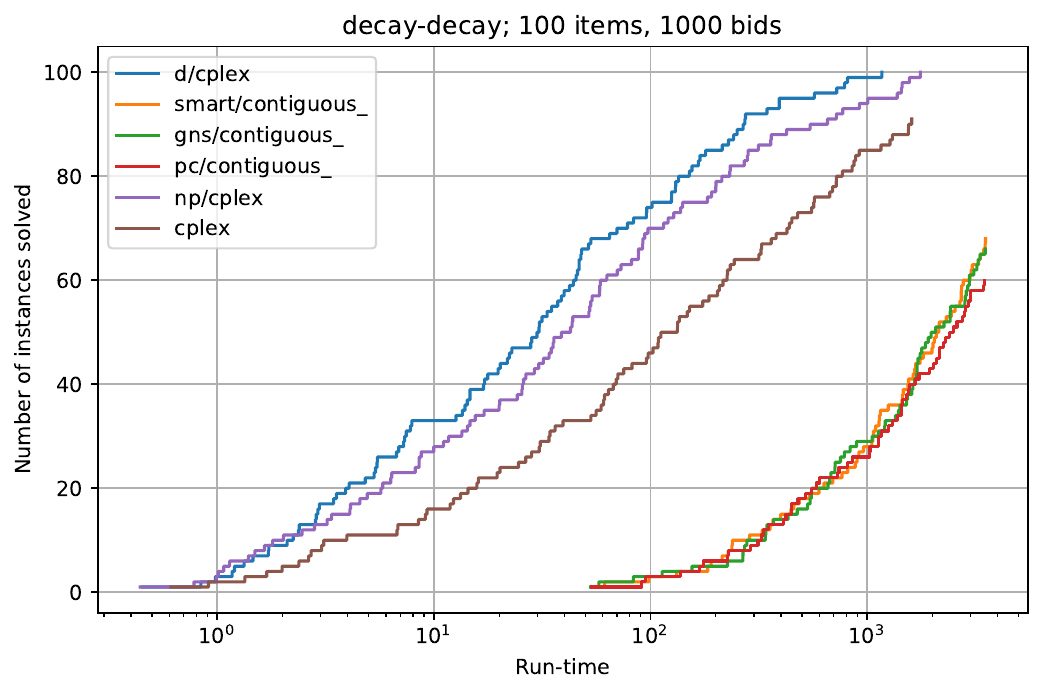}
\end{subfigure}
\begin{subfigure}
  \centering
  \includegraphics[width=.39\linewidth]{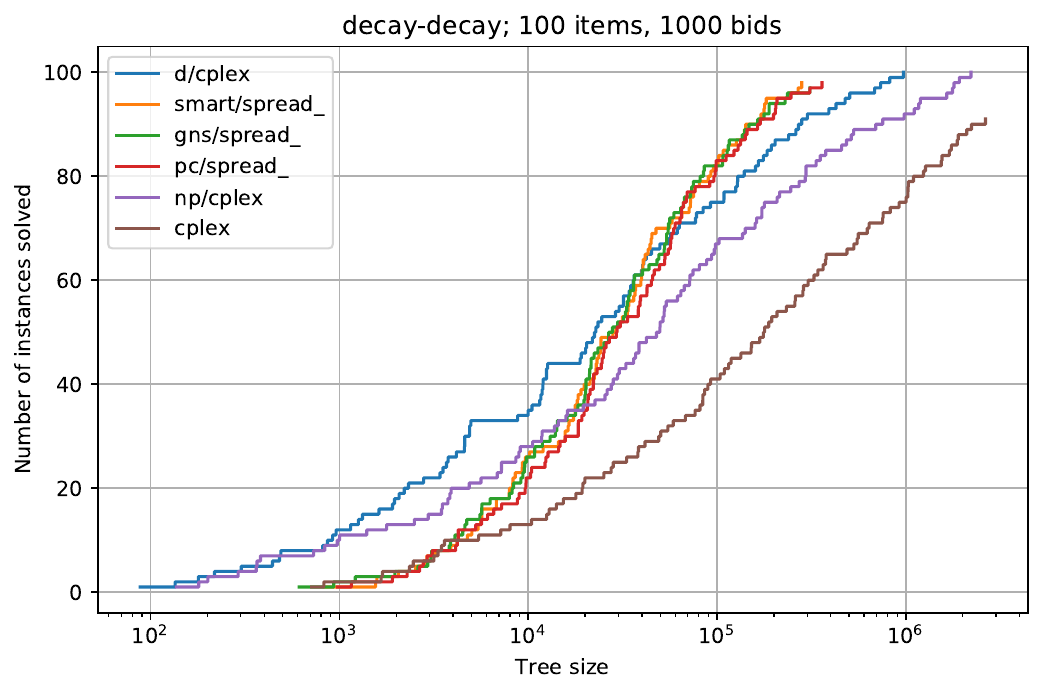}
  \label{fig:blah}
\end{subfigure}
\begin{subfigure}
  \centering
  \includegraphics[width=.39\linewidth]{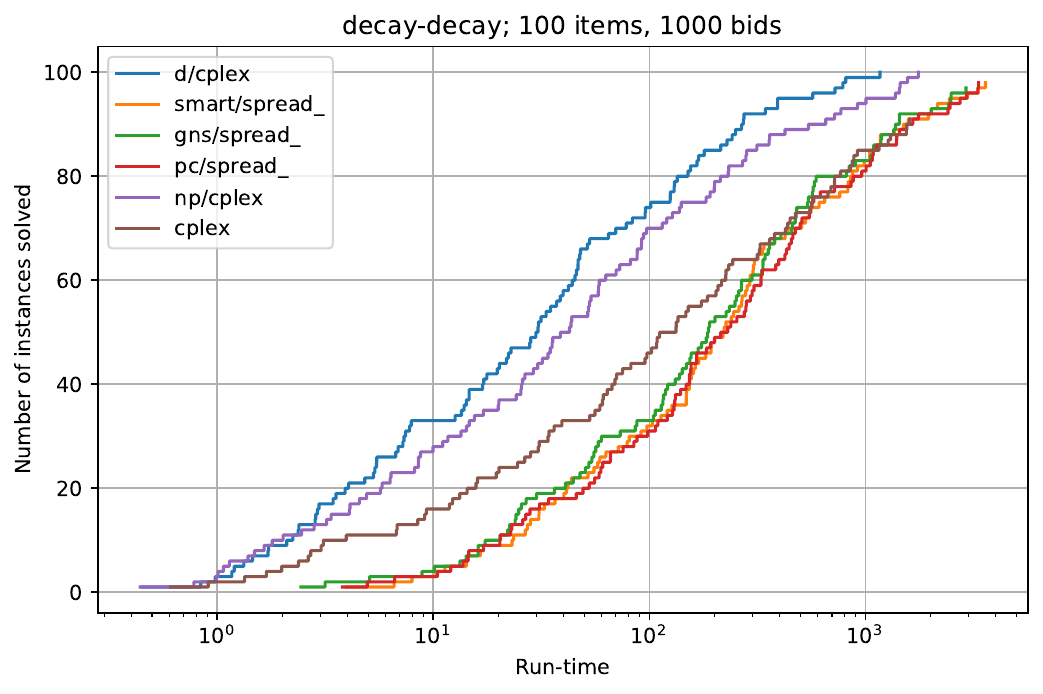}
\end{subfigure}
\begin{subfigure}
  \centering
  \includegraphics[width=.39\linewidth]{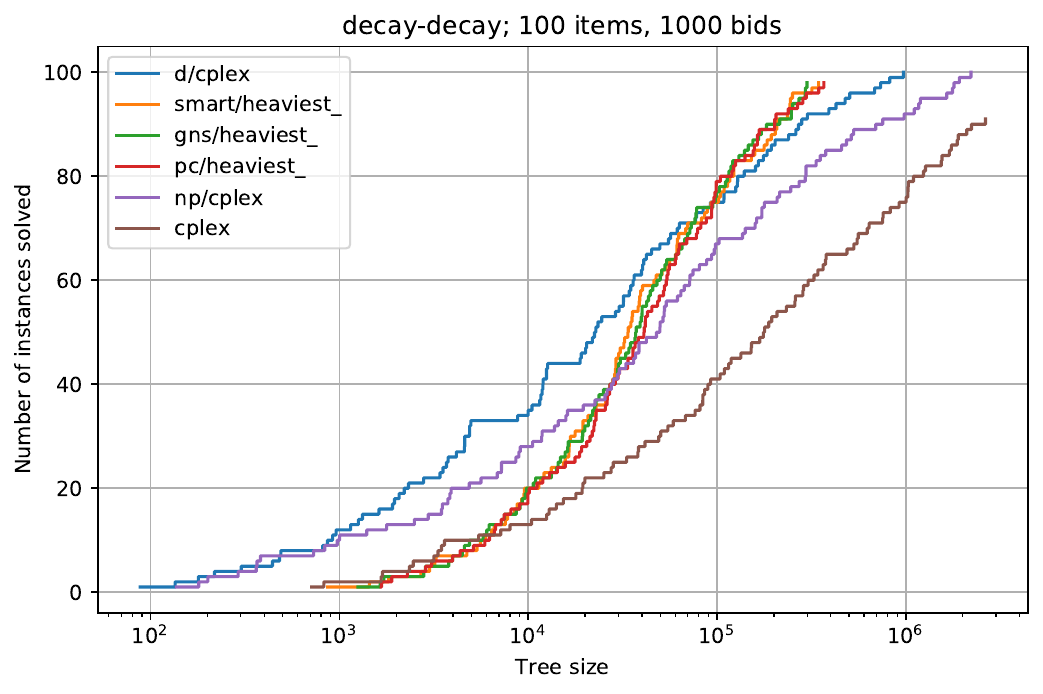}
  \label{fig:blah}
\end{subfigure}
\begin{subfigure}
  \centering
  \includegraphics[width=.39\linewidth]{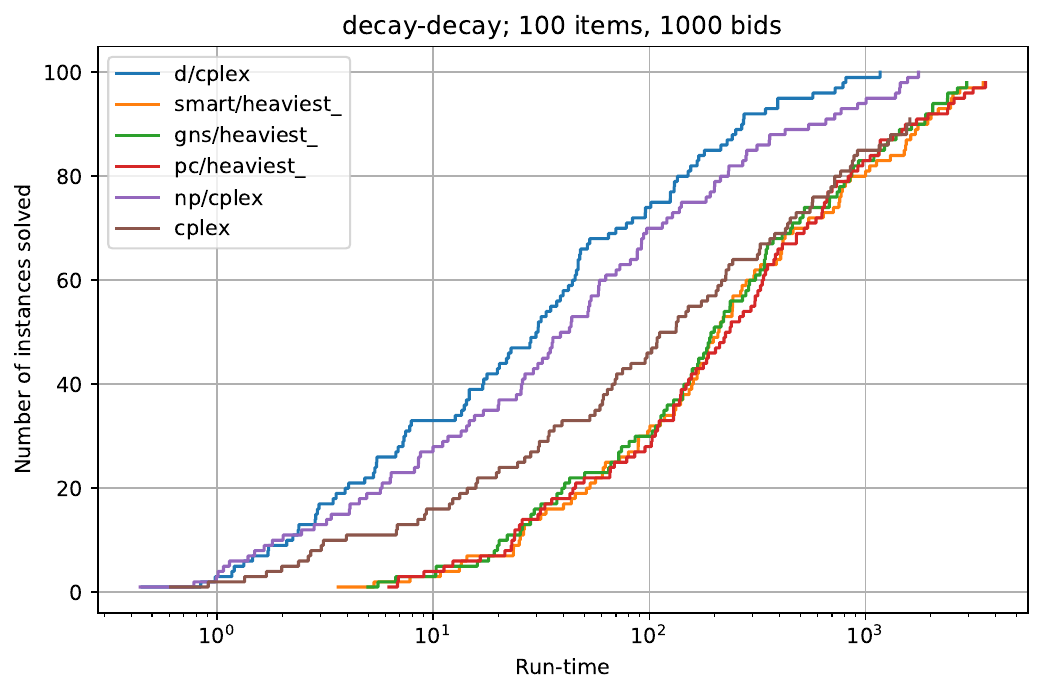}
\end{subfigure}
\begin{subfigure}
  \centering
  \includegraphics[width=.39\linewidth]{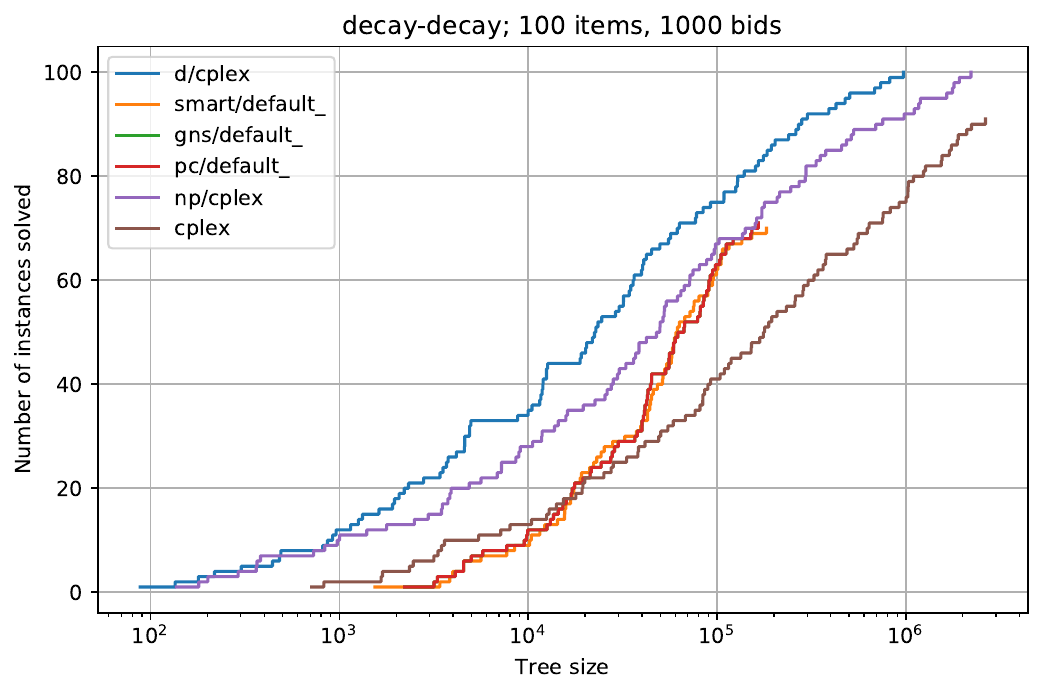}
  \label{fig:blah}
\end{subfigure}
\begin{subfigure}
  \centering
  \includegraphics[width=.39\linewidth]{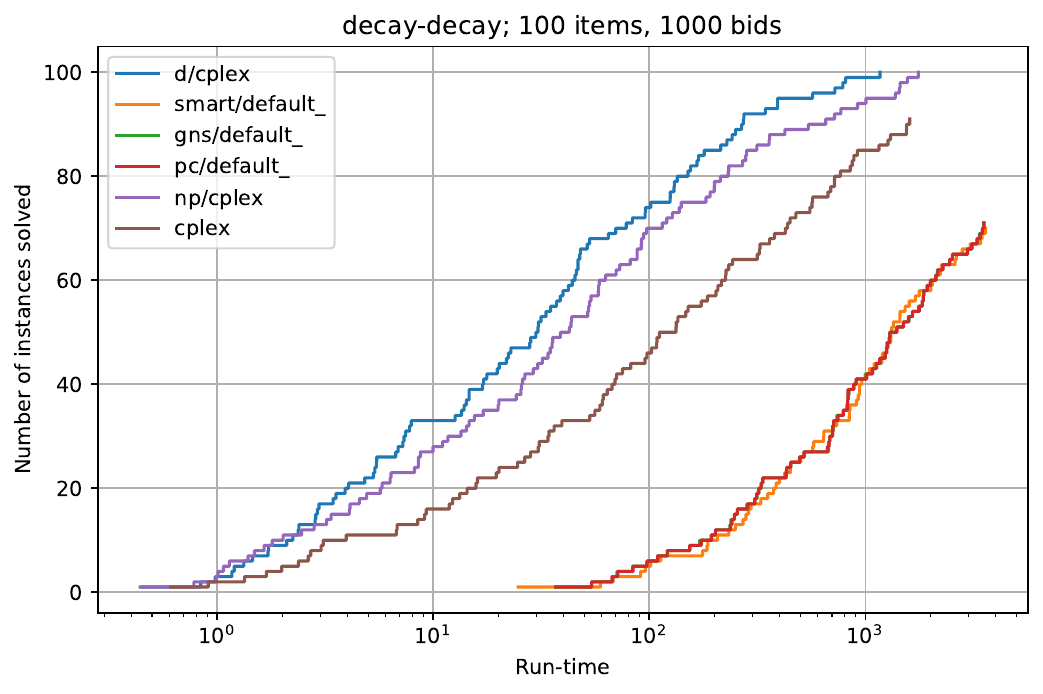}
\end{subfigure}
\begin{subfigure}
  \centering
  \includegraphics[width=.39\linewidth]{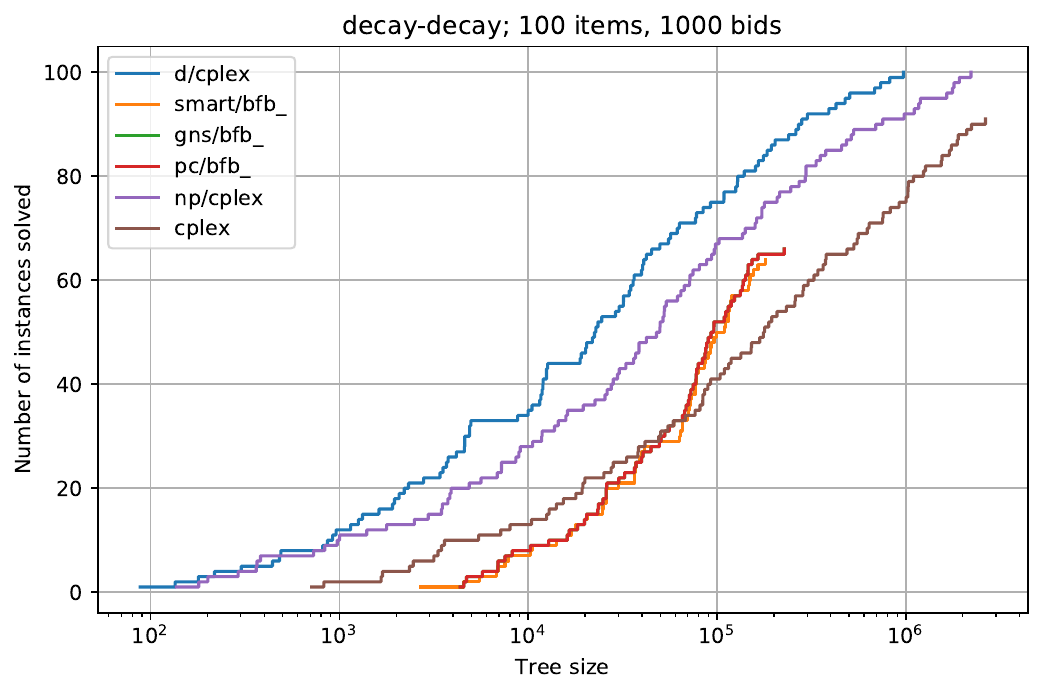}
  \label{fig:blah}
\end{subfigure}
\begin{subfigure}
  \centering
  \includegraphics[width=.39\linewidth]{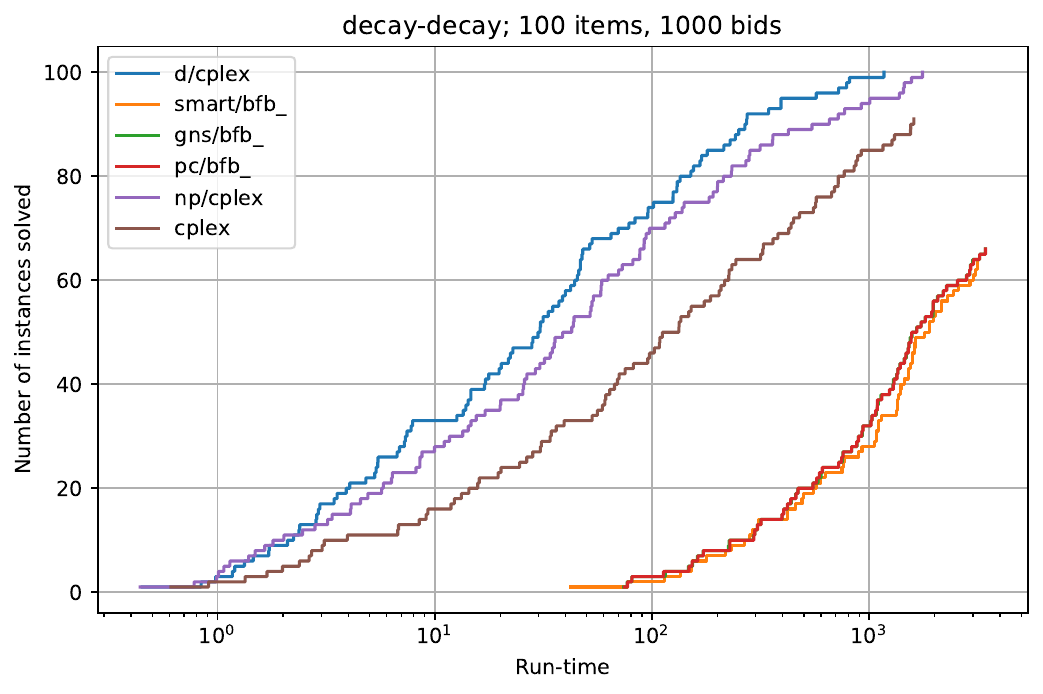}
\end{subfigure}

\caption{Decay-decay, CPLEX cover cuts off, all other parameters off}
\label{fig:muca_}
\end{figure}

\begin{figure}[t]
\centering
\begin{subfigure}
  \centering
  \includegraphics[width=.39\linewidth]{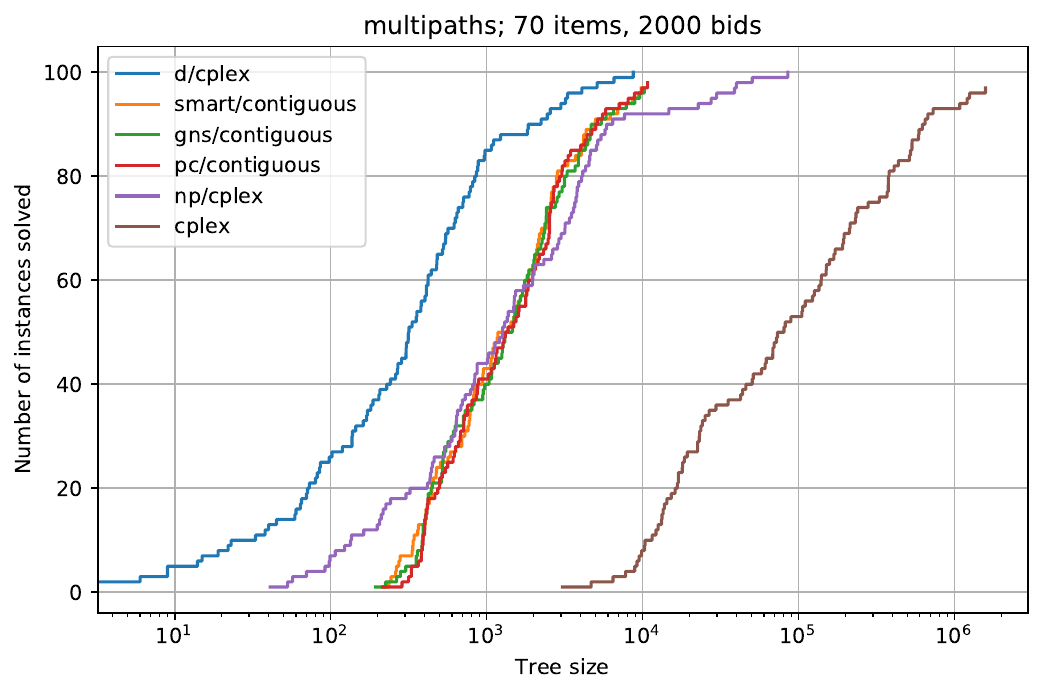}
  \label{fig:blah}
\end{subfigure}
\begin{subfigure}
  \centering
  \includegraphics[width=.39\linewidth]{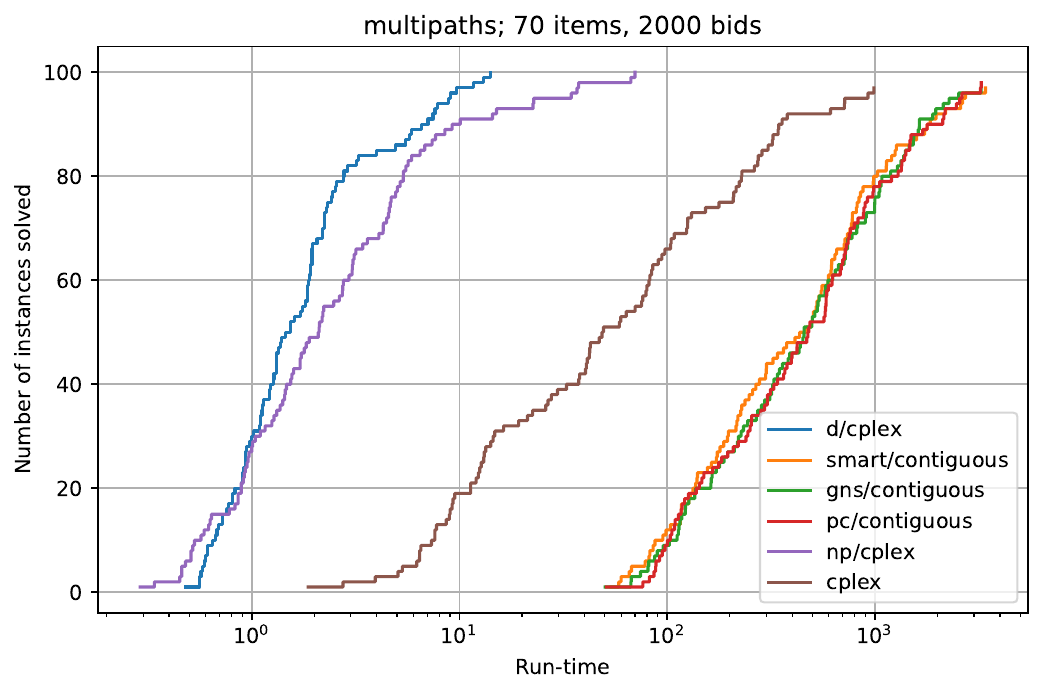}
\end{subfigure}
\begin{subfigure}
  \centering
  \includegraphics[width=.39\linewidth]{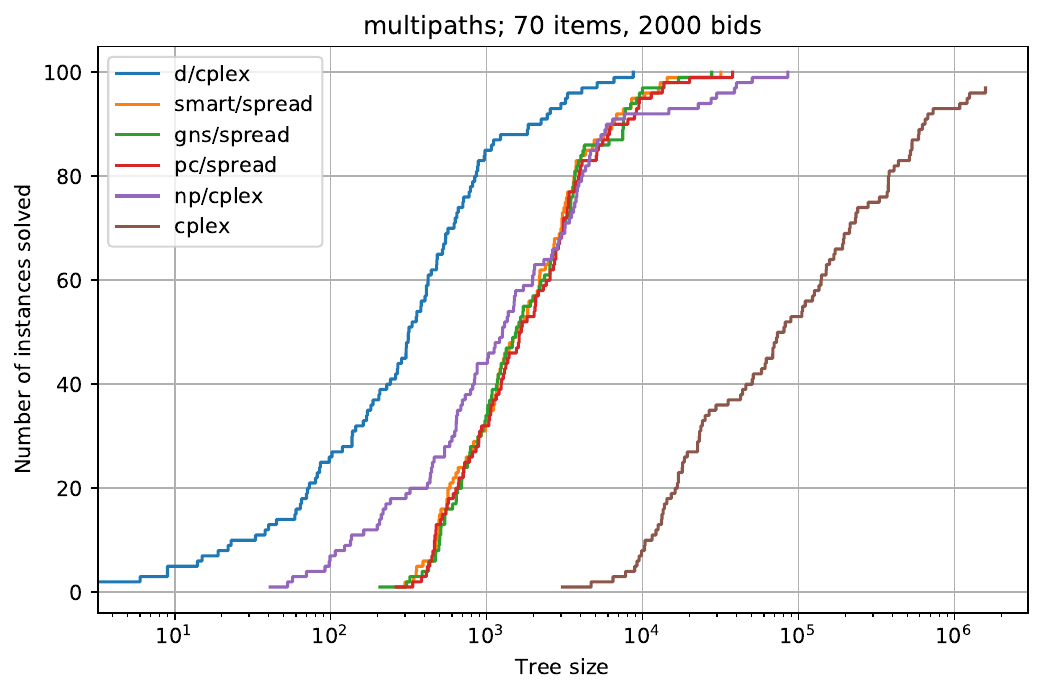}
  \label{fig:blah}
\end{subfigure}
\begin{subfigure}
  \centering
  \includegraphics[width=.39\linewidth]{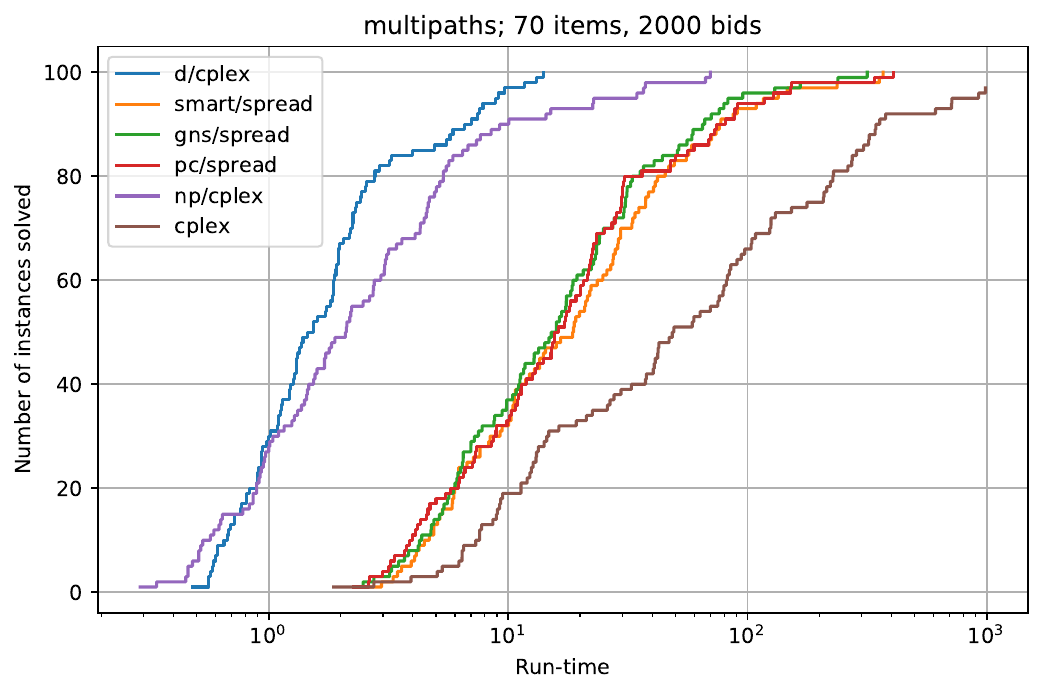}
\end{subfigure}
\begin{subfigure}
  \centering
  \includegraphics[width=.39\linewidth]{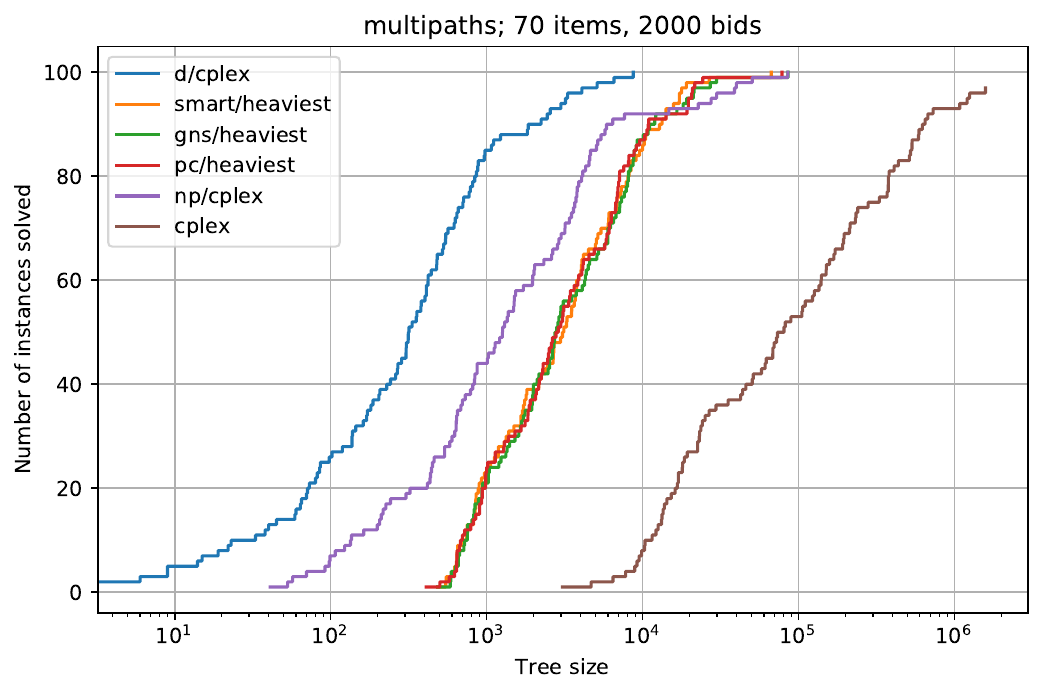}
  \label{fig:blah}
\end{subfigure}
\begin{subfigure}
  \centering
  \includegraphics[width=.39\linewidth]{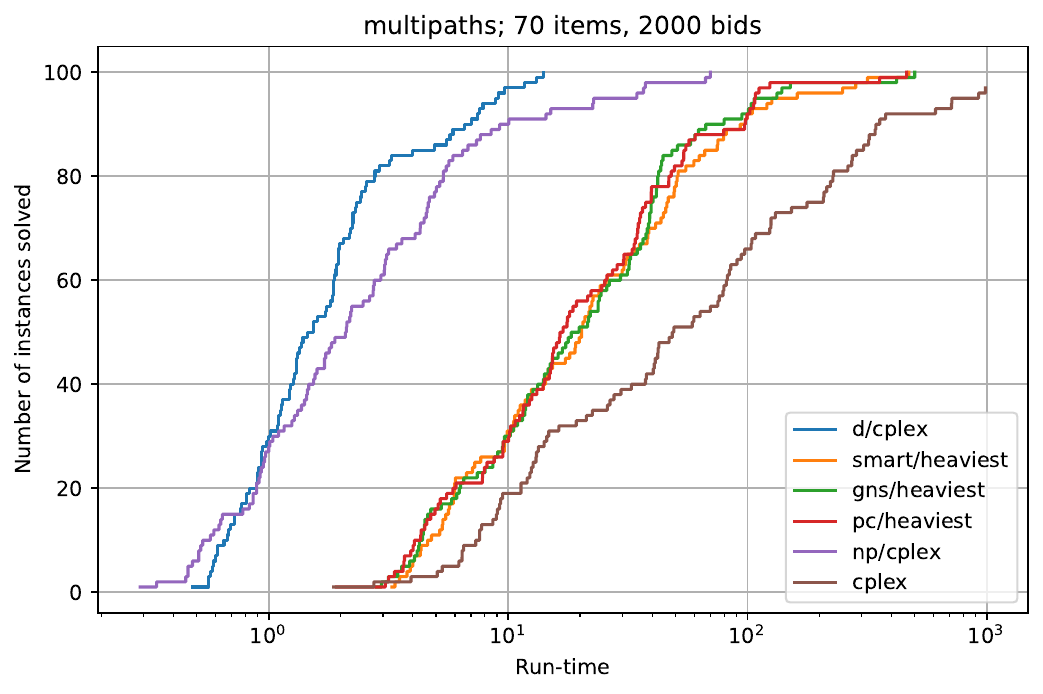}
\end{subfigure}
\begin{subfigure}
  \centering
  \includegraphics[width=.39\linewidth]{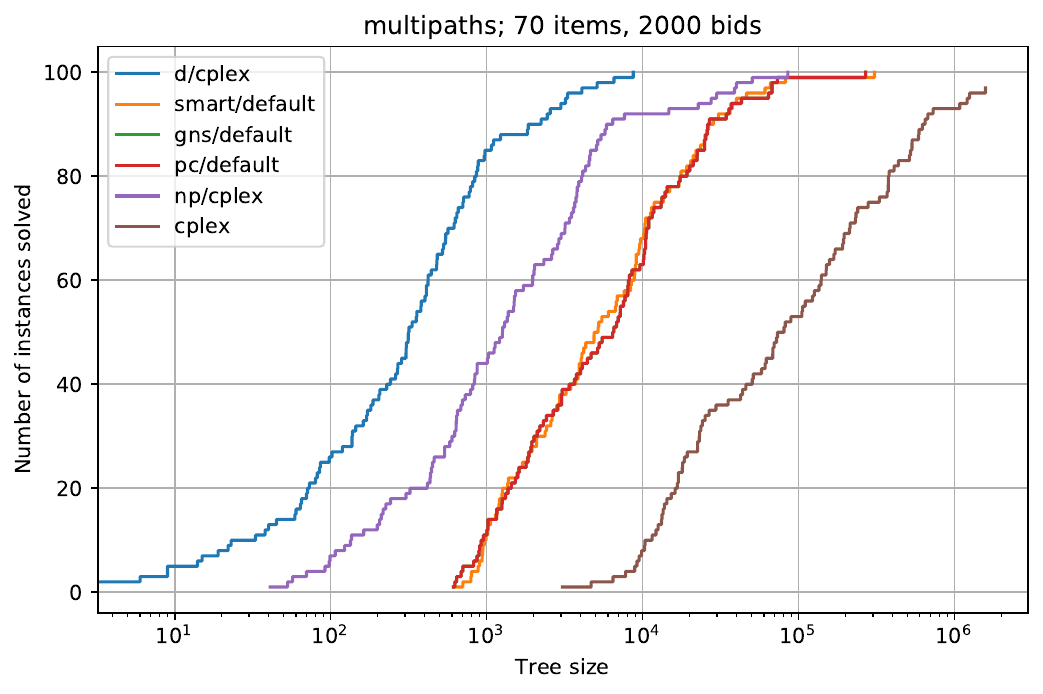}
  \label{fig:blah}
\end{subfigure}
\begin{subfigure}
  \centering
  \includegraphics[width=.39\linewidth]{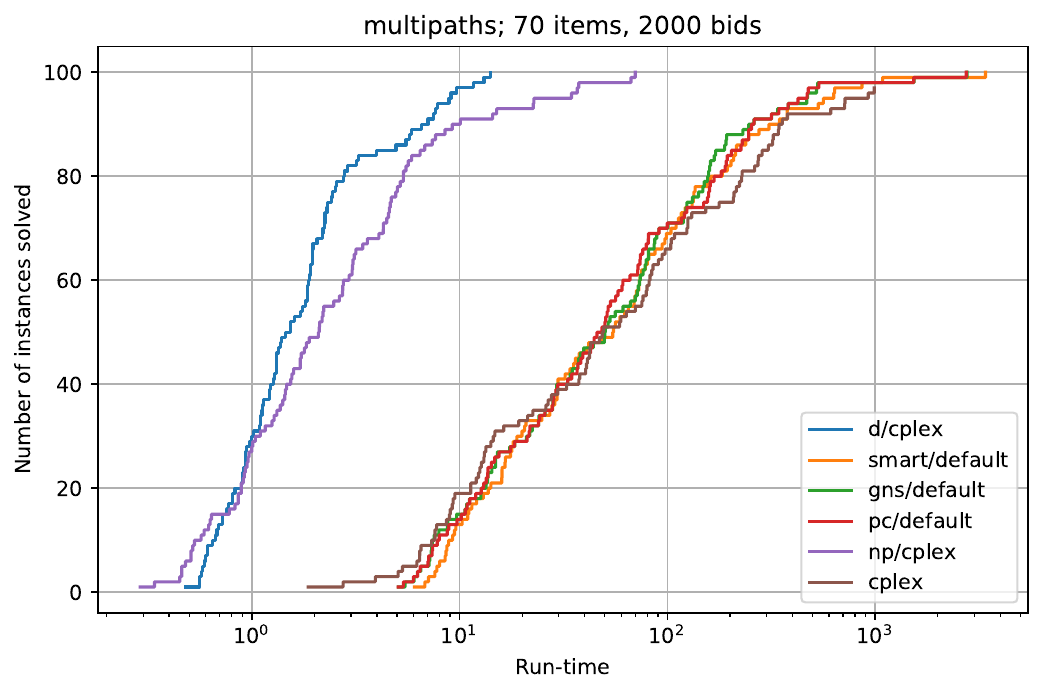}
\end{subfigure}
\begin{subfigure}
  \centering
  \includegraphics[width=.39\linewidth]{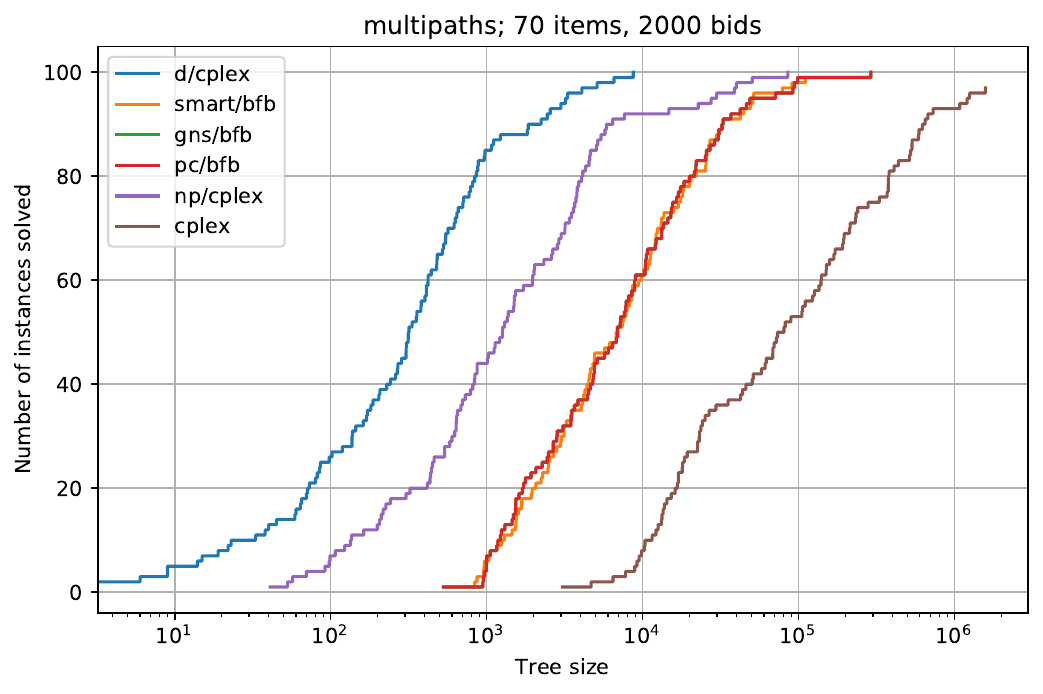}
  \label{fig:blah}
\end{subfigure}
\begin{subfigure}
  \centering
  \includegraphics[width=.39\linewidth]{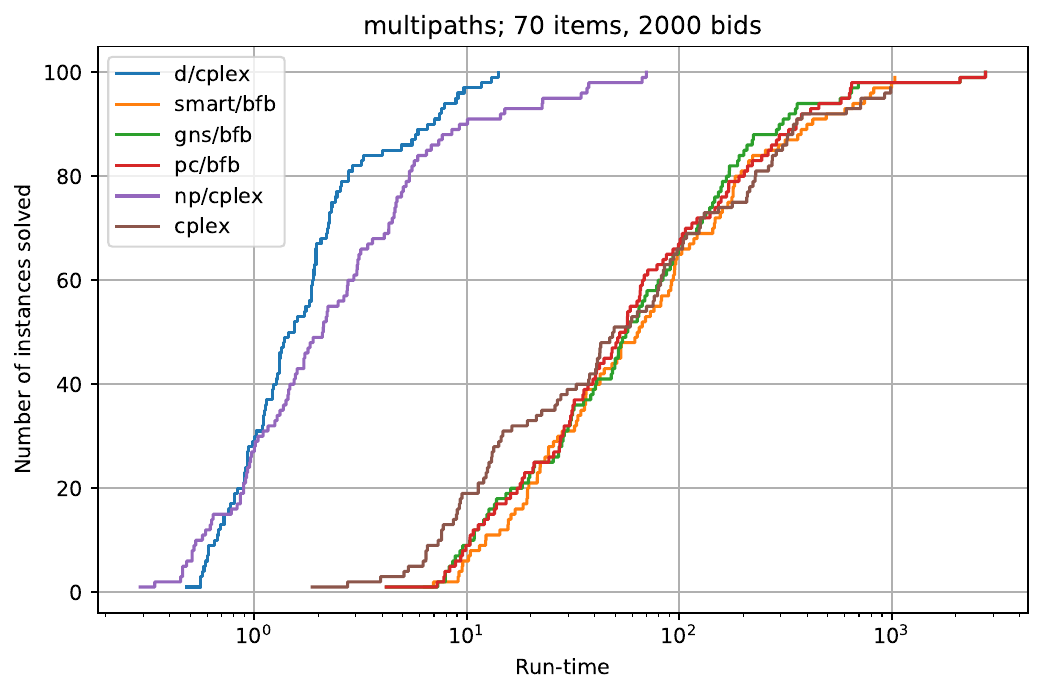}
\end{subfigure}

\caption{Multipaths, CPLEX cover cuts on, all other parameters off}
\label{fig:multipaths}
\end{figure}

\begin{figure}[t]
\centering
\begin{subfigure}
  \centering
  \includegraphics[width=.39\linewidth]{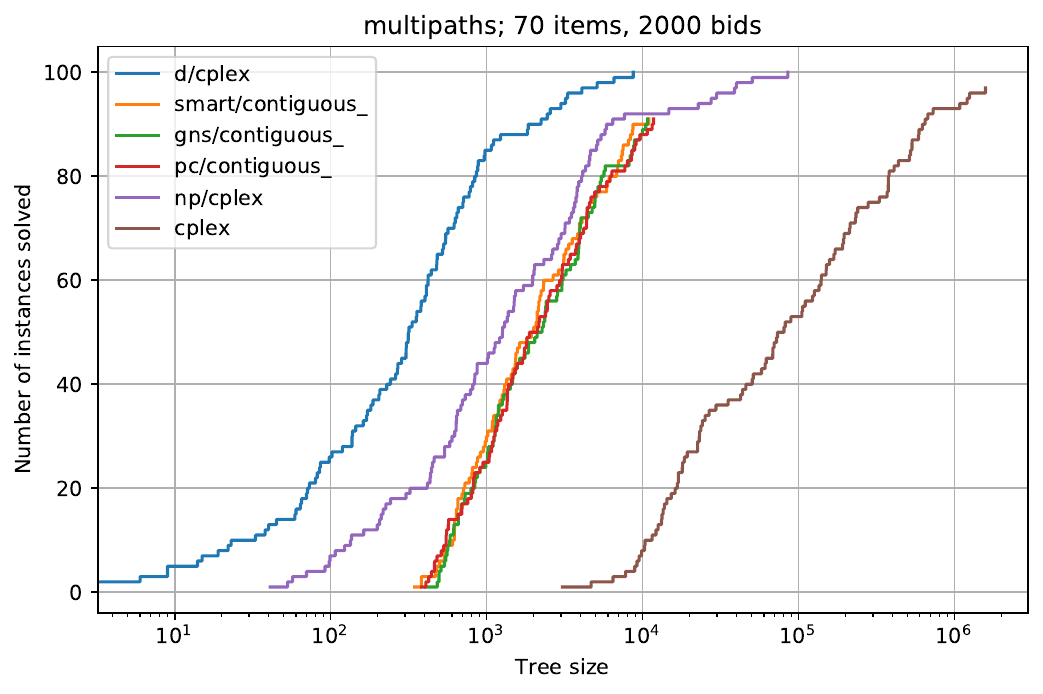}
  \label{fig:blah}
\end{subfigure}
\begin{subfigure}
  \centering
  \includegraphics[width=.39\linewidth]{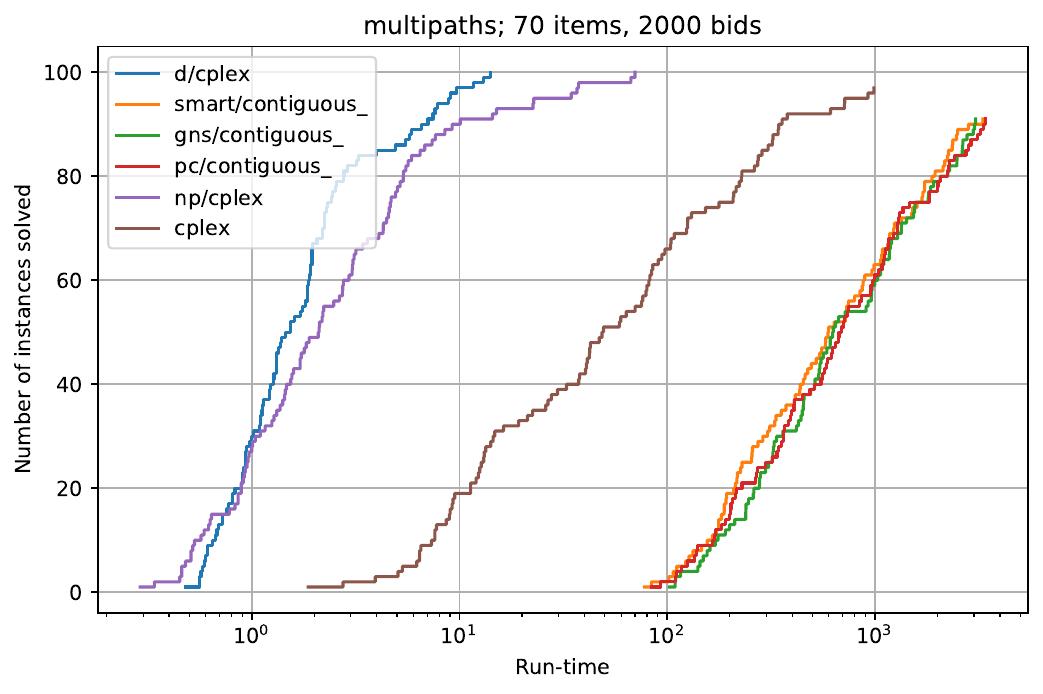}
\end{subfigure}
\begin{subfigure}
  \centering
  \includegraphics[width=.39\linewidth]{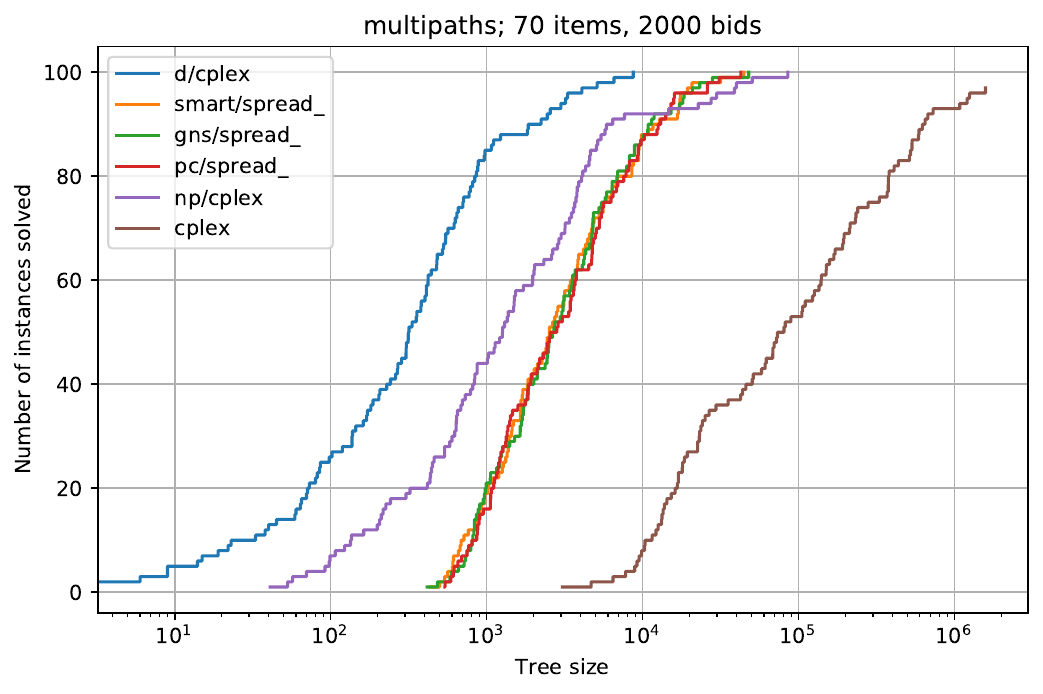}
  \label{fig:blah}
\end{subfigure}
\begin{subfigure}
  \centering
  \includegraphics[width=.39\linewidth]{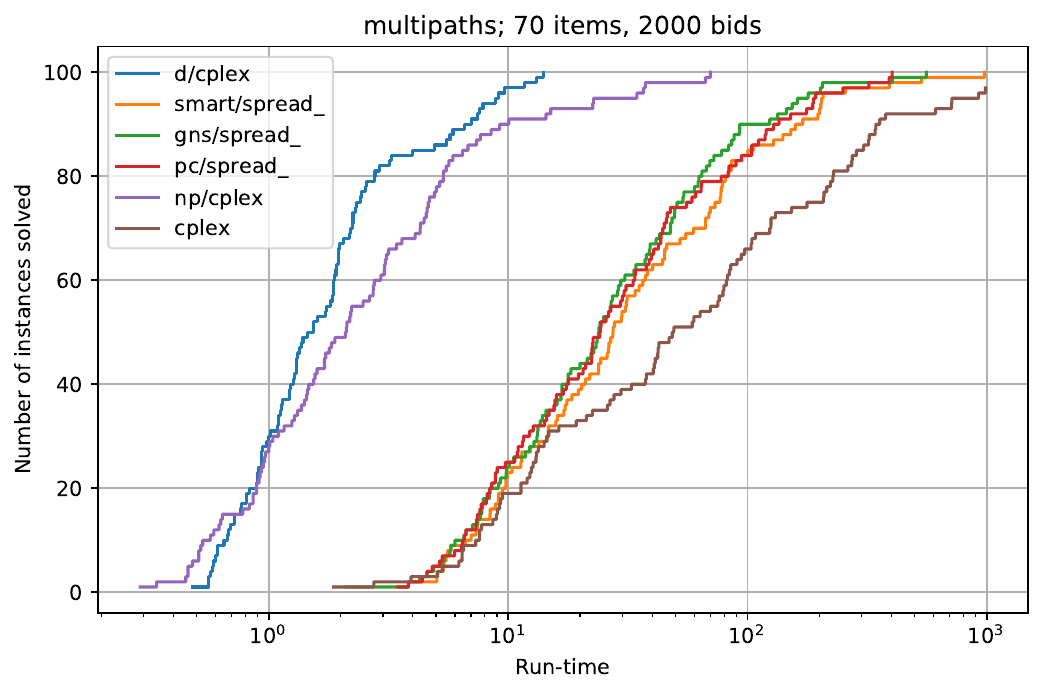}
\end{subfigure}
\begin{subfigure}
  \centering
  \includegraphics[width=.39\linewidth]{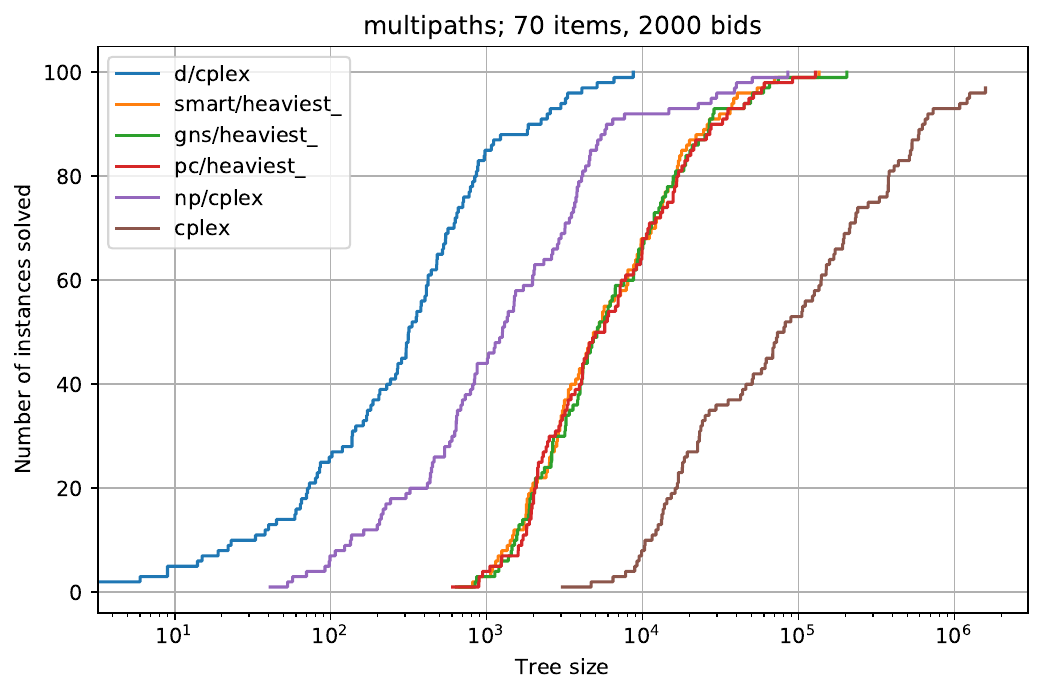}
  \label{fig:blah}
\end{subfigure}
\begin{subfigure}
  \centering
  \includegraphics[width=.39\linewidth]{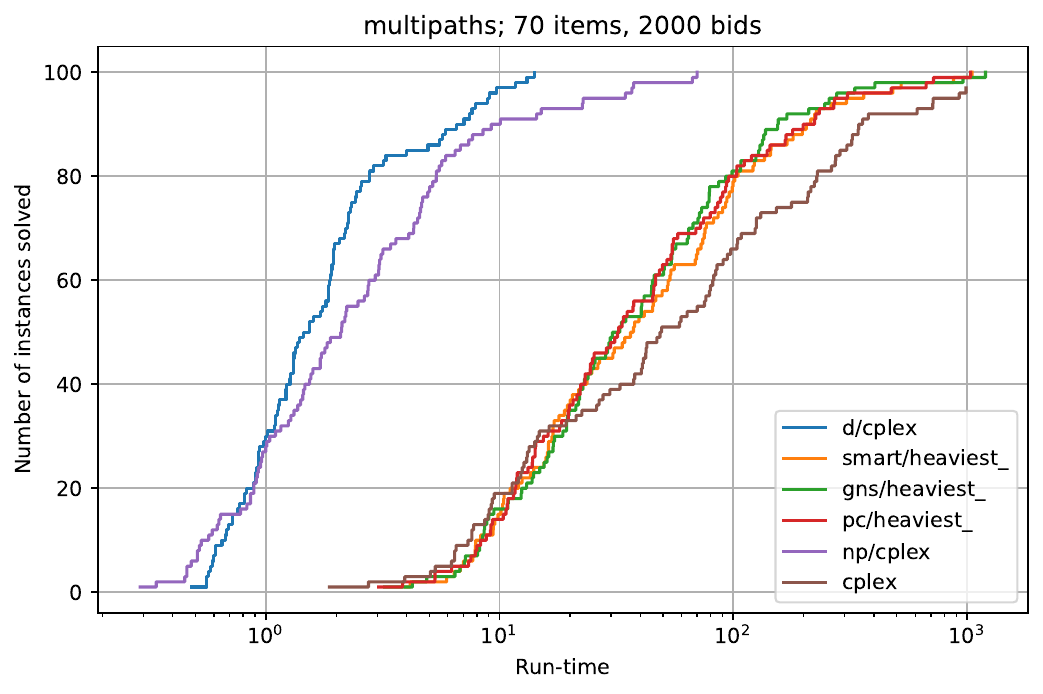}
\end{subfigure}
\begin{subfigure}
  \centering
  \includegraphics[width=.39\linewidth]{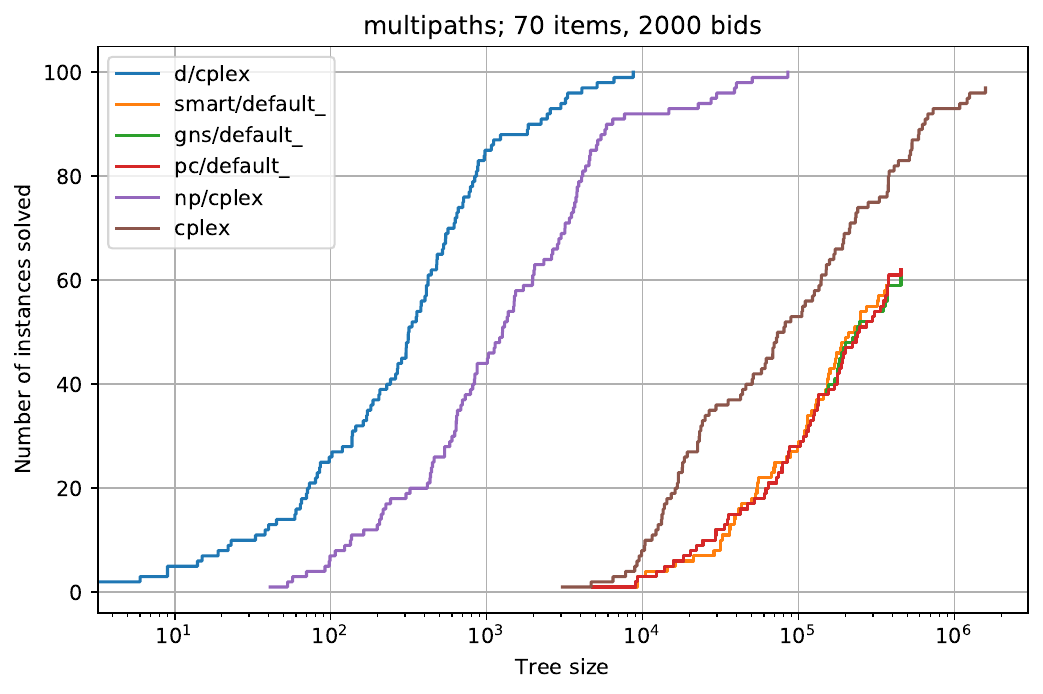}
  \label{fig:blah}
\end{subfigure}
\begin{subfigure}
  \centering
  \includegraphics[width=.39\linewidth]{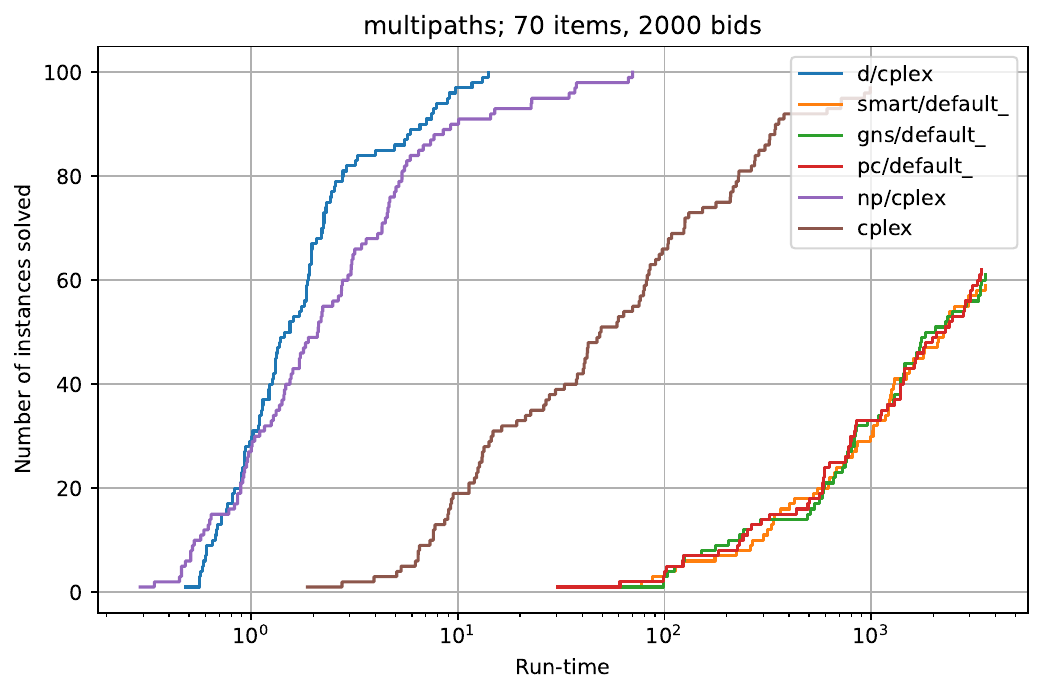}
\end{subfigure}
\begin{subfigure}
  \centering
  \includegraphics[width=.39\linewidth]{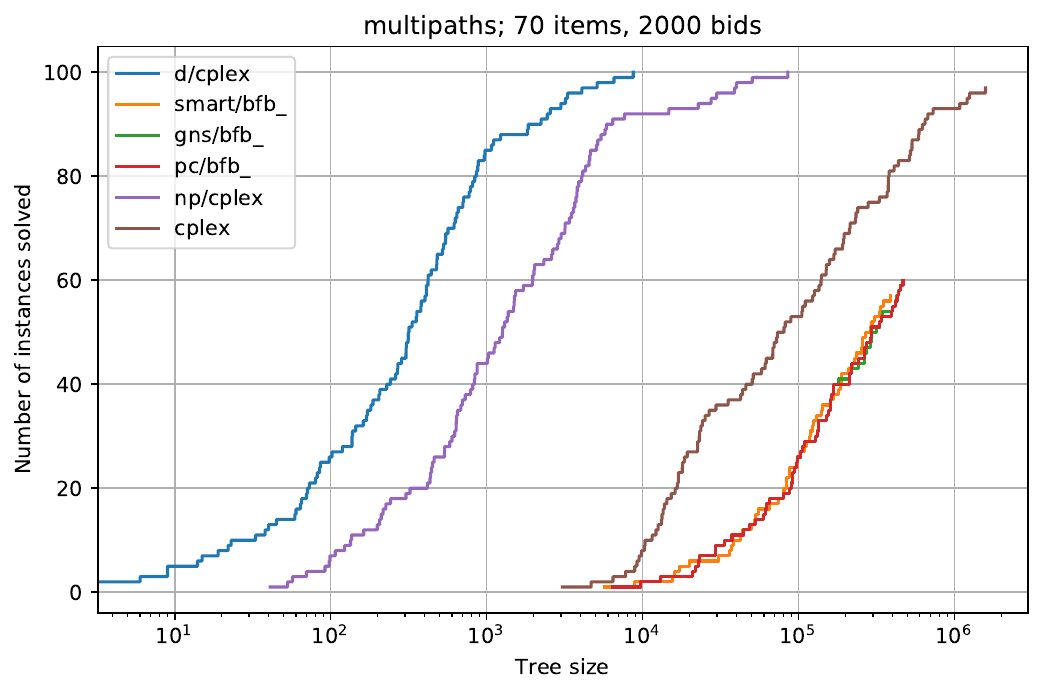}
  \label{fig:blah}
\end{subfigure}
\begin{subfigure}
  \centering
  \includegraphics[width=.39\linewidth]{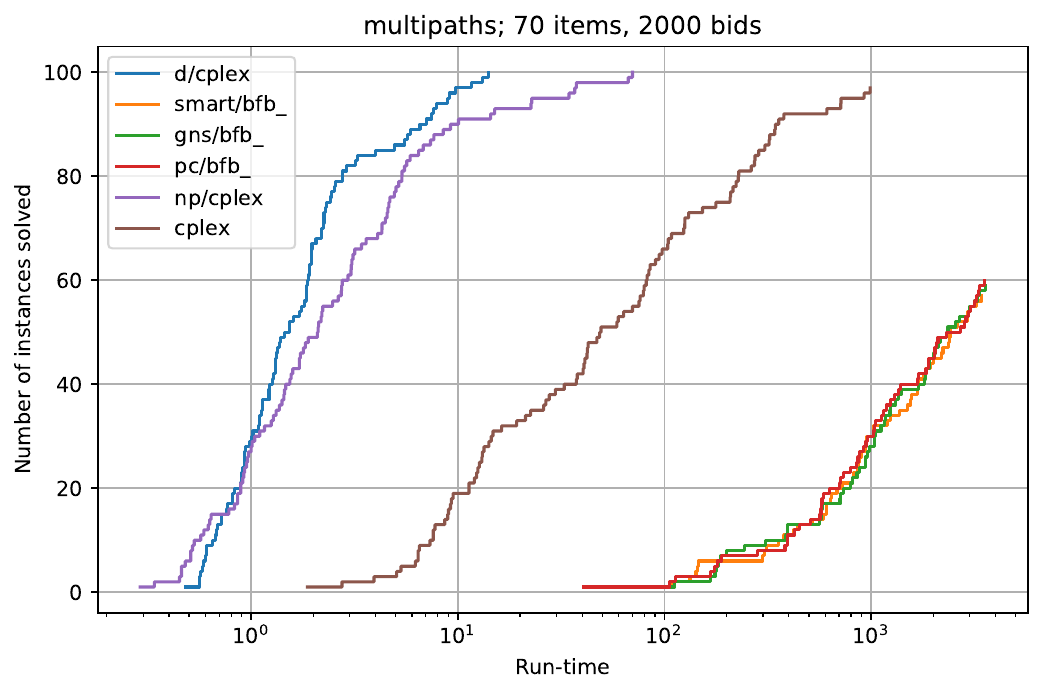}
\end{subfigure}

\caption{Multipaths, CPLEX cover cuts off, all other parameters off}
\label{fig:multipaths_}
\end{figure}

%%%%%%%%%%%%%%%%%%%%%%%%%%%%%%%%%%
%%%%%%%%%%%%%%%%%%%%%%%%%%%%%%%%%%
%%%%%%%%%%%%%%%%%%%%%%%%%%%%%%%%%%

\subsection{Lifted cover cut methods atop default solver settings}

Next, we evaluate our lifting methods atop close-to-default CPLEX settings; all CPLEX settings are untouched with the exception of presolve, which is turned off. Presolve, which refers to a toolkit of optimizations that operate directly on the input formulation, is incompatible with user-implemented cutting plane callbacks. As before, We toggle CPLEX's internal cover cut generation routine on and off to measure the impact of our routines with and without the cover cuts added by CPLEX (turning CPLEX cover cuts off corresponds to an underscore label in our plots).

Figures~\ref{fig:mkp_weakly_correlatedno_presolve},~\ref{fig:mkp_uncorrelatedno_presolve},~\ref{fig:chvatal_no_presolve},~\ref{fig:muca_no_presolve}, and~\ref{fig:multipathsno_presolve} contain the tree size and run-time performance plots for the weakly correlated, uncorrelated, Chv\'{a}tal, decay-decay and multipaths distributions, respectively, when CPLEX cover cuts are turned on. Figures~\ref{fig:mkp_weakly_correlatedno_presolve_},~\ref{fig:mkp_uncorrelatedno_presolve_},~\ref{fig:chvatal_no_presolve_},~\ref{fig:muca_no_presolve_}, and~\ref{fig:multipathsno_presolve_} contain the tree size and run-time performance plots for the weakly correlated, uncorrelated, Chv\'{a}tal, decay-decay and multipaths distributions, respectively, when CPLEX cover cuts are turned off. 

\begin{figure}[t]
\centering
\begin{subfigure}
  \centering
  \includegraphics[width=.39\linewidth]{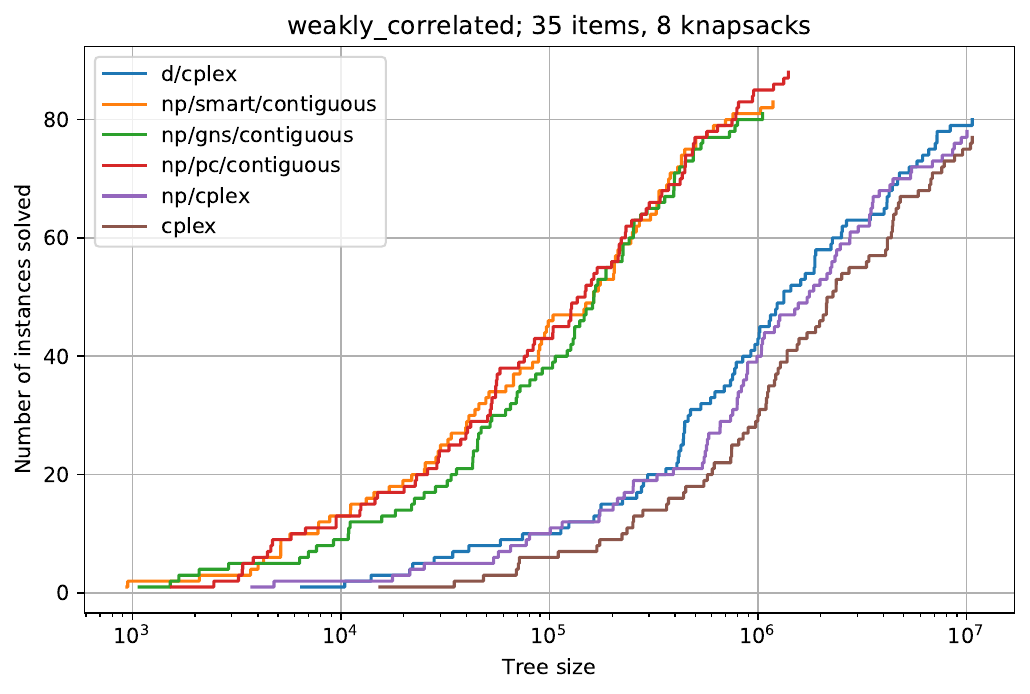}
  \label{fig:blah}
\end{subfigure}
\begin{subfigure}
  \centering
  \includegraphics[width=.39\linewidth]{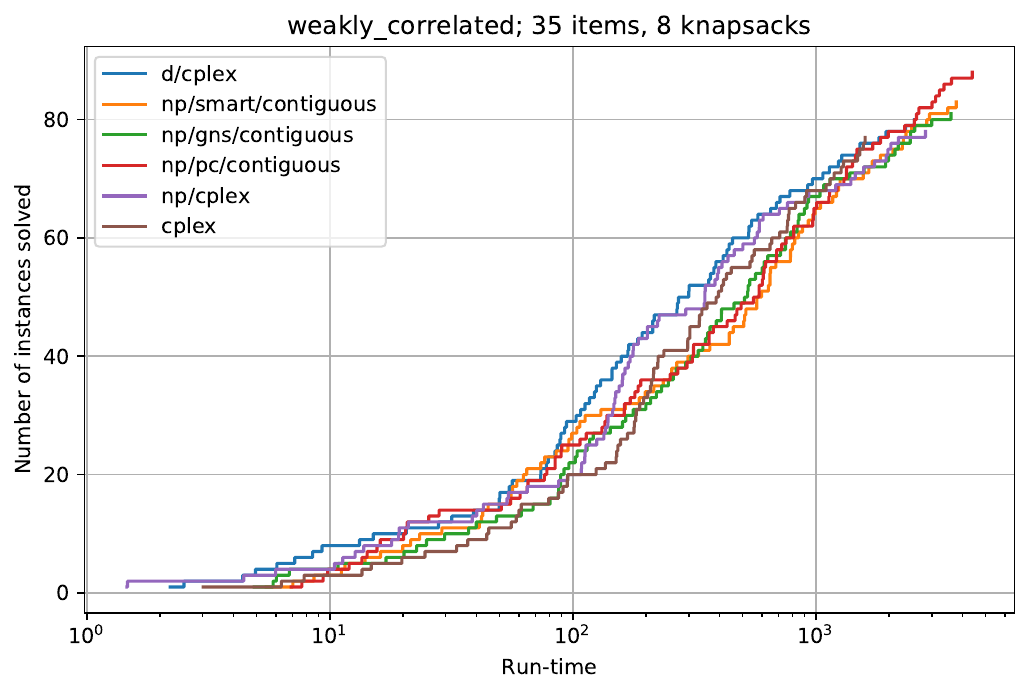}
\end{subfigure}
\begin{subfigure}
  \centering
  \includegraphics[width=.39\linewidth]{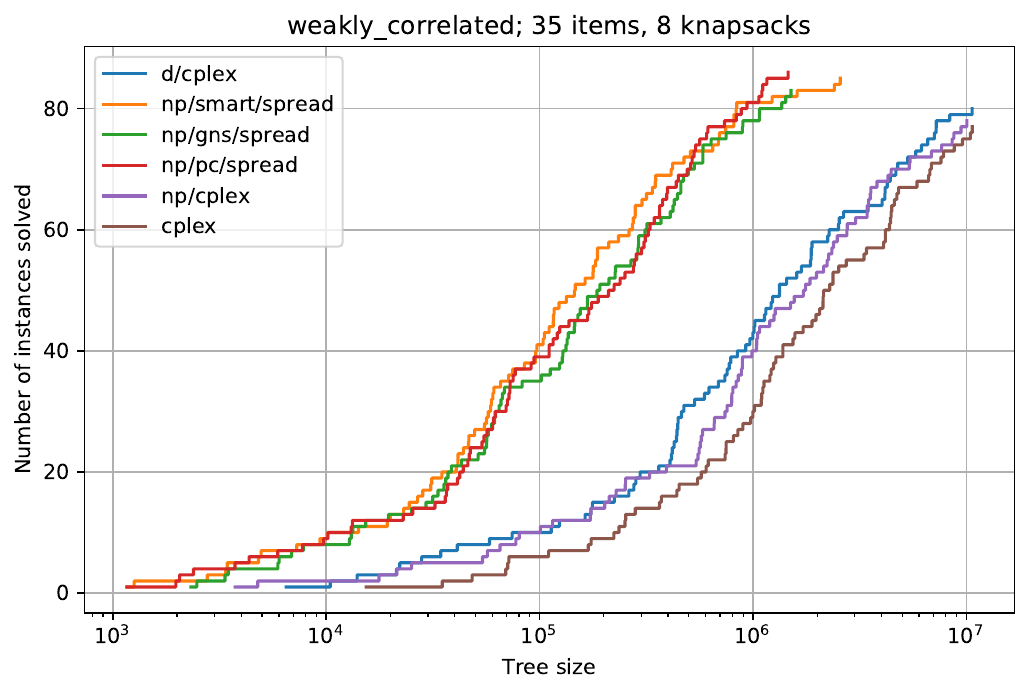}
  \label{fig:blah}
\end{subfigure}
\begin{subfigure}
  \centering
  \includegraphics[width=.39\linewidth]{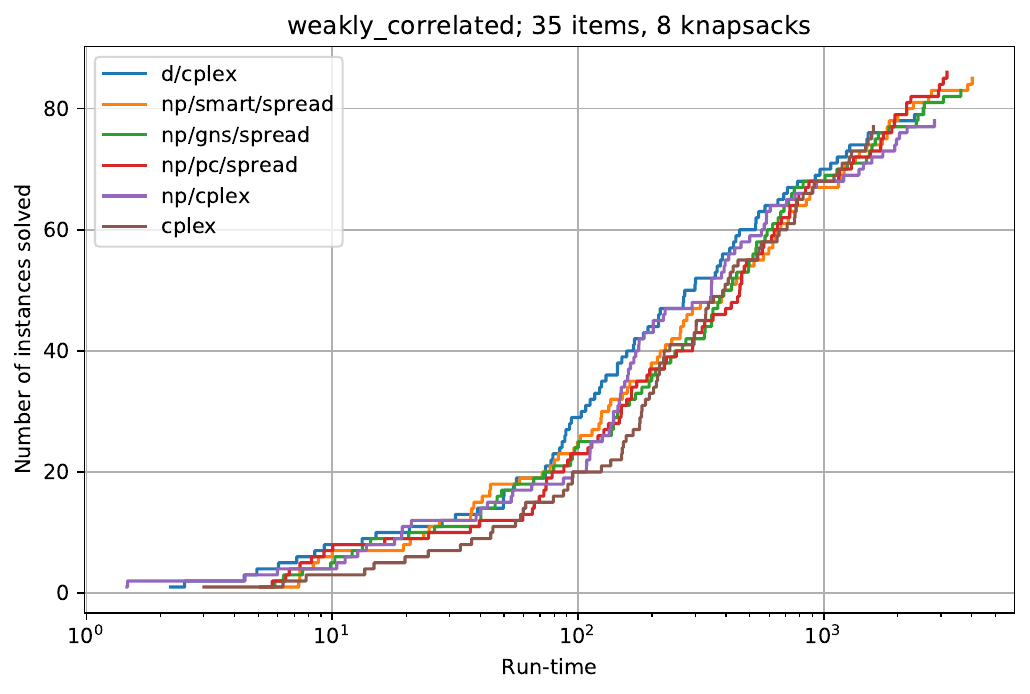}
\end{subfigure}
\begin{subfigure}
  \centering
  \includegraphics[width=.39\linewidth]{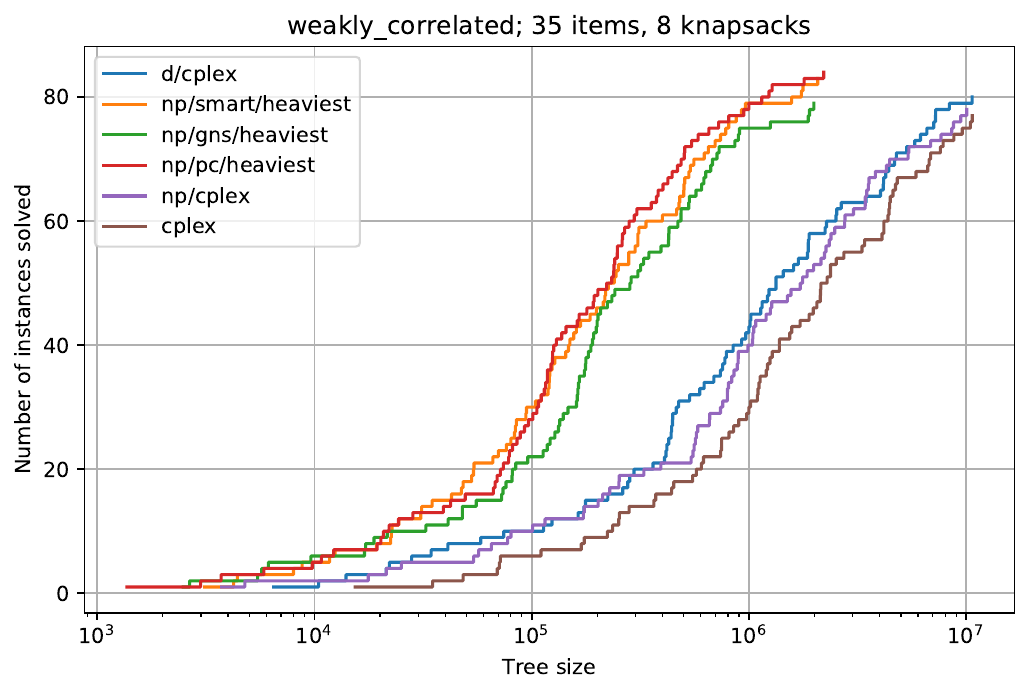}
  \label{fig:blah}
\end{subfigure}
\begin{subfigure}
  \centering
  \includegraphics[width=.39\linewidth]{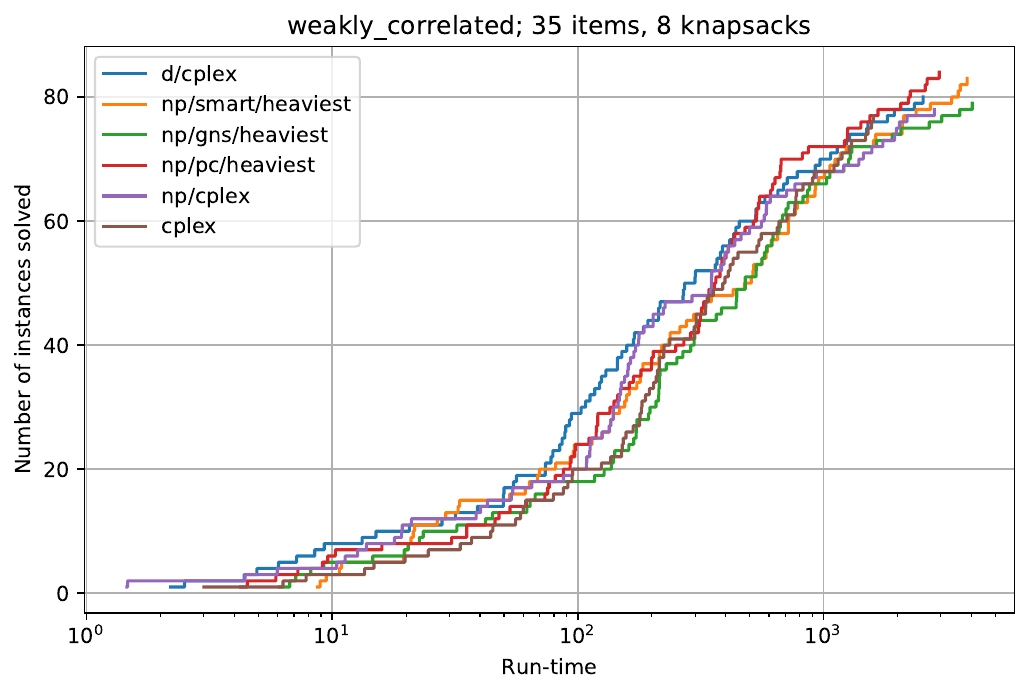}
\end{subfigure}
\begin{subfigure}
  \centering
  \includegraphics[width=.39\linewidth]{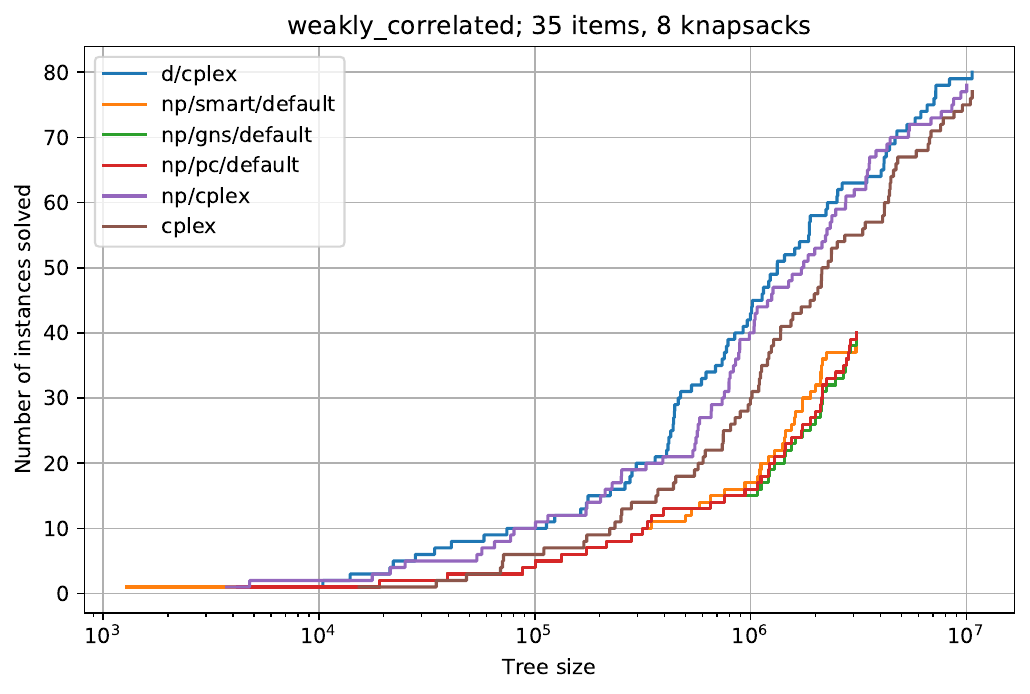}
  \label{fig:blah}
\end{subfigure}
\begin{subfigure}
  \centering
  \includegraphics[width=.39\linewidth]{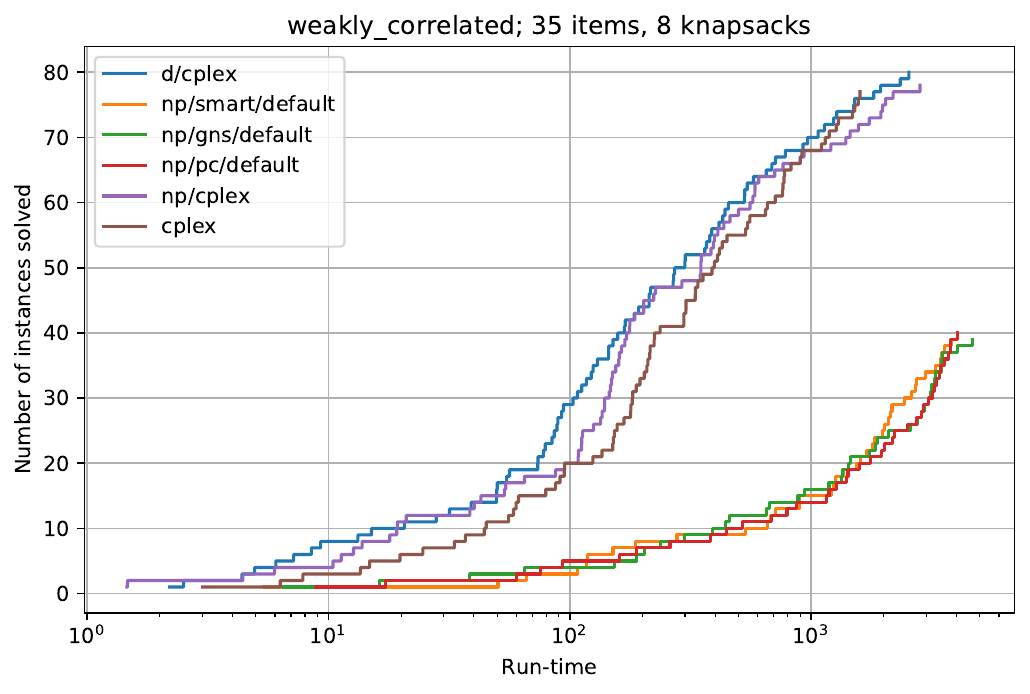}
\end{subfigure}
\begin{subfigure}
  \centering
  \includegraphics[width=.39\linewidth]{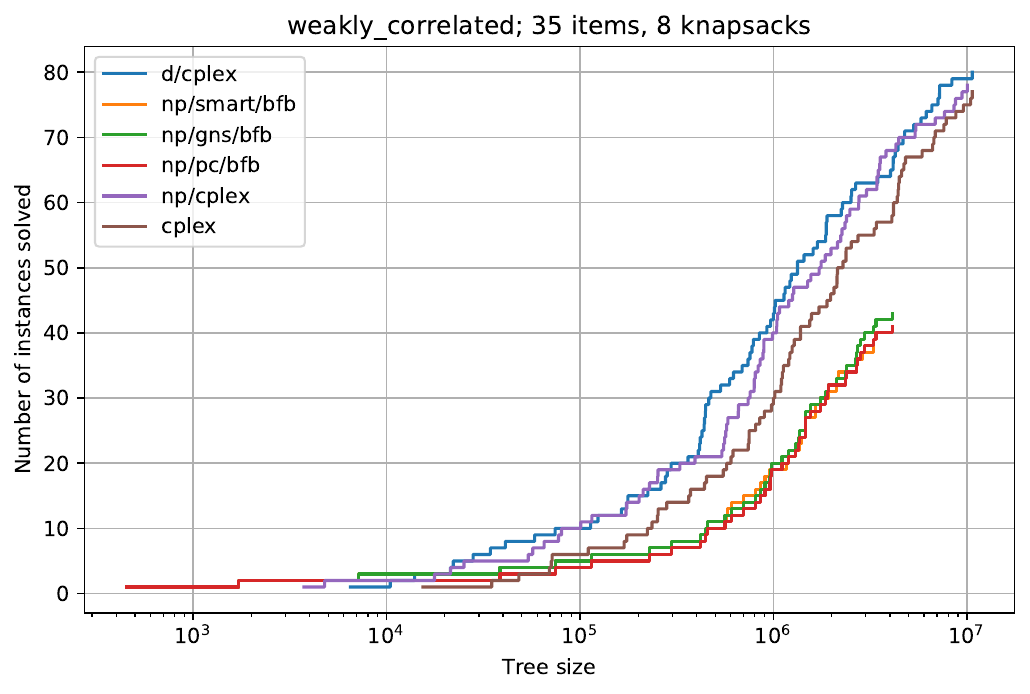}
  \label{fig:blah}
\end{subfigure}
\begin{subfigure}
  \centering
  \includegraphics[width=.39\linewidth]{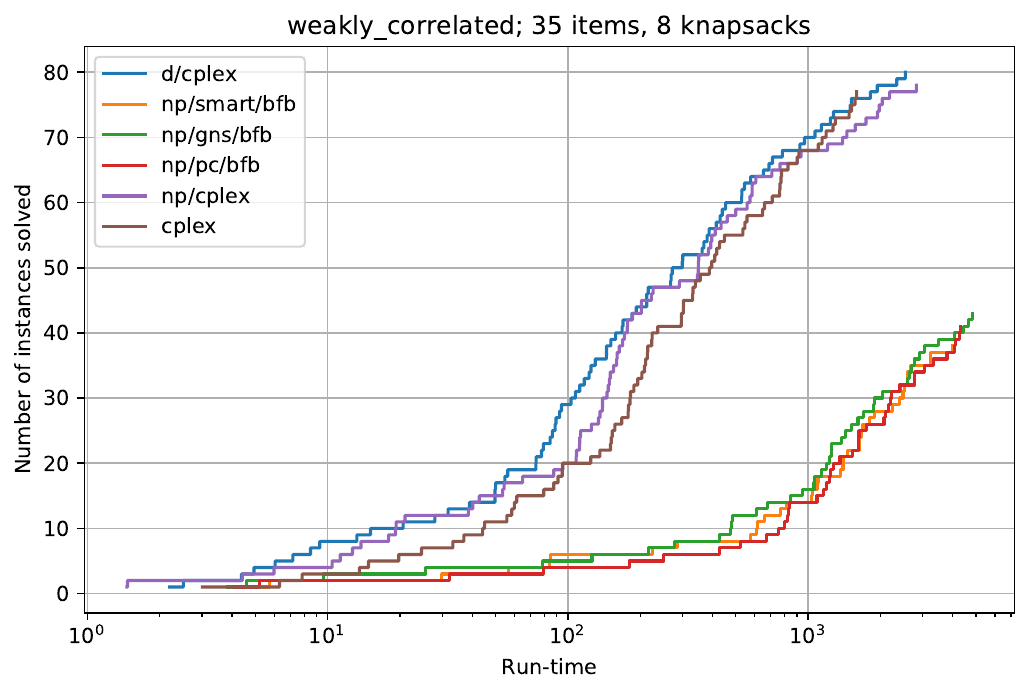}
\end{subfigure}

\caption{Weakly correlated, CPLEX cover cuts on, all other parameters but presolve on}
\label{fig:mkp_weakly_correlatedno_presolve}
\end{figure}

\begin{figure}[t]
\centering
\begin{subfigure}
  \centering
  \includegraphics[width=.39\linewidth]{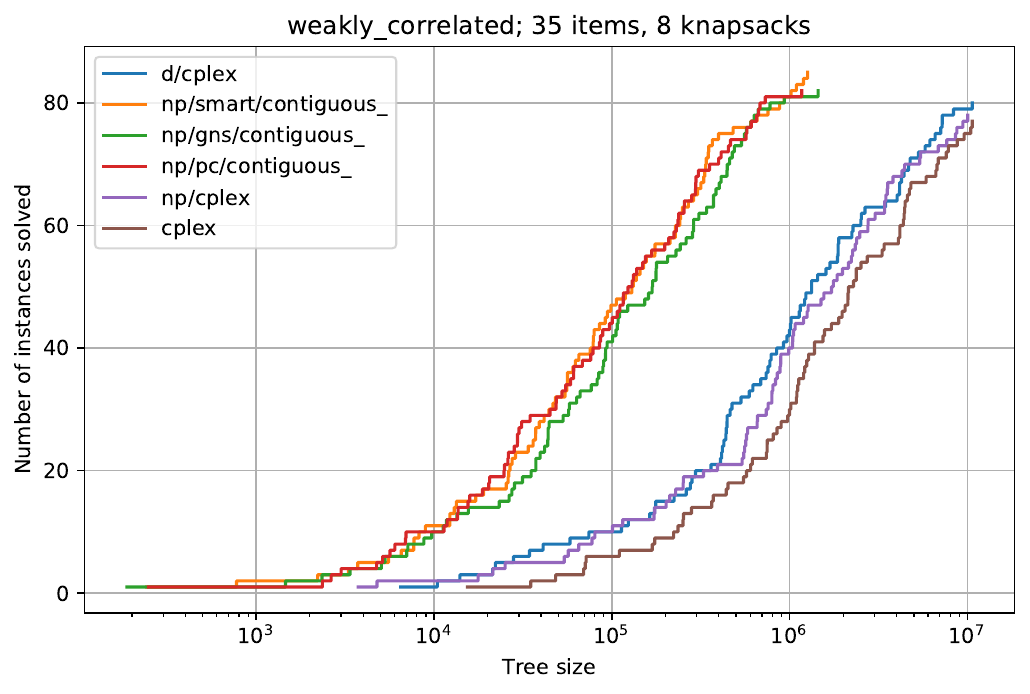}
  \label{fig:blah}
\end{subfigure}
\begin{subfigure}
  \centering
  \includegraphics[width=.39\linewidth]{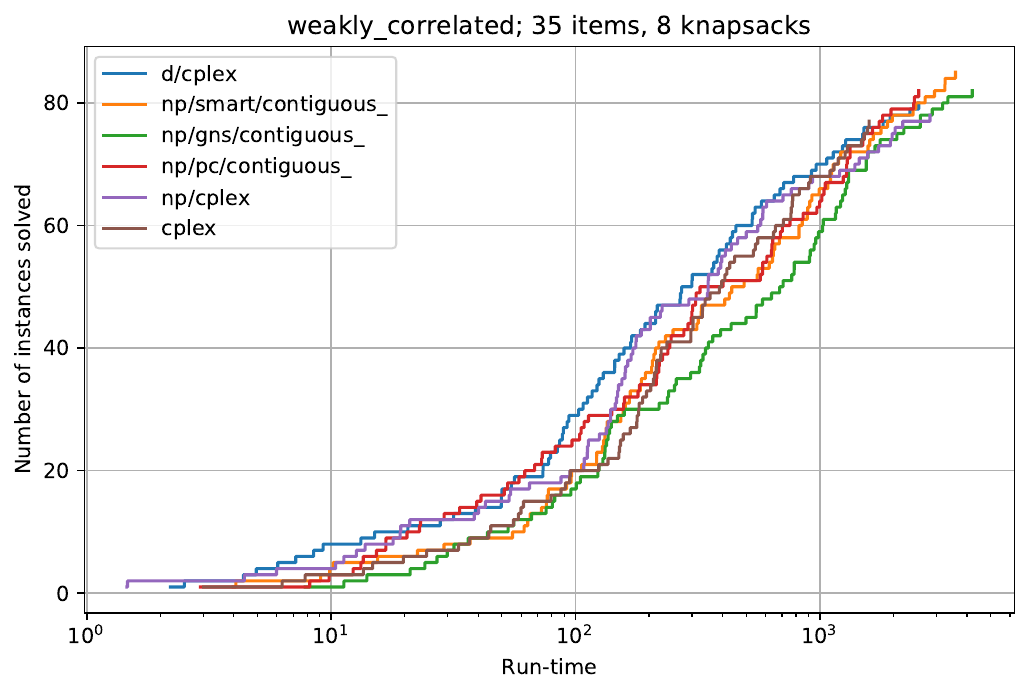}
\end{subfigure}
\begin{subfigure}
  \centering
  \includegraphics[width=.39\linewidth]{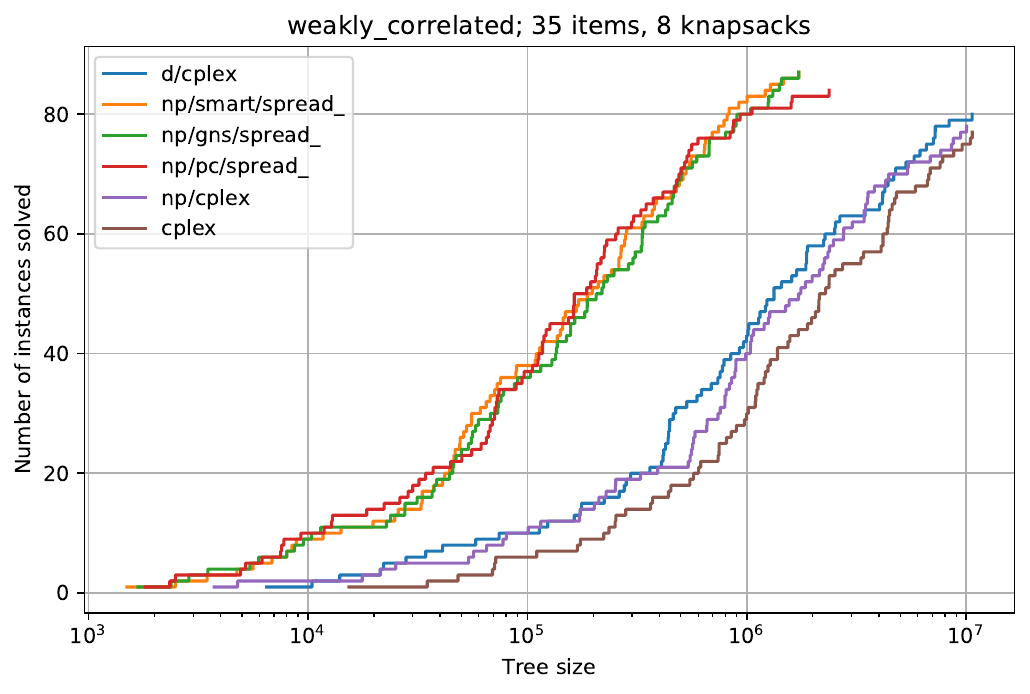}
  \label{fig:blah}
\end{subfigure}
\begin{subfigure}
  \centering
  \includegraphics[width=.39\linewidth]{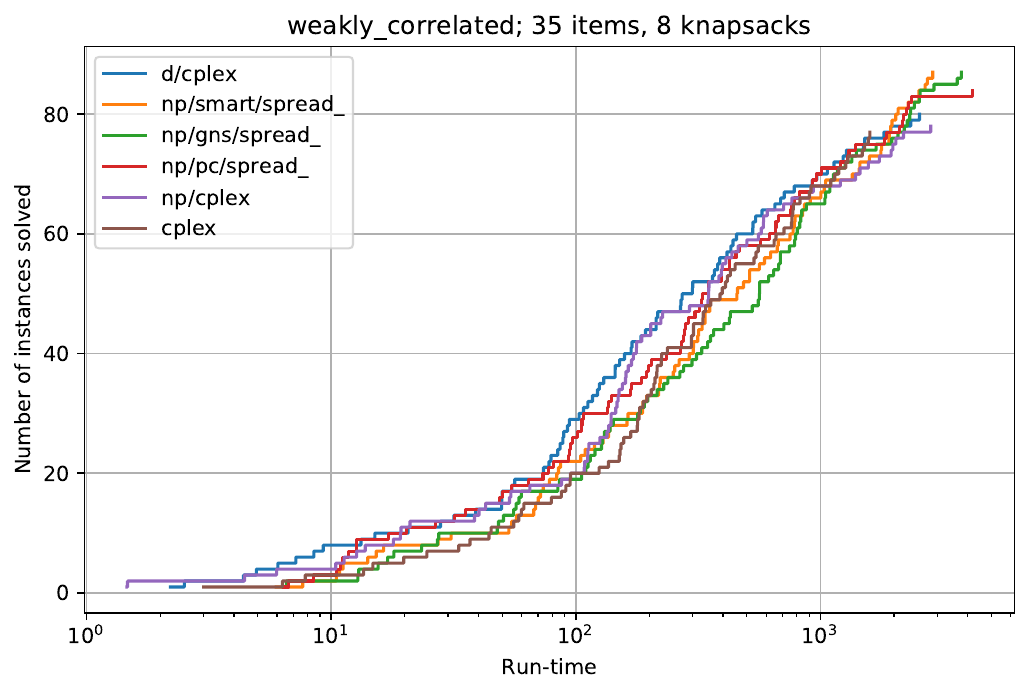}
\end{subfigure}
\begin{subfigure}
  \centering
  \includegraphics[width=.39\linewidth]{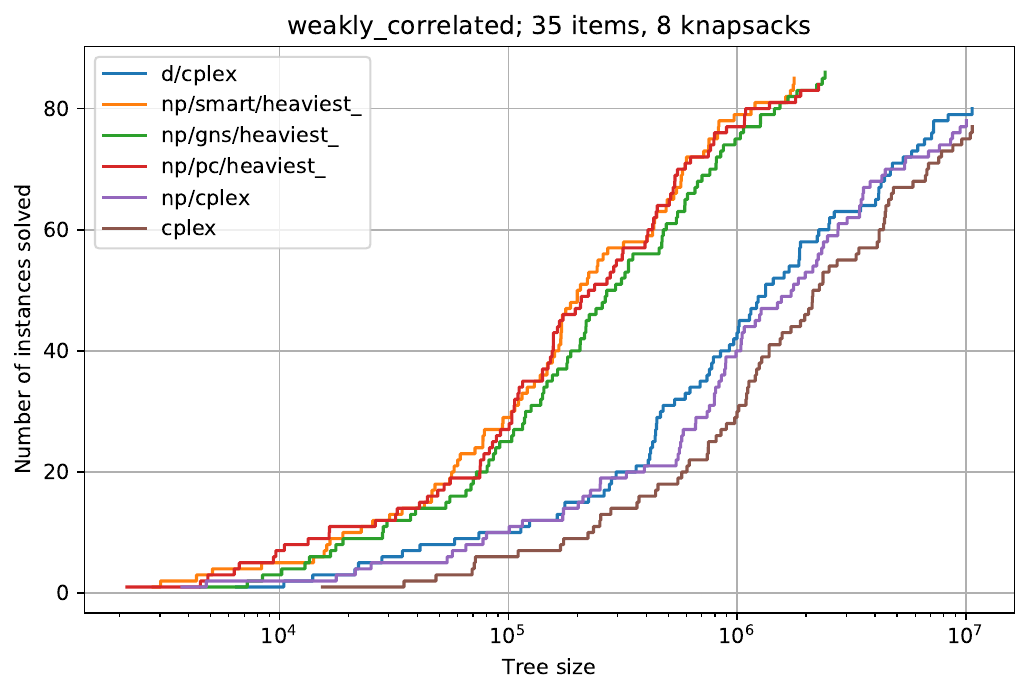}
  \label{fig:blah}
\end{subfigure}
\begin{subfigure}
  \centering
  \includegraphics[width=.39\linewidth]{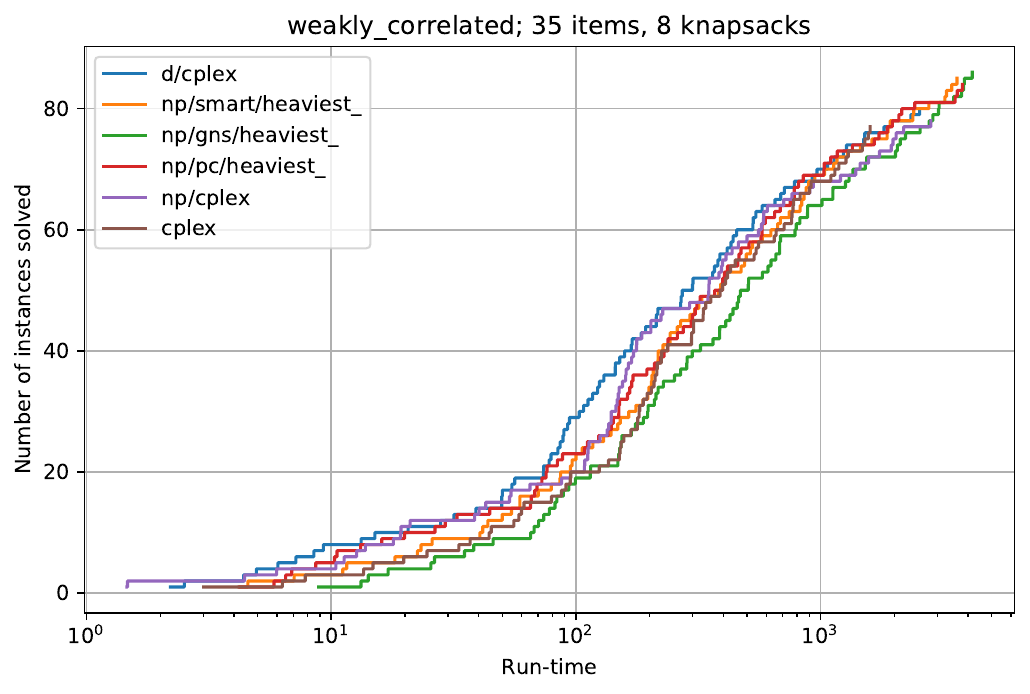}
\end{subfigure}
\begin{subfigure}
  \centering
  \includegraphics[width=.39\linewidth]{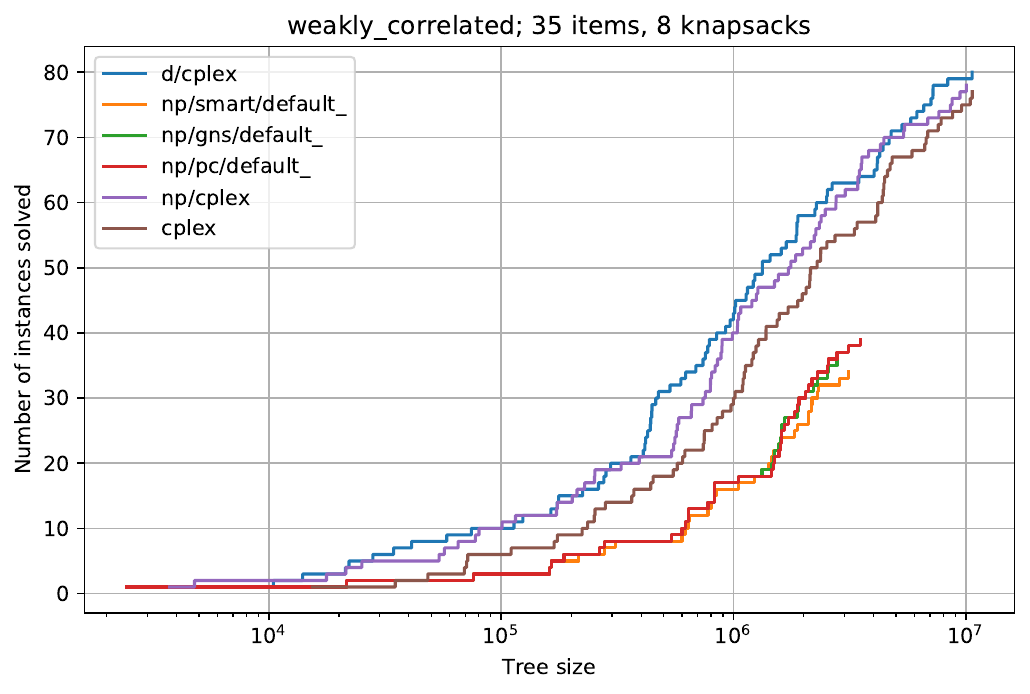}
  \label{fig:blah}
\end{subfigure}
\begin{subfigure}
  \centering
  \includegraphics[width=.39\linewidth]{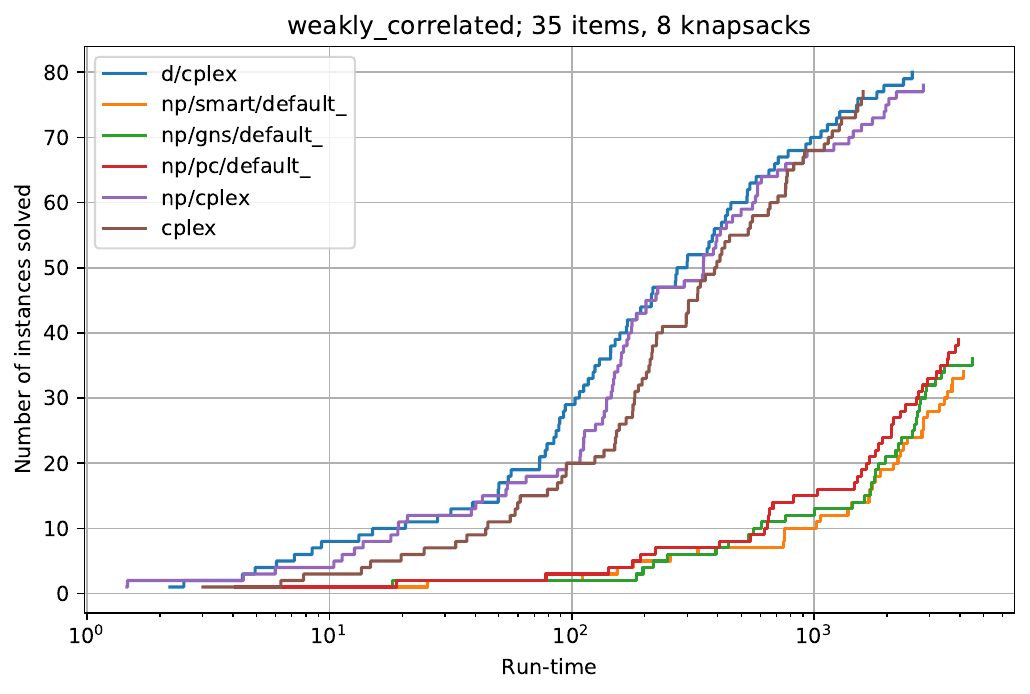}
\end{subfigure}
\begin{subfigure}
  \centering
  \includegraphics[width=.39\linewidth]{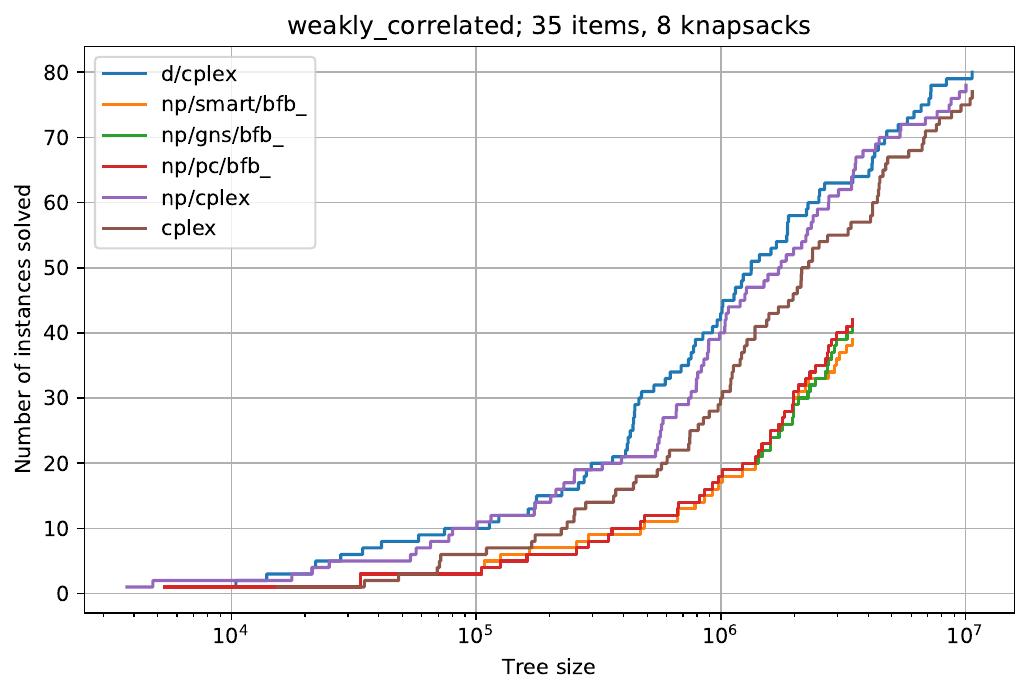}
  \label{fig:blah}
\end{subfigure}
\begin{subfigure}
  \centering
  \includegraphics[width=.39\linewidth]{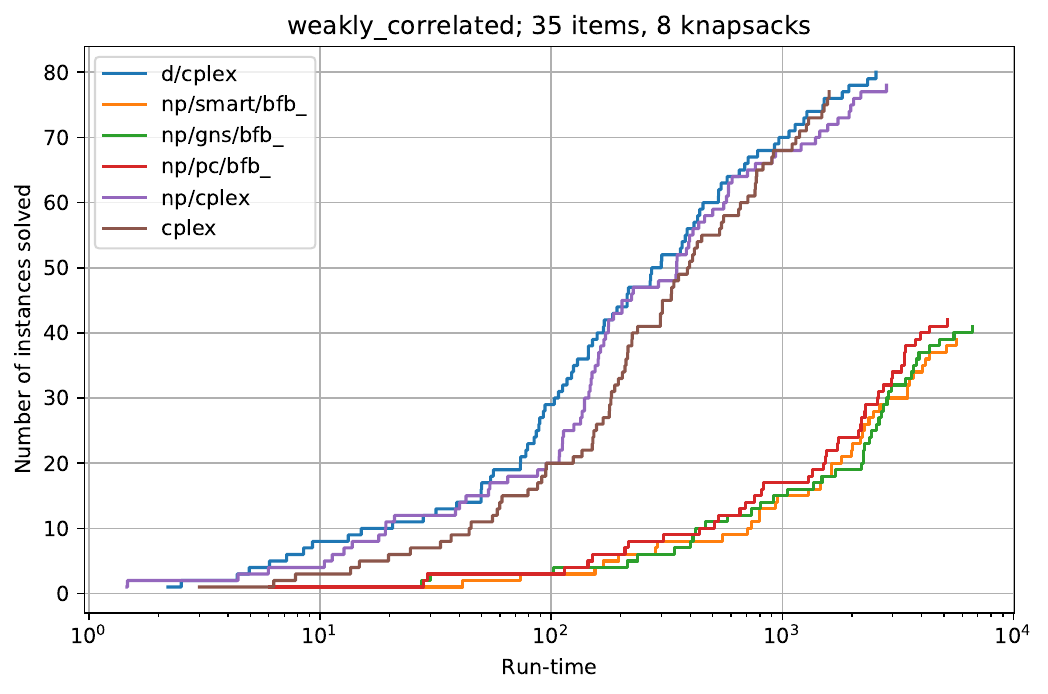}
\end{subfigure}

\caption{Weakly correlated, CPLEX cover cuts off, all other parameters but presolve on}
\label{fig:mkp_weakly_correlatedno_presolve_}
\end{figure}

\begin{figure}[t]
\centering
\begin{subfigure}
  \centering
  \includegraphics[width=.39\linewidth]{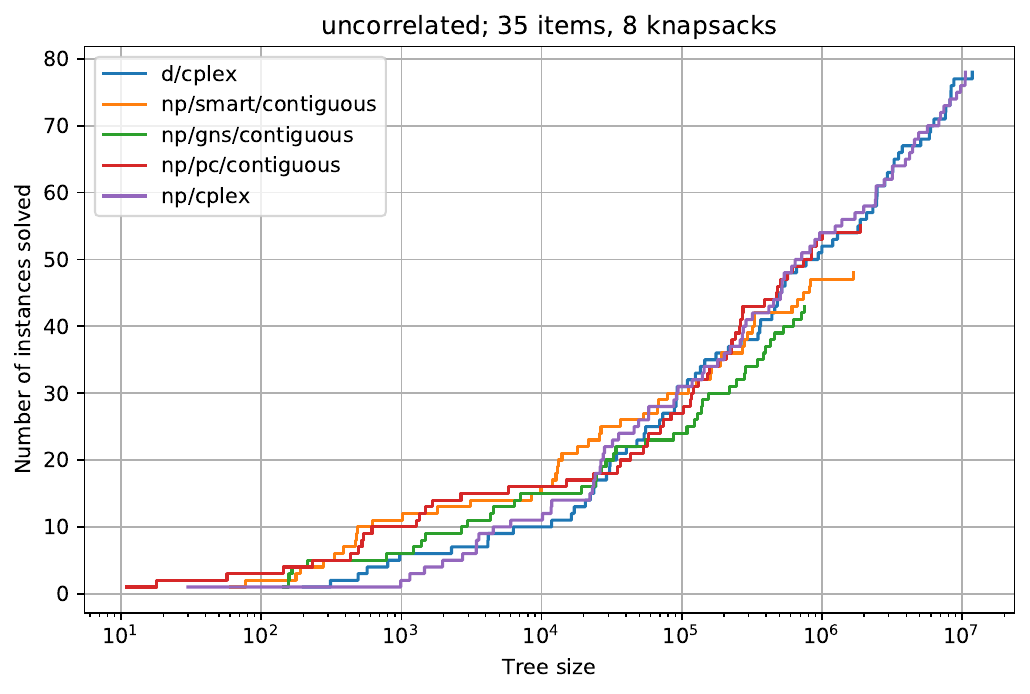}
  \label{fig:blah}
\end{subfigure}
\begin{subfigure}
  \centering
  \includegraphics[width=.39\linewidth]{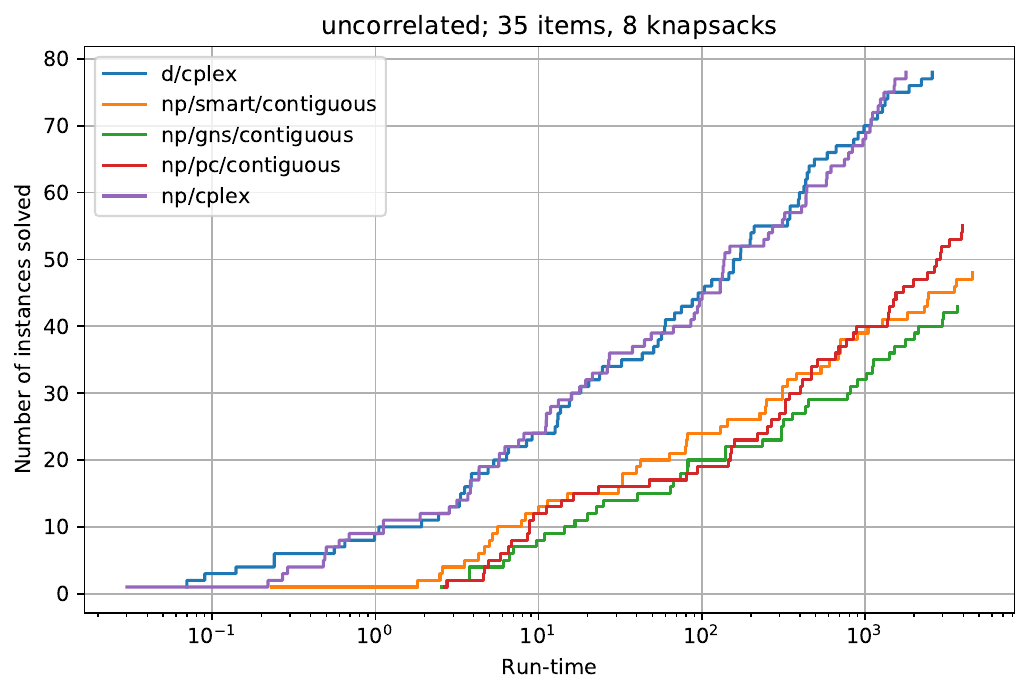}
\end{subfigure}
\begin{subfigure}
  \centering
  \includegraphics[width=.39\linewidth]{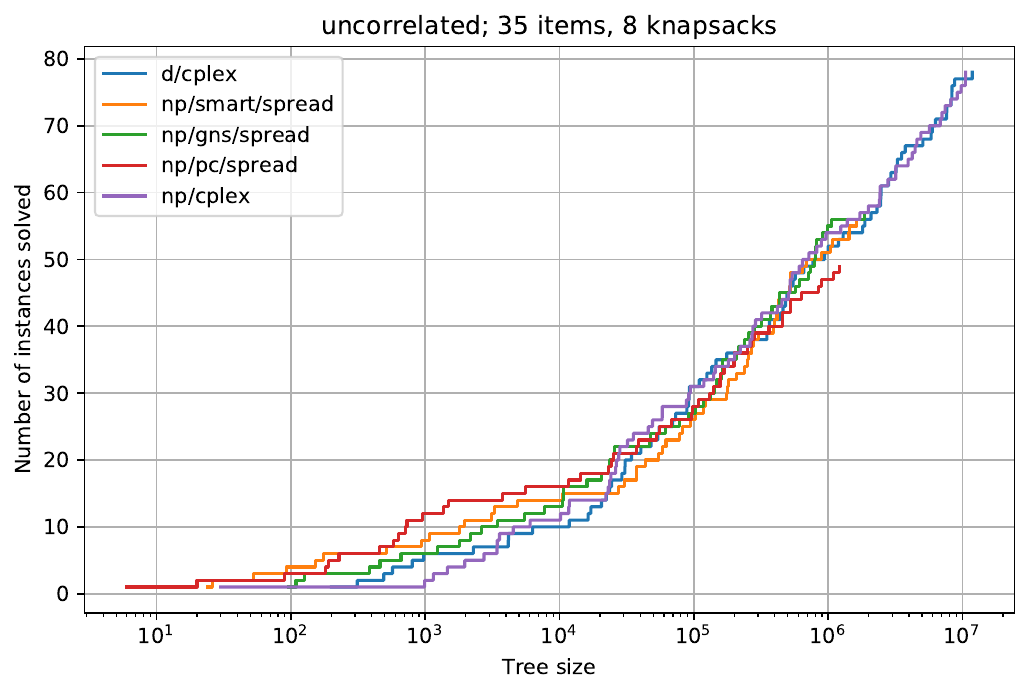}
  \label{fig:blah}
\end{subfigure}
\begin{subfigure}
  \centering
  \includegraphics[width=.39\linewidth]{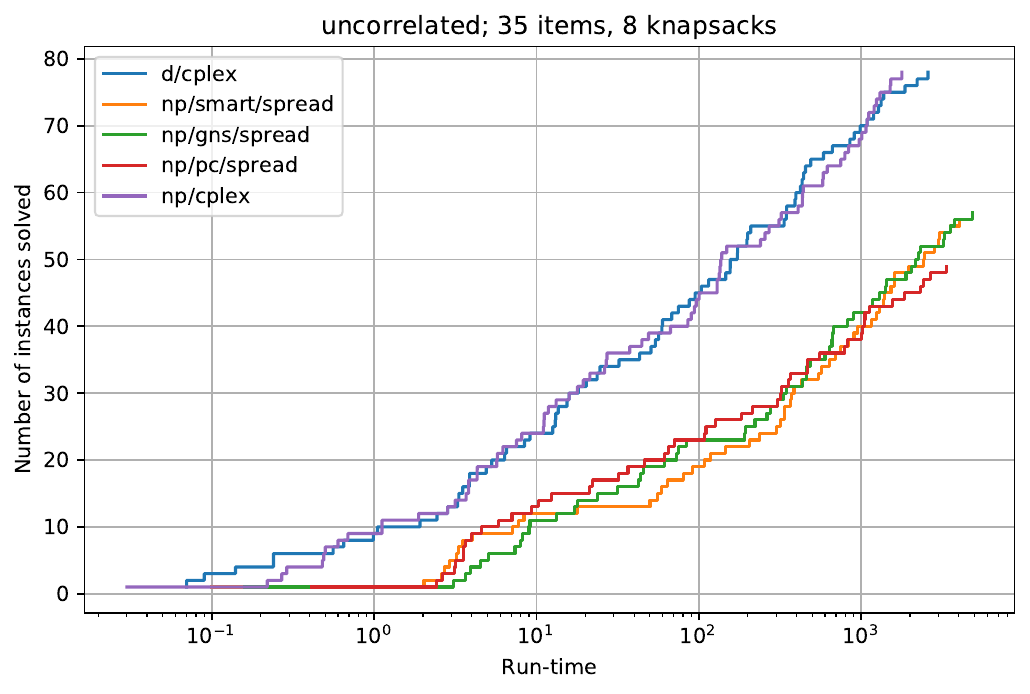}
\end{subfigure}
\begin{subfigure}
  \centering
  \includegraphics[width=.39\linewidth]{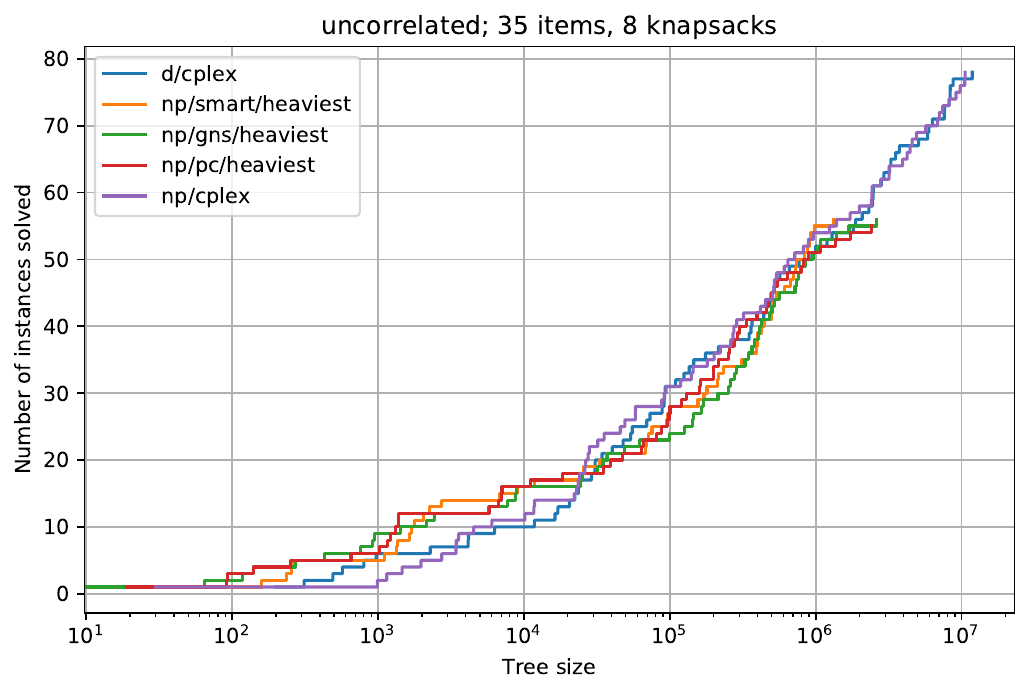}
  \label{fig:blah}
\end{subfigure}
\begin{subfigure}
  \centering
  \includegraphics[width=.39\linewidth]{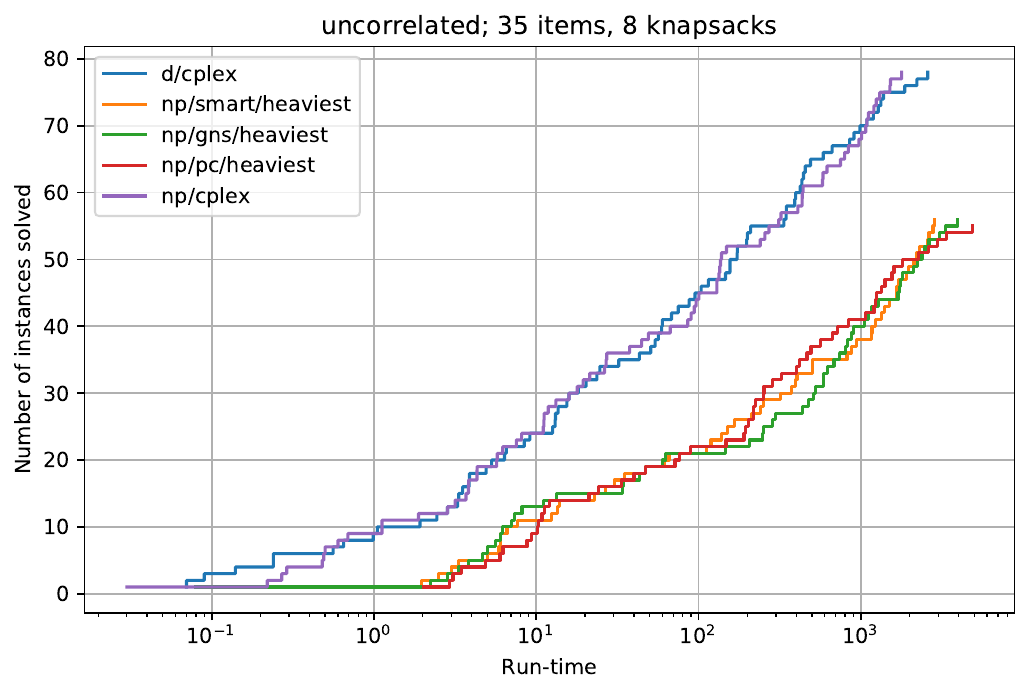}
\end{subfigure}
\begin{subfigure}
  \centering
  \includegraphics[width=.39\linewidth]{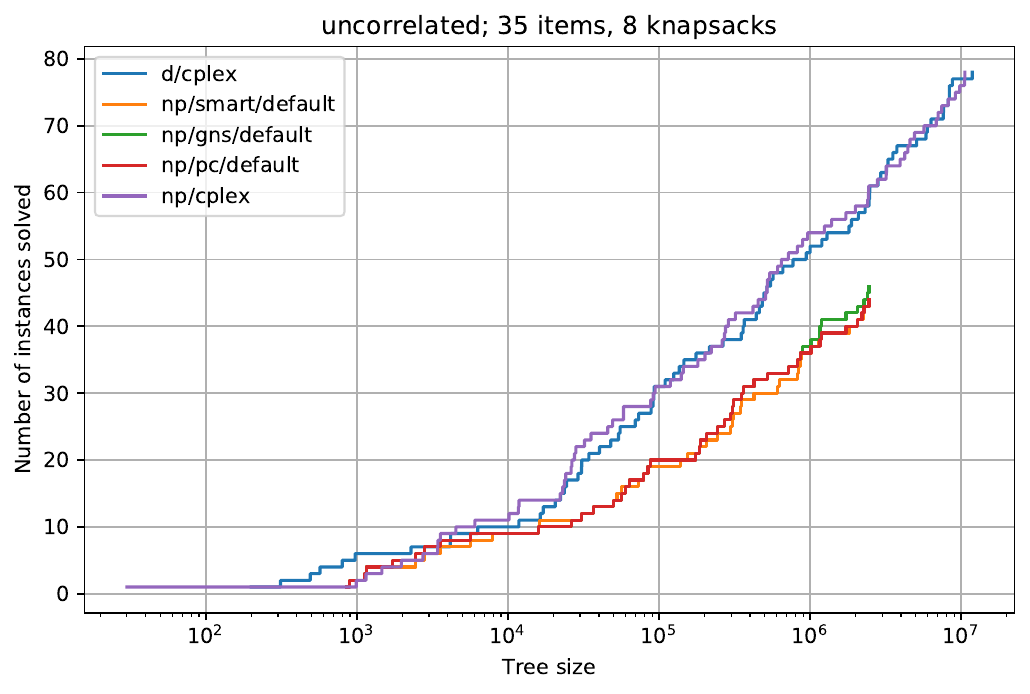}
  \label{fig:blah}
\end{subfigure}
\begin{subfigure}
  \centering
  \includegraphics[width=.39\linewidth]{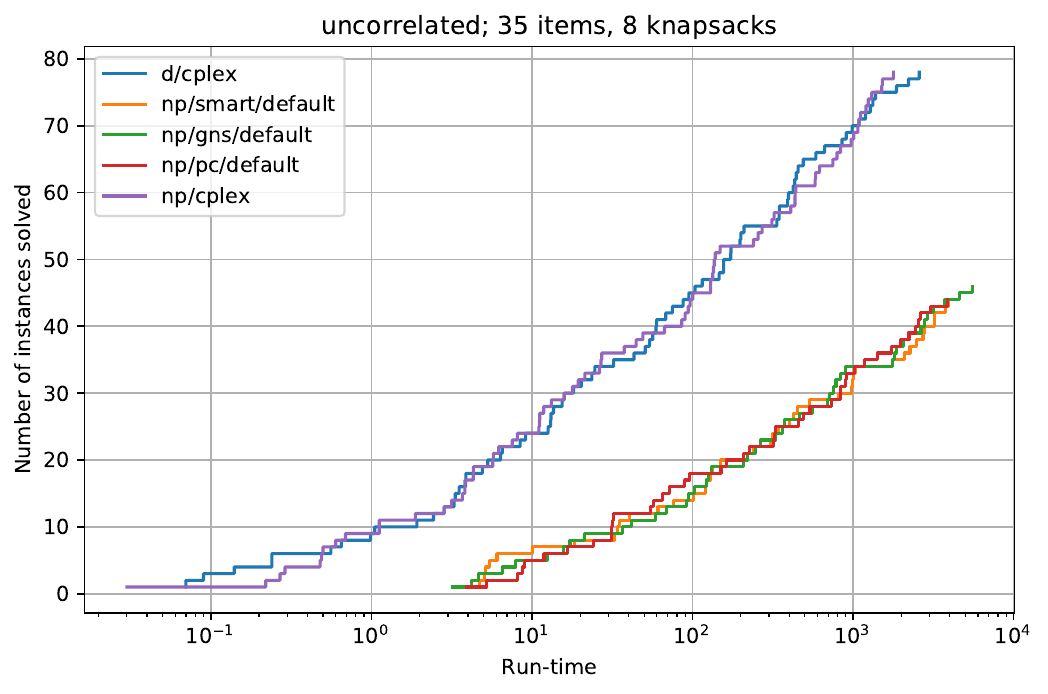}
\end{subfigure}
\begin{subfigure}
  \centering
  \includegraphics[width=.39\linewidth]{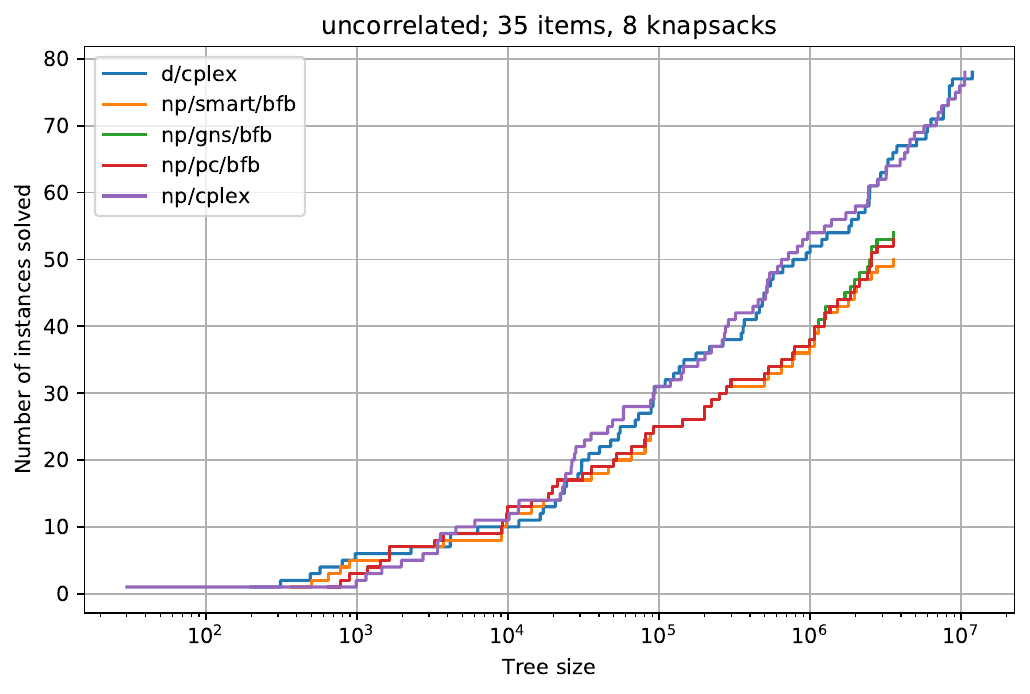}
  \label{fig:blah}
\end{subfigure}
\begin{subfigure}
  \centering
  \includegraphics[width=.39\linewidth]{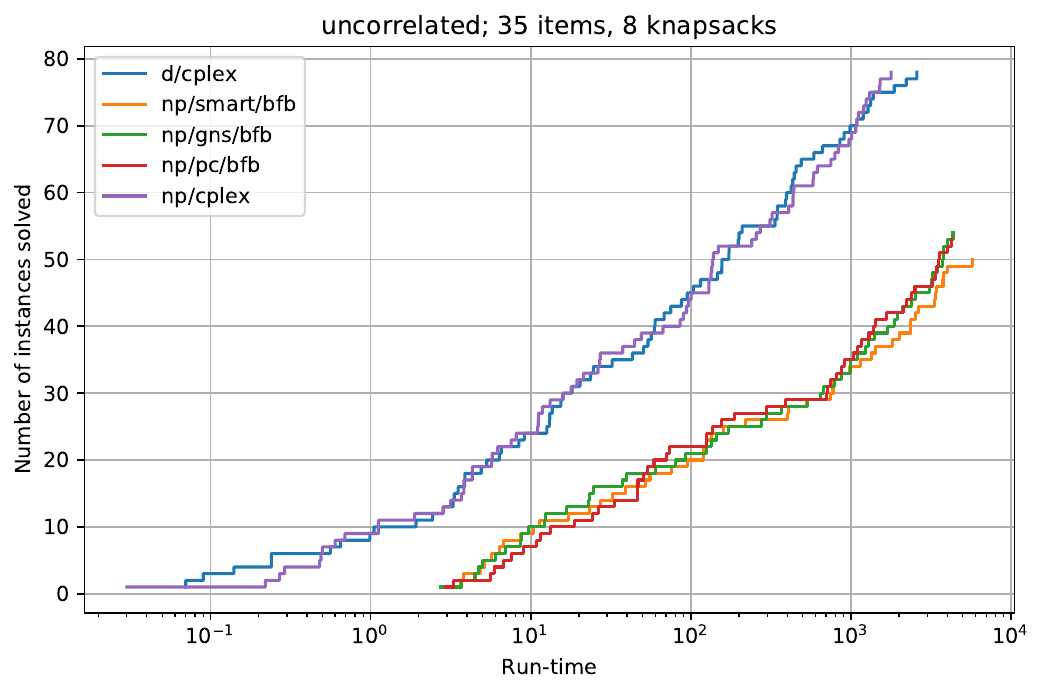}
\end{subfigure}

\caption{Uncorrelated, CPLEX cover cuts on, all other parameters but presolve on}
\label{fig:mkp_uncorrelatedno_presolve}
\end{figure}

\begin{figure}[t]
\centering
\begin{subfigure}
  \centering
  \includegraphics[width=.39\linewidth]{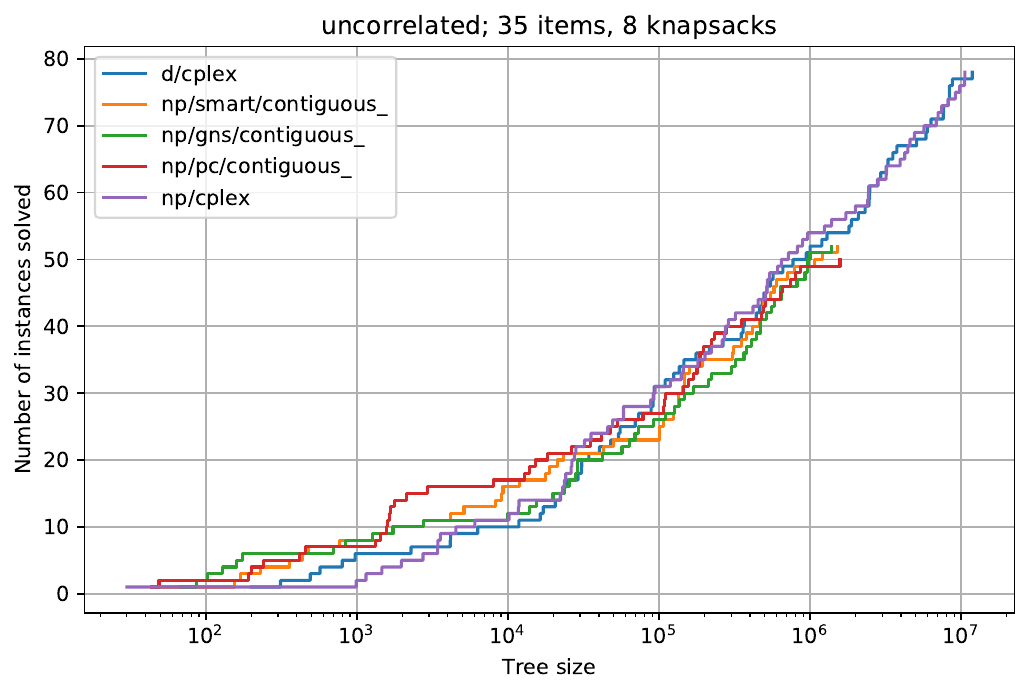}
  \label{fig:blah}
\end{subfigure}
\begin{subfigure}
  \centering
  \includegraphics[width=.39\linewidth]{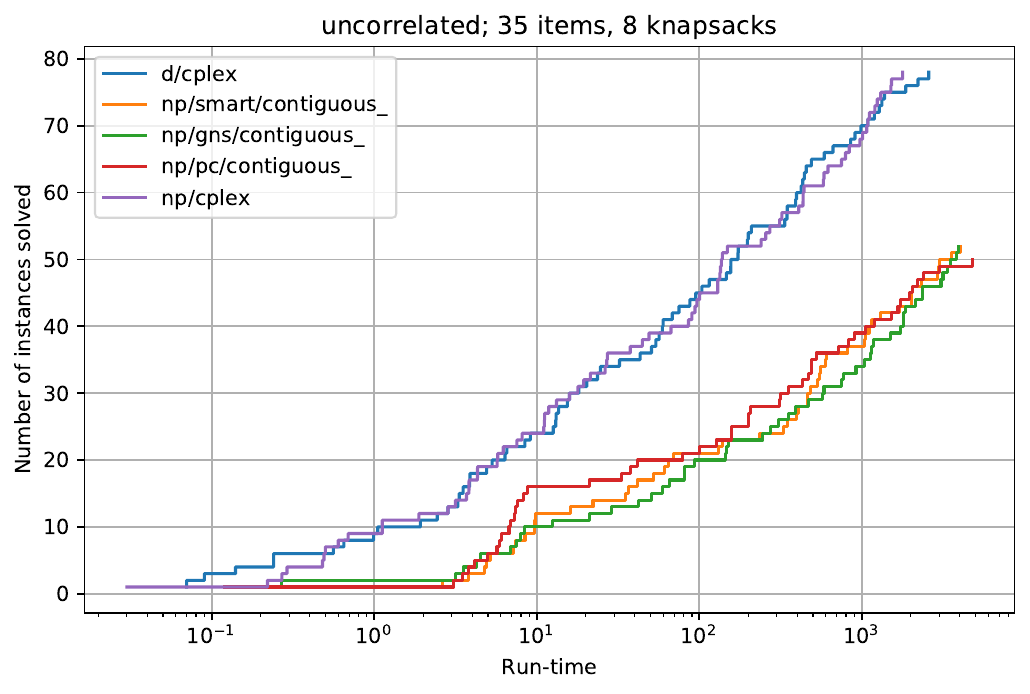}
\end{subfigure}
\begin{subfigure}
  \centering
  \includegraphics[width=.39\linewidth]{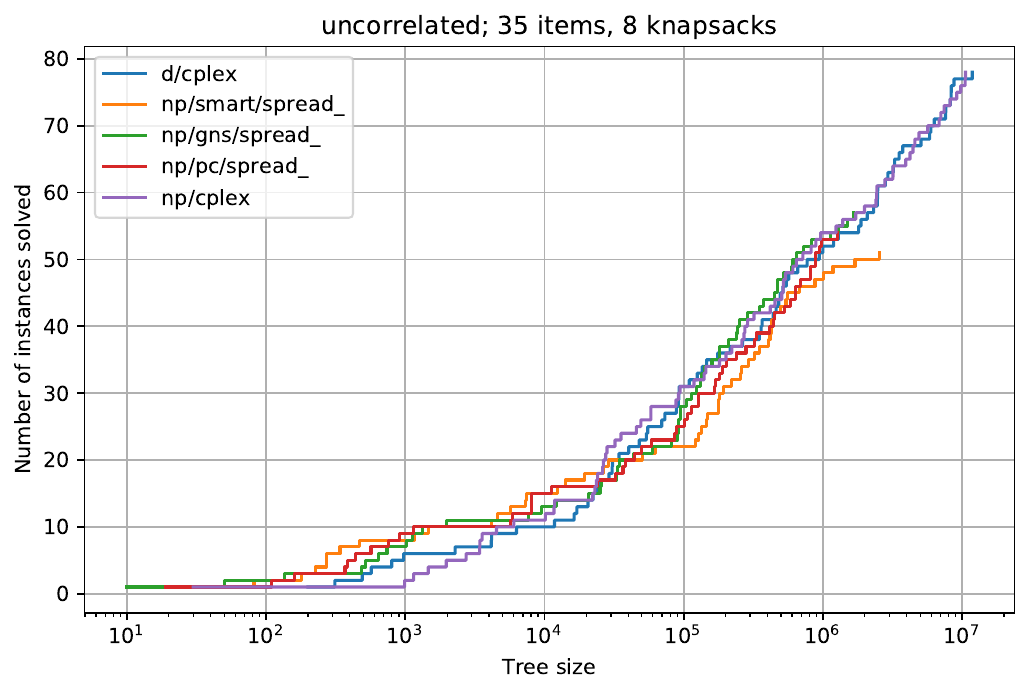}
  \label{fig:blah}
\end{subfigure}
\begin{subfigure}
  \centering
  \includegraphics[width=.39\linewidth]{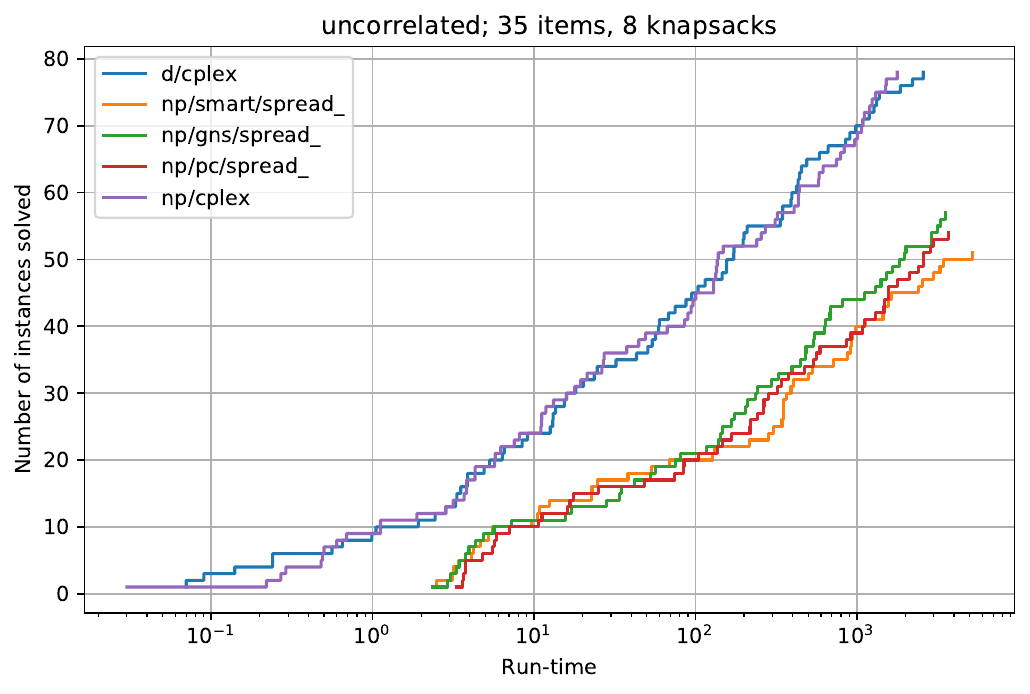}
\end{subfigure}
\begin{subfigure}
  \centering
  \includegraphics[width=.39\linewidth]{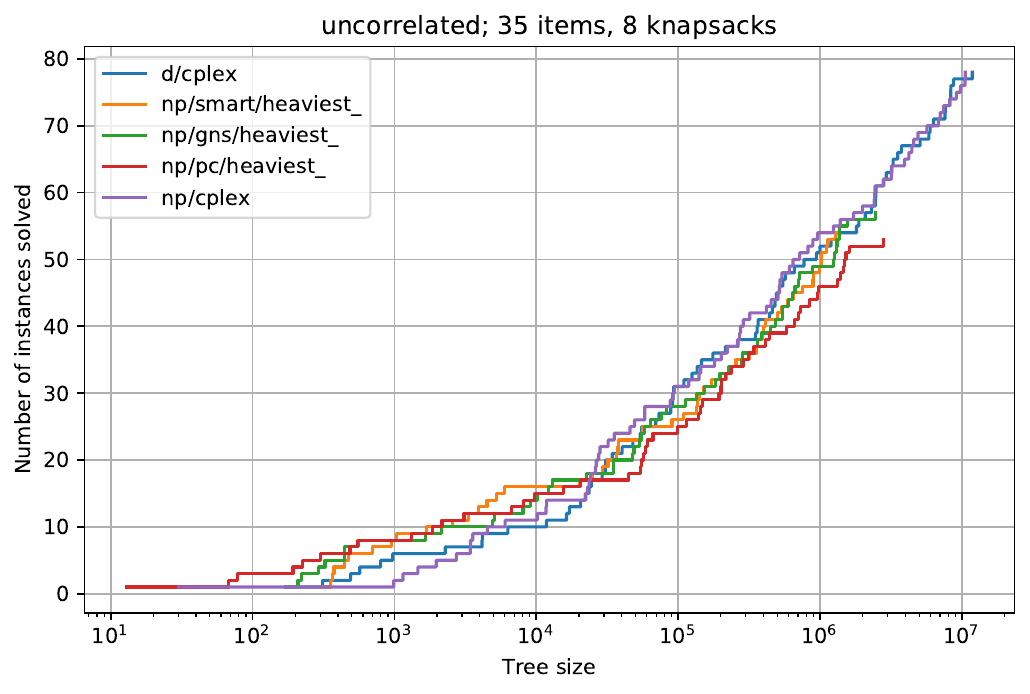}
  \label{fig:blah}
\end{subfigure}
\begin{subfigure}
  \centering
  \includegraphics[width=.39\linewidth]{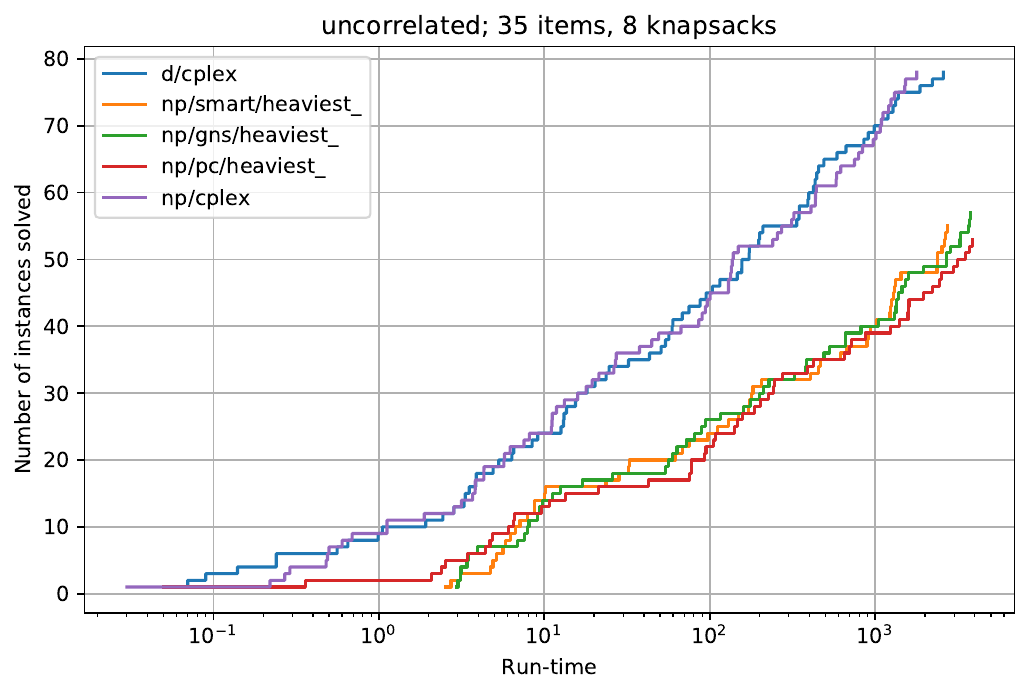}
\end{subfigure}
\begin{subfigure}
  \centering
  \includegraphics[width=.39\linewidth]{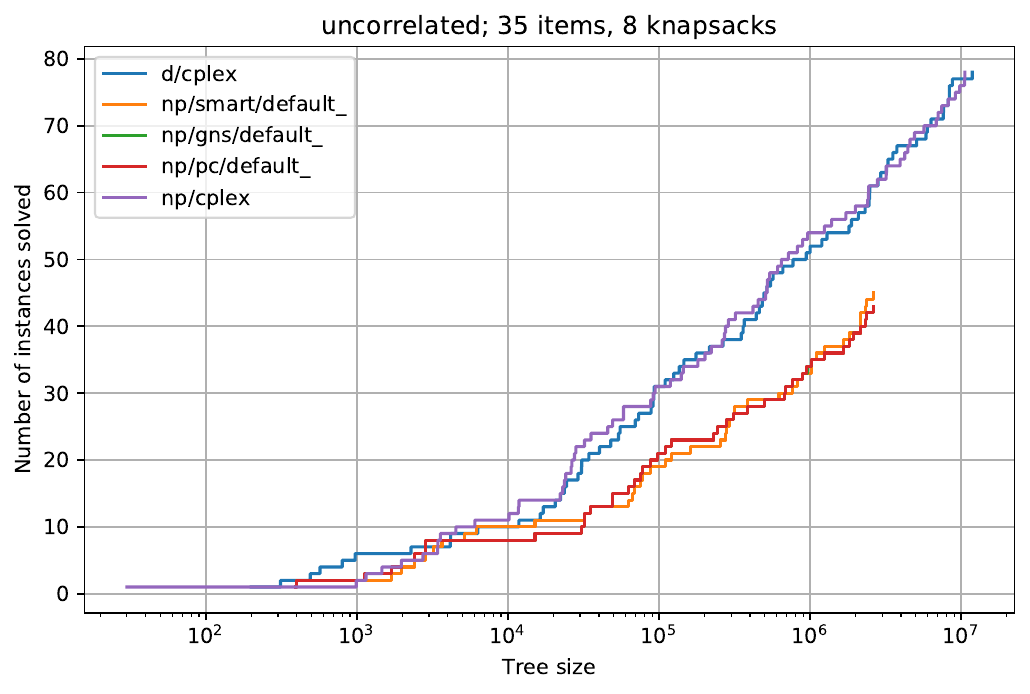}
  \label{fig:blah}
\end{subfigure}
\begin{subfigure}
  \centering
  \includegraphics[width=.39\linewidth]{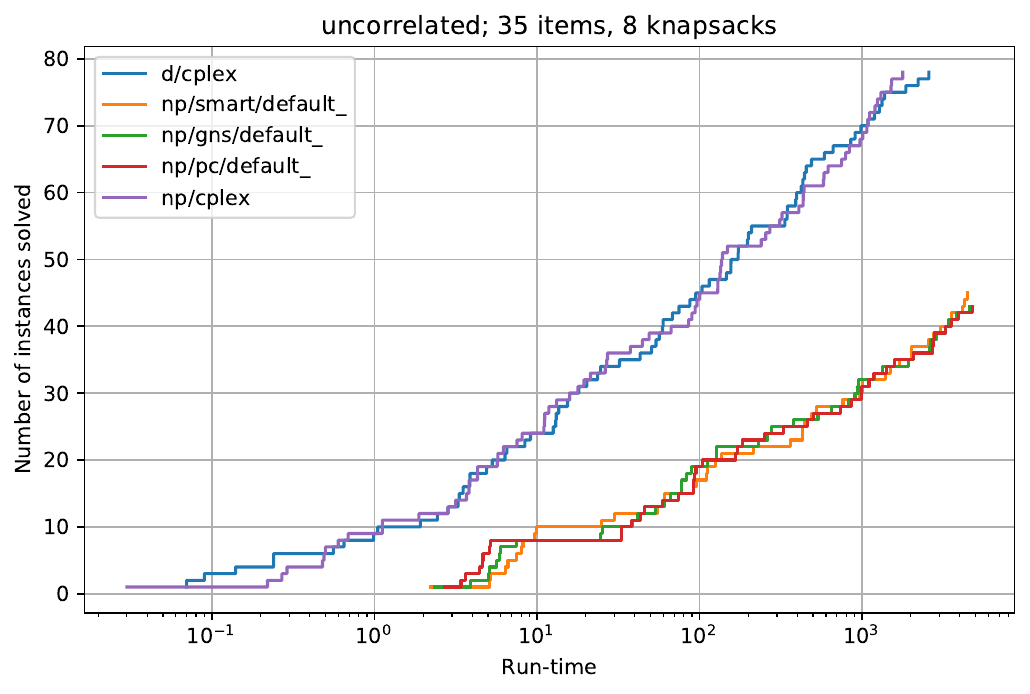}
\end{subfigure}
\begin{subfigure}
  \centering
  \includegraphics[width=.39\linewidth]{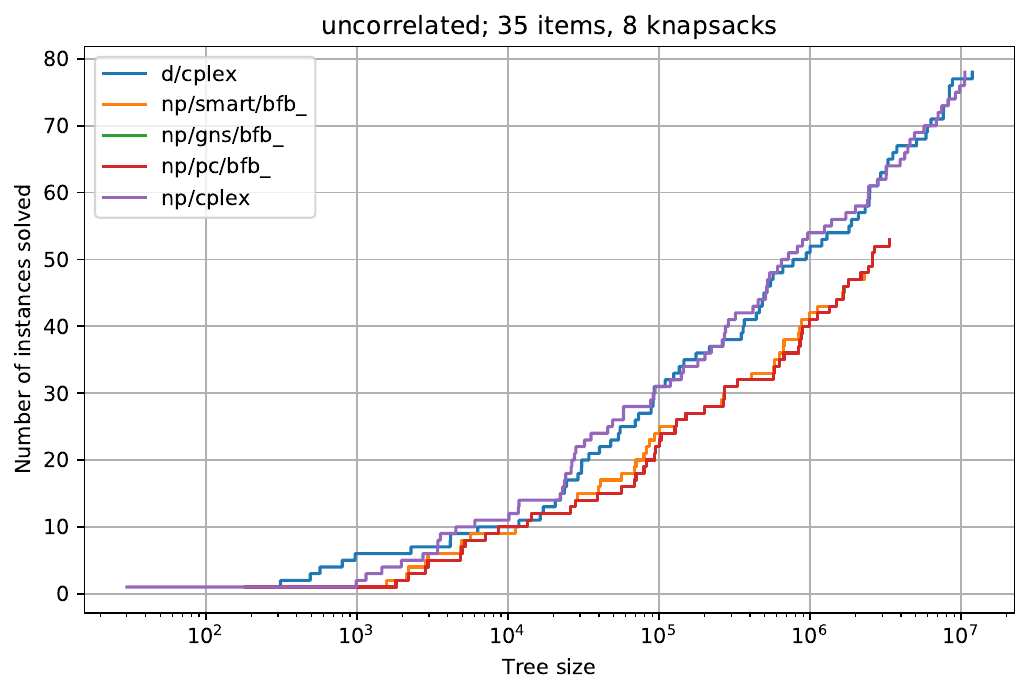}
  \label{fig:blah}
\end{subfigure}
\begin{subfigure}
  \centering
  \includegraphics[width=.39\linewidth]{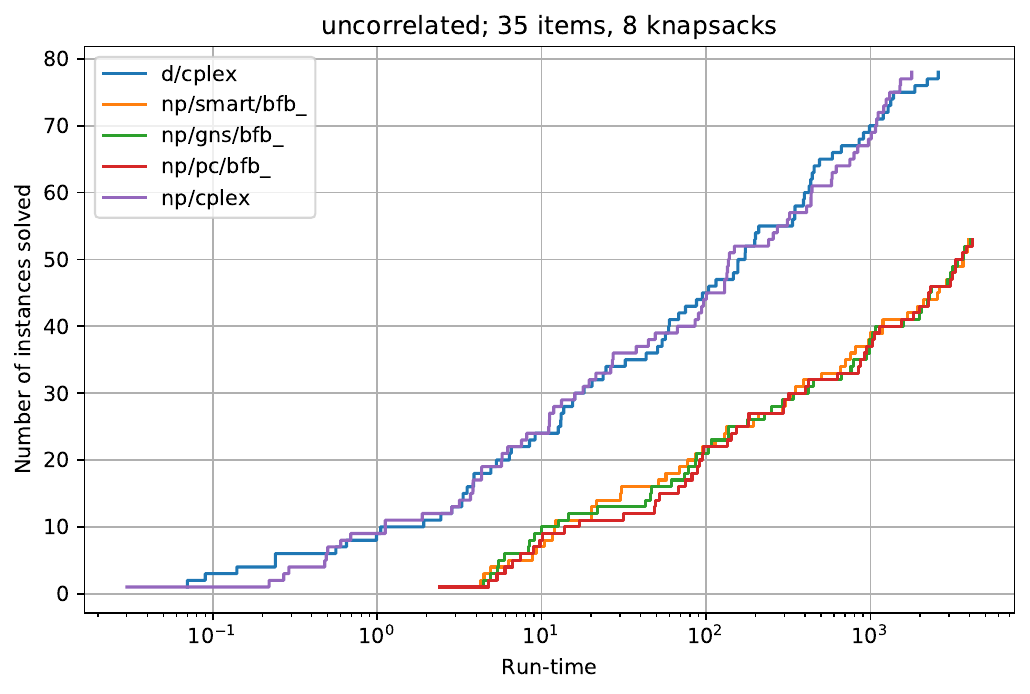}
\end{subfigure}

\caption{Uncorrelated, CPLEX cover cuts off, all other parameters but presolve on}
\label{fig:mkp_uncorrelatedno_presolve_}
\end{figure}

\begin{figure}[t]
\centering
\begin{subfigure}
  \centering
  \includegraphics[width=.39\linewidth]{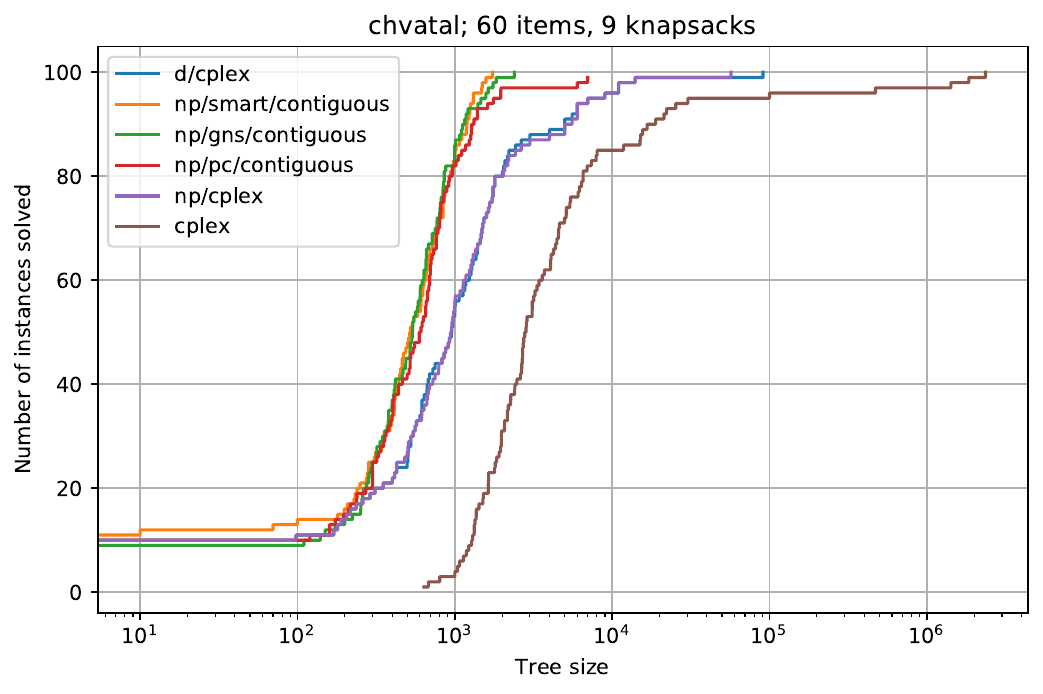}
  \label{fig:blah}
\end{subfigure}
\begin{subfigure}
  \centering
  \includegraphics[width=.39\linewidth]{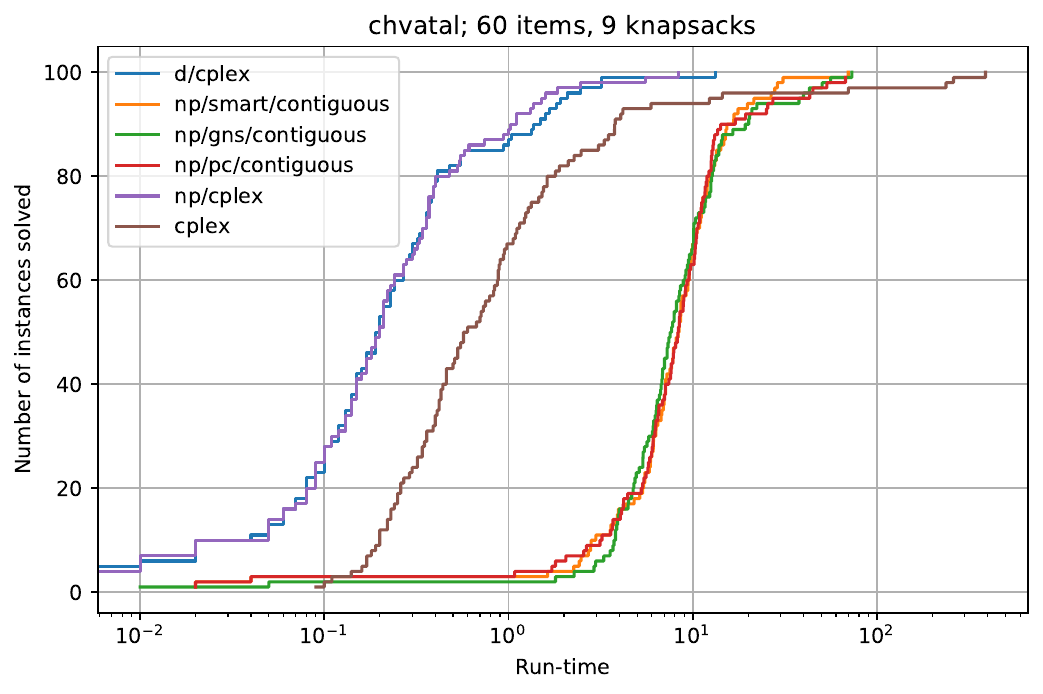}
\end{subfigure}
\begin{subfigure}
  \centering
  \includegraphics[width=.39\linewidth]{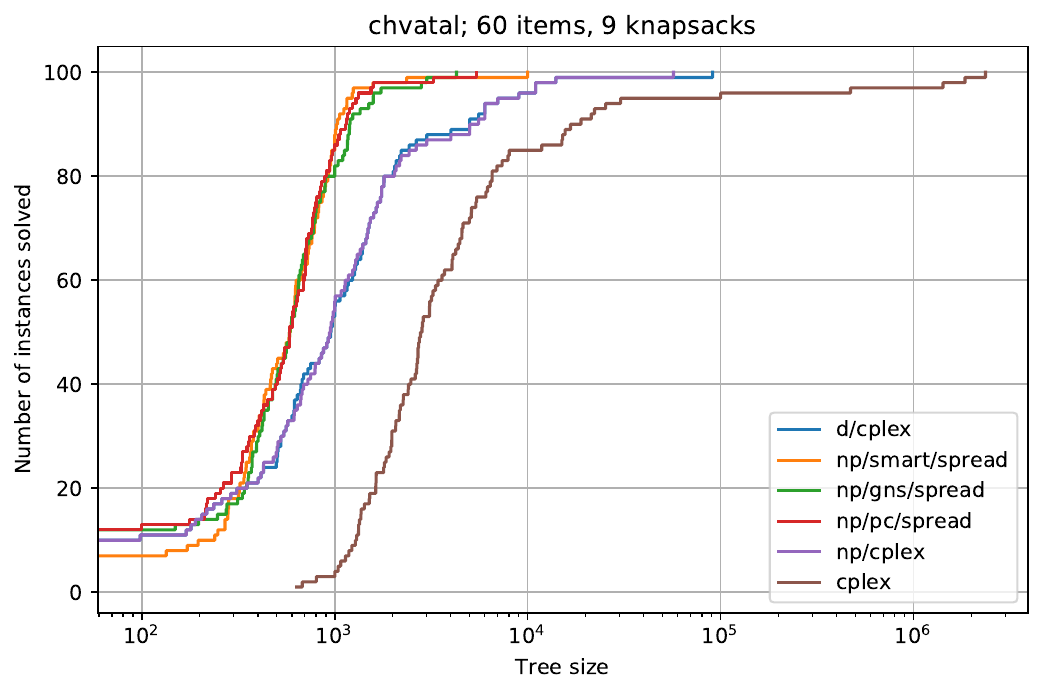}
  \label{fig:blah}
\end{subfigure}
\begin{subfigure}
  \centering
  \includegraphics[width=.39\linewidth]{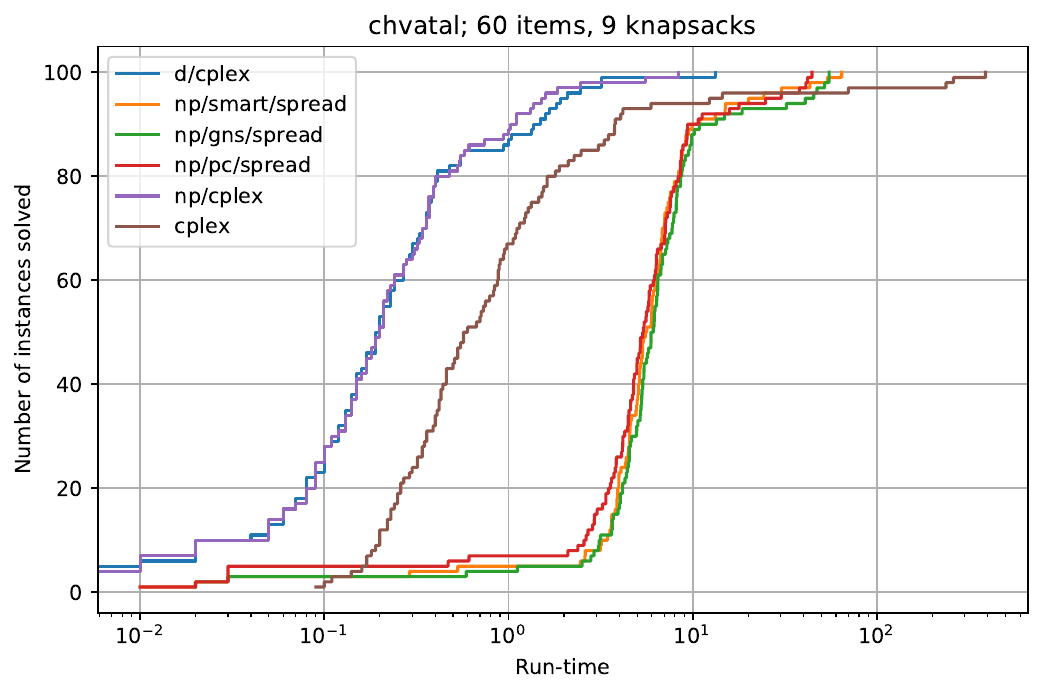}
\end{subfigure}
\begin{subfigure}
  \centering
  \includegraphics[width=.39\linewidth]{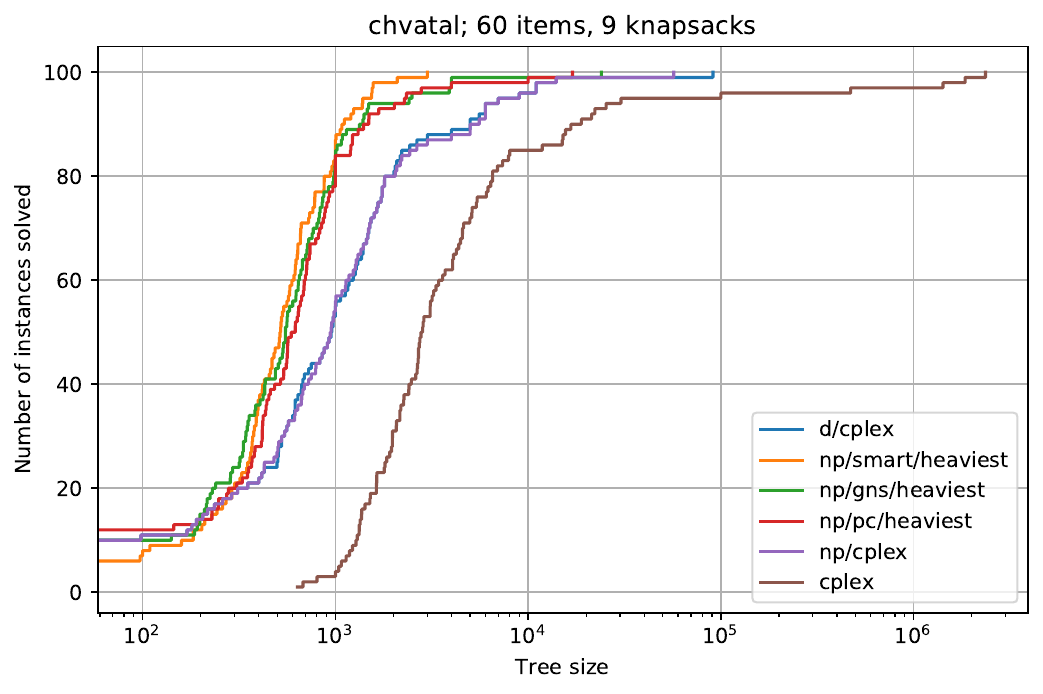}
  \label{fig:blah}
\end{subfigure}
\begin{subfigure}
  \centering
  \includegraphics[width=.39\linewidth]{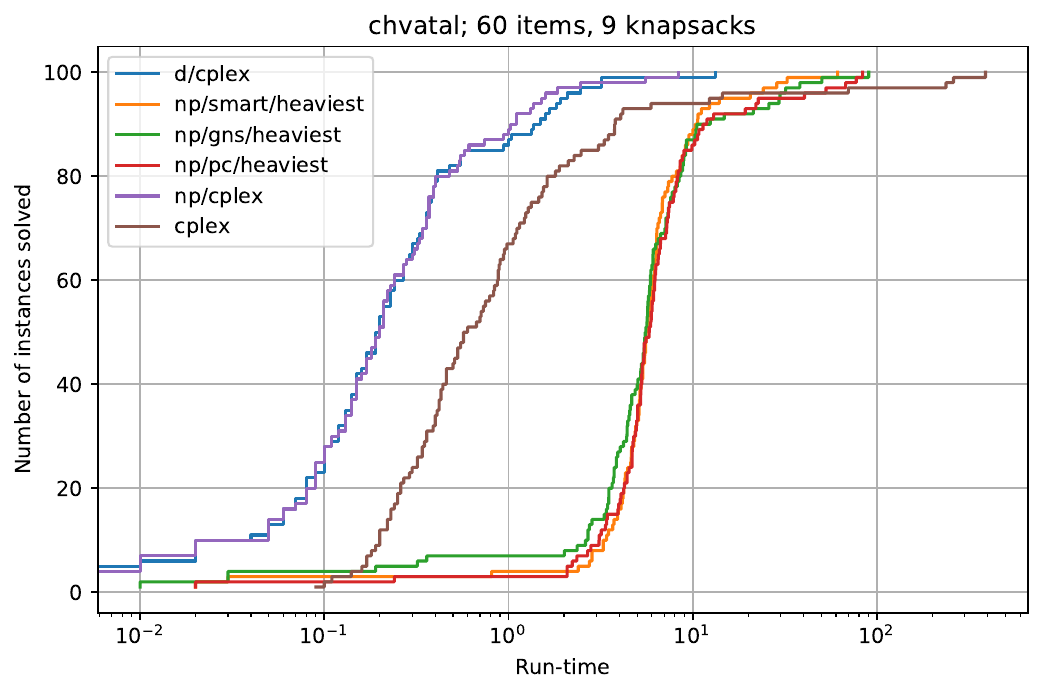}
\end{subfigure}
\begin{subfigure}
  \centering
  \includegraphics[width=.39\linewidth]{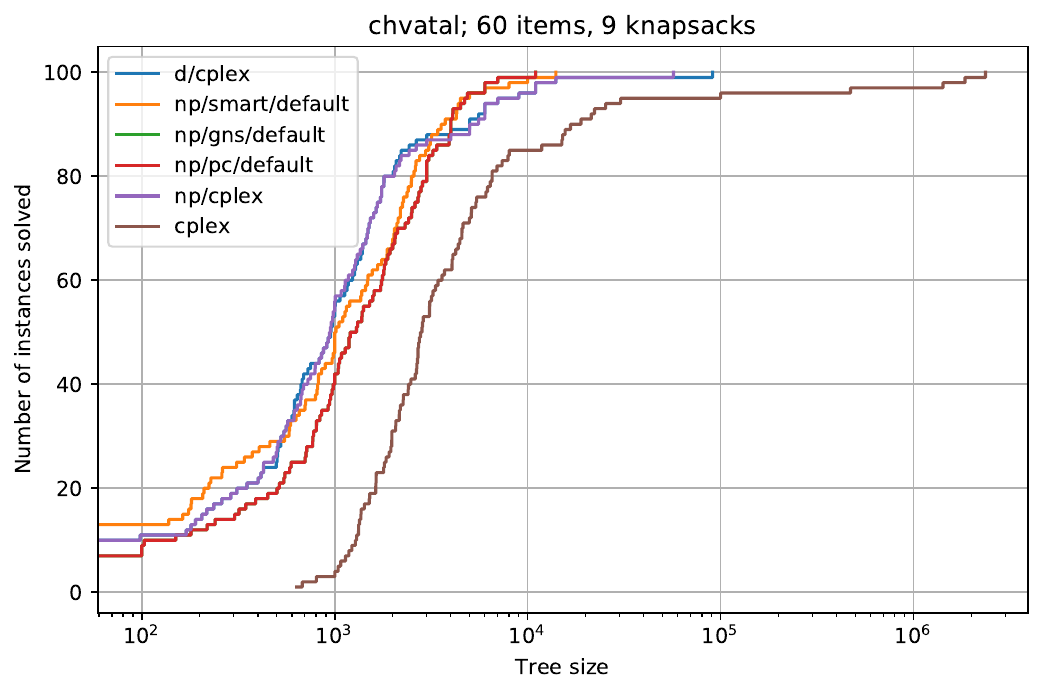}
  \label{fig:blah}
\end{subfigure}
\begin{subfigure}
  \centering
  \includegraphics[width=.39\linewidth]{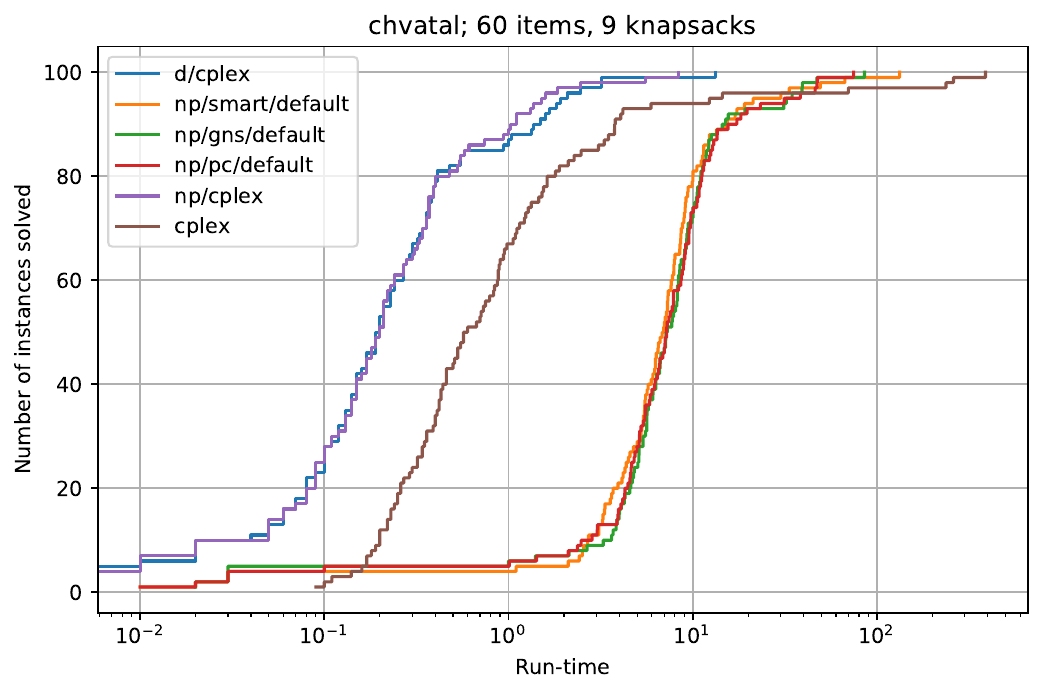}
\end{subfigure}
\begin{subfigure}
  \centering
  \includegraphics[width=.39\linewidth]{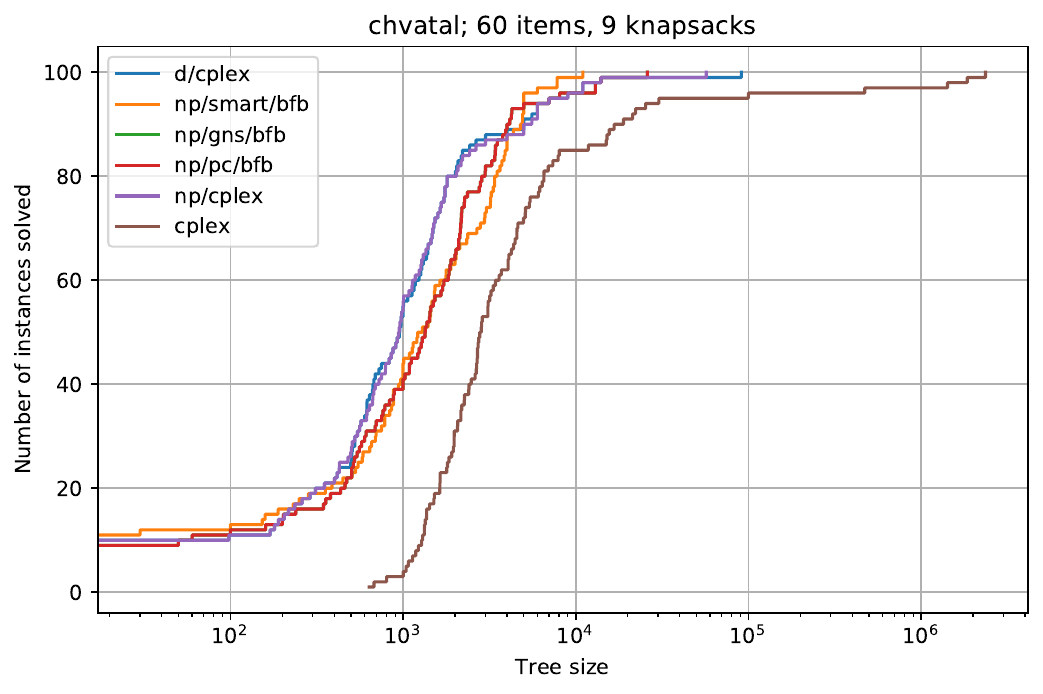}
  \label{fig:blah}
\end{subfigure}
\begin{subfigure}
  \centering
  \includegraphics[width=.39\linewidth]{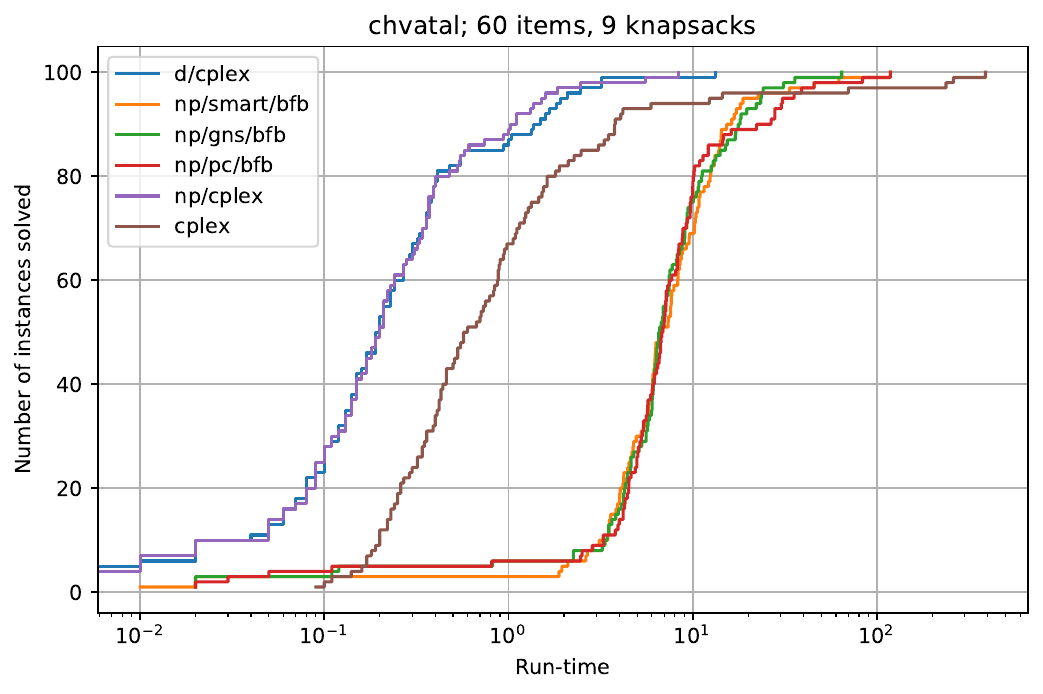}
\end{subfigure}

\caption{Chv\'{a}tal, CPLEX cover cuts on, all other parameters but presolve on}
\label{fig:chvatal_no_presolve}
\end{figure}

\begin{figure}[t]
\centering
\begin{subfigure}
  \centering
  \includegraphics[width=.39\linewidth]{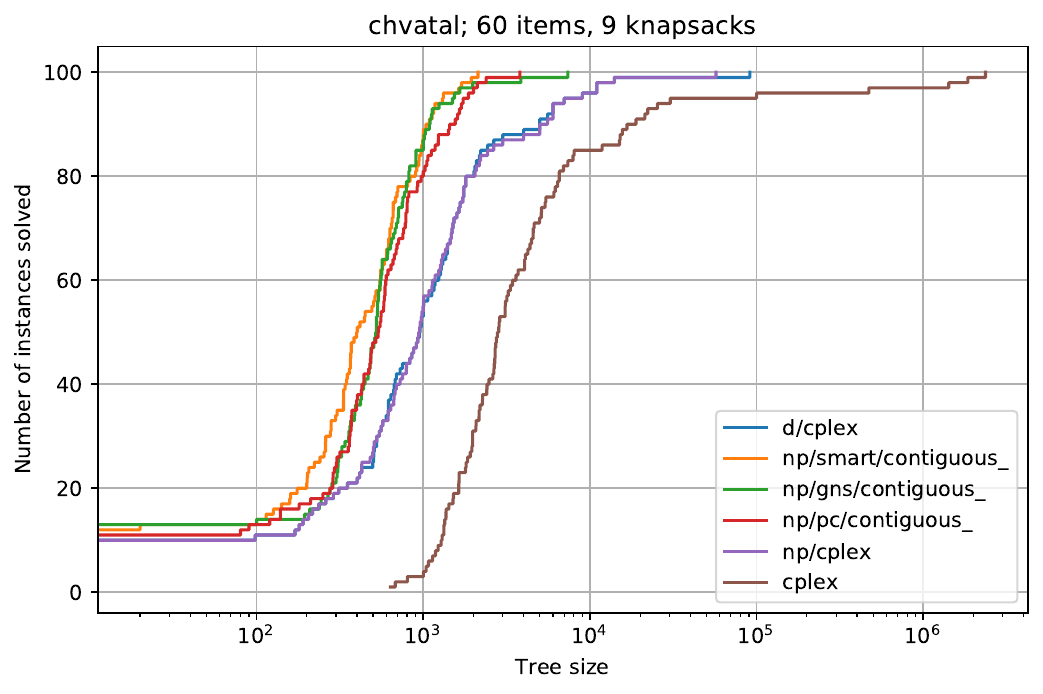}
  \label{fig:blah}
\end{subfigure}
\begin{subfigure}
  \centering
  \includegraphics[width=.39\linewidth]{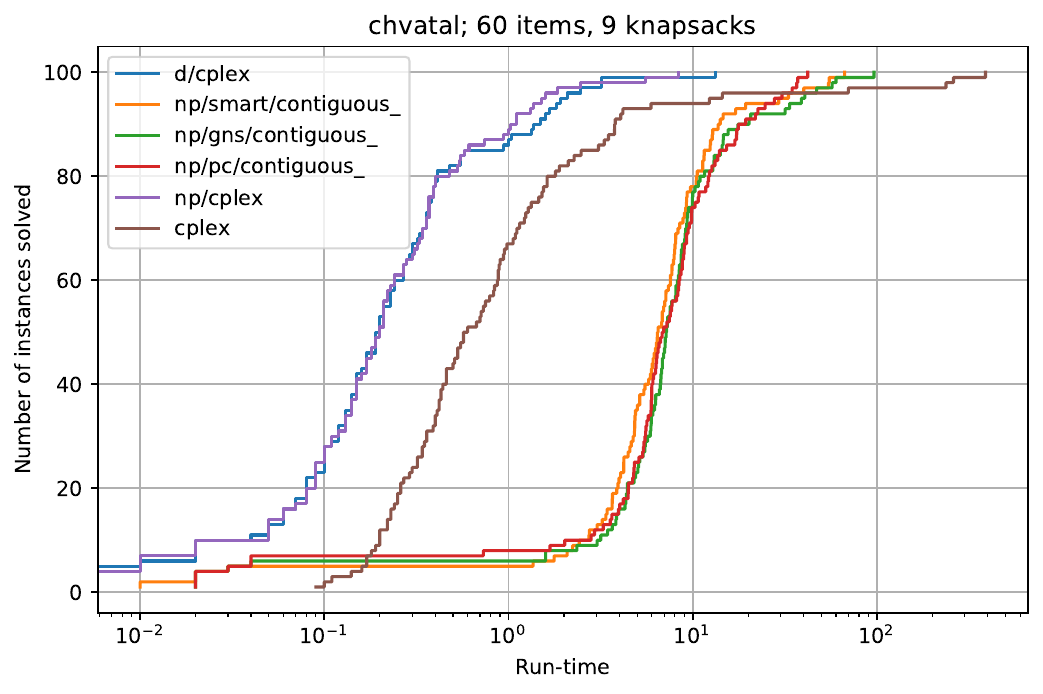}
\end{subfigure}
\begin{subfigure}
  \centering
  \includegraphics[width=.39\linewidth]{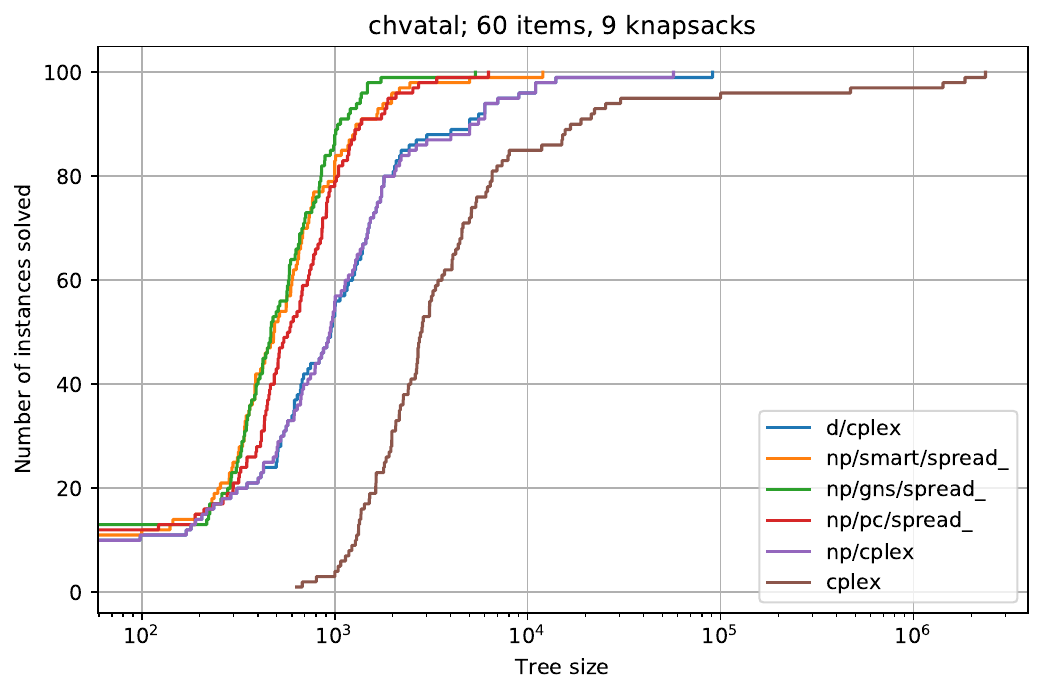}
  \label{fig:blah}
\end{subfigure}
\begin{subfigure}
  \centering
  \includegraphics[width=.39\linewidth]{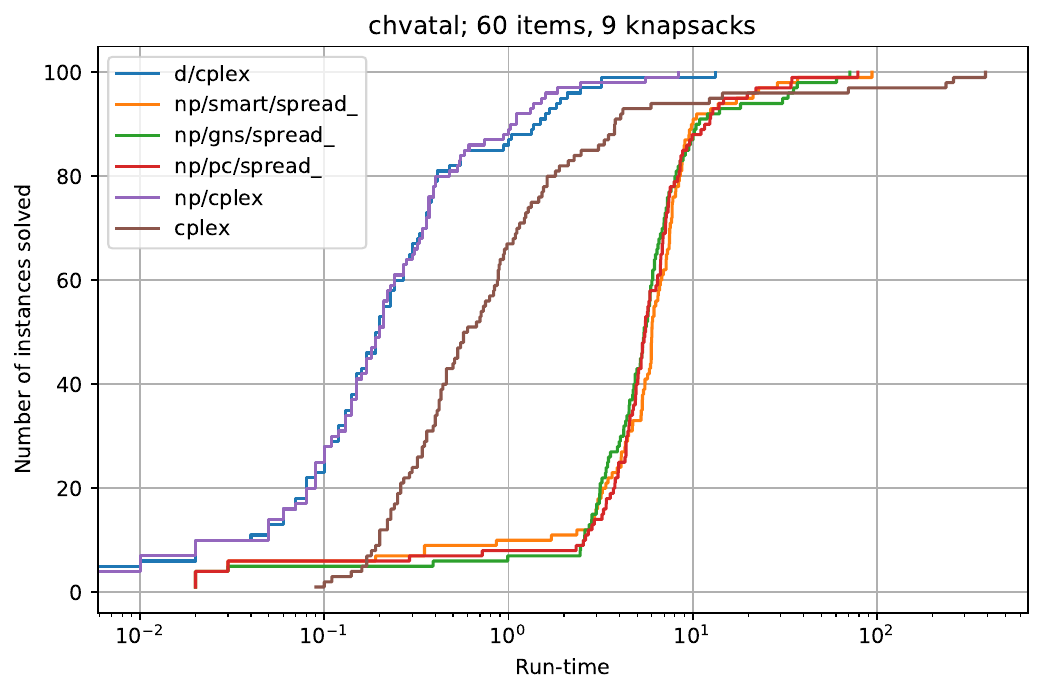}
\end{subfigure}
\begin{subfigure}
  \centering
  \includegraphics[width=.39\linewidth]{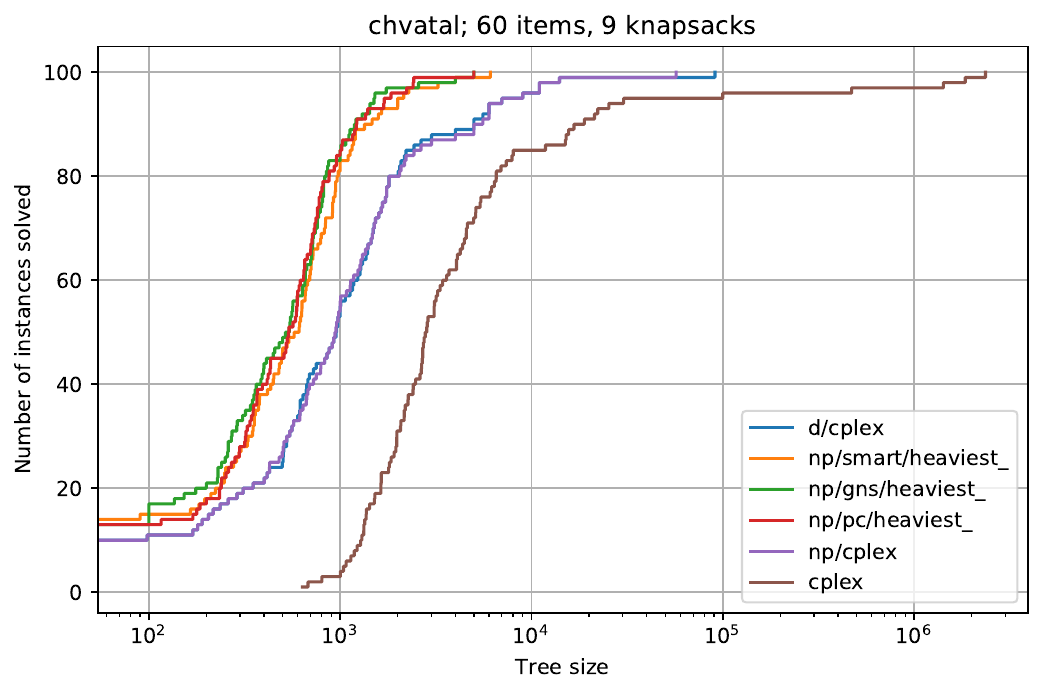}
  \label{fig:blah}
\end{subfigure}
\begin{subfigure}
  \centering
  \includegraphics[width=.39\linewidth]{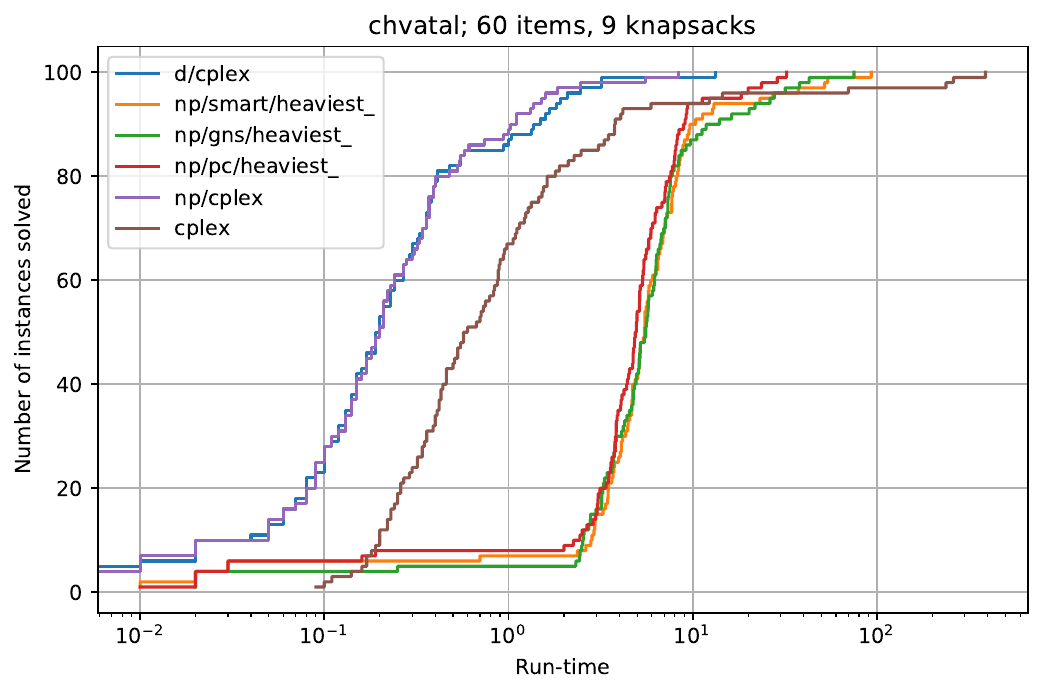}
\end{subfigure}
\begin{subfigure}
  \centering
  \includegraphics[width=.39\linewidth]{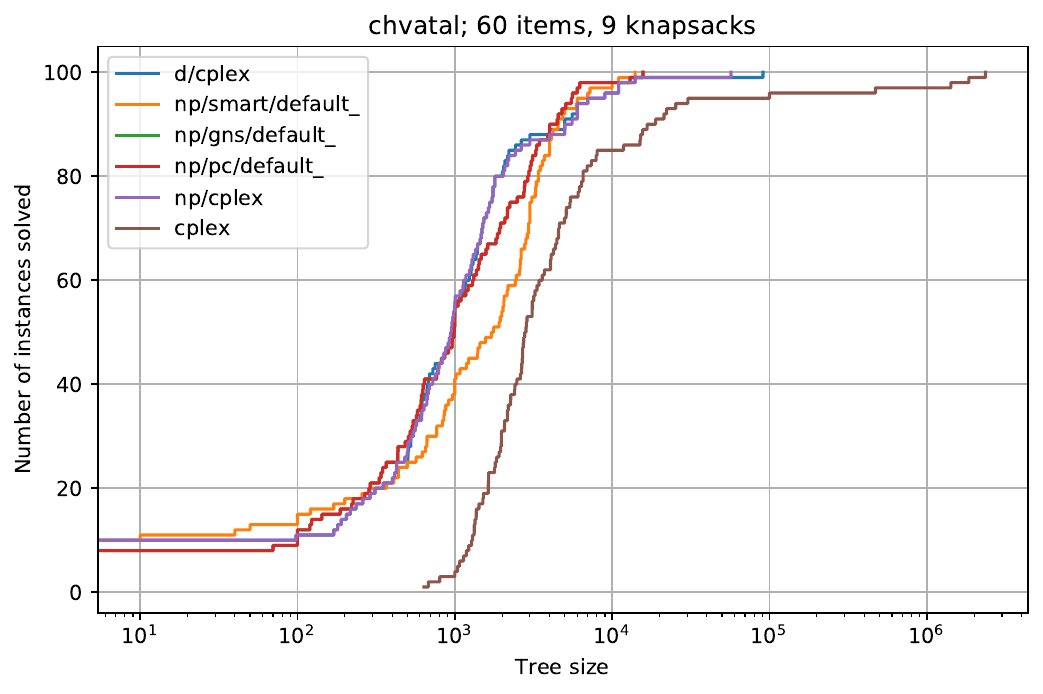}
  \label{fig:blah}
\end{subfigure}
\begin{subfigure}
  \centering
  \includegraphics[width=.39\linewidth]{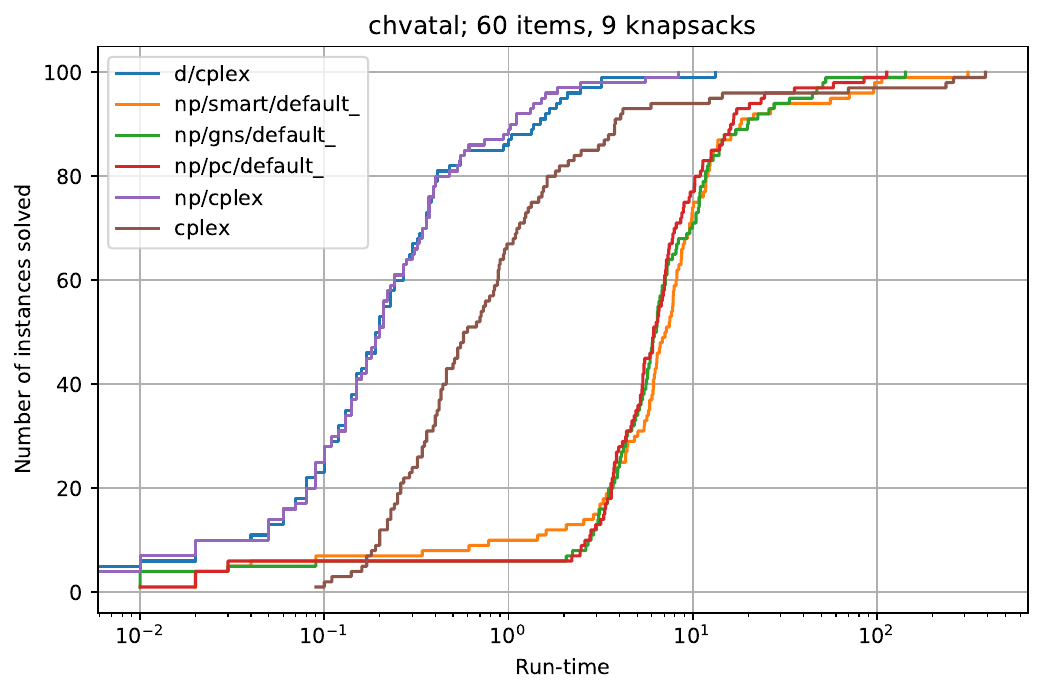}
\end{subfigure}
\begin{subfigure}
  \centering
  \includegraphics[width=.39\linewidth]{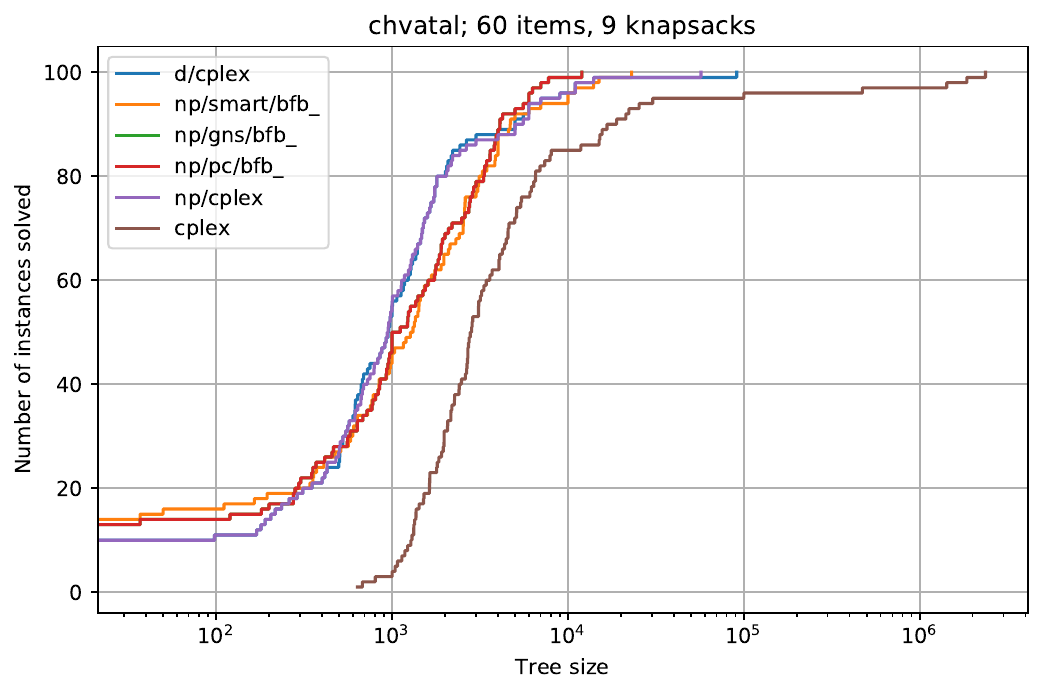}
  \label{fig:blah}
\end{subfigure}
\begin{subfigure}
  \centering
  \includegraphics[width=.39\linewidth]{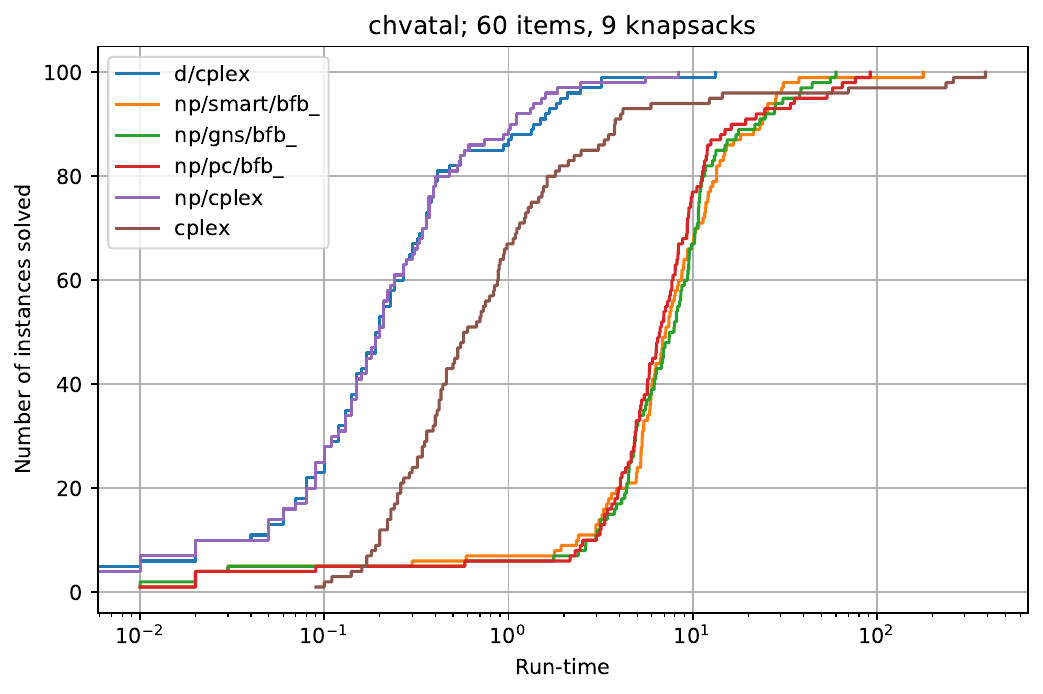}
\end{subfigure}

\caption{Chv\'{a}tal, CPLEX cover cuts off, all other parameters but presolve on}
\label{fig:chvatal_no_presolve_}
\end{figure}

\begin{figure}[t]
\centering
\begin{subfigure}
  \centering
  \includegraphics[width=.39\linewidth]{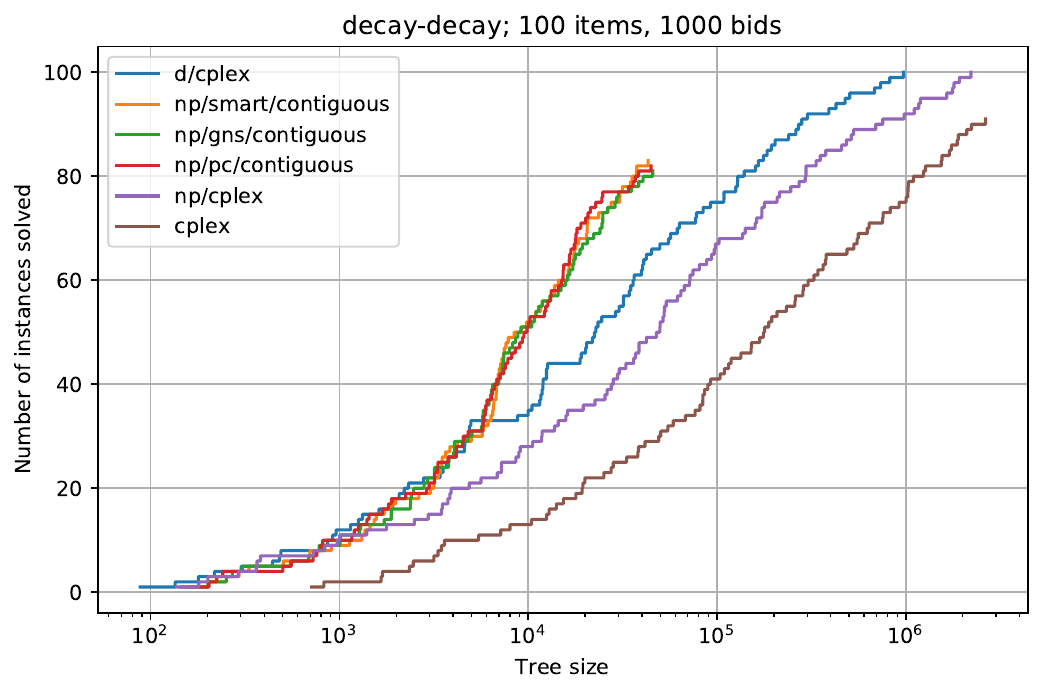}
  \label{fig:blah}
\end{subfigure}
\begin{subfigure}
  \centering
  \includegraphics[width=.39\linewidth]{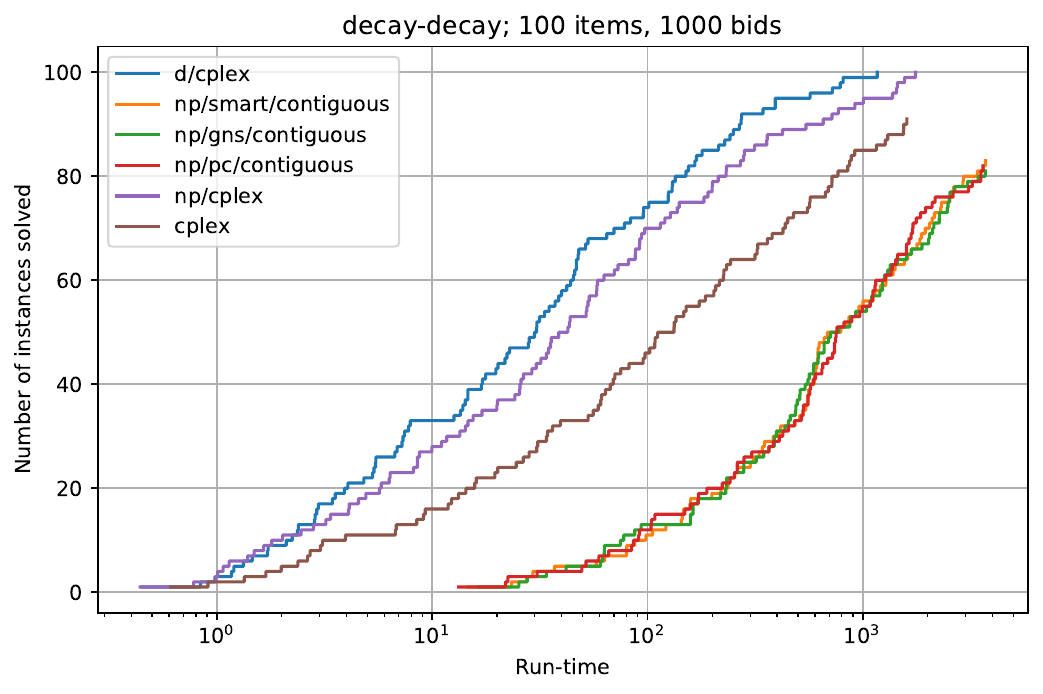}
\end{subfigure}
\begin{subfigure}
  \centering
  \includegraphics[width=.39\linewidth]{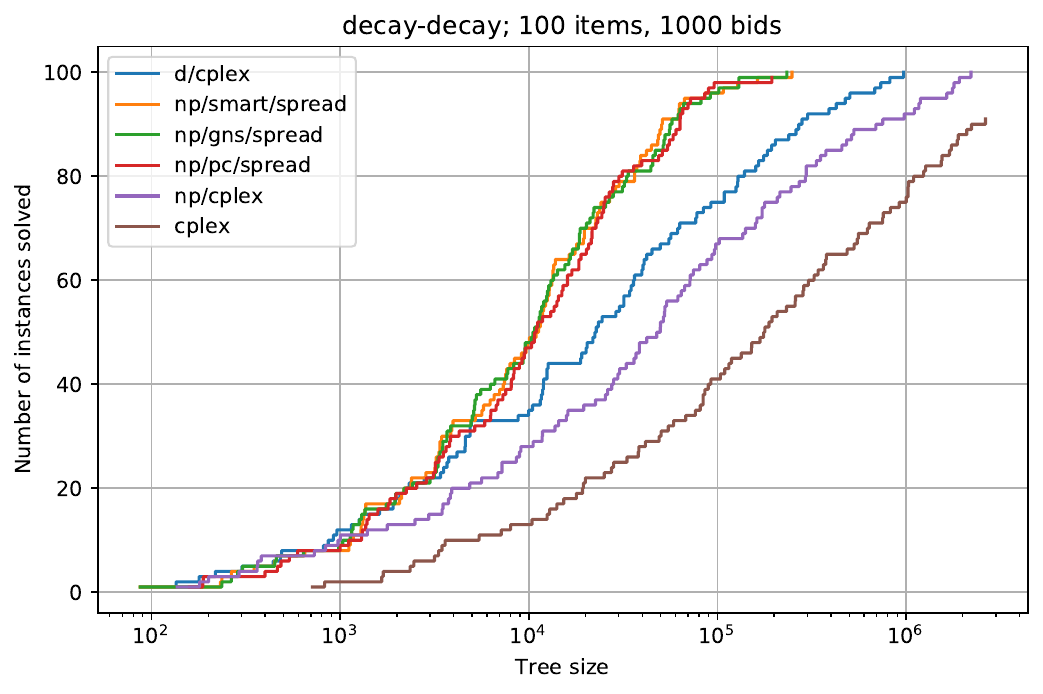}
  \label{fig:blah}
\end{subfigure}
\begin{subfigure}
  \centering
  \includegraphics[width=.39\linewidth]{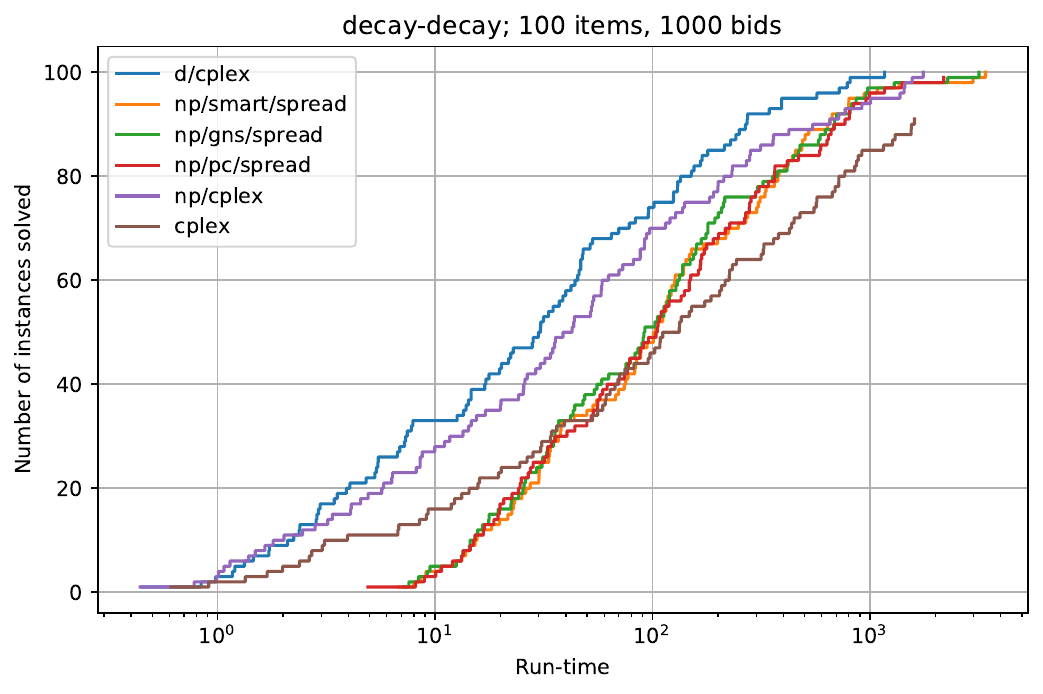}
\end{subfigure}
\begin{subfigure}
  \centering
  \includegraphics[width=.39\linewidth]{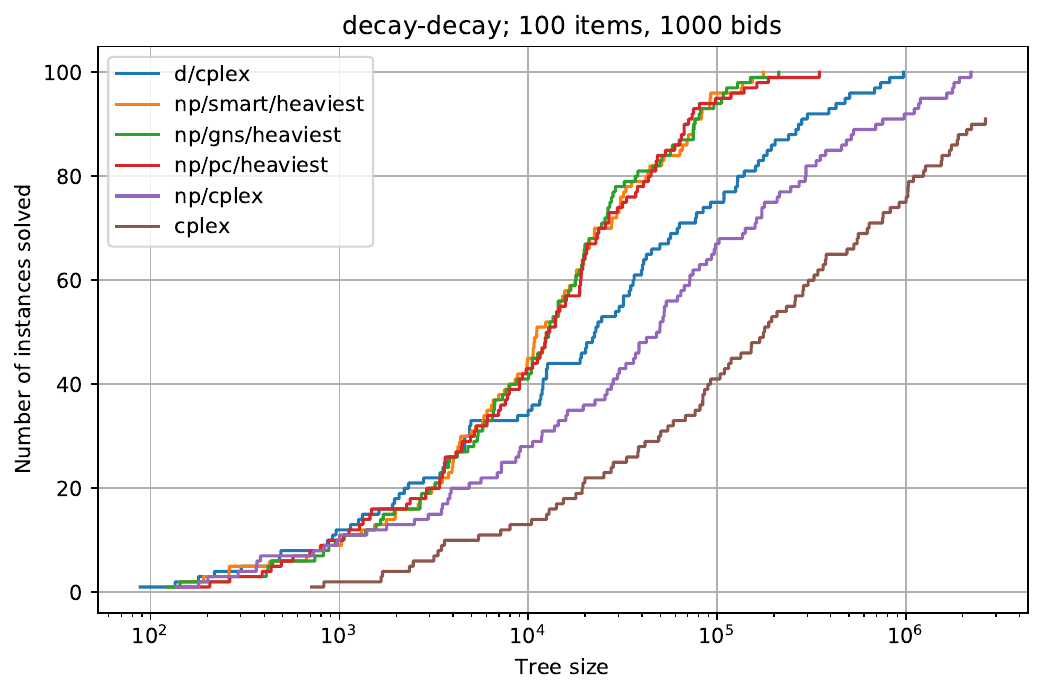}
  \label{fig:blah}
\end{subfigure}
\begin{subfigure}
  \centering
  \includegraphics[width=.39\linewidth]{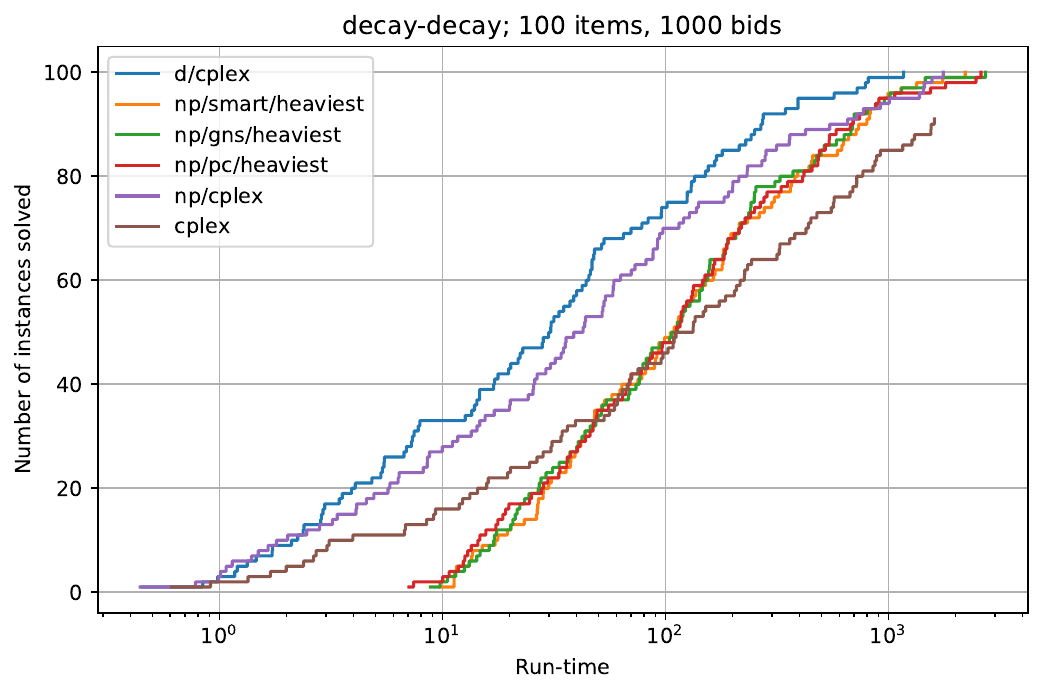}
\end{subfigure}
\begin{subfigure}
  \centering
  \includegraphics[width=.39\linewidth]{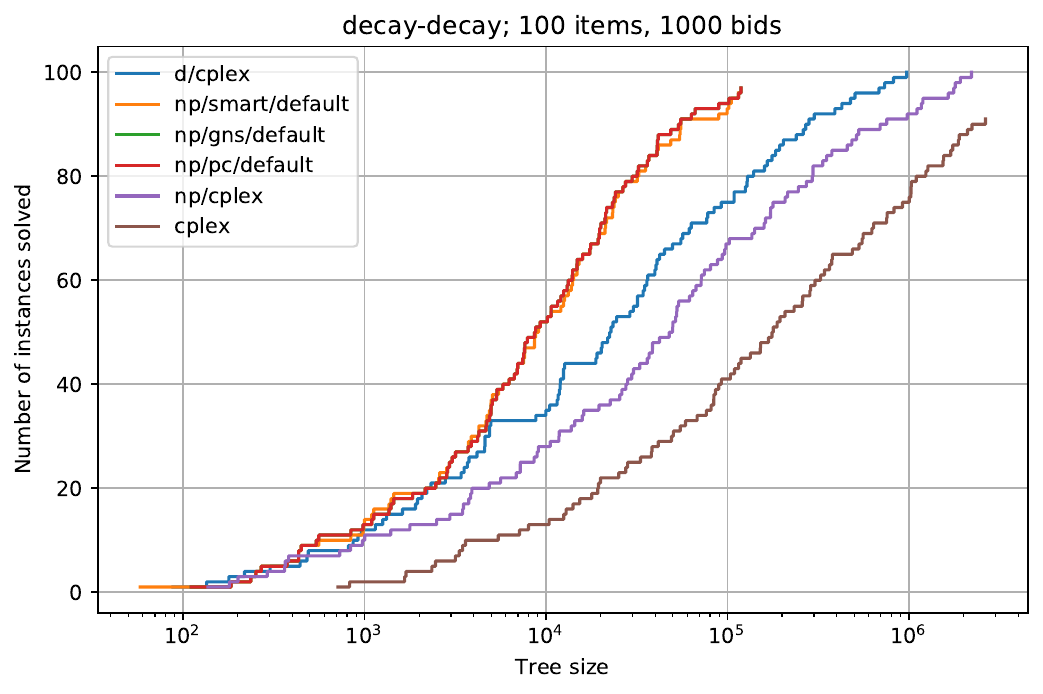}
  \label{fig:blah}
\end{subfigure}
\begin{subfigure}
  \centering
  \includegraphics[width=.39\linewidth]{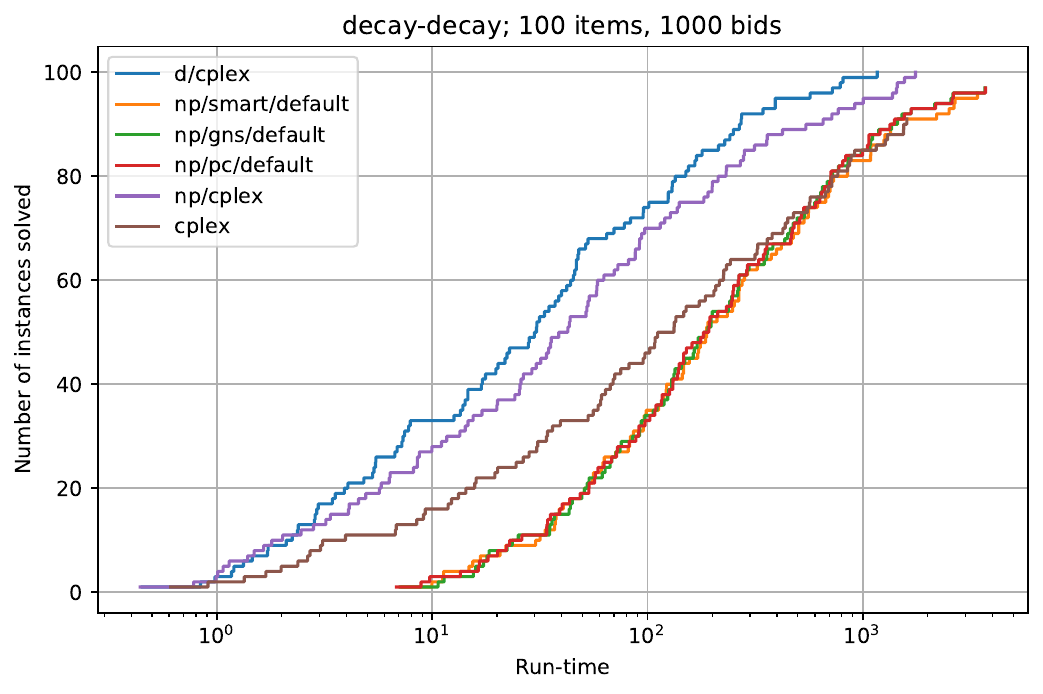}
\end{subfigure}
\begin{subfigure}
  \centering
  \includegraphics[width=.39\linewidth]{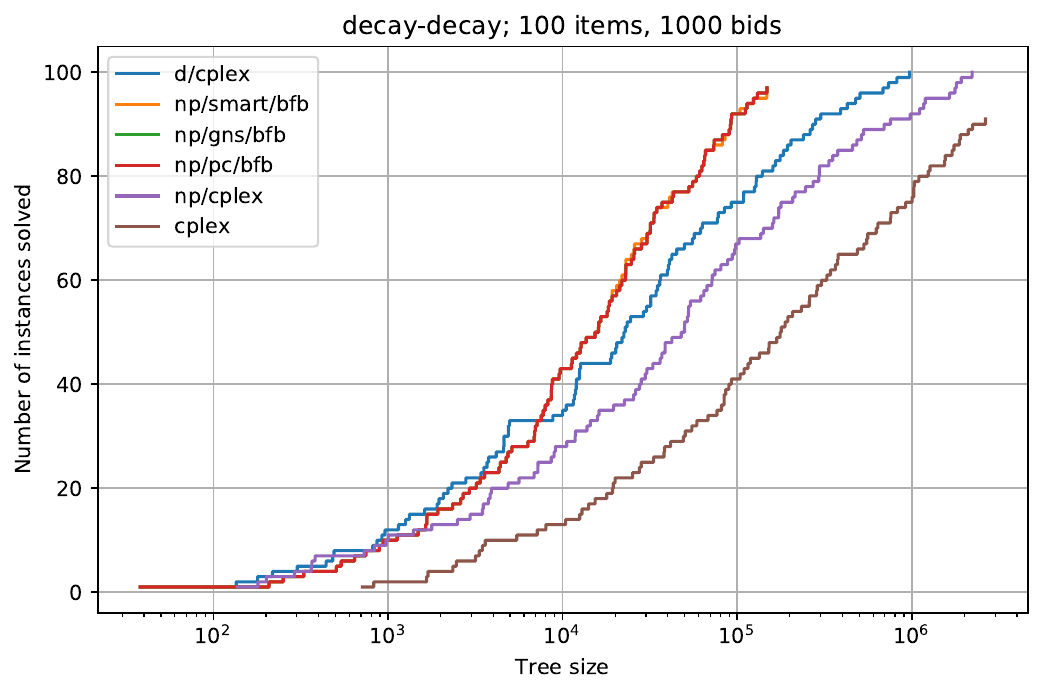}
  \label{fig:blah}
\end{subfigure}
\begin{subfigure}
  \centering
  \includegraphics[width=.39\linewidth]{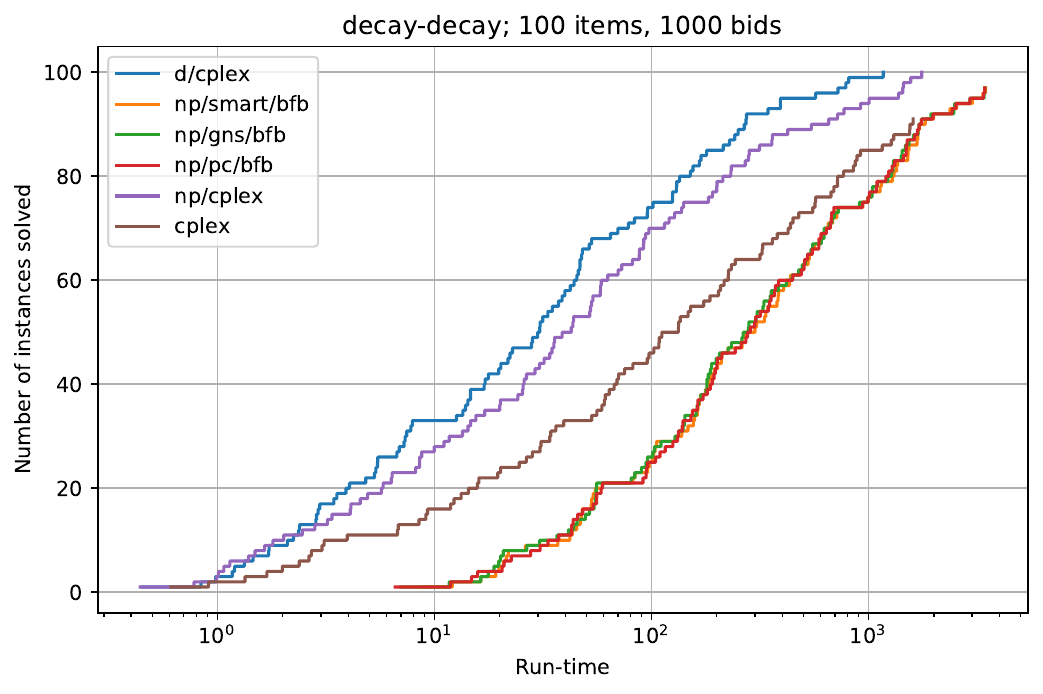}
\end{subfigure}

\caption{Decay-decay, CPLEX cover cuts on, all other parameters but presolve on}
\label{fig:muca_no_presolve}
\end{figure}

\begin{figure}[t]
\centering
\begin{subfigure}
  \centering
  \includegraphics[width=.39\linewidth]{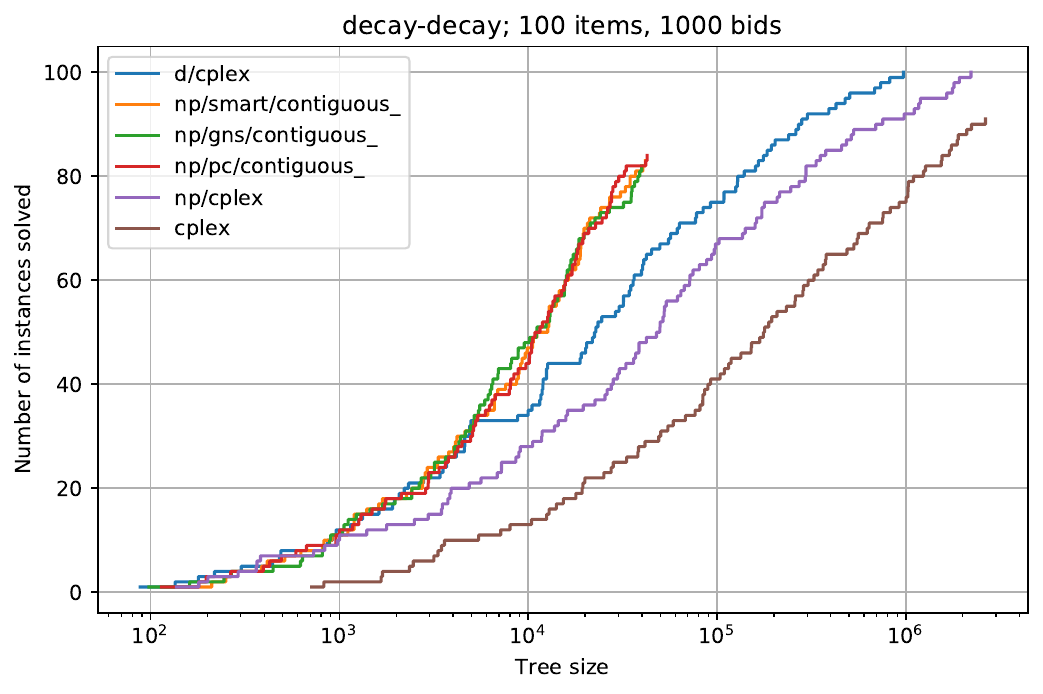}
  \label{fig:blah}
\end{subfigure}
\begin{subfigure}
  \centering
  \includegraphics[width=.39\linewidth]{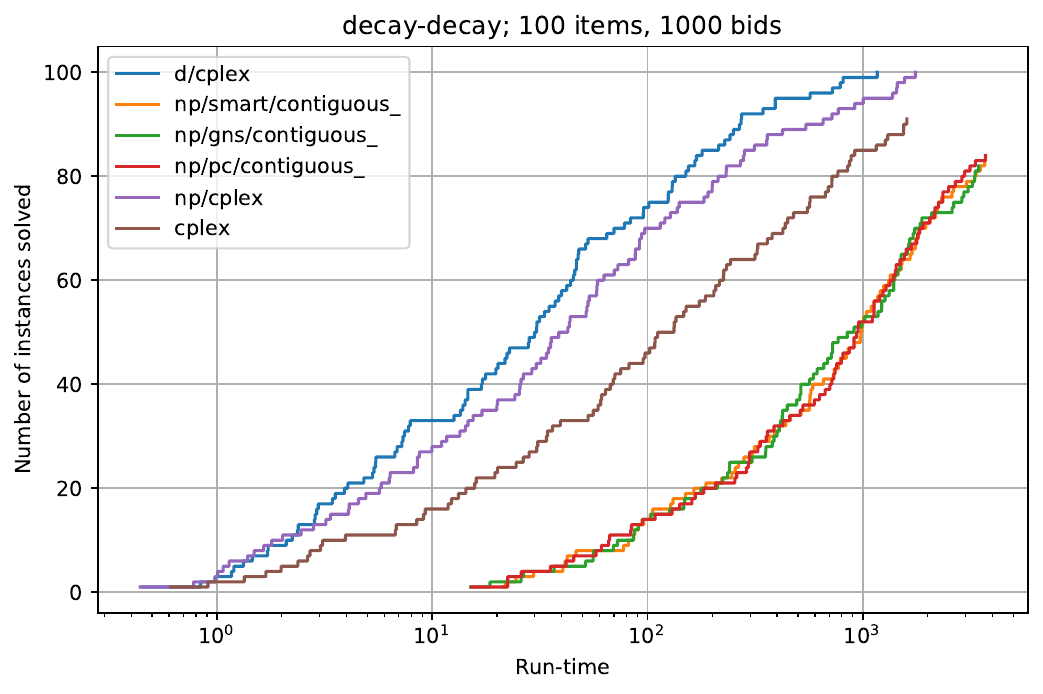}
\end{subfigure}
\begin{subfigure}
  \centering
  \includegraphics[width=.39\linewidth]{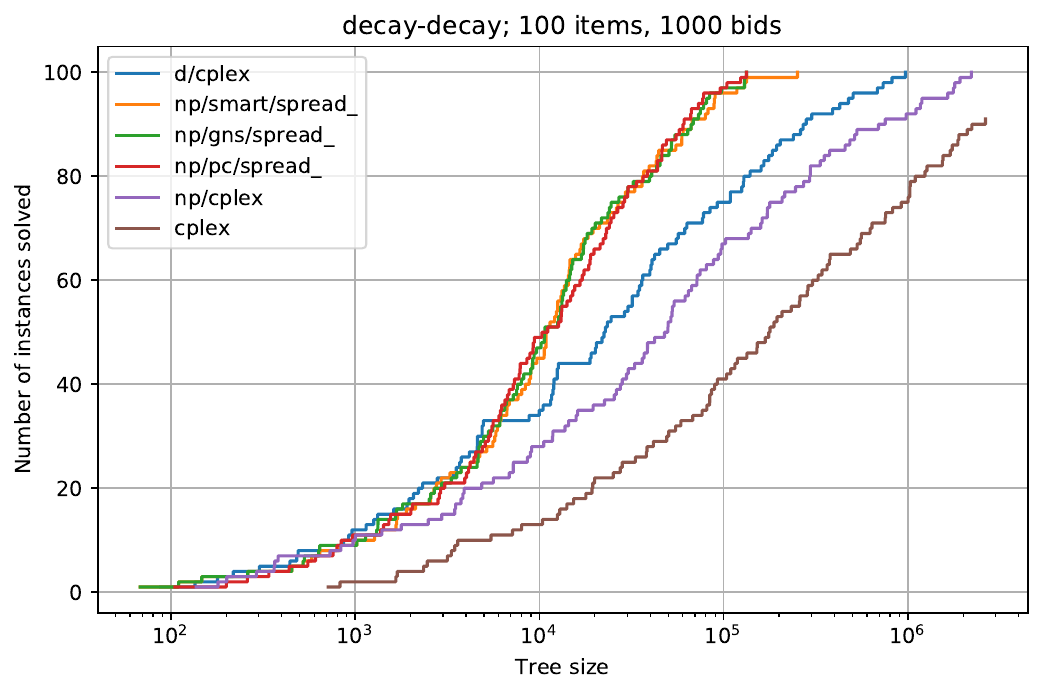}
  \label{fig:blah}
\end{subfigure}
\begin{subfigure}
  \centering
  \includegraphics[width=.39\linewidth]{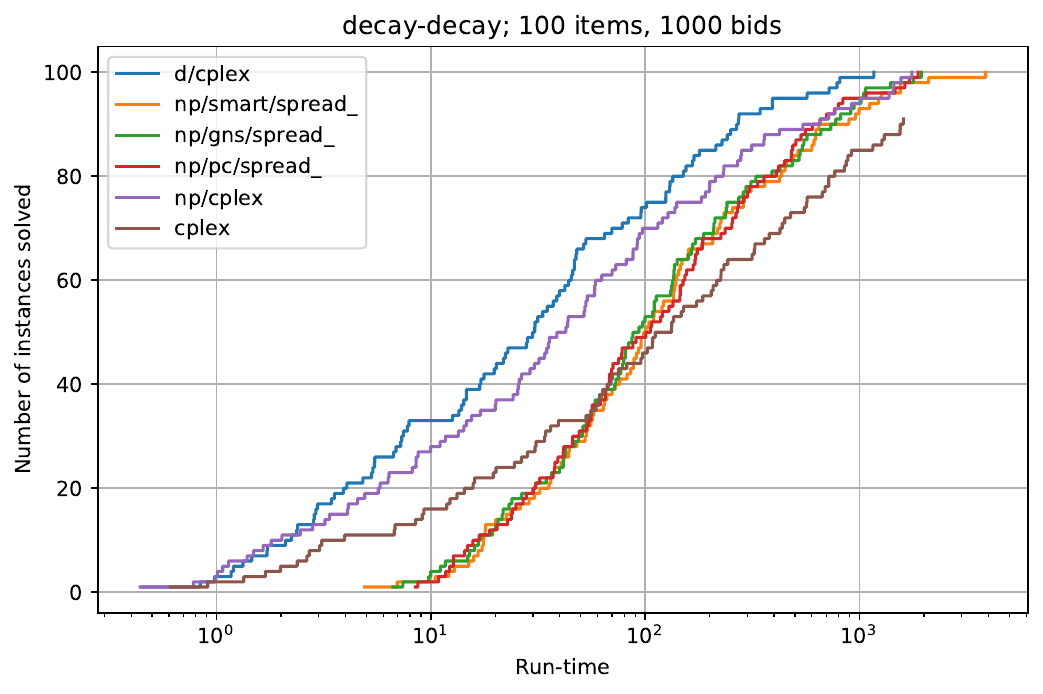}
\end{subfigure}
\begin{subfigure}
  \centering
  \includegraphics[width=.39\linewidth]{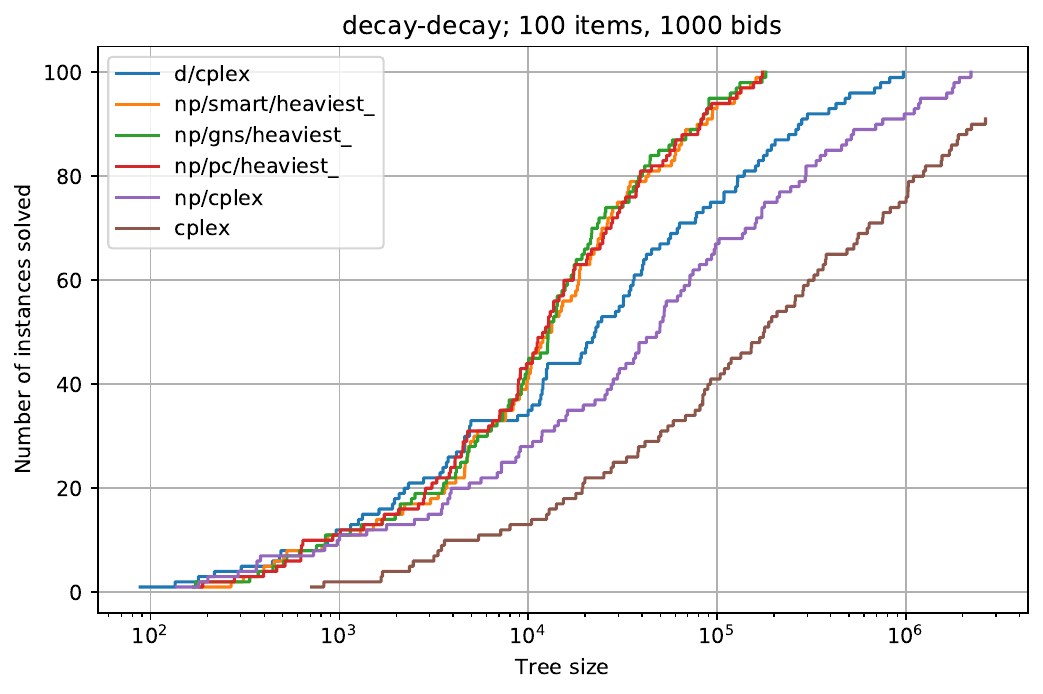}
  \label{fig:blah}
\end{subfigure}
\begin{subfigure}
  \centering
  \includegraphics[width=.39\linewidth]{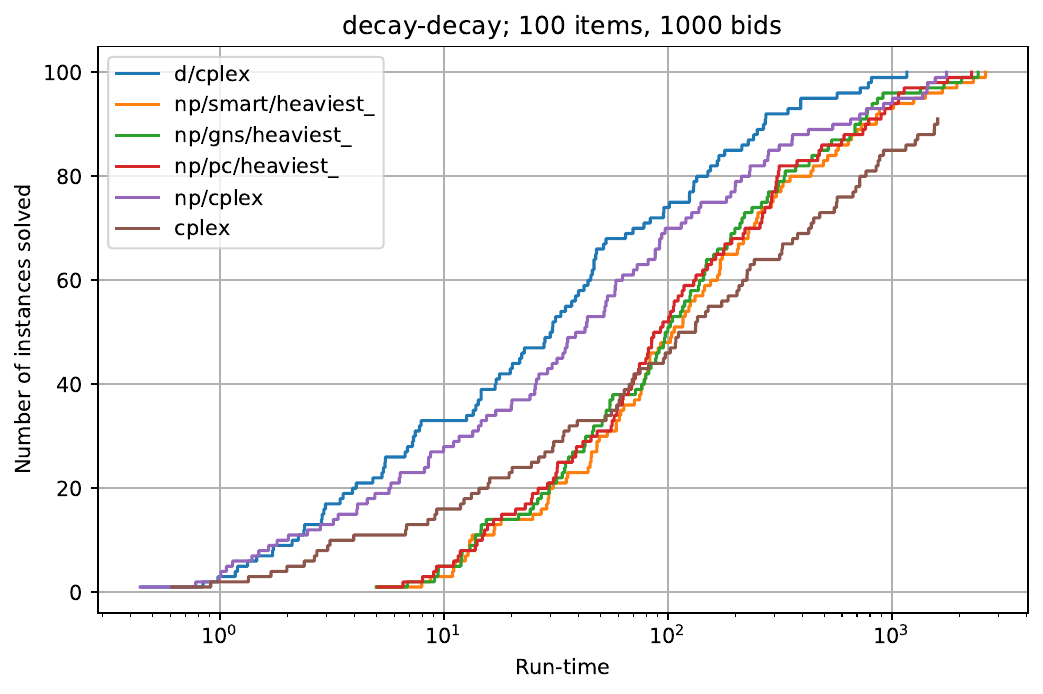}
\end{subfigure}
\begin{subfigure}
  \centering
  \includegraphics[width=.39\linewidth]{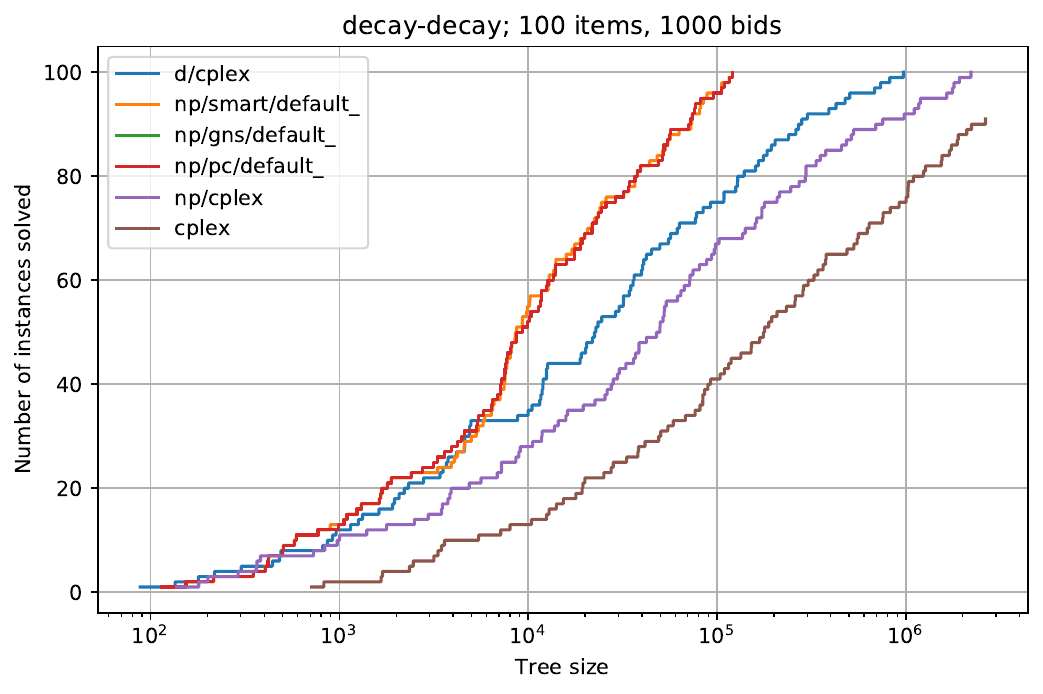}
  \label{fig:blah}
\end{subfigure}
\begin{subfigure}
  \centering
  \includegraphics[width=.39\linewidth]{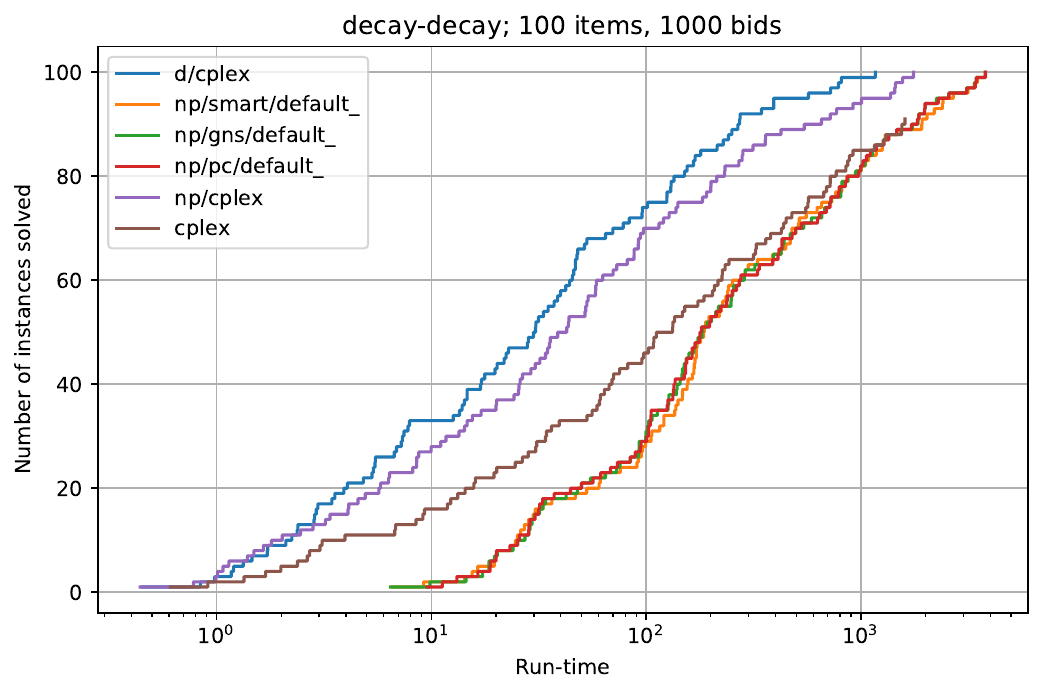}
\end{subfigure}
\begin{subfigure}
  \centering
  \includegraphics[width=.39\linewidth]{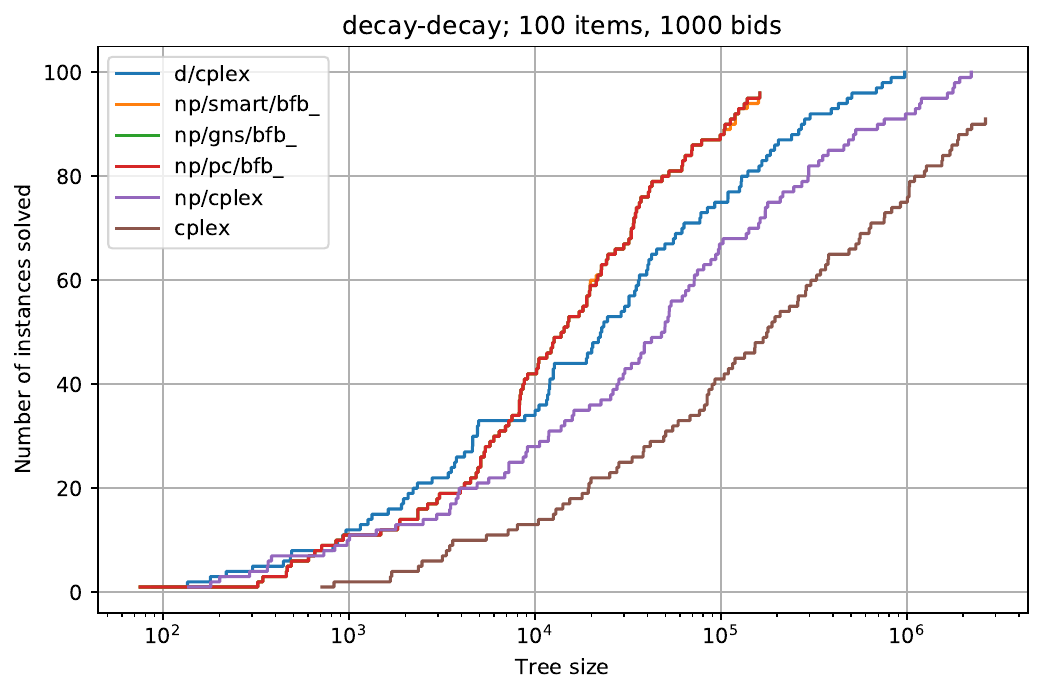}
  \label{fig:blah}
\end{subfigure}
\begin{subfigure}
  \centering
  \includegraphics[width=.39\linewidth]{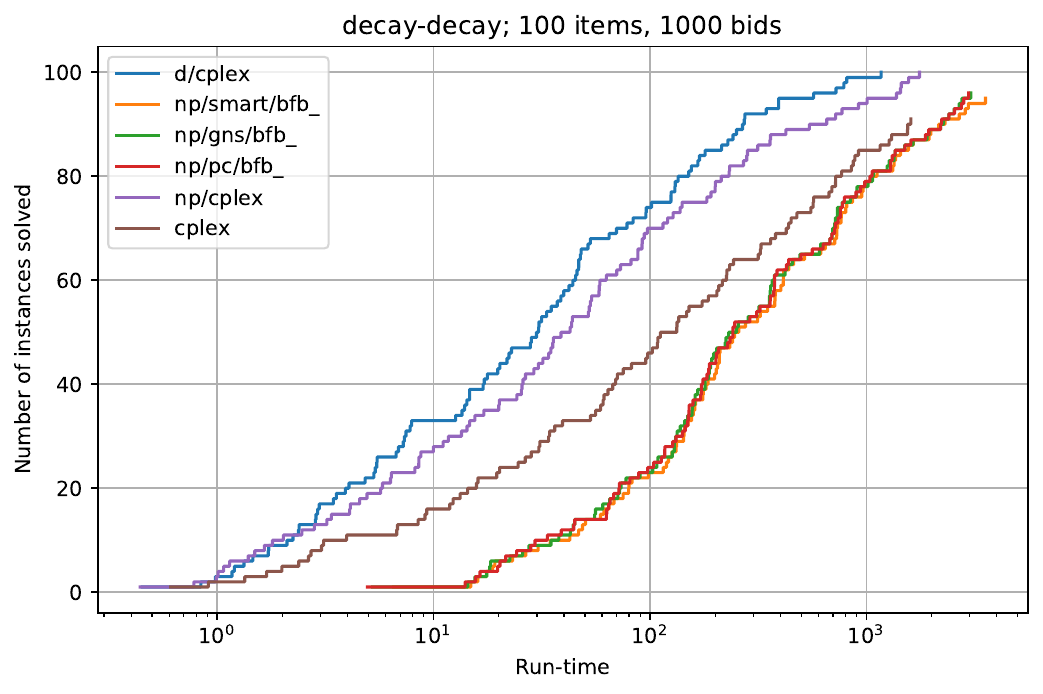}
\end{subfigure}

\caption{Decay-decay, CPLEX cover cuts off, all other parameters but presolve on}
\label{fig:muca_no_presolve_}
\end{figure}

\begin{figure}[t]
\centering
\begin{subfigure}
  \centering
  \includegraphics[width=.39\linewidth]{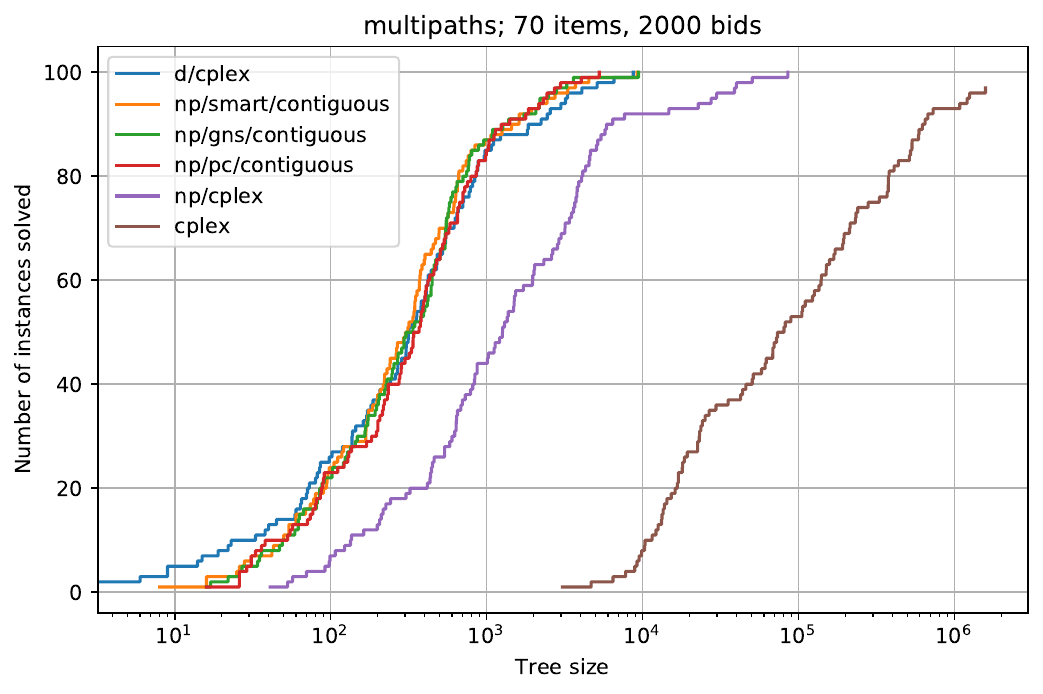}
  \label{fig:blah}
\end{subfigure}
\begin{subfigure}
  \centering
  \includegraphics[width=.39\linewidth]{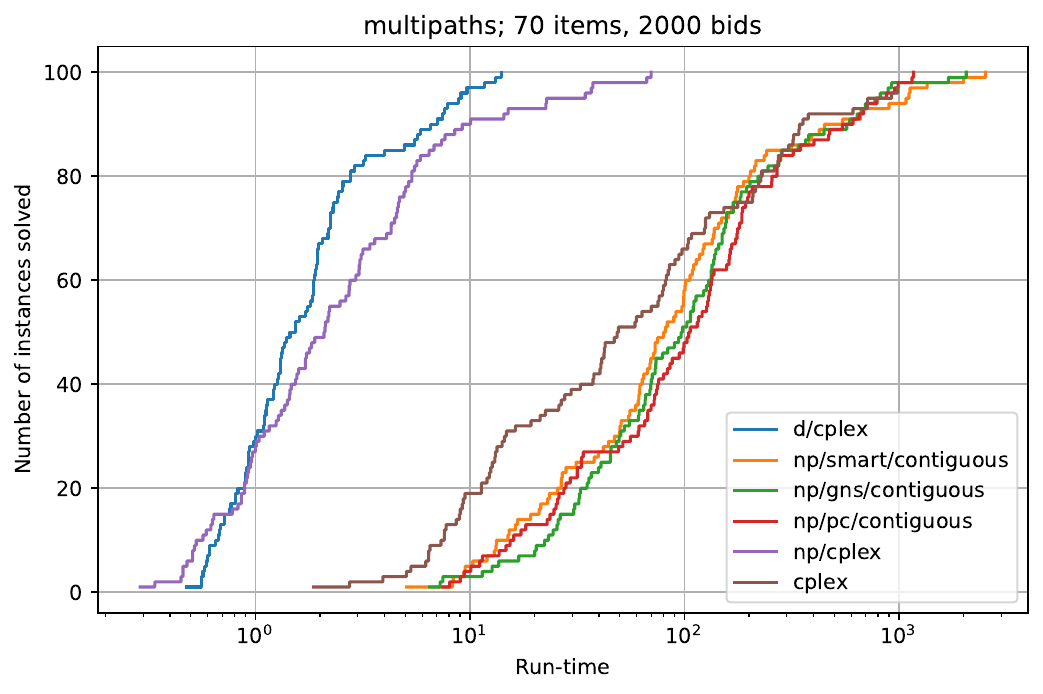}
\end{subfigure}
\begin{subfigure}
  \centering
  \includegraphics[width=.39\linewidth]{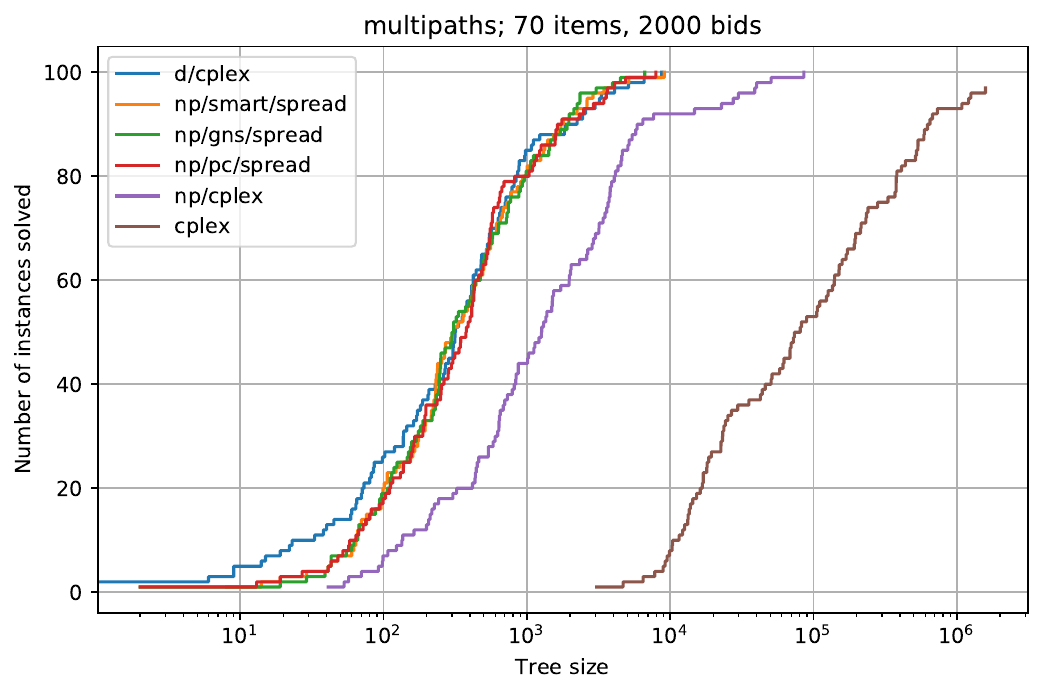}
  \label{fig:blah}
\end{subfigure}
\begin{subfigure}
  \centering
  \includegraphics[width=.39\linewidth]{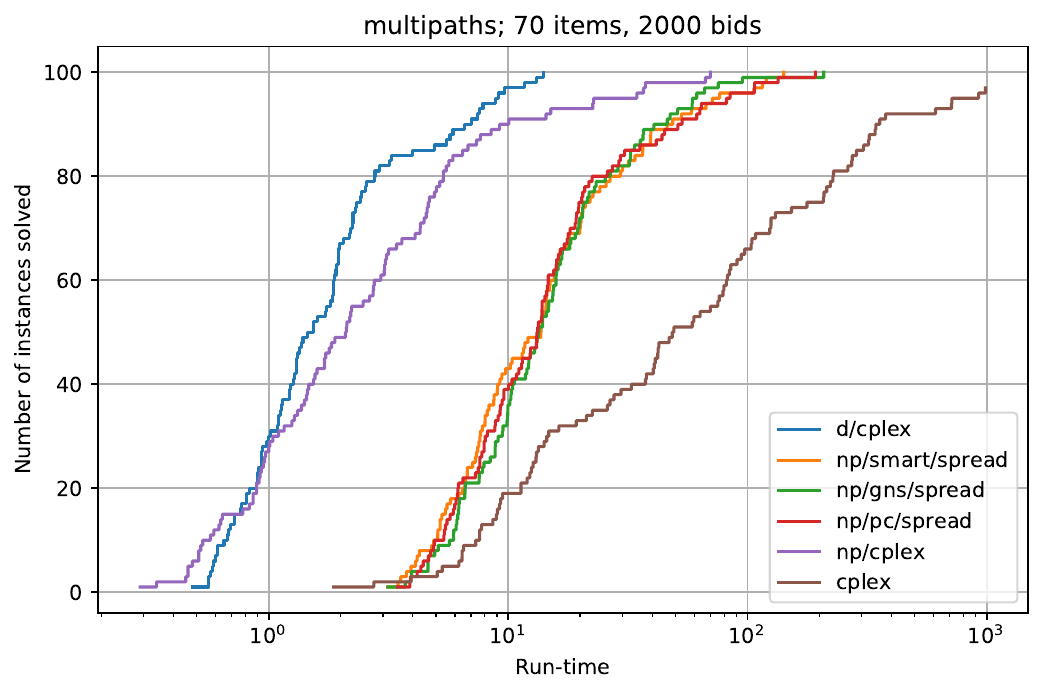}
\end{subfigure}
\begin{subfigure}
  \centering
  \includegraphics[width=.39\linewidth]{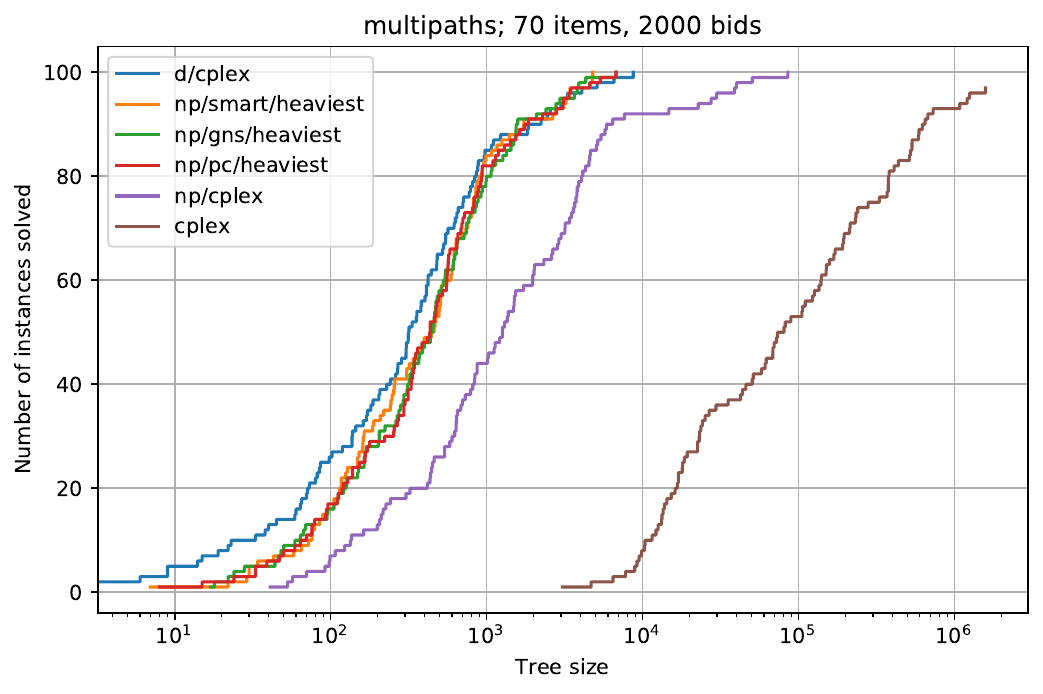}
  \label{fig:blah}
\end{subfigure}
\begin{subfigure}
  \centering
  \includegraphics[width=.39\linewidth]{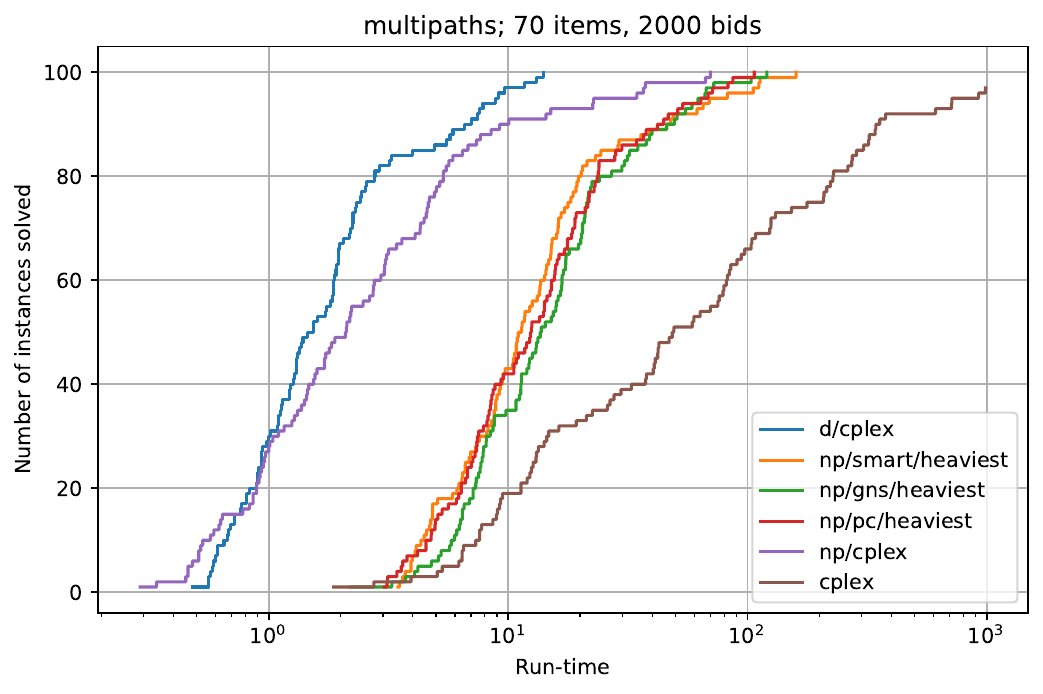}
\end{subfigure}
\begin{subfigure}
  \centering
  \includegraphics[width=.39\linewidth]{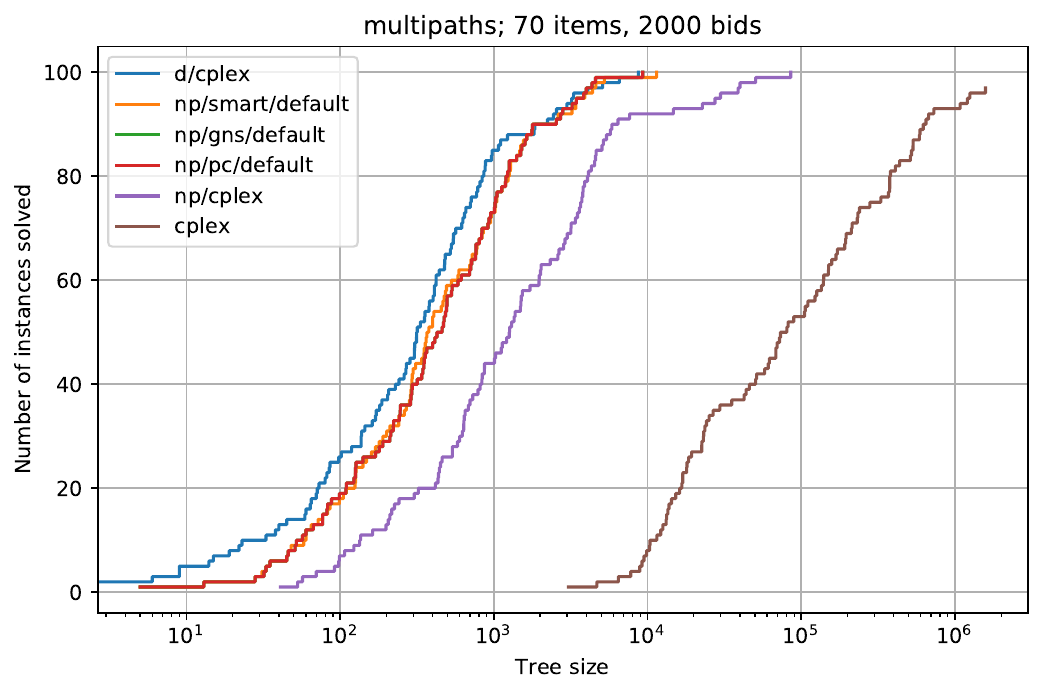}
  \label{fig:blah}
\end{subfigure}
\begin{subfigure}
  \centering
  \includegraphics[width=.39\linewidth]{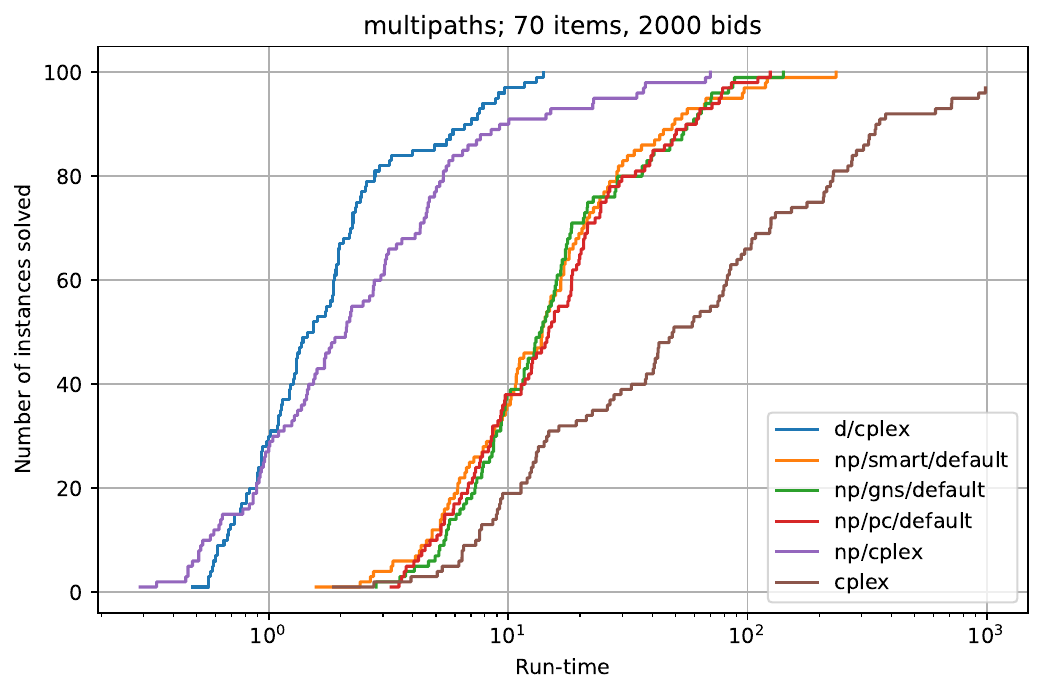}
\end{subfigure}
\begin{subfigure}
  \centering
  \includegraphics[width=.39\linewidth]{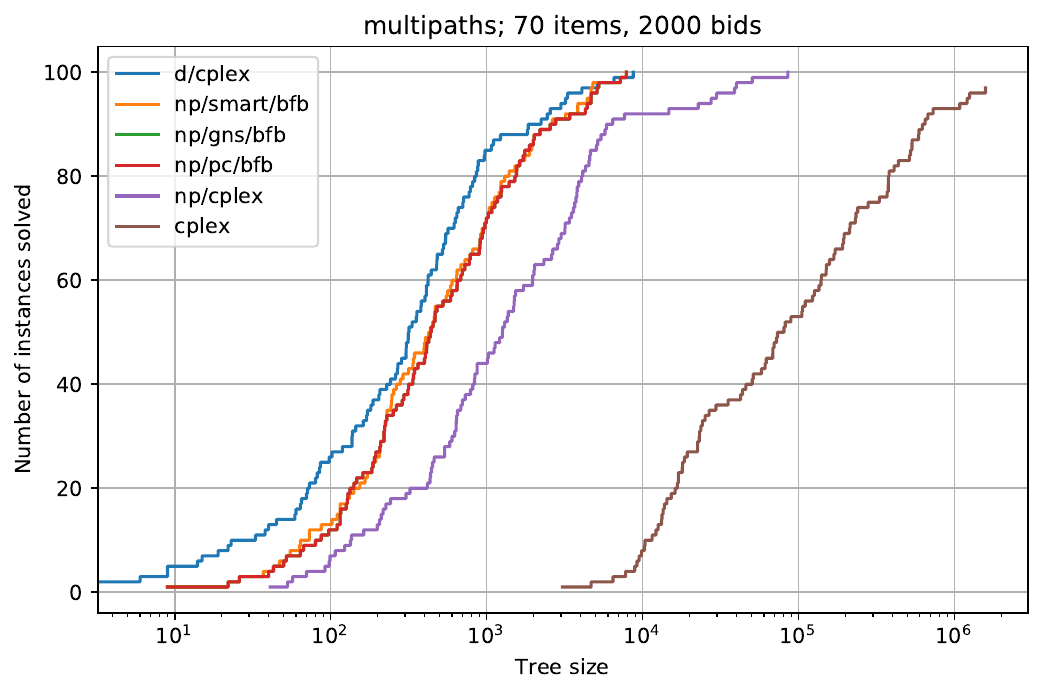}
  \label{fig:blah}
\end{subfigure}
\begin{subfigure}
  \centering
  \includegraphics[width=.39\linewidth]{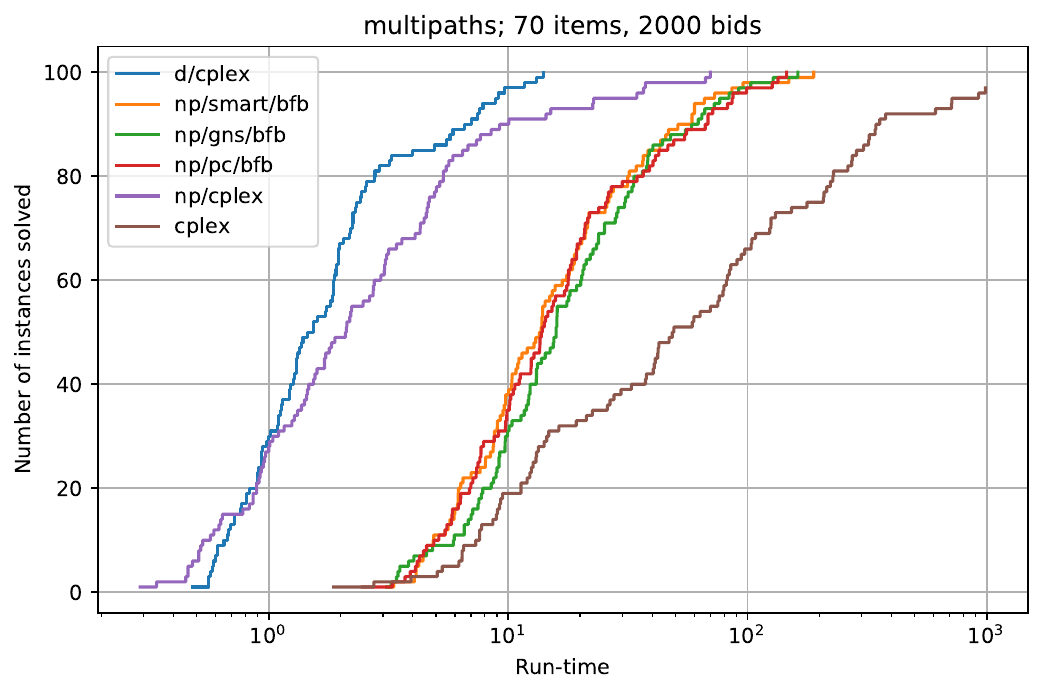}
\end{subfigure}

\caption{Multipaths, CPLEX cover cuts on, all other parameters but presolve on}
\label{fig:multipathsno_presolve}
\end{figure}

\begin{figure}[t]
\centering
\begin{subfigure}
  \centering
  \includegraphics[width=.39\linewidth]{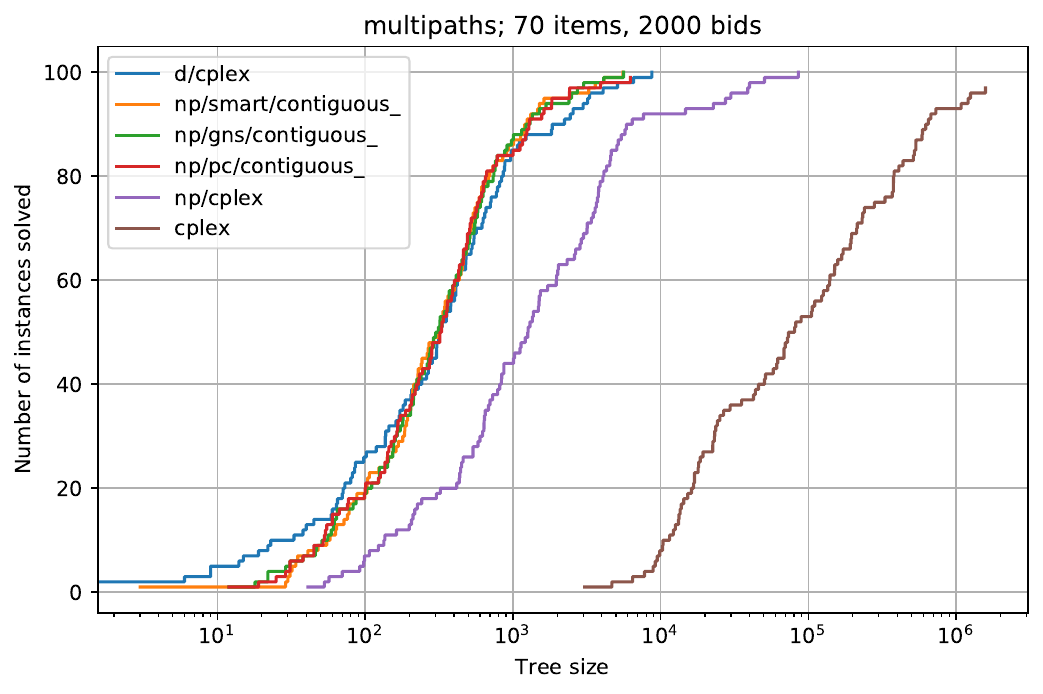}
  \label{fig:blah}
\end{subfigure}
\begin{subfigure}
  \centering
  \includegraphics[width=.39\linewidth]{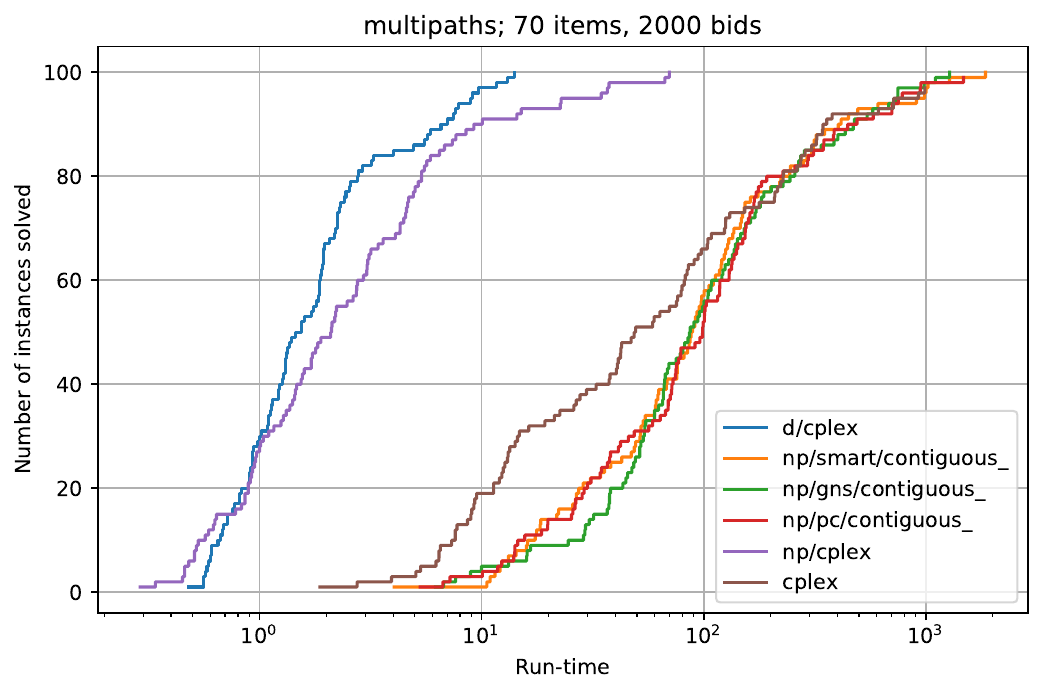}
\end{subfigure}
\begin{subfigure}
  \centering
  \includegraphics[width=.39\linewidth]{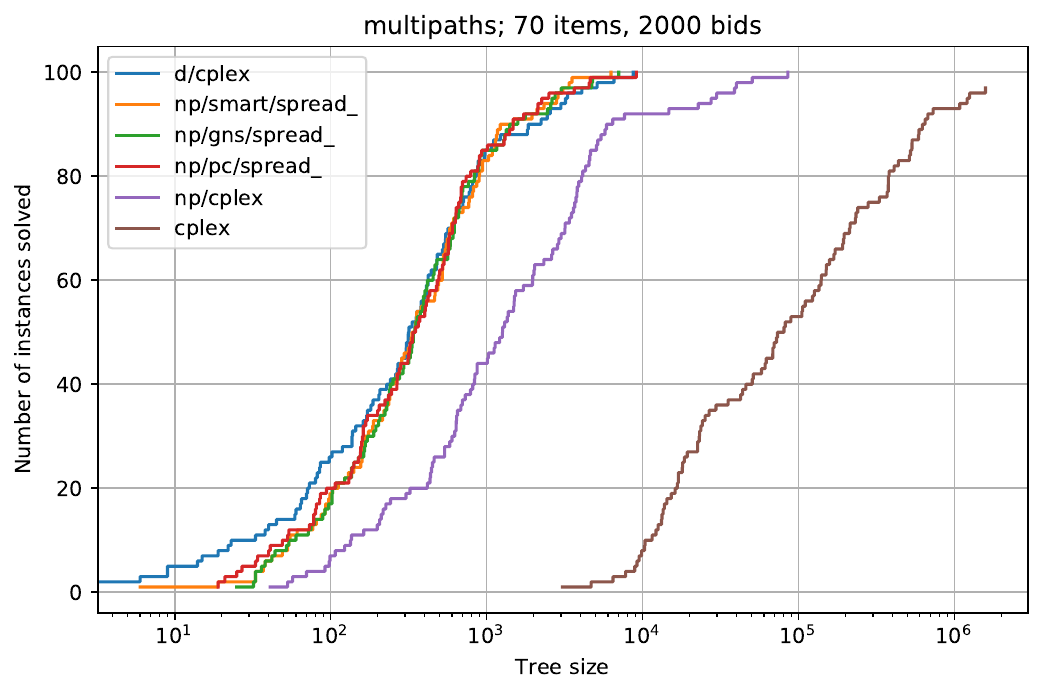}
  \label{fig:blah}
\end{subfigure}
\begin{subfigure}
  \centering
  \includegraphics[width=.39\linewidth]{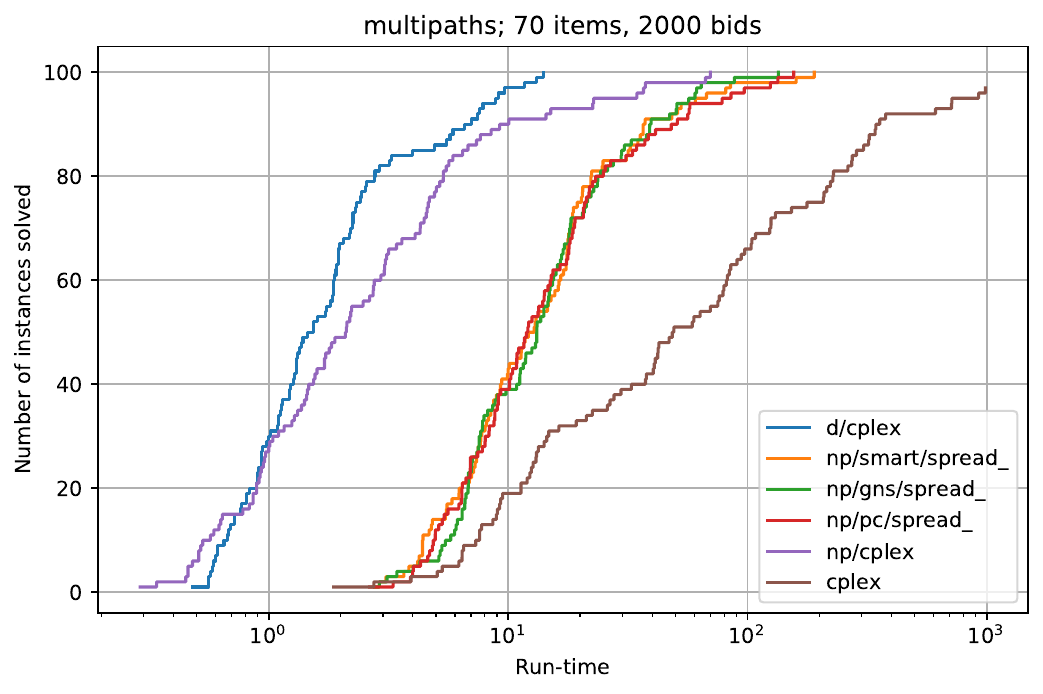}
\end{subfigure}
\begin{subfigure}
  \centering
  \includegraphics[width=.39\linewidth]{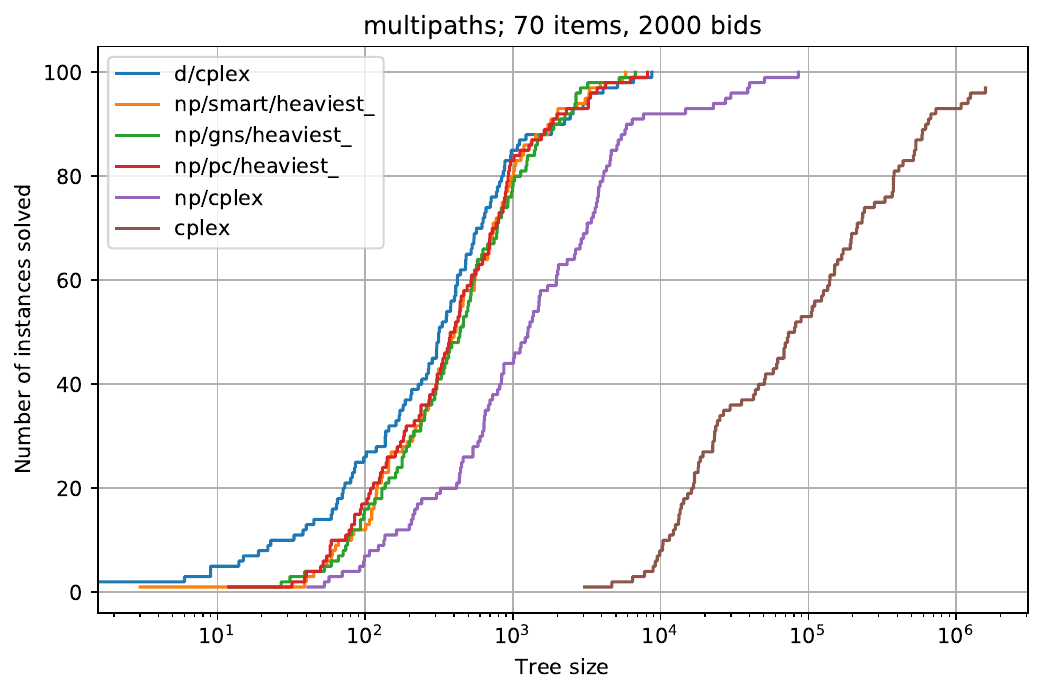}
  \label{fig:blah}
\end{subfigure}
\begin{subfigure}
  \centering
  \includegraphics[width=.39\linewidth]{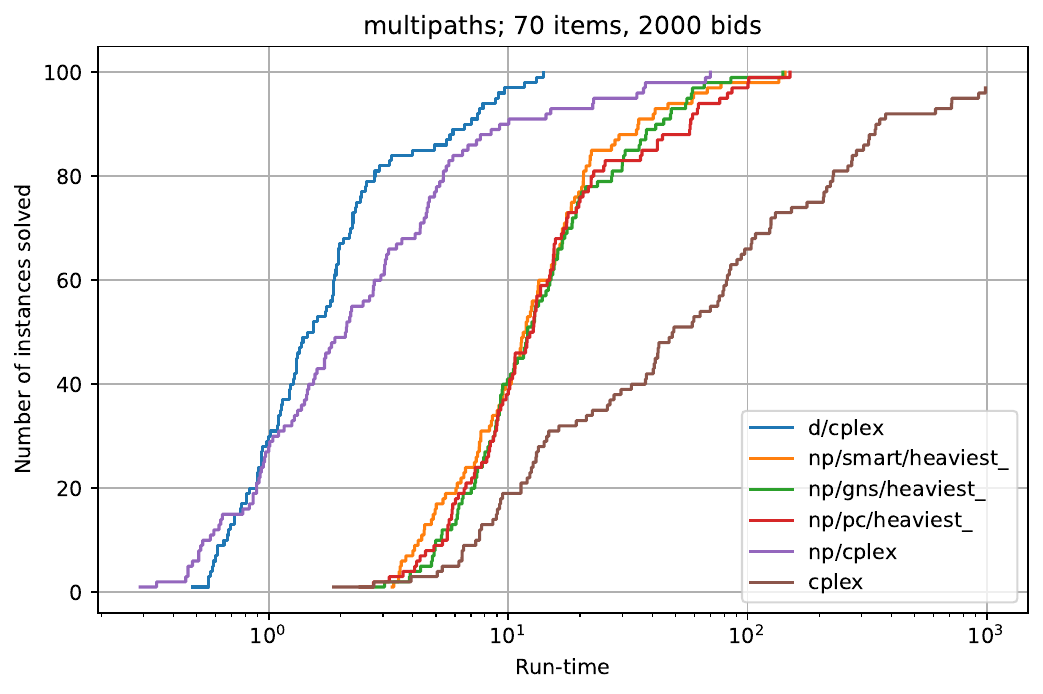}
\end{subfigure}
\begin{subfigure}	
  \centering
  \includegraphics[width=.39\linewidth]{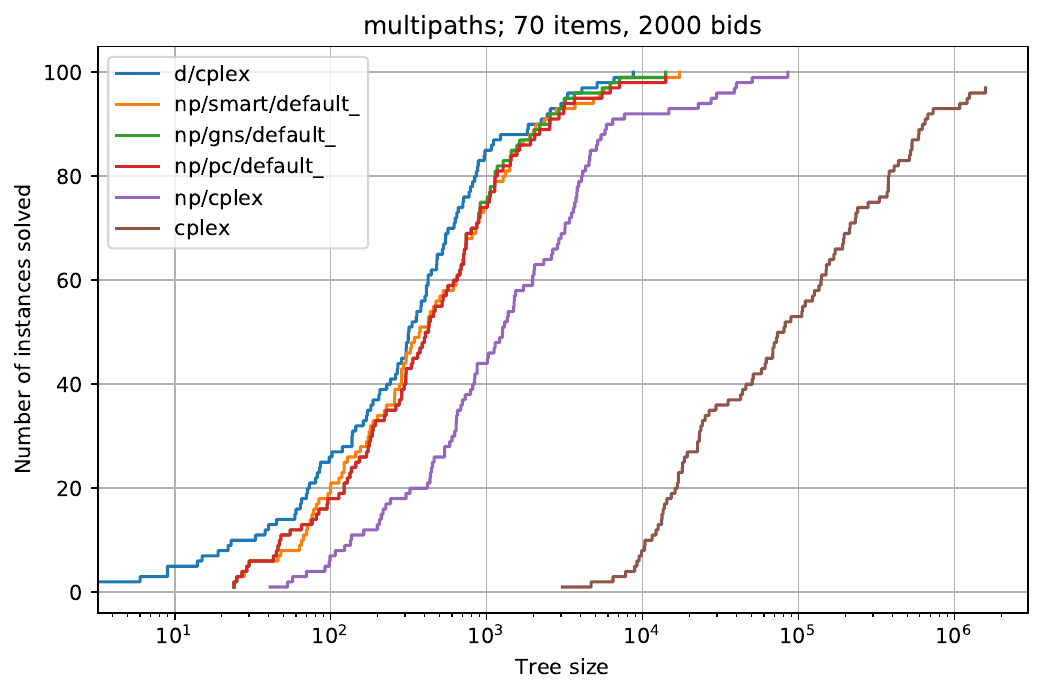}
  \label{fig:blah}
\end{subfigure}
\begin{subfigure}
  \centering
  \includegraphics[width=.39\linewidth]{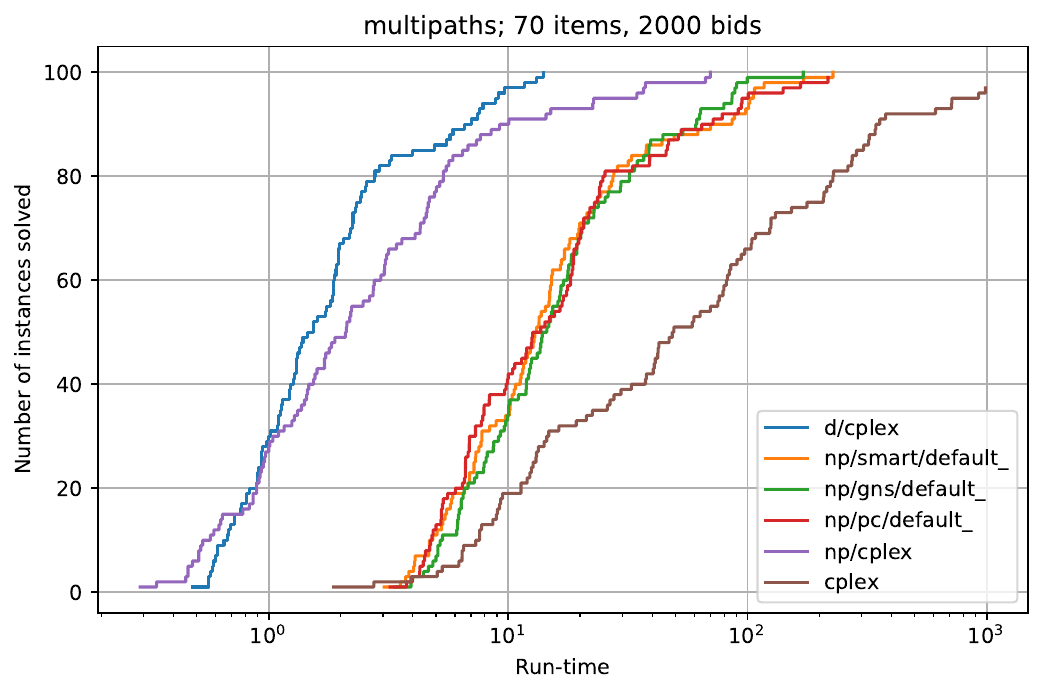}
\end{subfigure}
\begin{subfigure}
  \centering
  \includegraphics[width=.39\linewidth]{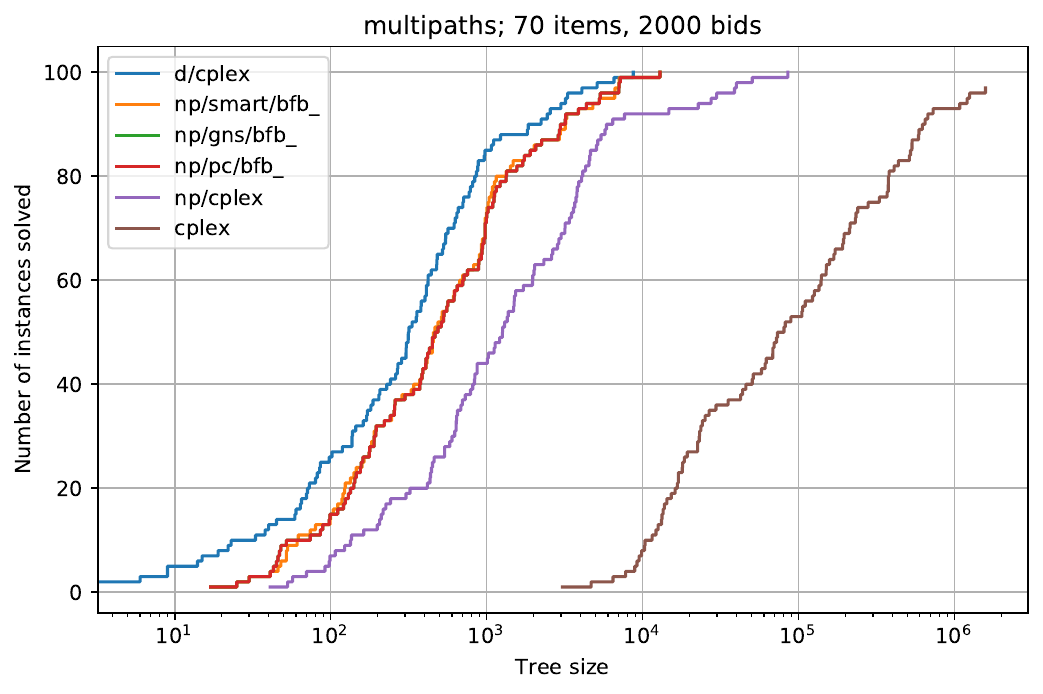}
  \label{fig:blah}
\end{subfigure}
\begin{subfigure}
  \centering
  \includegraphics[width=.39\linewidth]{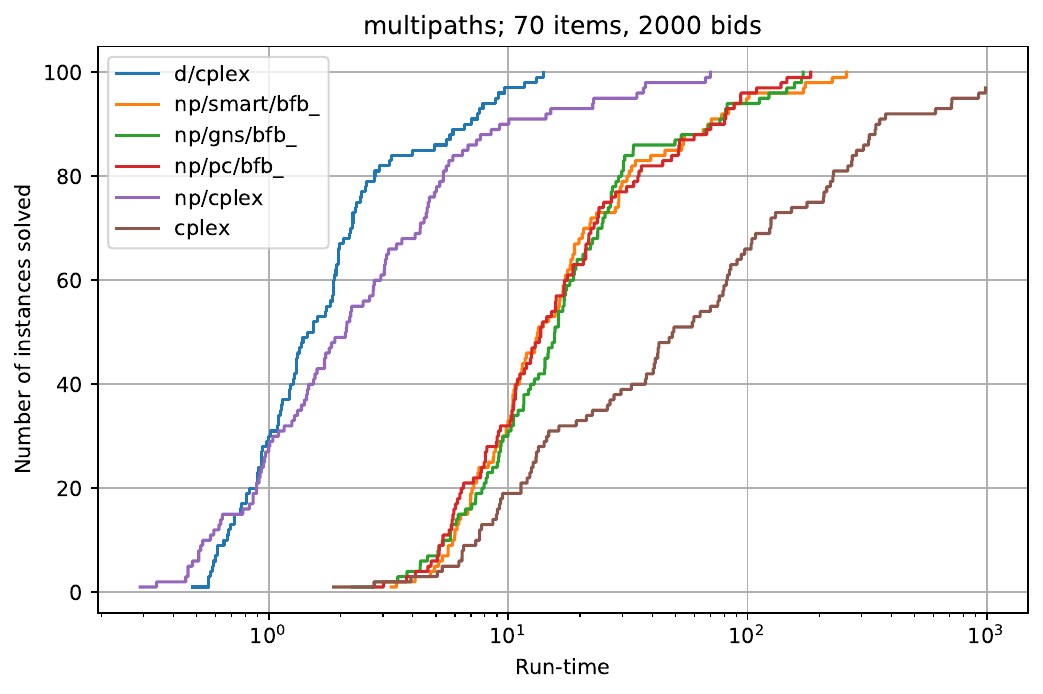}
\end{subfigure}

\caption{Multipaths, CPLEX cover cuts off, all other parameters but presolve on}
\label{fig:multipathsno_presolve_}
\end{figure}

\end{document}